\newtheorem{thm}{Theorem}[section]
\newtheorem{lema}{Lemma}[section]
\newtheorem{prop}{Proposition}[section]
\newtheorem{coro}{Corollary}[section]
\declaretheorem[style=definition,name=Definition,numberwithin=section]{Def}
\declaretheorem[style=definition,name=Remark,numberwithin=section]{rmk}
\newcommand{\C}{\mathbb{C}}
\newcommand{\Q}{\mathbb{Q}}
\newcommand{\Z}{\mathbb{Z}}
\newcommand{\N}{\mathbb{N}}
\newcommand{\F}{\mathbb{F}}
\newcommand{\Hom}[2]{\operatorname{Hom}(#1,#2)}
\newcommand{\PGL}{\operatorname{PGL}}
\newcommand{\GL}{\operatorname{GL}}
\newcommand{\SL}{\operatorname{SL}}
\newcommand{\U}{\operatorname{U}}
\newcommand{\g}{\mathfrak{g}}
\newcommand{\Irr}[1][G]{\operatorname{Irr}(#1)}
\newcommand{\Stab}[1][\chi]{\operatorname{Stab}_{#1}}
\newcommand{\bil}[1][G]{\mathcal{B}({#1},\U(1))}
\newcommand{\bilplus}[1][G]{\mathcal{B}^+({#1},\U(1))}
\newcommand{\tr}{\operatorname{Tr}}
\newcommand{\spec}{\operatorname{Spec }}
\newcommand{\ov}[1]{\overline{#1}}
\newcommand{\ord}[1]{\operatorname{ord}(#1)}
\newcommand{\gr}{\operatorname{gr}}
\newcommand{\lcm}{\operatorname{lcm}}
\newcommand{\ev}[1]{\operatorname{ev}_{#1}}
\NewDocumentCommand{\frob}{}{\operatorname{Frob}_q}
\newcommand{\mcA}{\mathcal{A}}
\newcommand{\mcP}{\mathcal{P}}
\newcommand{\mcC}{\mathcal{C}}
\newcommand{\mcD}{\mathcal{D}}
\newcommand{\mcJ}{\mathcal{J}}
\newcommand{\mcG}{\mathcal{G}}
\newcommand{\mcX}{\mathcal{X}}
\newcommand{\mcY}{\mathcal{Y}}
\newcommand{\mcM}{\mathcal{M}}
\newcommand{\mcF}{\mathcal{F}}
\newcommand{\mcE}{\mathcal{E}}
\newcommand{\mcI}{\mathcal{I}}
\newcommand{\mcO}{\mathcal{O}}
\newcommand{\mcL}{\mathcal{L}}
\NewDocumentCommand{\preBetti}{O{R} O{\mcC}}{X^{#2}(#1)}
\NewDocumentCommand{\betti}{O{\SL_n} O{R} O{\mcC}}{\mcM_B^{#3}(#1(#2))}
\NewDocumentCommand{\abbreviatedBetti}{O{\SL_n}}{\mcM_B^\mcC(#1)}
\NewDocumentCommand{\Dolbeault}{O{G} O{d}}{\mcM_{\text{Dol}}^{#2}(#1)}
\newcommand{\Betti}[1][G]{\mcM_{\text{B}}^d(#1)}
\newcommand{\pare}[1]{ { \left( #1 \right)}}
\renewcommand{\d}{{\mathrm{d}}}
\title{Counting stringy points on a family of character varieties}
\author{Lucas de Amorin}
\author{Martin Mereb}
\email{ldamorin@dm.uba.ar}
\address{Departamento de Matemática-IMAS, Facultad de Ciencias Exactas y Naturales, Universidad de Buenos Aires, Argentina}
\begin{document}

\begin{abstract}
    We provided explicit formulas for the number of stringy points over finite fields of parabolic type A character varieties with generic semisimple monodromy. This leads to formulas for their stringy E-polynomials. In particular, they satisfy the Betti Topological Mirror Symmetry Conjecture of T. Hausel and M. Thaddeus, as well as a refinement regarding isotypic components. Our proof is based on a Frobenius' type formula for Clifford's type settings and an analysis of it in a specific set-up related to regular wreath products with cyclic groups.
\end{abstract}

\maketitle

\setcounter{tocdepth}{1}
\tableofcontents

\section{Introduction}

The Hitchin system (\cite{H}, \cite{Hitchin}) has been an object of intense research in the last decades due to its rich geometry and its connections to Mirror Symmetry and the Geometric Langland's Program (\cite{KW}, \cite{Ngo}, \cite{GWZ}, among others). Fix a curve $C$ and a reductive group $G$. A Higgs $G$-principal bundle is a pair $(\mcE,\Theta)$ where $\mcE$ is $G$-principal bundle over $C$ and $\Theta:\mcE\to \mcE\otimes\Omega_C(\g)$ is a twisted endomorphism. There is a moduli space parametrizing these objects with fixed cohomological invariants, under a suitable stability condition (\cite{SimpsonII}). It is called the Dolbeault moduli space $\Dolbeault[G][]$. It is endowed with a fibration $\pi_G:\Dolbeault[G][]\to \mcA_G$ to the so-called Hitchin base, it is given by taking the characteristic polynomial of $\Theta$. This is the Hitchin system.

Among its astonishing properties, there is one we remark below. Let $\check{G}$ be the Langlands dual group to $G$. The Hitchin bases $\mcA_{G}$ and $\mcA_{\check{G}}$ are isomorphic. Generically, the fibers of $\pi_G$ and $\pi_{\check{G}}$ are dual abelian varieties (\cite{HT}, \cite{DP}). This fits in the situation proposed by A.Strominger, S.Yau, and E.Zaslow \cite{SYZ} to Mirror Symmetry. Thus, one expects $\Dolbeault[\check{G}][]$ to be the mirror partner of $\Dolbeault[G][]$. As a particular feature, there should be equality between Hodge numbers. A precise statement for $\SL_n$ and $\PGL_n$ was proposed by T. Hausel and M. Thaddeus in \cite{HT-TMS}.  

Let $d$ be a positive integer. A twisted $\SL_n$-Higgs bundle is a $\GL_n$-Higgs bundle $(\mcE,\Theta)$ with $\mcE$ of degree $d$ and $\Theta$ trace-less. As before, there is a coarse moduli space $\Dolbeault[\SL_n]$ parametrizing them. The tensor product induces an action of the Jacobian variety $\mathrm{Jac}^0(C)$ in $\Dolbeault[\SL_n]$. The twisted $\PGL_n$-Dolbeault moduli space $\Dolbeault[\PGL_n]$ is the quotient $\Dolbeault[\SL_n]/\mathrm{Jac}^0(C)$. If $d$ and $n$ are coprime, $\Dolbeault[\SL_n]$ is smooth and $\Dolbeault[\PGL_n]$ is an orbifold. The Topological Mirror Symmetry conjecture of Hausel and Thaddeus \cite{HT-TMS} proposes
\[h^{p,q}(\Dolbeault[\SL_n])=h^{p,q}_{st}(\Dolbeault[\PGL_n],\mcG)\]
for any $\gcd(d,n)=1$, where $\mcG$ is a natural unitary gerbe on $\Dolbeault[\PGL_n]$ and $h_{st}^{p,q}$ are the stringy Hodge numbers of V. Batyrev and D. Dais \cite{BD}, which incorporate correction terms to take in account the singularities. This conjecture has been recently proved in \cite{GWZ} and \cite{MS}. The former also implies this equality for other Lie groups and gerbes. 

In the present paper, we focus on the Betti side of this history. There exists another moduli space $\Betti$ parametrizing twisted $G$-representations of the fundamental group of $C$ (see definition below). Simpson's non-Abelian Hodge theory \cite{SimpsonII} implies that $\Betti$ and $\Dolbeault$ are diffeomorphic if $d$ and $n$ are coprime. In particular, the cohomology rings of $\Betti$ and $\Dolbeault$ agree. However, its Hodge structures disagree. The Betti Topological Mirror Symmetry conjecture \cite{HT-TMS} establishes
\[h^{p,q}(\Betti[\SL_n])=h^{p,q}_{st}(\Betti[\PGL_n],\mcG)\]
as before. We prove a version of this conjecture for isogenous groups to $\SL_n$ and discrete torsions, which are a particular kind of gerbes. Furthermore, we provide explicit formulas for these numbers.

Define the parabolic $\SL_n$-character variety as the GIT quotient
\[\abbreviatedBetti[\SL_n]=\{(x,y,z)\in \SL_n(\C)^g\times \SL_n(\C)^g\times\mcC:[x,y]z=1\}//\SL_n(\C) \]
where $\mcC\subset \SL_n$ is a conjugacy class, $g$ is the genus of $C$ and $\SL_n(\C)$ acts by simultaneous conjugation. For a divisor $d$ of $n$, there is an associated cyclic central subgroup $F_d\subset \SL_n$ of size $d$. The parabolic $\SL_n/F_d$-character variety is the orbifold
\[\abbreviatedBetti[\SL_n/F_d]=\abbreviatedBetti[\SL_n]/F_d^{2g}\]
where the action of $F_d^{2g}$ in $\SL_n(\C)^{2g}$ is given by left multiplication. We think of its points as twisted $\SL_n/F_d$-representations of the fundamental group of $C$. Finally, let $E_{st}(X;u,v)$ be the generation function of $h_{st}^{p,q}(X)$. For its precise definition and variants, see section \ref{sec:E-pols}. We can state our theorem now.

\begin{thm}
    Let $n$ be a natural number. For any divisor $d$ of $n$, $F_d$-discrete torsion $\mcD$ of order $\frac{d}{K}$, and any generic semisimple conjugacy class $\mcC$ of $\SL_n(\ov\Q)$,
    \[E_{st}^{\mcD}(\abbreviatedBetti[\SL_n/F_d]; u,v) = \sum_{\tau} \frac{\left((-1)^n(uv)^{\frac{n^2}{2}}\mathcal{H}_{\tau'}(uv)\right)^{2g-1}}{(uv-1)^{2g-1+n-1}}\sum_{s|n} s^{2g} C_{s,\tau} \Phi_g(n,d,s,K)\]
    where $\tau$ runs over all multi-partitions types of size $n$, $\Phi_g$ is an arithmetic function, $\mathcal{H}_{\tau'}$ are polynomials associated to $\tau$, and $C_{s,\tau}$ are combinatorial constants determined by $s$ and $\tau$.  
\end{thm}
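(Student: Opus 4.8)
The plan is to convert the computation of $E_{st}^{\mcD}$ into an arithmetic problem and then resolve that problem with Clifford theory. First I would spread the data $(\abbreviatedBetti[\SL_n/F_d],\mcD,\mcC)$ out over the ring of integers of a suitable number field; genericity of $\mcC$ guarantees that $\abbreviatedBetti[\SL_n]$ is smooth and that $\Gamma:=F_d^{2g}$ acts by translation with finite stabilizers, so the quotient is a well-behaved orbifold. By the arithmetic criterion recalled in Section~\ref{sec:E-pols} (a stringy refinement of the Katz--Hausel--Rodriguez-Villegas principle), it then suffices to prove that the $\mcD$-weighted stringy point count of $\abbreviatedBetti[\SL_n/F_d]$ over $\F_q$ is a polynomial $P(q)$ and that the underlying mixed Hodge structure is of Hodge--Tate type, for then $E_{st}^{\mcD}(\abbreviatedBetti[\SL_n/F_d];u,v)=P(uv)$. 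So the theorem reduces to evaluating that stringy point count in closed form.

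Since $\Gamma$ is abelian, the inertia stack decomposes over pairs $(\alpha,\beta)\in\Gamma^2$, and the $\mcD$-twisted stringy count takes the form
\[\#^{\mcD}_{st}=\frac{1}{\size{\Gamma}}\sum_{\alpha\in\Gamma} q^{a(\alpha)}\sum_{\beta\in\Gamma}\mcD(\alpha,\beta)\,\#\bigl(\abbreviatedBetti[\SL_n]^{\alpha}(\F_q)_{\beta}\bigr),\]
where $a(\alpha)$ is the age of the twisted sector (constant on each component by homogeneity) and $\abbreviatedBetti[\SL_n]^{\alpha}(\F_q)_{\beta}$ denotes the $\beta$-twisted-Frobenius fixed points of the locus fixed by translation by $\alpha$. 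The key observation is that a tuple $(x,y,z)$ giving an $\alpha$-fixed point must be conjugate to its $\alpha$-translate: there is $g_\alpha\in\SL_n$ with $g_\alpha x_i g_\alpha^{-1}=\alpha_i x_i$, and similarly for the $y_i$. Unwinding this condition replaces the equation $[x,y]z=1$ by a system in which the $x_i,y_i$ are constrained to prescribed cosets of $\SL_n(\F_q)$ inside an extension determined by $\alpha$, with $z$ still in $\mcC$. This is exactly the ``Clifford-type'' setting of the abstract.

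Next I would apply the Frobenius-type mass formula for Clifford settings to each coset-constrained count, expressing $\#\bigl(\abbreviatedBetti[\SL_n]^{\alpha}(\F_q)_{\beta}\bigr)$ as a sum over $\Irr[\SL_n(\F_q)]$ of a ratio of the shape $\tfrac{(\text{values on }\mcC\text{ and on the prescribed cosets})}{(\text{degree})^{2g-1}}$, twisted by $\beta$. At this point the representation theory of $\GL_n(\F_q)$ enters: irreducible characters are indexed by multipartition types $\tau$, and their degrees and their values on a generic semisimple class are governed by Green's functions and hook data, packaged into the polynomials $\mathcal{H}_{\tau'}$ attached to the conjugate multipartition $\tau'$. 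One passes to $\SL_n(\F_q)$ by Clifford theory along the central extension by $F_d$, i.e.\ by organizing characters of $\GL_n(\F_q)$ under tensoring by $\det$-characters of order dividing $d$. The stabilizer of a type $\tau$ under this twisting action is cyclic of some order $s\mid n$, and Clifford theory describes the $\SL_n(\F_q)$-constituents over $\tau$ through the (projectively twisted) character theory of a regular wreath product with $C_s$; this is the origin of the factor $s^{2g}$ and of the combinatorial constants $C_{s,\tau}$.

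Finally I would assemble the pieces: sum over $\Irr[\SL_n(\F_q)]$ grouped by type $\tau$, isolate the ``main factor'' attached to a single type, which after $q\mapsto uv$ is the established closed form $\bigl((-1)^n(uv)^{\frac{n^2}{2}}\mathcal{H}_{\tau'}(uv)\bigr)^{2g-1}/(uv-1)^{2g-1+n-1}$ for the parabolic $\SL_n$-character variety, and then evaluate the remaining exponential sum over $(\alpha,\beta)\in\Gamma^2$ weighted by the discrete torsion $\mcD$ of order $\tfrac{d}{K}$ and restricted by the wreath-product constraint (which cuts $\alpha,\beta$ down to coordinates in the order-$s$ subgroup of $F_d$); this sum is purely arithmetic and collapses to $\Phi_g(n,d,s,K)$. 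Checking polynomiality in $q$ and Hodge--Tate purity along the way closes the loop with the first step. I expect the main obstacle to be the third step: unlike $\GL_n(\F_q)$, the character theory of $\SL_n(\F_q)$ is genuinely intricate (Lusztig series, non-canonical parametrizations), and one must show that the twisted class values, the degrees, and the $\Gamma$-action on characters conspire so that the a priori complicated sum over $\Irr[\SL_n(\F_q)]$ collapses to the clean type-indexed formula with weights $s^{2g}C_{s,\tau}$.
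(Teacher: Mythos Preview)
Your global strategy matches the paper's: spread out, reduce $E_{st}^{\mcD}$ via the Katz/Groechenig--Wyss--Ziegler criterion to computing the $\mcD$-weighted sum $\sum_{a,b}\#\bigl(\abbreviatedBetti[\SL_n]^{a}\bigr)^{b^{-1}\frob}$, evaluate each term by a Frobenius-type character formula, and then perform the arithmetic sum over $(a,b)$ to produce $\Phi_g$. The endpoints of your outline are fine.

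The gap is in the middle step, where you write that the count is ``a sum over $\Irr[\SL_n(\F_q)]$'' and that ``Clifford theory describes the $\SL_n(\F_q)$-constituents \ldots\ through the character theory of a regular wreath product with $C_s$''. That is not what happens and would not work. The paper's Lemma~\ref{irred-comp} shows that if $A=\ord a$, an $a$-fixed tuple forces a grading $\C^n=\bigoplus_{i\in F_A}V_i$ with $\dim V_i=n/A$, so the twisted sector $X_{a,z}^{b}$ lives inside the \emph{small} group $G_{A,B}$ generated by the block-diagonal matrices $H=\{\det=1\text{ in }\GL_{n/A}^{A}\}$, a block-permutation $\sigma$, and a diagonal element $\gamma$; one has $G_{A,B}/H\simeq F_A\times F_B$. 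The character formula (Theorem~\ref{character-formula}) therefore sums over $\Irr[H]^{K}$, not over $\Irr[\SL_n(\F_q)]$, and the wreath product $\GL_{n/A}\wr C_A$ is \emph{geometric input}, not a byproduct of Clifford theory for $\SL_n\subset\GL_n$. Green's theory enters for $\GL_{n/A}(\F_q)$, and the constants you call $C_{s,\tau}$ arise from $C_{s,A,\tau}$ for types of size $n/A$ via the identity $C_{s,A,\tau}=C_{As,1,A\cdot\tau}$ (proved combinatorially in Section~\ref{counting-twisted-points}).

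You also omit the main new analytic ingredient. When $K=G/H$ is not cyclic the classical Frobenius formula fails; the coset-constrained count picks up a bilinear form $\Omega_\theta\in\bilplus[K]$ (Theorem~\ref{character-formula}), and one must show $\size{\ker\Omega_\theta}=\size{K}/e(\chi)^2$ and prove Proposition~\ref{magic-formula} (that $\xi\mapsto\Omega_{\xi\theta}(a,b)$ is a character) in order to sum $\Omega_\theta$ away. This, together with the classification of $K$-fixed characters of $H$ and their ramification in the block-like setting (Theorems~\ref{clasf-char} and~\ref{ramification}), is where the actual work lies; your outline jumps over it. Finally, Hodge--Tate purity is not needed: polynomial count alone gives $E(X,\mcL;u,v)=P(uv)$ by Theorem~\ref{KGWZ}.
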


The polynomials $\mathcal{H}_{\tau'}$ are the normalized Hook polynomials of \cite{MM}. The constants $C_{s,\tau}$ are defined and computed in section \ref{counting-twisted-points}. The function $\Phi_g$ is given by
\[\Phi_g(n,d,s,K)=\prod_{p|n\text{  prime}} \frac{(p-p^{1-2g})p^{\phi_g(p,n,d,s,K)} + p^{1-2g}-1}{p-1}\]
where 
\[\phi_g(p,n,d,s,K)=\min(v_p(s),v_p(n)-v_p(d),v_p(d),v_p(n)-v_p(s),v_p(K))\]
and $v_p$ is the $p$-adic valuation. In particular, the symmetries of $\Phi_g$ imply the Betti Topological Mirror Symmetry.

\begin{coro}
    Let $n$ be a natural number, $d$ be a divisor of $n$ and $\mcC$ be a generic semisimple conjugacy class of $\SL_n(\ov\Q)$. Then 
    \[E_{st}^{\hat\mcD}(\abbreviatedBetti[\SL_n/F_d]; u,v) = E_{st}^{\check\mcD}(\abbreviatedBetti[\SL_n/F_{\frac{n}{d}}]; u,v)\]
    for any discrete torsions $\hat\mcD$ and $\check\mcD$ with same class in $F_{\gcd(d,\frac{n}{d})}$.
\end{coro}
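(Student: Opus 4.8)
The corollary follows from the main theorem by a direct symmetry analysis of the formula. The key is the arithmetic function $\Phi_g(n,d,s,K)$ and how the exponent $\phi_g(p,n,d,s,K)=\min(v_p(s),v_p(n)-v_p(d),v_p(d),v_p(n)-v_p(s),v_p(K))$ behaves under the substitution $d\mapsto n/d$.

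**Step 1: reduce to a primewise statement.** Since $\Phi_g$ factors as a product over primes $p\mid n$, and the remaining factors in the theorem's formula ($\mathcal{H}_{\tau'}$, $C_{s,\tau}$, the $(uv-1)$ denominator, the sum over $\tau$ and over $s\mid n$) do not involve $d$ at all, it suffices to show that for each prime $p\mid n$ and each $s\mid n$, the local exponent $\phi_g(p,n,d,s,K)$ is unchanged — or, more precisely, that the whole local factor of $\Phi_g$ is unchanged — when we replace $d$ by $n/d$ and simultaneously replace the discrete torsion $\hat\mcD$ (of some order, giving some $K=\hat K$) by $\check\mcD$ (of order giving $\check K$). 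One must be careful here: $K$ is constrained by the order of the discrete torsion, which lives in a group depending on $d$, so the correct reading of ``same class in $F_{\gcd(d,n/d)}$'' must be translated into a condition on $v_p(\hat K)$ versus $v_p(\check K)$ for each $p$.

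**Step 2: the valuation identity.** Write $a=v_p(n)$, $b=v_p(d)$, so that $v_p(n/d)=a-b$. Then
\[\phi_g(p,n,d,s,K)=\min\bigl(v_p(s),\,a-b,\,b,\,a-v_p(s),\,v_p(K)\bigr).\]
Under $d\mapsto n/d$ the pair $(a-b,b)$ is swapped to $(b,a-b)$, and the two terms $v_p(s)$ and $a-v_p(s)$ are also an unordered pair; since $\min$ is symmetric in its arguments, the first four entries contribute the same multiset. Thus $\phi_g(p,n,n/d,s,\check K)=\min\bigl(v_p(s),b,a-b,a-v_p(s),v_p(\check K)\bigr)$, and the whole claim reduces to showing $\min(v_p(s),b,a-b,a-v_p(s),v_p(\hat K))=\min(v_p(s),b,a-b,a-v_p(s),v_p(\check K))$. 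Now $v_p\bigl(\gcd(d,n/d)\bigr)=\min(b,a-b)$, so the hypothesis that $\hat\mcD$ and $\check\mcD$ have the same class in $F_{\gcd(d,n/d)}$ forces $\min(v_p(\hat K),\min(b,a-b))=\min(v_p(\check K),\min(b,a-b))$ for every $p$ (this is exactly the statement that the orders of $\hat\mcD,\check\mcD$ agree after projecting to that common subgroup). Since $\min(b,a-b)$ already appears among the first four arguments of $\phi_g$, replacing $v_p(\hat K)$ by $v_p(\check K)$ cannot change the overall minimum: in symbols, $\min(M,v_p(\hat K))=\min(M,v_p(\check K))$ whenever $M\le\min(b,a-b)$ and $\min(v_p(\hat K),\min(b,a-b))=\min(v_p(\check K),\min(b,a-b))$, where $M=\min(v_p(s),b,a-b,a-v_p(s))$.

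**Step 3: assemble.** Combining Steps 1 and 2, $\Phi_g(n,d,s,\hat K)=\Phi_g(n,n/d,s,\check K)$ for all $s\mid n$. Plugging this into the formula of the main theorem — with $\mcC$ the same generic semisimple class on both sides, so the $\tau$-sum, the Hook polynomials $\mathcal{H}_{\tau'}$, the constants $C_{s,\tau}$, and the prefactor all match termwise — yields $E_{st}^{\hat\mcD}(\abbreviatedBetti[\SL_n/F_d];u,v)=E_{st}^{\check\mcD}(\abbreviatedBetti[\SL_n/F_{n/d}];u,v)$. The main obstacle, and the only place requiring genuine care rather than bookkeeping, is Step 1–2's translation of the phrase ``same class in $F_{\gcd(d,n/d)}$'' into the precise numerical relation between $\hat K$ and $\check K$; one has to unwind the definitions of discrete torsion and of $K$ from section \ref{counting-twisted-points} to confirm that this hypothesis is exactly equivalent to $\min(v_p(\hat K),\min(b,a-b))=\min(v_p(\check K),\min(b,a-b))$ for all primes $p\mid n$, after which the symmetry of $\min$ does the rest.
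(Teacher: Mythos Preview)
Your proof is correct and follows essentially the same approach as the paper. The paper's own argument is a single sentence at the end of \S\ref{sec:stringy-E-pol} (``The last claim of the theorem follows by noticing that $K$ can be replaced by $\gcd(d,\frac{n}{d},K)$ and $\Phi_g(n,d,s,K)=\Phi_g(n,\frac{n}{d},s,K)$''), and your Steps~1--3 are a careful unpacking of exactly these two observations: the symmetry of the multiset $\{v_p(s),\,v_p(d),\,v_p(n)-v_p(d),\,v_p(n)-v_p(s)\}$ under $d\leftrightarrow n/d$, and the fact that only $\min(v_p(K),v_p(d),v_p(n)-v_p(d))$ matters for $\phi_g$.
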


We can further refine this. The stringy Hodge numbers are built with contributions of the so-called twisted sectors $\Betti[\SL_n]^a/F_d^{2g}$ where $\Betti[\SL_n]^a$ is the fixed-point set of $a\in F_d^{2g}$. On the other hand, if our $F_d$-discrete torsion is the restriction of a $F_n$-one, the induced gerbe on $\Betti[\SL_n/F_d]$ carries a $F_{\frac{n}{d}}$-equivariant structure. Therefore, one can look at the isotypic component in cohomology with respect to a character $\xi:F_{\frac{n}{d}}\to \C^\times$.  This induces an isotypic contribution $E_{st}^{\mcD}(\abbreviatedBetti[\SL_n/F_d]; u,v)_\xi$ of $E_{st}^{\mcD}(\abbreviatedBetti[\SL_n/F_d]; u,v)$. As before, for its precise definition see section \ref{sec:E-pols}. For explicit formulas regarding these polynomials see Theorem \ref{stringy-contributions}. Here we only state its relationship.

\begin{thm}\label{intro:isotopyc=stringy}
    Let $n$ be a natural number, $d$ be a divisor of $n$ and $\mcC$ be a generic semisimple conjugacy class of $\SL_n(\ov\Q)$. For any $F_n$-discrete torsion $\mcD$, character $\xi$ of $F_{\frac{n}{d}}$ and $a\in F_{\frac{n}{d}}^{2g}$,
    \[E_{st}^{\mcD}(\abbreviatedBetti[\SL_n/F_d]; u,v)_\xi =E(\abbreviatedBetti^a/F_d^{2g},\mcL_{\mcD,a}; u,v)(uv)^{F(a)}\]
    provided that $\ord\xi=\ord a$, where $\mcL_{\mcD,a}$ is the local system associated with $\mcD\big|_{F_{\frac{n}{d}}^{2}}$ and $F(a)$ is one-half the codimension of $\abbreviatedBetti^a$.
\end{thm}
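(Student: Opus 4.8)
The plan is to expand the stringy $E$-polynomial of the $F_d$-orbifold through the Batyrev--Dais orbifold formula \cite{BD}, decompose it into $\xi$-isotypic pieces using the $F_{n/d}$-equivariant structure furnished by the $F_n$-discrete torsion, and then reorganise the decomposition via a Fourier transform on $F_d^{2g}$ into a single twisted sector, the Fourier transform being what interchanges the roles of $F_d$ and $F_{n/d}$.

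First I would apply the orbifold formula to the quotient $\abbreviatedBetti[\SL_n]/F_d^{2g}$ with discrete torsion $\mcD$. Since $F_d^{2g}$ is abelian --- so conjugacy classes are elements and centralisers are everything --- and acts on the holomorphically symplectic variety $\abbreviatedBetti[\SL_n]$ by symplectomorphisms (left multiplication by a central element preserves the Goldman form, $\operatorname{Ad}$ of the centre being trivial), one obtains
\[E_{st}^{\mcD}(\abbreviatedBetti[\SL_n/F_d]; u,v) = \sum_{c \in F_d^{2g}} E\!\left(\abbreviatedBetti^{c}/F_d^{2g},\, \mcL_{\mcD,c}; u,v\right)(uv)^{F(c)},\]
where $\mcL_{\mcD,c}$ is the rank-one local system obtained by transgressing $\mcD|_{F_d^{2g}}$ at $c$, and the age of the $c$-sector is $F(c)=\tfrac12\codim\abbreviatedBetti^{c}$, the normal eigenvalues of $c$ pairing as $\lambda,\lambda^{-1}$ by symplecticity (empty fixed loci drop out). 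I would then bring in the residual left-multiplication action of $F_n^{2g}/F_d^{2g}\cong F_{n/d}^{2g}$ on this quotient --- its diagonal $F_{n/d}$ being the equivariant structure on $\mcG$ attached to the $F_n$-extension of $\mcD$ --- and use that, $F_n^{2g}$ being abelian, this action preserves each twisted sector and lifts to each $\mcL_{\mcD,c}$, so every summand inherits a $\widehat{F_{n/d}}$-weight decomposition. Extracting $\xi$-components through the finite sum (which commutes with taking $E$-polynomials) gives
\[E_{st}^{\mcD}(\abbreviatedBetti[\SL_n/F_d]; u,v)_\xi = \sum_{c \in F_d^{2g}} E\!\left(\abbreviatedBetti^{c}/F_d^{2g},\, \mcL_{\mcD,c}; u,v\right)_\xi (uv)^{F(c)}.\]

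The heart of the argument is to evaluate this sum. I would first record the structure of the fixed loci for a generic semisimple $\mcC$: diagonalising the translating central element exhibits $\abbreviatedBetti^{c}$, for $c\in F_d^{2g}$, as a disjoint union of parabolic, generic semisimple character varieties for groups isogenous to $\SL_{n/\ord{c}}$, carrying residual $F_d^{2g}$- and $F_{n/d}^{2g}$-actions that depend only on $\ord{c}$; the same analysis applied to $\abbreviatedBetti^{a}$ for $a\in F_{n/d}^{2g}$ produces a $d\leftrightarrow n/d$ symmetry between these loci and their local systems, the latter matched via $\mcD|_{F_{n/d}^2}$ and the perfect pairing $F_n\times F_n\to\C^\times$. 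With this in hand, I would apply the Fourier transform on the finite abelian group $F_d^{2g}$ to the $\xi$-projected sector sum: the projections reorganise and the ages recombine into a single exponent, so that $\sum_{c\in F_d^{2g}}E(\abbreviatedBetti^{c}/F_d^{2g},\mcL_{\mcD,c})_\xi(uv)^{F(c)}$ becomes the $E$-polynomial of the single $F_{n/d}$-fixed locus $\abbreviatedBetti^{a}$ with $\ord{a}=\ord{\xi}$ --- quotiented by $F_d^{2g}$, twisted by $\mcL_{\mcD,a}$, and shifted by $(uv)^{F(a)}$ --- which is exactly the assertion, the hypothesis $\ord{\xi}=\ord{a}$ being what singles out this locus.

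I expect this Fourier-transform step to be the main obstacle: pinning down the cross-identification between the $F_d$-fixed loci $\abbreviatedBetti^{c}$ ($c\in F_d^{2g}$) and the $F_{n/d}$-fixed loci $\abbreviatedBetti^{a}$ ($a\in F_{n/d}^{2g}$), matching the rank-one local systems, and --- most delicately --- checking that the fermionic shifts $F(c)$ across the sectors $c$ recombine into the single shift $(uv)^{F(a)}$. As a cross-check, and an alternative route through this step, both sides of the claimed identity are of polynomial-count type, so one may instead evaluate them with the Frobenius-type formula for Clifford-type settings of Section \ref{counting-twisted-points}, carrying the $\xi$-grading throughout; the equality then reduces to the $d\leftrightarrow n/d$ symmetry of the arithmetic function $\Phi_g$ through $\phi_g$, together with the matching of the combinatorial constants $C_{s,\tau}$, just as in the proof of Theorem \ref{stringy-contributions}.
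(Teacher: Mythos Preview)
Your primary approach---a geometric Fourier transform on $F_d^{2g}$ that would collapse the sector sum into a single fixed locus---is not a proof as written, and you rightly flag it as the main obstacle. The fixed loci $\abbreviatedBetti^{c}$ for $c$ of different orders are varieties of different dimensions, so there is no Fourier transform on the level of varieties that would reorganise the sum; the ``projections reorganise and the ages recombine'' sentence is a hope rather than a mechanism. The $d\leftrightarrow n/d$ symmetry of the fixed loci you allude to is also not established anywhere in the paper at the level of varieties or motives, only at the level of point counts.

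What the paper actually does is precisely your ``alternative route''. Both sides are shown to have polynomial count via Theorem \ref{number-twisted-points}, so by Theorem \ref{KGWZ} the identity reduces to an equality of counting polynomials. The left-hand side is expressed as a sum over types $\tau$ and divisors $s$ weighted by a new arithmetic function $\Phi_{stp}(b^{(0)},n,s,d,K)$ (where $b^{(0)}$ encodes $\xi$), and the right-hand side by the function $\Phi_{st}$ already computed for Theorem \ref{stringy-contributions}. The proof then consists of showing
\[\Phi_{stp}(b^{(0)},n,s,d,K)=\delta_{\ord{b^{(0)}}\mid s}\,\Phi_{st}\!\left(a,n,\tfrac{n}{d},s,\tfrac{n^2}{d^2}K\right)\]
when $\ord{b^{(0)}}=\ord a$, by reducing to prime powers and carrying out a direct (and rather lengthy) case analysis with M\"obius inversion. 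Note this is a \emph{new} arithmetic identity, not the $\Phi_g$ symmetry you mention; the latter is what gives the Betti TMS corollary, whereas the isotypic statement requires the separate $\Phi_{stp}=\Phi_{st}$ computation of section \ref{sec:isotypic-contributions}.
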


There is a connection between this result and Theorem 0.5 of \cite{MS}. Hodge numbers can be defined through Deligne's Hodge and weight filtrations on the cohomology ring, called $F$ and $W$ respectively. In the case of $\Dolbeault$, there is another filtration $P$, the perverse one, induced by the Hitchin map. The P=W conjecture of M. de Cataldo, T. Hausel, and A. Migliorini \cite{dCHM} proposes that
\[P_kH^*(\Dolbeault)=W_{2k}H^*(\Betti)=W_{2k+1}H^*(\Betti)\]
for any $k$. There are two very recent proofs of it for $\GL_n$: \cite{HMMS-PW} and \cite{MS-PW}. In consequence, it holds for $\PGL_n$ \cite{PW-PGL}, if $\gcd(d,n)=1$, and for $\SL_n$ \cite{PW-SL}, if $n$ is prime. Together with Theorem 0.5 of \cite{MS}, they imply Theorem \ref{intro:isotopyc=stringy} for $\mcC$ central. We remark however that our methods are totally different, and their result misses our explicit formulas.

We expect that with the methods of \cite[Section 8]{Mellit} one should be able to weaken the generic condition over $\mcC$. In particular, this would recover the case where $\mcC$ is central that we miss. This will be pursued elsewhere.

Finally, let us state two geometric consequences of the previous theorems. They are compatible with previous results of T. Hausel, E. Letellier, and F. Rodriguez-Villegas; \cite[Theorem 1.2.5]{HLRV1} and \cite[Theorem 1.1.1]{HLRV2}. Pick $z\in \mcC$ and $a\in F_d^{2g}$. Let $\hat{\Upsilon}_{a}$ be the set whose elements are maps $V:\langle a\rangle\subset F_d^{2g} \to \operatorname{Grass}(n,\frac{n}{\ord a})$ such that $\C^n=\bigoplus_{i\in \langle a\rangle} V_i$ and $z$ acts fixing this decomposition. In this set acts $\langle a\rangle$ by translations. Define $\Upsilon_a=\hat{\Upsilon}_a/\langle a\rangle$. In Lemma \ref{irred-comp} we provided a decomposition of $\abbreviatedBetti[\SL_n]^a$ parametrized by $\Upsilon_a$. It turns out to be its decomposition in irreducible components.

\begin{coro}\label{cor:irred-comp}
    Let $n$ be a natural number, $d$ be a divisor of $n$, and $\mcC\subset \SL_n(\ov\Q)$ be a generic semisimple conjugacy class. For any $a\in F_d$ of order coprime with $\frac{n}{d}$, the irreducible components of $\abbreviatedBetti[\SL_n]^a$ are parametrized by $\Upsilon_a$. In particular, $\abbreviatedBetti[\SL_n]$ is irreducible.
\end{coro}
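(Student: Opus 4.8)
The plan is to leverage Lemma \ref{irred-comp}, which already provides a decomposition of $\abbreviatedBetti[\SL_n]^a$ indexed by the set $\Upsilon_a$: write $\abbreviatedBetti[\SL_n]^a = \bigcup_{[V]\in\Upsilon_a} Z_{[V]}$ for the pieces appearing there. The two things to establish are (i) each piece $Z_{[V]}$ is irreducible, and (ii) no piece is contained in the closure of another, so that the $Z_{[V]}$ are exactly the irreducible components. Granting these, the ``in particular'' clause follows by taking $a=1$: then $\langle a\rangle$ is trivial, $\Upsilon_1$ is a single point (the only decomposition is $\C^n=\C^n$), and $\abbreviatedBetti[\SL_n]^1=\abbreviatedBetti[\SL_n]$ has a single irreducible component, hence is irreducible.

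For (i), the idea is that each $Z_{[V]}$ should itself be realizable as a character variety — or a finite quotient of one — for a smaller group, namely a product of $\GL$'s or $\SL$'s associated with the graded pieces $V_i$ of the decomposition fixed by $z$, twisted appropriately by the cyclic group $\langle a\rangle$ of order coprime to $\tfrac{n}{d}$ (this coprimality is exactly what is needed to control the twisting and keep everything in the ``regular wreath product'' setting of the paper). One then invokes irreducibility of parabolic character varieties with generic semisimple monodromy in type $A$ — the base case of the whole construction, which for $\SL_n$ or $\GL_n$ with a single generic semisimple conjugacy class is classical (going back to the non-emptiness/connectedness results for such varieties, e.g. the work of Hausel–Letellier–Rodriguez-Villegas, cf. \cite{HLRV1}). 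A clean alternative, if one wants to stay internal to the paper, is a point-count argument: by the earlier Frobenius-type formula one can show each $\size{Z_{[V]}(\F_q)}$ is a polynomial in $q$ with leading coefficient $1$ (a single top-dimensional ``stratum''), which by a standard Lang–Weil / Katz-type argument forces geometric irreducibility.

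For (ii), the key observation is that the pieces $Z_{[V]}$ are distinguished by a discrete invariant — the isomorphism type of the $\langle a\rangle$-graded, $z$-compatible decomposition $V$, equivalently the dimension vector $(\dim V_i)_i$ together with how $z$ distributes among the graded pieces — and this invariant is locally constant on $\abbreviatedBetti[\SL_n]^a$. Concretely, for a representation $\rho$ fixed by $a$, conjugating $\rho$ by $a$ is the same as $\rho$, so $\rho$ preserves a canonical grading of $\C^n$ by the characters of $\langle a\rangle$; the type of this grading cannot jump under deformation within the fixed locus because the dimensions of the isotypic summands are determined by traces of $a$-twisted elements, which are locally constant. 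Hence the $Z_{[V]}$ are unions of connected components of $\abbreviatedBetti[\SL_n]^a$ (pairwise disjoint up to the identifications already made in $\Upsilon_a$), so none can lie in the closure of another, and combined with (i) they are precisely the irreducible components.

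The main obstacle I expect is step (i): identifying each $Z_{[V]}$ precisely enough with a smaller character variety (or finite quotient thereof) to apply a known irreducibility statement, and in particular checking that the induced monodromy data stays generic and semisimple after passing to the graded pieces and that the coprimality of $\ord a$ with $\tfrac{n}{d}$ really does make the relevant wreath-product twisting ``regular'' in the sense used elsewhere in the paper. If that identification is delicate, the polynomial-point-count route is the safer fallback, at the cost of invoking Lang–Weil.
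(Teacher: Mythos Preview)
Your fallback route---the point-count argument---is essentially what the paper does, though the implementation differs. The paper does not count points of each piece $Z_{[\upsilon]}$ separately; instead it reads off the leading coefficient of the $E$-polynomial of the \emph{whole} fixed locus $\abbreviatedBetti^a$ directly from Theorem~\ref{stringy-contributions}. The unique type contributing to the top degree is $A\cdot\{\{1,\ldots,1\}\}$, and for this type only $s=A$ survives with $C_{A,1,A\cdot\tau}=\vartheta(\tau,n)/A$. The coprimality hypothesis $\gcd(A,\tfrac{n}{d})=1$ enters exactly here: it makes the factor $\gcd(A,\tfrac{n}{d})$ in the formula equal to $1$, so the leading coefficient is $\vartheta(\tau,n)/A = |\Upsilon_A|$. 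Since $\abbreviatedBetti^a$ is already known to be smooth and equidimensional (the lemma right after Lemma~\ref{irred-comp}), the leading coefficient counts its irreducible components, and matching against the $|\Upsilon_A|$ disjoint pieces from Lemma~\ref{irred-comp} finishes.

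Two remarks on your write-up. First, your step~(ii) is unnecessary: Lemma~\ref{irred-comp} gives a genuine disjoint union, each piece is smooth of the same dimension as the total, and in a smooth variety a full-dimensional locally closed subvariety is a union of connected (=irreducible) components---so no piece can meet the closure of another. Your trace-based justification would not work anyway, since all pieces share the same dimension vector $(n/A,\ldots,n/A)$; what distinguishes them is how the eigenvalues of $z$ are allotted, which is not seen by dimensions alone. Second, your primary route (identifying each $Z_{[\upsilon]}$ with a smaller character variety and citing known irreducibility) is a genuinely different strategy the paper does not pursue; it is plausible but you would need to verify that the induced conjugacy class data on the blocks remains generic, which is not immediate.
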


\begin{coro}
     Let $n$ be a natural number. For any divisors $d$ of $n$ and any semisimple generic conjugacy class $\mcC$ of $\SL_n(\ov{\Q})$, the stringy Euler characteristic of $\abbreviatedBetti[\SL_n]$ vanishes if $n,g>1$ and is $1$ if $n=1$.
\end{coro}

Our approach is closely related to \cite{HV} and \cite{MM} where they compute the $E$-polynomial of certain families of character varieties. The main difference is the presence of stringy contributions in our case. This is also the case for other references in the literature as \cite{HLRV1} and \cite{Baraglia}. Well-known results on p-adic Hodge theory imply that the (stringy) $E$-polynomials of a variety are determined by the counting (stringy) points function of a spread out, see \cite[Appendix by N.Katz]{HV} and \cite[Section 2.5]{GWZ}. In our case, the previous theorems reduce to calculate $|({\betti[\SL_n][\ov{\F_q}]}^a)^{b^{-1}\frob}|$ for $q$ divisible enough and $a,b\in F_d^{2g}$,  where $\betti[\SL_n][\ov{\F_q}]$ is obtained by changing $\C$ by $\ov{\F}_q$ in the definition of $\abbreviatedBetti[\SL_n]$. This is the content of Theorem \ref{number-twisted-points}. Its proof proceeds by reducing to a character computation. However, there is a new difficulty. In \cite{HV} and \cite{MM}, they use a classical formula of Frobenius \cite{Frobenius} to compute the numbers of solutions to
\[[x_1,y_1]\ldots  [x_g,y_g]z=1\]
where $x_i,y_i$ varies in a finite group $G$. In our case, there is an extra condition $\pi(x_i)=a_i$, $\pi(y_i)=b_i$ for a given quotient morphism $\pi:G\to K$ to an abelian group. Call this number $N(a,b)$ and let $H=\ker\pi$. As in Frobenius' formula, character theory shows that
\[N(a,b) = \sum_{\theta\in\Irr[H]}\left(\frac{|H|}{\theta(1)}\right)^{2g-1}\Omega_\theta(a,b)\theta(z)\]
where $\Irr[H]$ is the set of irreducible characters of $H$,
\[\Omega_\theta(a,b)=\prod_{i=1}^g\Omega_\theta(a_i,b_i)\]
for tuples $a,b\in K^g$, and
\[\Omega_\theta(a,b)=\frac{\theta(1)}{|H|^{2}} \sum_{x,y\in H}\theta([xx_a,yy_b])\]
for $a,b\in K$ and chosen $x_a,y_b$ with $\pi(x_a)=a$ and $\pi(y_b)=b$. When $K$ is trivial or cyclic, these functions $\Omega_\theta$ are trivial. However, this is not our case. We prove several properties regarding them in section \ref{sec:The formula}.

\begin{thm}
    Let $G$ be a finite group and $H\subset G$ be a normal subgroup such that the quotient $K=G/H$ is abelian. Let $\theta$ be any irreducible character of $H$. If $a,b\in K^g$ are tuples whose coordinates generate $K$, $\Omega_\theta(a,b)$ vanishes unless $\theta(kxk^{-1})=\theta(x)$ for any $x,k\in G$. In this case,
    \begin{enumerate}
        \item $\Omega_\theta$ is bilinear and anti-symmetric,
        \item $\Omega_\theta(x,x)=1$ for any $x\in G$,
        \item $|\ker \Omega_\theta|=\frac{|K|}{e(\chi)^2}$ where $e(\chi)$ is the ramification index of any irreducible character $\chi$ of $G$ over $\theta$, and,
        \item if $e(\chi)=1$, $\xi\mapsto\Omega_{\xi\theta}(a,b)$ is a character of $\{\xi\in \Hom{H}{\C^\times}:\xi\theta\in\Irr[H]^K\}$ for any $a,b\in K$.
    \end{enumerate}
\end{thm}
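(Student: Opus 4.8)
The plan is to show that $\Omega_\theta$ is, up to a harmless sign, the commutator pairing of the Schur‑multiplier class that Clifford theory attaches to $\theta$, and then to read off (1)--(4) from the structure of twisted group algebras of finite abelian groups. Throughout let $\rho$ afford $\theta$ and, for $k\in K$, write $\theta^k(h)=\theta(khk^{-1})$; the set $S=\{k\in K:\theta^k=\theta\}$ is a subgroup of $K$, equal to $K$ precisely when $\theta$ is $G$‑invariant. I would first dispose of the vanishing claim. For a single pair, fixing lifts $u$ of $a$ and $y_b$ of $b$ and setting $y_1=uyu^{-1}$, one expands $[u,yy_b]=y_1(uy_bu^{-1})y_b^{-1}u^{-1}y_1^{-1}u$ and, conjugating by $y_1^{-1}$ (legitimate since $\theta$ is a class function on $H$ and this element lies in $H$), obtains $\theta([u,yy_b])=\theta\bigl([u,y_b][u^{-1},y_1^{-1}]\bigr)$. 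Summing over $y$ — equivalently over $h:=y_1^{-1}$ — and using that $[u,y_b],[u^{-1},h]\in H$ with $\rho$ a genuine representation there and $\rho([u^{-1},h])=\rho(u^{-1}hu)\rho(h)^{-1}$, we get $\sum_{y\in H}\theta([u,yy_b])=\tr\bigl(\rho([u,y_b])\sum_{h\in H}\rho(u^{-1}hu)\rho(h)^{-1}\bigr)$. The operator $\sum_h\rho(u^{-1}hu)\rho(h)^{-1}$ intertwines $\rho$ with its $u^{-1}$‑conjugate, so by Schur's lemma it vanishes unless $\theta^{u^{-1}}=\theta$, i.e. unless $a\in S$; hence $\Omega_\theta(a,b)=0$ when $a\notin S$, and since $\Omega_\theta(a,b)=\overline{\Omega_\theta(b,a)}$, also when $b\notin S$. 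If $\theta$ is not $G$‑invariant then $S\subsetneq K$, so any tuples with generating coordinates have a coordinate outside $S$, and $\Omega_\theta(a,b)=\prod_i\Omega_\theta(a_i,b_i)=0$.

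From now on assume $\theta$ is $G$‑invariant, so $S=K$. I would choose a unitary projective representation $P$ of $G$ restricting to $\rho$ on $H$, normalized so that $P(g_1)P(g_2)=\alpha(\bar g_1,\bar g_2)P(g_1g_2)$ with $\alpha\in Z^2(K,\U(1))$ inflated from $K$; the class of $\alpha$ is the obstruction to extending $\theta$ to $G$. Writing $u=x_awx_a^{-1}$ and $w=yy_b$, a short cocycle computation gives $P(xux^{-1}w^{-1})=\rho(x)P(u)\rho(x)^{-1}P(w)^{-1}$, so $\sum_{x\in H}\theta([xx_a,yy_b])=\tfrac{|H|}{\theta(1)}\tr P(u)\,\tr P(w)^{-1}$ by Schur; moreover $\tr P(x_awx_a^{-1})$ is a scalar (depending on $a,b$ only through $\alpha$) times $\tr P(w)$, and $\sum_{y\in H}|\tr P(yy_b)|^2=|H|$ by Schur orthogonality for the irreducible $P$. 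Collecting the $\alpha$‑factors yields $\Omega_\theta(a,b)=\langle a,b\rangle_\alpha^{\pm1}$, where $\langle a,b\rangle_\alpha=\alpha(a,b)/\alpha(b,a)$ is the commutator pairing of $\alpha$ (the sign being immaterial below). This identification is the heart of the matter.

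Given it, (1) and (2) are the standard facts that $\langle\cdot,\cdot\rangle_\alpha$ is a coboundary‑independent alternating bicharacter on $K$, which one sees from $u_xu_y=\langle x,y\rangle_\alpha\,u_yu_x$ in the twisted group algebra $\C_\alpha[K]$. For (3), $\ker\Omega_\theta$ is the radical $R$ of $\langle\cdot,\cdot\rangle_\alpha$, which is $\{x:u_x\text{ central in }\C_\alpha[K]\}$, so $\dim Z(\C_\alpha[K])=|R|$; the $\widehat K$‑action $u_x\mapsto\psi(x)u_x$ on $\C_\alpha[K]$ has one‑dimensional fixed subalgebra, hence acts transitively on the Wedderburn blocks, which therefore all have a common dimension $e$ with $e^2|R|=|K|$; and Clifford theory gives $\chi(1)=e\,\theta(1)$ for every $\chi\in\Irr[G]$ over $\theta$, i.e. $e(\chi)=e$, whence $|\ker\Omega_\theta|=|K|/e(\chi)^2$. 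For (4), $e(\chi)=1$ forces $e=1$, hence $\langle\cdot,\cdot\rangle_\alpha\equiv1$, hence $\alpha$ is a coboundary and $\theta$ extends to some $\tilde\theta\in\Irr[G]$; then for $\xi$ with $\xi\theta\in\Irr[H]^K$, tensoring a projective lift of $\xi$ with $\tilde\theta$ identifies the obstruction class of $\xi\theta$ with that of $\xi$, and $\xi\mapsto[\alpha_{\xi\theta}]\in H^2(K,\U(1))$ is a homomorphism (concretely $\alpha_{\xi\theta}(k_1,k_2)=\xi(\mu(k_1,k_2))^{-1}$ for a fixed factor set $\mu\colon K\times K\to H$ of $1\to H\to G\to K\to 1$); composing with the homomorphism $[\alpha]\mapsto\langle a,b\rangle_\alpha^{\pm1}$ shows $\xi\mapsto\Omega_{\xi\theta}(a,b)$ is a character.

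The step I expect to be hardest is the identification $\Omega_\theta(a,b)=\langle a,b\rangle_\alpha^{\pm1}$: the bookkeeping of the normalizations of $P$ and of the cocycle must be done carefully enough that the sum collapses to exactly the commutator pairing and not merely to some alternating bicharacter, and the substitution in the vanishing step must be arranged so the offending variable genuinely lands inside a commutator of elements of $H$ where Schur's lemma applies. Granting that identification, (1)--(4) amount to the well‑known structure theory of $\C_\alpha[K]$ and of $H^2(-,\U(1))$ for finite abelian groups.
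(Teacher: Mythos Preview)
Your approach is a genuine alternative to the paper's. The paper works on the Fourier side: it studies $\mcF\Omega_\chi(\xi_1,\xi_2)=\chi(1)\int_G\int_G\xi_1(x)\xi_2(y)\chi([x,y])\,dx\,dy$ for $\chi\in\Irr[G]$, and shows via Schur's lemma that this is the scalar by which the commutator $\phi_2\phi_1\phi_2^{-1}\phi_1^{-1}$ of intertwiners $\phi_i:V\to V_{\xi_i}$ acts. This is the commutator pairing of the projective action of $\Stab_\chi\subset\Irr[K]$ on the representation space. The paper then recovers $\Omega_\chi$ on $K$ by Fourier inversion, proving vanishing when $|\Stab_\chi|\neq e(\chi)^2$ and identifying $\Omega_\chi(a,b)=\mcF\Omega_\chi(b^*,a^*)$ via a map $(\,)^*:K\twoheadrightarrow\Stab_\chi$. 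You work directly on $K$ via the obstruction cocycle $\alpha\in H^2(K,\U(1))$ of the $K$-invariant $\theta$, identifying $\Omega_\theta$ with the commutator pairing $\langle a,b\rangle_\alpha$. These are dual pictures of the same structure; your route is more direct for (1)--(3), bypassing the Fourier inversion, and your reading of $|\ker\Omega_\theta|$ from the Wedderburn decomposition of $\C_\alpha[K]$ is cleaner than the paper's explicit sum.

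There is, however, a real gap in your argument for (4). Your formula $\alpha_{\xi\theta}(k_1,k_2)=\xi(\mu(k_1,k_2))^{-1}$ is only a $2$-cocycle on $K$ (with trivial coefficients) when $\xi$ is itself $K$-invariant: the factor-set identity $\mu(k_1,k_2)\mu(k_1k_2,k_3)=(k_1\cdot\mu(k_2,k_3))\mu(k_1,k_2k_3)$ applied to $\xi$ yields the cocycle condition only if $\xi(k_1\cdot h)=\xi(h)$. But the hypothesis $\xi\theta\in\Irr[H]^K$ gives only $(k\cdot\xi)\xi^{-1}\in\Stab_\theta$, not $k\cdot\xi=\xi$. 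Equivalently, your projective lift $\tilde\xi\cdot\tilde\theta$ has a cocycle that is not in general inflated from $K$. So the ``tensoring a projective lift of $\xi$ with $\tilde\theta$'' step does not produce a well-defined class in $H^2(K,\U(1))$ to which you can apply $[\alpha]\mapsto\langle a,b\rangle_\alpha$. The paper does not attempt a cohomological argument here: it proves (4) (its Proposition~\ref{magic-formula}) by a second direct Schur-lemma computation parallel to the main one, introducing auxiliary intertwiners $\zeta:V\to V_{(a\cdot\xi)\xi^{-1}}$ (which exist precisely because $(a\cdot\xi)\xi^{-1}\in\Stab_\theta$) and tracking the dependence on $\xi$ through them. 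If you want to salvage your conceptual route, you would need to work not in $H^2(K,\U(1))$ but in a twisted setting that accounts for the nontrivial $\Stab_\theta$; alternatively, accept that (4) needs the paper's hands-on calculation.
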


We use these properties to get rid of $\Omega_\theta$ in the setting coming from our original problem. This is done in section \ref{sec:The case} after a study of its Clifford's theory. We classify all fixed characters in Theorem \ref{clasf-char} and find their ramification in Theorem \ref{ramification}. The final version of the character formula is given in Theorem \ref{reduced-character-formula}. 

As a final remark, by our formula for $|({\betti[\SL_n][\ov{\F_q}]}^a)^{b^{-1}\frob}|$ one could expect that the classes of $\Betti[\SL_n]^a$ and $\Betti[\SL_n]^{a'}$ agree in the Grothendieck group of $F_n$-varieties. The first step to this question is Lemma \ref{irred-comp}.

The rest of the paper is organized as follows. In section \ref{sec:preliminaries} we recall a few preliminaries. In \ref{sec:stringy-points}, we compute $|({\betti[\SL_n][\ov{\F_q}]}^a)^{b^{-1}\frob}|$ using the formulas of sections \ref{sec:The formula} and \ref{sec:The case}. We finish in section \ref{sec:stringy-E-polynomial} computing the stringy $E$-polynomial and its variants and checking all the previously mentioned results. Additionally, in \ref{sec:consequences} we prove the geometric consequences mentioned before.

\medskip\noindent\it{Acknowledgements. }\rm The help of Python was key to finding mistakes in previous versions of this paper. We wish to thank Guido Arnone and Charly di Fiore for useful discussions. We would also like to thank the organizers of the V ELGA for providing such an enrichment environment for the first author, which gave him the opportunity to hold fruitful conversations with prominent figures in the area. The first named author was supported by a PhD fellowship from CONICET. 

\section{Preliminaries}\label{sec:preliminaries}

\subsection{Clifford's theory}

We first recall a few preliminaries regarding Clifford's theory. A reference is \cite{Green2}. Let 
\[0\to H\to G\xrightarrow{\pi} K\to 0\]
an exact sequence of finite groups with $K$ abelian. We call this a Clifford's setting. Clifford's theory relates the character theory of $G$ and $H$. We will denote with $\Irr[G]$ the set of irreducible characters of a group $G$.

In Clifford's setting, there are two important actions. The first one is of $K$ on $\Irr[H]$ and it is given by $(a\cdot\theta)(x) =\theta(axa^{-1})$. For the second one, note that the quotient map $\pi:G\to K$ induces an inclusion $\Irr[K]\subset \Irr[G]$. This gives an action of $\Irr[K]$ in $\Irr$ by multiplication; $(\xi\cdot \chi)(x)=\xi(\pi(x))\chi(x)$. Denote with $[\chi]$ and $\Stab$ the orbit and the stabilizer of a character $\chi$ respectively.

\begin{thm}[Theorem 1 of \cite{Green2}]
    Let $H\to G\xrightarrow{\pi} K$ be a Clifford's setting. Let $\chi\in\Irr$. Then there exists an integer $e(\chi)$ such that
    \[\chi\big|_H = e(\chi)\sum_{\theta'\in [\theta]} \theta '\]
    where $\theta$ is any irreducible constituent of $\chi\big|_H$. This integer satisfies
        \[e(\chi)^2=\frac{|K|}{|[\chi]|\,.\,|[\theta]|}\]
    and, if $K$ is cyclic, $e(\chi)=1$. Finally, let $\chi'$ be another character of $G$. Then $\chi'\big|_H=\chi\big|_H$ if and only if $\chi'\in [\chi]$. 
\end{thm}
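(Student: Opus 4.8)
The plan is to run the standard Clifford-theory machinery, using crucially that $K$ is abelian so that the multiplication action of $\Irr[K]\subset\Irr$ on $\Irr$ carries most of the bookkeeping. First I would establish the decomposition of $\chi\big|_H$. Since $H\trianglelefteq G$, the group $G$ permutes $\Irr[H]$ by conjugation and $\chi\big|_H$ is $G$-invariant. Fixing a constituent $\theta$ of $\chi\big|_H$, Frobenius reciprocity gives $\langle\operatorname{Ind}_H^G\theta,\chi\rangle=\langle\theta,\chi\big|_H\rangle>0$, so $\chi$ is a constituent of $\operatorname{Ind}_H^G\theta$; Mackey's formula for the normal subgroup $H$ shows $(\operatorname{Ind}_H^G\theta)\big|_H=\sum_{g\in K}\theta^g$, which forces every constituent of $\chi\big|_H$ into the $G$-orbit $[\theta]$. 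Moreover, conjugation by $g\in G$ is an isometry of the class functions on $H$ fixing $\chi\big|_H$ (because $\chi$ is a class function on $G$), so $\langle\chi\big|_H,\theta^g\rangle=\langle\chi\big|_H,\theta\rangle$ for all $g$; hence all constituents occur with the same multiplicity $e(\chi):=\langle\chi\big|_H,\theta\rangle$, and since $H$ acts trivially on $\Irr[H]$ this $G$-orbit is a $K$-orbit. This yields $\chi\big|_H=e(\chi)\sum_{\theta'\in[\theta]}\theta'$.

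Next I would record the projection formula. Because $K$ is abelian and $H\trianglelefteq G$, the character $\operatorname{Ind}_H^G 1_H$ is the inflation to $G$ of the regular representation of $K$, i.e.\ $\sum_{\xi\in\Irr[K]}\xi$; the projection formula then gives, for every $\chi\in\Irr$, $\operatorname{Ind}_H^G(\chi\big|_H)=\chi\cdot\operatorname{Ind}_H^G 1_H=\sum_{\xi\in\Irr[K]}\xi\chi$. The last assertion of the theorem follows at once: if $\chi'\in[\chi]$ then $\chi'\big|_H=\chi\big|_H$ since each $\xi$ is trivial on $H$, and conversely, if $\chi'\big|_H=\chi\big|_H$ then $\chi'$ is a constituent of $\operatorname{Ind}_H^G(\chi'\big|_H)=\operatorname{Ind}_H^G(\chi\big|_H)=\sum_{\xi\in\Irr[K]}\xi\chi$, so $\chi'=\xi\chi$ for some $\xi$.

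For the index formula I would compute $\operatorname{Ind}_H^G(\chi\big|_H)$ in two ways and compare. By the decomposition above, together with the fact that $G$-conjugate characters of a normal subgroup induce the same character, it equals $e(\chi)\,|[\theta]|\,\operatorname{Ind}_H^G\theta$; by the projection formula it equals $\sum_{\xi\in\Irr[K]}\xi\chi$. Pairing both expressions with $\chi$ and using Frobenius reciprocity gives $e(\chi)\,|[\theta]|\,\langle\operatorname{Ind}_H^G\theta,\chi\rangle=e(\chi)^2|[\theta]|$ on one side and $|\{\xi\in\Irr[K]:\xi\chi=\chi\}|=|\Irr[K]|/|[\chi]|=|K|/|[\chi]|$ on the other. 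Equating yields $e(\chi)^2=|K|/(|[\chi]|\cdot|[\theta]|)$.

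The assertion $e(\chi)=1$ when $K$ is cyclic is where the real work lies, and I expect it to be the main obstacle. I would reduce it to the classical extension lemma: if $H\trianglelefteq L$ with $L/H$ cyclic and $\theta\in\Irr[H]$ is $L$-invariant, then $\theta$ extends to a character of $L$. The proof of that is the matrix argument — choose a generator $gH$ of $L/H$ of order $m$, a matrix realization $\rho$ of $\theta$, and an invertible $P$ with $P\rho(h)P^{-1}=\rho(ghg^{-1})$; comparing $P^m$ with $\rho(g^m)$ via Schur's lemma gives $P^m=\lambda\,\rho(g^m)$ for a scalar $\lambda$, and after rescaling $P$ by an $m$-th root of $\lambda^{-1}$ one checks that $g^jh\mapsto P^j\rho(h)$ is a well-defined representation of $L$ extending $\rho$. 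Applying this to the inertia group $I=I_G(\theta)$, whose image in $K$ is cyclic, Gallagher's theorem identifies the characters of $I$ lying over $\theta$ with $\{\beta\tilde\theta:\beta\in\Irr[I/H]\}$, each of multiplicity one over $\theta$, and the Clifford correspondence together with Mackey for the normal subgroup $H$ transports this to $G$ to give $e(\chi)=1$ for every $\chi$ over $\theta$. Apart from this extension lemma, the argument is a mechanical application of Frobenius reciprocity, Mackey's formula for a normal subgroup, and the projection formula; alternatively one can cite the corresponding results in Isaacs' book for the first and last steps.
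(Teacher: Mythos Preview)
Your argument is correct and follows the standard Clifford-theory route. Note, however, that the paper does not give its own proof of this statement: it is quoted as Theorem~1 of \cite{Green2} and used as a black box, so there is no in-paper proof to compare against. Your write-up is a faithful reconstruction of the classical argument (Clifford's theorem for the decomposition, the projection formula $\operatorname{Ind}_H^G(\chi\big|_H)=\sum_{\xi\in\Irr[K]}\xi\chi$ for the orbit description and the index identity, and the cyclic-extension lemma plus Gallagher for $e(\chi)=1$), which is exactly what one finds in the cited reference or in Isaacs' book.
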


Note additionally that $\theta(1)=\theta'(1)$ for any $\theta'\in [\theta]$. Hence, $\chi(1)=e(\chi)\,|[\theta]|\,\theta(1)$.

\begin{lema}\label{cyclic-stab-implies-not-ramification}
    Let $H\to G\xrightarrow{\pi} K$ be a Clifford's setting and $\chi\in \Irr$. Let $K'=\bigcap\limits_{\xi\in\Stab}\ker\xi$ and $H'=\pi^{-1}(K')$. Then $e_{G/H}(\chi)=e_{G/H'}(\chi)$. In particular, if $\operatorname{Stab}_{\chi}$ is cyclic, then $e(\chi)=1$. 
\end{lema}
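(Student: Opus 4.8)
\emph{Plan.} First observe the basic geometry of the situation: $K'\le K$ and $H'=\pi^{-1}(K')$ is a normal subgroup of $G$ with $H\subseteq H'$, and $G/H'\cong K/K'$ is abelian, so $H'\to G\to K/K'$ is again a Clifford's setting and it makes sense to compare the two ramification indices. The crux is a purely group-theoretic identity that I would isolate at the start: $\Stab=\{\xi\in\Irr[K]:\xi|_{K'}=1\}$. Indeed, by its very definition $K'$ is the annihilator of the subgroup $\Stab\le\Irr[K]$ under the perfect pairing $K\times\Irr[K]\to\C^\times$, so by the double-annihilator property for finite abelian groups $\Stab$ is the annihilator of $K'$, which is exactly the stated set. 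Identifying it with $\Irr[K/K']$ gives $|\Stab|=|K|/|K'|$, hence $|[\chi]|=|\Irr[K]|/|\Stab|=|K'|$ for the $\Irr[K]$-orbit of $\chi$.

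Next I would feed this into the orbit formula $e(\chi)^2=|K|/(|[\chi]|\,|[\theta]|)$ of the quoted Clifford theorem, applied to both settings. For $H\to G\to K$, with $\theta$ an irreducible constituent of $\chi|_H$ and $[\theta]_K$ its $K$-orbit, this reads
\[
e_{G/H}(\chi)^2\,|[\theta]_K|=\frac{|K|}{|[\chi]|}=\frac{|K|}{|K'|}.
\]
For $H'\to G\to K/K'$, the acting group $\Irr[K/K']$, viewed inside $\Irr[G]$ via $\pi$, is precisely $\Stab$, every element of which fixes $\chi$; so the orbit of $\chi$ in this setting is a single point, and with $\psi$ an irreducible constituent of $\chi|_{H'}$ and $[\psi]_{K/K'}$ its $K/K'$-orbit the same formula gives
\[
e_{G/H'}(\chi)^2\,|[\psi]_{K/K'}|=\frac{|K/K'|}{1}=\frac{|K|}{|K'|}.
\]
Thus it remains only to prove $|[\psi]_{K/K'}|=|[\theta]_K|$, i.e. that $\chi|_H$ and $\chi|_{H'}$ have the same number of distinct irreducible constituents.

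To see this I would restrict the Clifford decomposition $\chi|_{H'}=e_{G/H'}(\chi)\sum_{\psi'\in[\psi]_{K/K'}}\psi'$ further to $H$, obtaining $\chi|_H=e_{G/H'}(\chi)\sum_{\psi'}\psi'|_H$, and compute $\langle\chi|_H,\chi|_H\rangle_H$ in two ways. Since $\chi|_H=e_{G/H}(\chi)\sum_{\theta'\in[\theta]_K}\theta'$ is a sum of distinct irreducibles, it equals $e_{G/H}(\chi)^2\,|[\theta]_K|$. On the other hand, expanding the square and discarding the nonnegative cross terms, $\langle\chi|_H,\chi|_H\rangle_H\ge e_{G/H'}(\chi)^2\sum_{\psi'}\langle\psi'|_H,\psi'|_H\rangle_H\ge e_{G/H'}(\chi)^2\,|[\psi]_{K/K'}|$. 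By the two displays these bounds agree, so both inequalities are equalities: each $\psi'|_H$ is irreducible and the $\psi'|_H$ are pairwise distinct as $\psi'$ runs over $[\psi]_{K/K'}$. Then $\chi|_H=e_{G/H'}(\chi)\sum_{\psi'}\psi'|_H$ is a decomposition of $\chi|_H$ into distinct irreducibles all of multiplicity $e_{G/H'}(\chi)$; comparing with $\chi|_H=e_{G/H}(\chi)\sum_{\theta'}\theta'$ and using uniqueness of the decomposition into irreducibles forces $e_{G/H'}(\chi)=e_{G/H}(\chi)$ (and $|[\psi]_{K/K'}|=|[\theta]_K|$). Finally, if $\Stab$ is cyclic then $K/K'\cong\Irr[K/K']=\Stab$ is cyclic, so the cyclic case of the quoted theorem applied to $H'\to G\to K/K'$ gives $e_{G/H'}(\chi)=1$, whence $e(\chi)=e_{G/H}(\chi)=1$.

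I expect the main obstacle to be the bookkeeping in the last step — in particular extracting from a single scalar equality that the restrictions $\psi'|_H$ are simultaneously irreducible and pairwise distinct — together with making the two orbit-size computations fully rigorous, i.e. the clean identification of $\Stab$ with the stabilizer that appears in the $H'\to G\to K/K'$ Clifford setting.
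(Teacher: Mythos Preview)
Your proof is correct and takes a genuinely different route from the paper's.

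The paper argues directly that for any irreducible constituents $\theta,\theta'$ of $\chi|_{H'}$ there is no nontrivial $\xi\in\Irr[K']=\Irr[H'/H]$ with $\xi\theta=\theta'$: one lifts $\xi$ to $\hat\xi\in\Irr[K]$, observes that the $K$-action commutes with multiplication by $\xi$ so that $\hat\xi\chi|_{H'}=\chi|_{H'}$, whence $\hat\xi\chi=\epsilon\chi$ for some $\epsilon\in\Irr[G/H']=\Stab$; since $\Stab$ fixes $\chi$ this forces $\hat\xi\in\Stab$, i.e.\ $\xi$ is trivial on $K'$. From this it follows immediately (Clifford for $H\to H'\to K'$) that each $\theta|_H$ is irreducible and that distinct $\theta$'s restrict to distinct irreducibles, giving $e_{G/H}(\chi)=e_{G/H'}(\chi)$.

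Your approach reaches the same intermediate conclusion (the $\psi'|_H$ are irreducible and pairwise distinct) by a numerical squeeze: you apply the identity $e(\chi)^2\,|[\cdot]|=|K|/|[\chi]|$ in both Clifford settings, use the double-annihilator identification $\Irr[K/K']=\Stab$ to see that the $\Irr[K/K']$-orbit of $\chi$ is trivial, and then compare the two expressions with $\langle\chi|_H,\chi|_H\rangle_H$ to force equality in the obvious inequalities. This avoids the lifting argument entirely and is arguably slicker, at the cost of being less transparent about \emph{why} no nontrivial $\xi\in\Irr[K']$ can relate two constituents of $\chi|_{H'}$. Both approaches derive the ``in particular'' clause identically, via cyclicity of $K/K'\cong\Stab$.
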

\begin{proof}
    Note that $\Irr[\Stab]$ is nothing but $K/K'$. Thus $G/H'=K/K'=\Irr[\Stab]$. It is enough to prove that if $\theta$ and $\theta'$ are irreducible constituents of $\chi\big|_{H'}$ then there does not exist any non-trivial $\xi\in \Irr[K']$ with $\xi\theta=\theta'$. Indeed, this would imply that $\theta\big|_{H}$ is irreducible and different from  $\theta'\big|_{H}$, provided $\theta\neq\theta'$. Hence, $e_{G/H}(\chi)=e_{G/H'}(\chi)$.

    Assume that $\xi\theta=\theta'$. Then, for any $k\in K$ and $g\in H'$, $\xi(kgk^{-1})(k\cdot\theta)(g)=(k\cdot\theta')(g)$. Now, $K$ is abelian and $\xi$ can be lifted to $\hat\xi \in\Irr[K]$. Hence, $\xi(kgk^{-1})=\xi(g)$. Therefore, $\chi\big|_{H'}=\sum_{k}k\cdot\theta$ is fixed by $\xi$. So, $\hat\xi\chi\big|_{H'}=\xi\chi\big|_{H'}=\chi\big|_{H'}$. It must exist $\epsilon\in \Irr[G/H']$ such that $\hat\xi\chi=\epsilon\chi$. However, $\Irr[G/H']=\Stab$ fix $\chi$. Hence, $\hat\xi\chi=\chi$. This implies that $\hat\xi$ vanishes at $K'$, i.e., $\xi$ is trivial.  
\end{proof}

\begin{lema}\label{cor:cyclic-stab-implies-not-ramification}
    Let $G\to K$ be a Clifford's setting and $\chi\in \Irr$ with $e(\chi)^2=|\Stab|$. If $\Stab$ has at most two generators, $\Stab\simeq \Z/e(\chi)\Z\times\Z/e(\chi)\Z$.
\end{lema}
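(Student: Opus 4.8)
The plan is to restate the hypothesis in Clifford‑theoretic terms, use Lemma~\ref{cyclic-stab-implies-not-ramification} to pass to a setting in which the pairing $\Omega_\theta$ is non‑degenerate, and then finish with an elementary fact about alternating pairings on a finite abelian group generated by two elements. First I would observe that, by Theorem~1 of \cite{Green2} quoted above, for any constituent $\theta$ of $\chi\big|_H$ one has $e(\chi)^2 = \tfrac{|K|}{|[\chi]|\,|[\theta]|}$, and since $|[\chi]| = |\Irr[K]|/|\Stab| = |K|/|\Stab|$ this reads $e(\chi)^2 = |\Stab|/|[\theta]|$. Hence the assumption $e(\chi)^2 = |\Stab|$ is equivalent to $|[\theta]| = 1$, i.e. $\theta$ is $K$‑invariant; in particular $\Omega_\theta$ is defined on $K\times K$ and, by its properties recorded in the introduction, is bilinear, anti‑symmetric, satisfies $\Omega_\theta(x,x)=1$, and has $|\ker\Omega_\theta| = |K|/e(\chi)^2$.

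Next I would reduce to the non‑degenerate case. By Lemma~\ref{cyclic-stab-implies-not-ramification}, with $K' = \bigcap_{\xi\in\Stab}\ker\xi$ and $H' = \pi^{-1}(K')$ we have $e_{G/H'}(\chi) = e(\chi)$ and $G/H'\cong\Irr[\Stab]$, so $|G/H'| = |\Stab| = e(\chi)^2$; moreover the duality computation in that proof identifies $\Irr[G/H']$ with $\Stab$ inside $\Irr[G]$, so the stabilizer of $\chi$ in the Clifford setting $H'\to G\to G/H'$ is still $\Stab$. Replacing $H$ by $H'$ and $K$ by $\bar K := G/H'$, we may thus assume $\bar K\cong\Stab$ has at most two generators and $|\bar K| = e(\chi)^2$; applying the previous paragraph in the new setting, the relevant constituent $\bar\theta$ is $\bar K$‑invariant and $|\ker\Omega_{\bar\theta}| = |\bar K|/e(\chi)^2 = 1$, i.e. $\Omega_{\bar\theta}$ is non‑degenerate. (Alternatively, one can produce this non‑degenerate alternating pairing on $\bar K$ as the commutator pairing of the two‑cocycle governing the projective extension of $\bar\theta$, avoiding the $\Omega_\theta$ theorem altogether.)

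Finally I would prove the linear‑algebra input: a finite abelian group $A$ with at most two generators carrying a non‑degenerate pairing $\langle\,,\rangle\colon A\times A\to\C^\times$ with $\langle a,a\rangle = 1$ for all $a$ is isomorphic to $\Z/e\Z\times\Z/e\Z$ with $e^2 = |A|$. Write $A = \langle x\rangle\oplus\langle y\rangle$ with $\ord{x} = m$ dividing $\ord{y} = n$ (invariant factors, possibly $m=1$). Using $\langle y,y\rangle = 1$, bilinearity gives $\langle y,\,ix+jy\rangle = \langle y,x\rangle^i$, so the character $\langle y,-\rangle$ has order $\ord{\langle y,x\rangle}$, which divides $\gcd(m,n) = m$; similarly $\langle x,-\rangle$ has order dividing $m$. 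Non‑degeneracy means the characters $\langle x,-\rangle,\langle y,-\rangle$ generate $\Irr[A]$, which therefore has exponent at most $m$, forcing $|A| = |\Irr[A]|\le m^2$ and hence $n\le m$; with $m\mid n$ this gives $m = n$ and $m^2 = |A| = e^2$. Applying this to $A = \Stab$ with the pairing $\Omega_{\bar\theta}$ yields $\Stab\simeq\Z/e(\chi)\Z\times\Z/e(\chi)\Z$.

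I expect the main obstacle to be the bookkeeping in the second step — checking that replacing $H$ by $H'$ keeps the stabilizer of $\chi$ equal to $\Stab$ while collapsing $K$ onto exactly $\Stab$, so that $\ker\Omega_\theta$ becomes trivial; this is essentially already contained in the proof of Lemma~\ref{cyclic-stab-implies-not-ramification}. The genuinely new content is the elementary classification of alternating pairings in the last paragraph, which is routine.
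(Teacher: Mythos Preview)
Your argument is correct, but it is considerably heavier than the paper's. The paper dispatches the lemma in three lines using only Lemma~\ref{cyclic-stab-implies-not-ramification} and a cyclic tower: if $e(\chi)\neq 1$ then $\Stab$ is not cyclic, so write $\Stab=\Z/n\Z\times\Z/m\Z$; passing (via Lemma~\ref{cyclic-stab-implies-not-ramification}) to the setting $K\simeq\Z/n\Z\times\Z/m\Z$ and inserting the intermediate subgroup with cyclic quotient $\Z/m\Z$ and cyclic kernel $\Z/n\Z$, both steps are unramified, so the $e(\chi)$ copies of $\theta$ in $\chi|_H$ must come from exactly $e(\chi)$ distinct intermediate constituents, forcing $e(\chi)\mid m$ (and symmetrically $e(\chi)\mid n$), whence $n=m=e(\chi)$. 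Your route instead manufactures a non\nobreakdash-degenerate alternating pairing on $\Stab$ and then classifies two\nobreakdash-generated groups admitting such a pairing. That is a perfectly valid and more conceptual argument, and your linear\nobreakdash-algebra step is clean; the cost is that you are invoking $\Omega_\theta$ from Section~\ref{sec:The formula}, i.e.\ material developed \emph{after} this lemma (your parenthetical about the commutator pairing of the projective cocycle is exactly $\mcF\Omega_\chi$ from Proposition~\ref{key-argument}, so the alternative has the same forward\nobreakdash-reference flavor). There is no circularity---Section~\ref{sec:The formula} does not use this lemma---but the paper evidently placed the result in the preliminaries precisely because it admits the elementary tower proof.
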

\begin{proof}
    If $e(\chi)=1$, there is nothing to prove. If not, $\Stab$ must not be cyclic. By hypothesis, we can write $\Stab=\Z/n\Z\times \Z/m\Z$. Note that as $\Z/n\Z$ and $\Z/m\Z$ are cyclic, $e(\chi)|n,m$. Hence, $n=m=e(\chi)$.
\end{proof}

\subsection{Characters of finite general linear groups}

Let us review some aspects of Green's classification of characters of $\GL_{n}(\F_q)$. For the basics in partitions, multi-partitions, and their associated numbers or polynomials we refer to \cite[Section 2.1]{MM}. Let $\mcP$ be the collection of all partitions. 

\begin{thm}[J.Green]\label{Green}
    The characters of $\GL_{n}(\F_q)$ which do not vanish at all regular semisimple elements are in bijection with multi-partitions $\Lambda:\Irr[\F_q^\times] \to \mcP$ such that
    \[\sum_{\xi\in \Irr[\F_q^\times]} |\Lambda(\xi)| = n.\]
    Moreover, for such a multi-partition $\Lambda$, the value of its associated character at a diagonal regular matrix $(\mu_1,\ldots,\mu_{n})$ is given by
    \[
    \pare{
    \prod_{\xi\in \Irr[\F_q^\times]} \chi^{\Lambda(\xi)}(1)
    }
    \pare{
    \sum_{\substack{\{1,\ldots,n\}=\sqcup_\xi I_\xi \\ |I_\xi|=|\Lambda(\xi)|}}
    \,\,\,
    \prod_{\xi\in \Irr[\F_q^\times]}
    \,\,\,
    \prod_{i\in I_\xi}\xi(\mu_i)
    }
    \]
    where $\chi^{\Lambda(\xi)}$ is the character of $S_{|\Lambda(\xi)|}$ associated with $\Lambda(\xi)$. 
\end{thm}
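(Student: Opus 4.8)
The plan is to identify these characters with the \emph{principal series} of $G=\GL_n(\F_q)$: the parametrization comes from the endomorphism algebra of a Harish--Chandra induced representation, and the value at a regular diagonal element comes from the Deligne--Lusztig character formula. Fix the diagonal torus $T$, the upper triangular Borel $B$, and $W=N_G(T)/T\cong S_n$. A character $\theta$ of $T$ is a tuple $(\xi_1,\dots,\xi_n)\in\Irr[\F_q^\times]^n$; write $m_\xi$ for the multiplicity of $\xi$ in $\theta$, so its stabilizer $W_\theta\le W$ is the Young subgroup $\prod_\xi S_{m_\xi}$. Let $\tilde\theta$ be the inflation of $\theta$ to $B$ and set $I(\theta)=\operatorname{Ind}_B^G\tilde\theta$. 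First I would establish the bijection. By Harish--Chandra theory (Howlett--Lehrer), $\operatorname{End}_G(I(\theta))$ is the Iwahori--Hecke algebra $\mathcal H(W_\theta,q)$ --- the relevant $2$-cocycle is trivial for $\GL_n$ --- and since $q$ is a prime power this algebra is split semisimple with $\mathcal H(W_\theta,q)\otimes\C\cong\C[W_\theta]\cong\bigotimes_\xi\C[S_{m_\xi}]$. Hence the irreducible constituents of $I(\theta)$ correspond to $\prod_\xi\Irr[S_{m_\xi}]$, i.e.\ to multi-partitions $\Lambda$ with $|\Lambda(\xi)|=m_\xi$, the constituent $\chi_{\theta,\Lambda}$ occurring with multiplicity $\dim\bigl(\bigotimes_\xi\chi^{\Lambda(\xi)}\bigr)=\prod_\xi\chi^{\Lambda(\xi)}(1)$. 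As $W$-conjugate $\theta$'s give the same $I(\theta)$ while non-conjugate ones give representations with no common constituent (their intertwining number vanishes by Mackey's formula), the whole family is indexed by pairs (orbit of $\theta$, $\Lambda$), equivalently by multi-partitions $\Lambda\colon\Irr[\F_q^\times]\to\mcP$ with $\sum_\xi|\Lambda(\xi)|=n$: the orbit records the multiplicities $|\Lambda(\xi)|$.

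Next I would check that the $\chi_{\theta,\Lambda}$ are exactly the characters not vanishing identically on the set of regular diagonal matrices. Every irreducible character of $G$ is a constituent of $\operatorname{Ind}_P^G\sigma$ for a unique cuspidal pair $(L,\sigma)$; if $L\ne T$, some block of $\sigma$ is a cuspidal character of $\GL_m(\F_q)$ with $m\ge 2$, and such a character vanishes on every non-central split semisimple element. If a split regular semisimple element of $G$ is conjugate into $L$, each of its blocks is regular semisimple in the corresponding factor, hence non-central whenever that factor has size $\ge 2$; so $\operatorname{Ind}_P^G\sigma$, and therefore $\chi$, vanishes at all split regular semisimple elements. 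Conversely the value formula proved below shows each $\chi_{\theta,\Lambda}$ is nonzero at generic diagonal regular matrices. Thus the domain of the bijection is precisely the set of characters in the statement.

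For the value at a regular diagonal matrix $g=(\mu_1,\dots,\mu_n)$: regularity forces the only cosets $xB$ with $x^{-1}gx\in B$ to be the $n!$ cosets $wB$, $w\in W$ (a semisimple element of $B$ is $U$-conjugate into $T$, uniquely up to $W$), so $I(\theta)(g)=\sum_{w\in W}\theta(w^{-1}gw)=\bigl(\prod_\xi m_\xi!\bigr)\,S(g)$, where $S(g)=\sum_{\{1,\dots,n\}=\sqcup_\xi I_\xi,\ |I_\xi|=m_\xi}\ \prod_\xi\prod_{i\in I_\xi}\xi(\mu_i)$. To distribute this over the constituents I would invoke Lusztig's expression $\chi_{\theta,\Lambda}=\tfrac1{|W_\theta|}\sum_{w\in W_\theta}\chi^\Lambda(w)\,R_{T_w}^G(\theta)$, legitimate because $\theta$, being $W_\theta$-stable, corresponds to a character of $T_w^F$ for each $w\in W_\theta$; the coefficient of $R_{T_1}^G(\theta)=I(\theta)$ is $\chi^\Lambda(1)/|W_\theta|$, as forced by the multiplicities. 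Now apply the Deligne--Lusztig character formula: for split regular semisimple $g$ one has $C_G(g)=T$, and $R_{T_w}^G(\theta)(g)=0$ unless the maximal torus $T_w$ is $G$-conjugate to $T$, i.e.\ unless $w=1$. Hence $\chi_{\theta,\Lambda}(g)=\tfrac{\chi^\Lambda(1)}{|W_\theta|}I(\theta)(g)=\chi^\Lambda(1)\,S(g)=\bigl(\prod_\xi\chi^{\Lambda(\xi)}(1)\bigr)S(g)$, using $\chi^\Lambda(1)=\prod_\xi\chi^{\Lambda(\xi)}(1)$ and $|W_\theta|=\prod_\xi m_\xi!$, which is the asserted formula. (Alternatively the entire statement is Green's classification: Steps 1--2 can be obtained inside Zelevinsky's PSH-algebra, where these characters span the subalgebra generated by the cuspidals of $\GL_1(\F_q)=\F_q^\times$, and the value formula is Green's degeneracy rule at regular semisimple elements.)

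The one genuinely non-formal point is the proportionality $\chi_{\theta,\Lambda}(g)=\chi^\Lambda(1)\,S(g)$: it rests on the vanishing $R_{T_w}^G(\theta)(g)=0$ for $1\ne w\in W_\theta$, i.e.\ on the fact that a split regular semisimple element lies only in the split $G$-class of maximal tori --- equivalently, that the twisted traces $\tr\bigl((g,T_w)\mid I(\theta)\bigr)$ vanish for $w\ne 1$, a Bruhat-cell computation in which regularity of $g$ is used essentially. I expect this to be the main obstacle; the Hecke-algebra identification, the orbit bookkeeping, and the count giving $I(\theta)(g)$ are routine.
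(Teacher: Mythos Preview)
Your argument is correct, but it is not the route the paper takes: the paper simply cites Green's 1955 paper, whose proof is purely combinatorial --- characters of $\GL_n(\F_q)$ are built from ``basic characters'' via symmetric-function identities, and the value formula is read off from Green's degeneracy rules, without any geometry. Your approach instead invokes the full modern apparatus (Harish--Chandra series, Howlett--Lehrer's Hecke-algebra description, Deligne--Lusztig induction, Lusztig's almost-character decomposition), all of which postdates Green. What you gain is conceptual transparency and a proof that manifestly generalizes to other reductive groups; what Green's argument buys is that it is self-contained and elementary (no \'etale cohomology), at the price of heavier bookkeeping with symmetric functions. Two small points worth tightening if you write this up: (i) the passage from a $W_\theta$-stable character $\theta$ of $T^F$ to a character of each twisted torus $T_w^F$ is not entirely formal --- for $\GL_n$ it goes through the norm maps $\F_{q^d}^\times\to\F_q^\times$ along cycles of $w$, and this deserves a sentence; (ii) the phrase ``do not vanish at all regular semisimple elements'' in the paper's statement is ambiguous --- you have (correctly) interpreted it as ``not identically zero on the split regular semisimple class'', which is what the application in the paper requires and what your cuspidal-support argument actually proves.
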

\begin{proof}
    This is a direct application of theorems 13, 14, 12, 7, lemma 5.1, and example 1 at the end of page 423 of \parencite{Green}. 
\end{proof}

The type $\tau:\mcP \to \N_0$ of a multi-partition $\Lambda$ associate to each partition $\lambda$ the size $m_\lambda$ of the fiber of $\Lambda$ over it. The constant term in the character formula of the theorem only depends on the type of the partition and is
\[d_\tau:=\left(\prod_\xi \chi^{\Lambda(\xi)}(1)\right) = \prod_\xi \frac{|\Lambda(\xi)|!}{\prod h_{\Lambda(\xi)}(i,j)}\]
where $h_{\Lambda(\xi)}(i,j)$ is the hook length of the cell $(i,j)$ in the Ferres diagram of $\Lambda(\xi)$. 

The dimension of the associated representation turns out to be determined by the type too. In \parencite[page 286]{Macdonald} it is proved that
\[ \tau(1):=\frac{\prod^{n}_{i=1}(q^i - 1)}{q^{-{n}(v')}H_\tau(q)} \]
where $H_\tau(q)$ is the hook polynomial associated with $\tau$. In addition, for any type $\tau$,
\[\frac{|\GL_{n}\F_q|}{\tau(1)}=(-1)^{n}q^{\frac{n^2}{2}}\mathcal{H}_{\tau'}(q)\]
where $\tau'$ is the dual type to $\tau$ and $\mathcal{H}_{\tau'}$ s the normalized hook polynomial \cite[Definition 2.1.13]{MM}. 

In the Clifford's setting $\SL_n(\F_q)\to \GL_n(\F_q)\to \F_q^\times$, the action of $\Irr[\F_q^\times]$ has a nice description in the language of multi-partitions. In \parencite[prop at the end of page 133]{Green2}, it is proved that it agrees with the one given by
\[(\alpha\cdot \Lambda)(\xi)=\Lambda(\alpha^{-1}\xi)\]
for $\alpha,\xi\in\Irr[\F_q^\times]$ and $\Lambda:\Irr[\F_q^\times]\to \mcP$. In particular, note the size of $\Stab[\Lambda]$ always divides $n$.

\subsection{E-polynomials}\label{sec:E-pols}

In the next two sections, we summarize what we need from \cite[Appendix by N. Katz]{HV} and \cite[Section 2]{GWZ}. We refer to those papers for a detailed exposition. We start with the definition of the E-polynomial and its variants.

Let $X$ be a complex algebraic variety. In \cite{HodgeII} and \cite{HodgeIII}, Deligne defined a mixed Hodge structure (MHS) in the compactly supported cohomology $H_c^*(X,\C)$. There exist two filtrations 
\[W_{-1}=0 \subset W_0\subset \cdots \subset W_{2j}=H^{2j}(X,\C)\] 
and 
\[F_0=H^{2j}(X,\C)\supset F_1 \supset \cdots \supset F_m \supset F_{m+1} = 0 \]
such that in each graded piece of $W$ the induced filtration by $F$ is a Hodge structure; each piece is the direct sum of each element in the filtration and its complex conjugate. Employing this structure one defines the compactly supported virtual Hodge numbers
\[h^{p,q}_c(X):=\sum_k (-1)^k \dim_\C(\gr_p^F\gr_{p+q}^W H^{k}_c(X,\C))\]
and, the $E$-polynomial
\[E(X;u,v) := \sum_{p,q}h_c^{p,q}(X)u^pv^q\]
There are several generalizations. First, one can replace $\C$ by any rank-one local system $\mcL$ to get the twisted version $E(X,\mcL;u,v)$. Assume further that there is a finite group $\Gamma$ acting on $X$ and an equivariant structure on $\mcL$. Deligne's MHS is functorial, and so gets it induces a MHS in $H^*_c(X,\mcL)^\Gamma$ and in any isotypic component. Hence, one can define
\[E(X/\Gamma,\mcL;u,v):=\sum_{p,q,k}(-1)^k \dim_\C(\gr^F_p\gr^W_{p+q}H_c^{k}(X,\mcL)^\Gamma) u^pv^q\]
and
\[E(X/\Gamma,\mcL;u,v)_\xi:=\sum_{p,q,k}(-1)^k \dim_\C(\gr_p^F\gr^W_{p+q}H_c^{k}(X,\mcL)_\xi) u^pv^q\]
where $H_c^{p,q}(X,\mcL)_\xi$ is the isotypic component with respect to a character $\xi:\Gamma\to\C^\times$. Remark that $E(X/\Gamma,\mcL;u,v)_\xi = E(X/\Gamma,\mcL_\xi;u,v)$ where $\mcL_\xi:=\mcL\otimes\C_\xi$ and $\C_\xi$ is the trivial local system with equivariant structure given by $\xi$.

Now let us consider an orbifold $\mcX=[X/\Gamma]$. Orbifold cohomology was introduced in \cite{VW}, \cite{CR}, and \cite{R}, among others. A main novelty was to consider stringy contributions. This leads to the definition of the stringy E-polynomial of an orbifold. Let us review this. 

The inertia stack or ghost loops space $I\mcX$ consists of loops in $\mcX$ which are trivial in the quotient space $X/\Gamma$, see \cite{LU}. More concretely, It consists of pairs $(x,[\gamma])$ where $[\gamma]$ is the conjugacy class of $\gamma\in \Gamma$ and $x\in X$ is fixed by $\gamma$. As a stack,
\[I\mcX \simeq \bigsqcup_{[\gamma]\in [\Gamma]} [X^\gamma/ C(\gamma)]\]
where $[\Gamma]$ is the set of conjugacy classes of $\Gamma$, $X^\gamma$ is the $\gamma$-fixed point set and $C(\gamma)$ is the centralizer of $\gamma$. Orbifold cohomology is the cohomology of $I\mcX$ up to some shifts, called Fermionic shifts. They are locally constant functions in $X^\gamma/C(\gamma)$ given by 
\[F(\gamma)=\sum w_j\]
where $\gamma$ acts on $TX\big|_{X^\gamma}$ with eigenvalues $e^{2\pi iw_j}$, $w_j \in [0, 1)$. One defines then the stringy E-polynomial as
\[E_{st}(\mcX;u,v)=\sum_{[\gamma]\in[\Gamma]}\sum_{\mcY} E(\mcY;u,v) (uv)^{F(\gamma)\big|_{\mcY}} \]
where $\mcY$ runs over all irreducible components of $X^\gamma/C(\gamma)$. We remark that this definition does not depend on the chosen presentation $\mcX=[X/\Gamma]$. Furthermore, under suitable assumptions, $E_{st}(\mcX;u,v)$ is nothing but the E-polynomial of a crepant resolution, see \cite{BD}.

There is also a twisted version. There is a transgression map (see \cite[Section 2.2]{GWZ}) which assigns to each gerbe $\mcG \in H_{et}^2(\mcX,\C^\times)$ a family of local systems $\mcL_{\mcG}\in H_{et}^1(I\mcX,\C^\times)$. One defines
\[E_{st}^\mcG(\mcX;u,v)=\sum_{[\gamma]\in[\Gamma]}\sum_{\mcY} E(\mcY,\mcL_{\mcG}\big|_{\mcY};u,v) (uv)^{F(\gamma)\big|_{\mcY}} \]
for such a gerbe. An important case are the so-called discrete torsions $\mcD\in H^2(\Gamma,U(1))$, see \cite{R}. They induce gerbes in $\mcX$ by the natural maps $H^2(\Gamma,U(1))\to H^2(\mcX,U(1))\to H^2(\mcX,\C^\times)$ and hence one can define $E_{st}^\mcD(\mcX;u,v)$. 

Finally, assume that $\Gamma$ lies in a bigger abelian group $\hat{\Gamma}$ and that the $\Gamma$-equivariant structure in a gerbe $\mcG$ extends to $\hat\Gamma$. This induces a $\hat{\Gamma}/\Gamma$-equivariant structure in $\mcL_\mcG$. Hence, one has, for any character $\xi:\hat{\Gamma}/\Gamma\to\C^\times$, the isotypic contributions
\[E_{st}^\mcG(\mcX;u,v)_\xi=\sum_{\gamma\in\Gamma}E(X^{\gamma}/\Gamma,\mcL_{\mcG,\gamma};u,v)_\xi (uv)^{F(\gamma)}\]
where $\mcL_{\mcG,\gamma}=\mcL_\mcG\big|_{X^\gamma/\Gamma}$, as long as the Fermionic shifts are constant. In particular, for any discrete torsion $\mcD \in H^2(\hat\Gamma,U(1))$ one gets $E_{st}^{\mcD}(\mcX;u,v)_\xi$. Notice $E_{st}^\mcG(\mcX;u,v)_\xi=E_{st}^{\mcG_\xi}([X/\hat\Gamma];u,v)$ where $\mcG_\xi$ is $\mcG$ with equivariant structure twisted by $\xi$.

\subsection{Katz's and Groechenig--Wyss--Ziegler's theorems}

Let us turn now to the aforementioned theorems. For the sake of simplicity, we give the statement just for the case that we are going to use. Let $X$ be a smooth variety over a finite field $\F_q$ equipped by an action of a finite group $\Gamma$. Define
\[ \#^\xi_\Gamma X = \frac{1}{|\Gamma|}\sum_{\gamma\in \Gamma}|X^{\gamma\frob} | \xi(\gamma)\]
for any character $\xi:\Gamma\to \ov{\Q}_\ell\simeq \C^\times$. Remark that 
\[ \#^\xi_\Gamma X = \sum_{i\in \Z} (-1)^i\tr(\frob, H^i_c(X,(\ov{\Q}_\ell)_\xi)^\Gamma) \]
by the Grothedieck--Lefschetz trace formula.

% \begin{lema}
%     Let $V$ be a finite-dimensional vector space over a field $k$, $\Gamma$ be a finite group acting on $V$, and $\xi: \Gamma\to k^*$ be a character. If $f:V\to V$ is an $\Gamma$-equivariant map, then
%     \[\tr(f|(V\otimes k_\xi)^\Gamma)=\frac{1}{|\Gamma|}\sum_{\gamma\in \Gamma}\tr(\gamma f|V)\xi(\gamma)\]
%     where $k_\xi$ is $k$ equipped with the $\Gamma$-action given by $\xi$.
% \end{lema}
% \begin{proof}
%     Note that 
%     \[\tr(\gamma(f\otimes \operatorname{id}_{k_\chi})|V\otimes k_\xi)=\tr(\gamma f|V)\xi(\gamma)\]
%     and hence we can assume that $\xi$ is trivial.
%     Now let $\rho:V\to V^\Gamma$ be the map given by
%     \[\rho(v)=\frac{1}{|\Gamma|}\sum_{\gamma\in \Gamma}\gamma v\]
%     for any $v\in V$. Also, denote with $\iota$ the inclusion $V^\Gamma\to V$. We have
%     \begin{align*}
%         \tr(f|V^\Gamma)=\tr(\rho f\iota|V^\Gamma) = \tr(f\iota\rho|V)
%         = \tr(\iota\rho f|V)
%         =\frac{1}{|\Gamma|}\sum_{\gamma\in \Gamma}\tr(\gamma f|V)
%     \end{align*}
%     where the third equality holds because $f$ is $\Gamma$-equivariant.
% \end{proof}

A spread out of a $\Gamma$-variety $X$ over $\C$ is a $\Gamma$-scheme $\mcX/R$ over a finitely generated $\Z$-algebra $R\subset\C$ such that $\mcX\times_{\spec R}\spec\C=X$. Given a character $\xi:\Gamma\to\C^\times$, we say that $\mcX$ has $\xi$-twisted polynomial count if there exists a polynomial $P\in \Q[q]$ such that
\[\#^\xi_\Gamma \mcX_\phi = P(q) \]
for any morphism $\phi:R\to \F_q$ to a finite field.

\begin{thm}[Katz, Groechenig--Wyss--Ziegler]\label{KGWZ}
    Let $X$ be a variety over $\C$ and $\mcL$ a rank one (étale) local system over it. Assume that the monodromy of $\mcL$ factor through a finite group $\Gamma$ inducing a character $\xi:\Gamma\to \C^\times$. Let $Y$ be the associated etale cover of $X$. Fix an abstract isomorphism $\C\simeq \ov{\Q}_\ell$. If there exists a spread out $\mcY/R$ of $Y$ with $\xi$-twisted polynomial count with counting polynomial $P$, then
    \[E(X,\mcL;u,v)=P(uv).\]
\end{thm}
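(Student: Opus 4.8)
The plan is to reduce the twisted statement on $X$ to an isotypic statement on the cover $Y$, and then invoke the weight machinery that underlies Katz's original argument.

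\textbf{Step 1: descent to the cover.} Since the monodromy of $\mcL$ factors through $\xi\colon\Gamma\to\C^\times$, the map $f\colon Y\to X$ is the Galois étale cover with group $\Gamma$ attached to $\mcL$, and $f_*\ov\Q_\ell\simeq\bigoplus_{\rho\in\Hom{\Gamma}{\C^\times}}\mcL_\rho$ with $\mcL_\xi\simeq\mcL$. Hence $H^i_c(X,\mcL)$ is the $\xi$-isotypic component $H^i_c(Y,\ov\Q_\ell)_\xi$ of the $\Gamma$-representation $H^i_c(Y,\ov\Q_\ell)$, and likewise over $\C$. Because $\Gamma$ acts by algebraic automorphisms of $Y$, Deligne's mixed Hodge structure on $H^*_c(Y,\C)$ is $\Gamma$-equivariant, so this decomposition is one of mixed Hodge structures (isotypic projection being exact in characteristic $0$). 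Therefore $E(X,\mcL;u,v)=\sum_{p,q,k}(-1)^k\dim_\C\gr_p^F\gr_{p+q}^WH^k_c(Y,\C)_\xi\,u^pv^q$.

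\textbf{Step 2: spreading out and point counting.} Enlarge $R$ so that $\Gamma$ acts on $\mcY/R$ compatibly with the given action on $Y$, and shrink $\spec R$ so that $\mcY\to\spec R$ is smooth and each $R^if_!\ov\Q_\ell$ is lisse and $\Gamma$-equivariant. For $\phi\colon R\to\F_q$, smooth base change identifies the stalk of $R^if_!\ov\Q_\ell$ with $H^i_c(\mcY_{\bar\phi},\ov\Q_\ell)$ over $\ov{\F_q}$, while the geometric generic stalk gives $H^i_c(Y,\ov\Q_\ell)\simeq H^i_c(Y,\C)$ via Artin comparison; all of this is $\Gamma$-equivariant, hence passes to $\xi$-isotypic parts. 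The Grothendieck--Lefschetz formula recalled above then yields $\#^\xi_\Gamma\mcY_\phi=\sum_i(-1)^i\tr(\frob,H^i_c(\mcY_{\bar\phi},\ov\Q_\ell)_\xi)$, which by the $\xi$-twisted polynomial count hypothesis equals $P(q)$ for every such $\phi$.

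\textbf{Step 3: weights and conclusion.} By Deligne's Weil~II, each $H^i_c(\mcY_{\bar\phi},\ov\Q_\ell)_\xi$ is mixed of weights $\le i$, and, after possibly shrinking $\spec R$ again, its weight filtration is the reduction of the weight filtration on the mixed Hodge structure of $H^i_c(Y,\C)_\xi$, with Frobenius eigenvalues on $\gr^W_m$ of absolute value $q^{m/2}$. Since the alternating trace is the fixed polynomial $P(q)$ for infinitely many $q$, the argument of Katz (appendix to \cite{HV}; see also \cite[Section 2]{GWZ}) forces the odd-weight contributions to cancel and the Frobenius eigenvalues on each $\gr^W_{2j}$ to equal $q^j$ exactly, so the mixed Hodge structure on $H^*_c(Y,\C)_\xi$ is of Hodge--Tate type and $\sum_k(-1)^k\dim_\C\gr^W_{2j}H^k_c(Y,\C)_\xi$ is the coefficient of $q^j$ in $P$. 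Combined with Step~1 this gives $E(X,\mcL;u,v)=P(uv)$.

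\textbf{Main obstacle.} The delicate point is Step~3: the transfer between the arithmetic data (Frobenius eigenvalues of the $\ell$-adic cohomology of the finite-field fibers) and the Hodge-theoretic data (the weight and Hodge filtrations of the Betti cohomology over $\C$). This is exactly what Katz's appendix and \cite[Section 2]{GWZ} supply; the only extra ingredient here is compatibility with the $\Gamma$-action, which is automatic since taking $\xi$-isotypic components is an exact operation on $\ov\Q_\ell$- and $\C$-vector spaces that commutes with Frobenius and with the mixed Hodge structure.
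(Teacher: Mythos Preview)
Your argument is correct and follows the same route as the paper: reduce to the $\xi$-isotypic cohomology of the cover $Y$ and then invoke the Katz/Groechenig--Wyss--Ziegler weight machinery. The paper simply cites \cite[Theorem 2.15]{GWZ} together with the implication (2)$\Rightarrow$(3) of \cite[Theorem 6.12]{HV}, whereas you have unpacked what those references contain; in particular your observation that taking $\xi$-isotypic components commutes with everything in sight is exactly the extra bookkeeping needed beyond the untwisted case.
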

\begin{proof}
    Note that $E(X,\mcL;u,v)=E(Y/\Gamma;u,v)$. Hence, this follows from \cite[Theorem 2.15]{GWZ} by the argument of (2) implies (3) in \cite[Theorem 6.12]{HV}. 
\end{proof}

In the case of a discrete torsion $\mcD\in H^2(\Gamma,U(1))$ and an orbifold $\mcX=[X/\Gamma]$, the local system $\mcL_{\mcD,\gamma}$ associated with $\mcD$ in $X^\gamma/C(\gamma)$ trivializes when pull-back to $X^\gamma$ as it is explained in \cite[end of section 3]{R}. Hence, we can apply the previous theorem as long as we are able to count twisted points of $X^\gamma$ for any $\gamma$.

\subsection{Type A parabolic character varieties}\label{prelim:ch-var}

Given positive integer $n$ and $g$, a ring $R$, and a conjugacy class $\mcC\subset \SL_n(R)$, define
\[\preBetti=\{(x,y,z)\in \SL_n(R)^g\times \SL_n(R)^g\times\mcC:[x,y]z=1\}\]
and
\[\betti=\preBetti/\PGL_n(R).\]

For a divisor $d|n$, set $F_d \subset \mathbb{G}_m$ as the $d$-torsion part of the multiplicative group and $T=\mathbb{G}_m^{n-1}$ the diagonal torus of $\SL_n$. If $k$ is an algebraically closed field, $F_d(k)$ is the cyclic group of order $d$. Let 
\[\betti[(\SL_n/F_d)] = [\betti/F_d(R)^{2g}]\]
be the orbifold quotient of $\betti$ under the action of $F_d(R)^{2g}$ given by left multiplication on $(x,y)$. We abbreviate $\abbreviatedBetti=\betti[\SL_n][\C]$ and $\abbreviatedBetti[\SL_n/F_d]=\betti[(\SL_n/F_d)][\C]$.

\begin{lema}
    Let $k$ be an algebraically closed field and $\mcC\subset \SL_n(k)$ be a semisimple conjugacy class such that there is no proper subset of its eigenvalues whose product is one, the action of $\PGL_n(k)$ in $\preBetti[k]$ is free and $\preBetti[k]$ is smooth. In consequence, $\betti[\SL_n][k]$ is smooth.
\end{lema}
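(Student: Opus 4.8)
The plan is to prove freeness of the $\PGL_n(k)$-action and smoothness of $\preBetti[k]$ directly, since smoothness of the quotient $\betti[\SL_n][k]$ then follows by a standard étale-slice / free-quotient argument (a free action of a smooth affine group scheme on a smooth scheme has smooth quotient). So the heart is the two claims about $\preBetti[k]$.

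First I would establish that the stabilizer of a point $(x,y,z)\in\preBetti[k]$ in $\PGL_n(k)$ is trivial. The condition on $\mcC$ — no proper subset of the eigenvalues of $z$ multiplies to $1$ — is precisely the genericity/irreducibility hypothesis. I would argue that any tuple satisfying $[x,y]z=1$ with this property must generate an irreducible subalgebra of $\operatorname{Mat}_n(k)$: if $W\subsetneq k^n$ were a proper nonzero invariant subspace for $x_1,\dots,x_g,y_1,\dots,y_g$, then it is also invariant under $z=([x,y])^{-1}$, and taking determinants of the restriction of the relation $[x,y]z=1$ to $W$ shows the product of the eigenvalues of $z$ on $W$ equals $\det([x,y]|_W)$; but the commutator relation forces this determinant to be $1$ (each $x_i|_W, y_i|_W \in \GL(W)$, and while individual commutators need not have determinant $1$ a priori — here $x_i,y_i\in\SL_n$ so the ambient determinants are $1$, and one checks $\det([x,y]|_W)=1$ using that $\det$ is multiplicative and the block-triangular structure gives $\det(g|_W)\det(g|_{k^n/W})=1$, forcing each factor, hmm — more carefully: for $g\in\SL_n$ preserving $W$, $\det(g|_W)$ need not be $1$, so I must instead use the commutator structure). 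The cleaner route: $[x,y]$ lies in the derived subgroup, so $\det([x,y]|_W)$ — restriction is not a homomorphism on commutators in general, but here one uses that $\det(g|_W)\det(g|_{k^n/W})$ is multiplicative in $g$, hence $g\mapsto\det(g|_W)$ is a homomorphism $\GL(W\text{-stabilizer})\to k^\times$, so it kills commutators; therefore $\det(z|_W)=\det([x,y]^{-1}|_W)=1$, contradicting the hypothesis on $\mcC$. Hence the tuple is irreducible, so by Schur's lemma its centralizer in $\GL_n(k)$ is the scalars, i.e. the stabilizer in $\PGL_n(k)$ is trivial. This gives freeness.

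Next, smoothness of $\preBetti[k]$. I would view $\preBetti[k]$ as the fiber over $1$ of the word map
\[
\mu\colon \SL_n(k)^g\times\SL_n(k)^g\times\mcC\longrightarrow \SL_n(k),\qquad
(x,y,z)\mapsto [x,y]z,
\]
and show $1$ is a regular value along $\mu^{-1}(1)$, i.e. the differential $d\mu$ is surjective at every point of $\preBetti[k]$. A tangent vector at $(x,y,z)$ translates (after trivializing by left/right multiplication) to elements $\xi_i,\eta_i\in\mathfrak{sl}_n$ and $\zeta$ in the tangent space to $\mcC$ at $z$, which is $\{[\,\cdot\,,z]\}$, i.e. $\operatorname{Im}(\operatorname{Ad}_z - 1)$; the image of $d\mu$ is then spanned by $\operatorname{Im}(\operatorname{Ad}_{w}-1)$-type terms coming from the commutators together with the $\mcC$-direction. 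The key linear-algebra fact (exactly as in the classical Higgs/character-variety smoothness arguments, e.g. Hausel–Letellier–Rodriguez-Villegas) is that the cokernel of $d\mu$ is dual to the space of invariants of the tuple acting by the adjoint on $\mathfrak{sl}_n$ — equivalently, to $H^0$ of the associated parabolic local system — which vanishes precisely because the tuple is irreducible (the centralizer is scalar, and scalars meet $\mathfrak{sl}_n$ trivially, $\operatorname{char} k \nmid n$ aside the argument uses $\mathfrak{pgl}_n$). So irreducibility, already shown, gives surjectivity of $d\mu$, hence $\preBetti[k]$ is smooth of the expected dimension $(2g-2)\dim\SL_n + \dim\mcC + \dim\PGL_n$.

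Finally I would assemble: $\PGL_n$ acts freely on the smooth scheme $\preBetti[k]$, the action is the restriction of an action of the smooth affine group $\PGL_n$, and the quotient exists (it is the GIT/good quotient, which here is geometric since orbits are closed — they are all of the same maximal dimension because the action is free and the relation cuts out a single orbit-type stratum), so $\betti[\SL_n][k]=\preBetti[k]/\PGL_n(k)$ is smooth. The main obstacle I anticipate is the careful bookkeeping in the differential-of-the-word-map computation — correctly identifying the tangent space to $\mcC$, trivializing the commutator map's differential, and matching its cokernel with the adjoint invariants — together with ensuring the eigenvalue hypothesis is deployed at exactly the right point (to force irreducibility via the determinant-of-restriction argument), rather than any subtlety about the quotient, which is formal once freeness and smoothness upstairs are in hand.
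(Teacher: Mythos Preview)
Your proposal is correct and follows precisely the strategy of \cite[Theorem 2.2.5]{HV}, which is exactly what the paper invokes: irreducibility of the tuple via the homomorphism $g\mapsto\det(g|_W)$ killing commutators (so $\det(z|_W)=1$ would contradict the eigenvalue hypothesis), hence freeness by Schur, and smoothness via identifying the cokernel of the differential of the word map with the adjoint invariants using the trace pairing. The only point to tighten is the characteristic caveat you flag yourself---working in $\mathfrak{pgl}_n$ (or assuming $\operatorname{char}k\nmid n$) so that scalar centralizers really give trivial cokernel.
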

\begin{proof}
    Same proof that \parencite[theorem 2.2.5]{HV}.
\end{proof}

\section{A character formula} \label{sec:The formula}

\subsection{The formula}

In this section, we analyze the following. Le $H\to G\to K$ be a Clifford's setting. Write $G_k$ for the inverse image of $k\in K$. In particular, $G_0=H$. More generally, $G_k=G_{k_1}\times\cdots \times G_{k_m}$ for any tuple $k\in K^m$. Assume that $z\in G$, $\omega$ is a word in $m$ variables and $a\in K^m$. We want a formula in terms of characters of $G$ or $H$ for
\[N(\omega,a):=|\{x\in G_a: \omega(x)z=1\}|\]
and, especially, for $\omega$ a product of brackets. The last case is the content of theorem \ref{character-formula}. First, we introduce a few definitions and notations.

For an arbitrary group $G$, a function $\Omega:G\times G\to \U(1)$ is called
\begin{itemize}
    \item bilinear if $\Omega(x,yz)=\Omega(x,y)\Omega(x,z)$ for any $x,y,z\in G$, and
    \item anti-symmetric if $\Omega(x,y)=\Omega(y,x)^{-1}$ for any $x,y\in G$.
\end{itemize}
Let $\bil$ be the set of all anti-symmetric bilinear functions $G\times G\to U(1)$. The multiplication of $\U(1)$ induces an addition on $\bil$ making it an abelian group. 

Note that for $\Omega\in\bil$, $\Omega(x,1)=\Omega(1,x)=1$ and $\Omega(x,x)=\pm 1$ for any $x\in G$. We call $\bilplus\subset \bil$ the subgroup of $\Omega$ for which $\Omega(x,x)=1$ for any $x\in G$. For example, if $G\simeq \Z/n\Z \times \Z/m\Z$, an $\Omega\in \bilplus$ is of the form
    \[\Omega(x,y)= \omega^{x_1y_2-x_2y_1} \]
where $\omega$ is a root of unity of order dividing $\gcd(n,m)$. Define $\ker\Omega$ as the subgroup of $G$ of thus $y$ for which $\Omega(x,y)=1$ for every $x\in G$. In the previous example, $\ker\Omega = \ord\omega\Z/n\Z \times \ord\omega\Z/m\Z $ which has size $\frac{nm}{\ord\omega^2}$. 

For a subgroup $G'\subset G$ we think $\bilplus[G']$ and $\bil[G']$ as functions on $G\times G\to \C$ by extending by zero. In addition, we set
\[\Omega(x,y)=\prod_{i=1}^m\Omega(x_i,y_i)\]
for tuples $x,y \in G^m$ and $\Omega\in \bil$. In particular, let
\[[x,y]=[x_1,y_1]\cdots [x_g,y_g].\]
for tuples $x,y\in G^g$. 

\begin{thm}\label{character-formula}
    Let $H\to G\to K$ be a Clifford's setting. There exists a natural application $\theta \in \Irr[H]^K \mapsto \Omega_\theta\in \bilplus[K]$ such that
    \[\left|\left\{\begin{array}{cc}
         (x,y)\in G^g\times G^g:\\
         {[}x,y]z=1,\pi(x)=a,\pi(y)=b
    \end{array}\right\}\right|=\sum_{\theta\in\Irr[H]^K}\left(\frac{|H|}{\theta(1)}\right)^{2g-1}\Omega_\theta(a,b)\theta(z)\]
    for any positive integer $g$, $z\in H$ and tuples $a,b\in K^g$ whose coordinates generate $K$. In addition, 
    \[|\ker\Omega_\theta| = \frac{|K|}{e(\theta)^2}\]
    where $e(\theta)$ is the ramification index of any character $\chi$ of $G$ over $\theta$.  
\end{thm}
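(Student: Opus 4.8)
The first step is to prove the counting formula by a direct character-theoretic computation, imitating Frobenius but keeping track of the constraint $\pi(x)=a$, $\pi(y)=b$. I would expand the indicator of the relation $[x,y]z=1$ over characters of $H$ (not $G$) via $\delta_{H}(w) = \frac{1}{|H|}\sum_{\theta\in\Irr[H]}\theta(1)\theta(w)$, and use column orthogonality to collapse sums over $H$. Concretely, fix lifts $x_{a_i}, y_{b_i}\in G$ with $\pi(x_{a_i})=a_i$, $\pi(y_{b_i})=b_i$, write each unknown as $x_i = u_i x_{a_i}$, $y_i = v_i y_{b_i}$ with $u_i,v_i\in H$, and sum over $u_i,v_i\in H$. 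The word $[x,y]z$ lies in $H$ since each bracket does, so the expansion over $\Irr[H]$ is legitimate. Carrying out the sums bracket-by-bracket, the sum over $u_i, v_i$ factorizes across $i=1,\dots,g$, producing the product $\prod_{i}\Omega_\theta(a_i,b_i)$ times the Frobenius-type factor $(|H|/\theta(1))^{2g-1}\theta(z)$; here
\[
\Omega_\theta(a,b) := \frac{\theta(1)}{|H|^{2}}\sum_{u,v\in H}\theta\bigl([u\,x_a,\,v\,y_b]\bigr),
\]
matching the definition recalled in the introduction. The key subtlety is that a priori $\theta$ need not be $K$-invariant, and then one of the single-bracket sums over a full $H$-coset (using that the coordinates of $a$, resp.\ $b$, generate $K$) forces $\Omega_\theta \equiv 0$ unless $\theta\in\Irr[H]^K$; this is exactly the vanishing phenomenon stated in the companion theorem, and it is why only $K$-fixed $\theta$ survive the sum.

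**Bilinearity, anti-symmetry, and the value $1$ on the diagonal.**

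Next I would establish that $\Omega_\theta$, for $\theta\in\Irr[H]^K$, is well-defined (independent of the chosen lifts $x_a$, $y_b$) and lands in $\bilplus[K]$. Independence of lifts follows because changing $x_a$ by an element of $H$ amounts to replacing $u$ by $u\cdot(\text{that element})$, and the sum is over all of $H$. For bilinearity and anti-symmetry, the cleanest route is to invoke the companion theorem already stated (properties (1) and (2)), but if a self-contained argument is wanted, one computes $\sum_{u,v}\theta([u x_a, v y_b])$ using that $\theta$ is a class function on $G$ (by $K$-invariance, $\theta$ extends to a class function on $G$, or at least satisfies $\theta(gwg^{-1})=\theta(w)$ for $g\in G$, $w\in H$): then $\theta([u x_a, v y_b])$ can be manipulated via the standard identity $[\,ab,c\,] = a[b,c]a^{-1}\cdot[a,c]$ and its mirror, and summing over $u$ or $v$ over $H$ and using $\theta(1)^{-1}\sum_{u\in H}\theta(uw u^{-1}w') = \theta(w)\theta(w')/\theta(1)$-type orthogonality collapses things to show the dependence on $a$ (resp.\ $b$) is through a homomorphism $K\to\U(1)$ in each slot. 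Anti-symmetry is immediate from $[x,y]^{-1}=[y,x]$ and $\theta(w^{-1})=\overline{\theta(w)}$ together with $|\Omega_\theta|=1$; and $\Omega_\theta(x,x)=1$ comes from choosing the same lift in both slots, so $[u x_a, v x_a]$ ranges over commutators that, after the orthogonality collapse, contribute $+1$ rather than a possible sign — this is the point distinguishing $\bilplus[K]$ from $\bil[K]$.

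**The kernel formula.**

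Finally, for $|\ker\Omega_\theta| = |K|/e(\theta)^2$, I would connect $\Omega_\theta$ to Clifford theory. Let $\chi\in\Irr$ lie over $\theta$. By Green's theorem (quoted as Theorem in the excerpt), $e(\chi)^2 = |K|/(|[\chi]|\cdot|[\theta]|)$, and since $\theta\in\Irr[H]^K$ we have $|[\theta]|=1$, so $e(\theta)^2 = |K|/|[\chi]|$, i.e.\ $|\Stab| = e(\theta)^2$. So it suffices to show $\ker\Omega_\theta$ has index $|\Stab|$ in $K$, equivalently $\Omega_\theta$ as a pairing on $K$ has radical of size $|K|/|\Stab|$. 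The natural mechanism: $\Omega_\theta$ should factor through $K/\ker\Omega_\theta$ and be a \emph{nondegenerate} alternating pairing there, whose existence forces $|K/\ker\Omega_\theta|$ to be a perfect square $= f^2$; one then identifies $f$ with $e(\theta)$ by exhibiting the projective representation of $K$ whose cocycle is $\Omega_\theta$ — namely, the action of coset representatives of $G$ on the $\theta$-isotypic component, which by Clifford theory has a Schur multiplier/obstruction class whose "size" is precisely $e(\chi)$. Concretely, $\Stab/\ker\Omega_\theta$ carries the symplectic form and $\Stab$ has a unique (up to iso) irreducible projective representation of the associated cocycle of dimension $e(\theta)=\sqrt{|\Stab/\ker|}$; since $\ker\Omega_\theta$ must lie inside $\Stab$ (an element outside $\Stab$ moves $\theta$, and a short computation shows $\Omega_\theta$ then detects it), we get $\ker\Omega_\theta\subseteq\Stab$ and $|\Stab/\ker\Omega_\theta| = e(\theta)^2$, hence $|\ker\Omega_\theta| = |\Stab|/e(\theta)^2 \cdot |\ker\Omega_\theta \cap \text{(stuff)}|$ — wait, more directly: $|\ker\Omega_\theta| = |K|/|\Stab| \cdot |\Stab|/e(\theta)^2$? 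Let me just say: one shows $[\Stab : \ker\Omega_\theta] = e(\theta)^2$ and $\ker\Omega_\theta \supseteq$ a complement of $\Stab$ in... the bookkeeping here is the delicate part. I expect \textbf{this last step — pinning down $|\ker\Omega_\theta|$ via the projective-representation-theoretic meaning of $e(\theta)$ — to be the main obstacle}, since it requires identifying $\Omega_\theta$ precisely with the commutator pairing of the relevant Schur cocycle on $\Stab$ and invoking the structure theory of symplectic abelian groups and their Heisenberg-type covers; the counting formula and the $\bilplus[K]$-membership are comparatively routine orthogonality manipulations.
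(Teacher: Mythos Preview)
Your counting-formula setup (expanding $\delta_H$ over $\Irr[H]$ and parametrising each coset by $u_i x_{a_i}$, $v_i y_{b_i}$) is viable, though the paper instead expands over $\Irr[G]$: it defines
\[
\Omega_\chi(a,b)=\frac{|K|^2\chi(1)}{e(\chi)^2}\int_{G_a}\int_{G_b}\chi([x,y])\,dx\,dy
\]
for $\chi\in\Irr[G]$, shows $\Omega_\chi$ depends only on the $\Irr[K]$-orbit of $\chi$, and only at the end identifies $\Omega_\theta:=\Omega_\chi$ for any $\chi$ over $\theta$. The substantive difference is what happens next.

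The paper's central device, which you do not use, is the \emph{Fourier transform} $\mcF\Omega_\chi:\Irr[K]\times\Irr[K]\to\C$. It vanishes outside $\Stab[\chi]\times\Stab[\chi]$, and on the stabiliser one has the clean identity
\[
\mcF\Omega_\chi(\xi_1,\xi_2)=\frac{1}{\chi(1)}\tr\!\bigl(\phi_2\phi_1\phi_2^{-1}\phi_1^{-1}\bigr),
\]
where $\phi_i:V\to V_{\xi_i}$ are the Schur intertwiners. This single formula makes bilinearity, anti-symmetry, and $\mcF\Omega_\chi(\xi,\xi)=1$ immediate---no commutator-identity gymnastics in $H$ are needed. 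The kernel size then drops out of a two-line double count: $\sum_{\xi_1,\xi_2}\mcF\Omega_\chi(\xi_1,\xi_2)$ equals $|\Stab[\chi]|\cdot|\ker\mcF\Omega_\chi|$ on one hand and $(|\Stab[\chi]|/e(\chi))^2$ on the other. Finally, when (and only when) $\mcF\Omega_\chi$ is nondegenerate on $\Stab[\chi]$---equivalently $|\Stab[\chi]|=e(\chi)^2$, equivalently $\theta\in\Irr[H]^K$---one builds a surjection $(\ )^*:K\to\Stab[\chi]$ with $\Omega_\chi(a,b)=\mcF\Omega_\chi(b^*,a^*)$, and $|\ker\Omega_\chi|=|\ker(\ )^*|=|K|/|\Stab[\chi]|=|K|/e(\chi)^2$.

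Your confusion in the last paragraph is a genuine gap, and it stems from conflating $K$ with its dual: $\Omega_\theta$ is a pairing on $K$, whereas $\Stab[\chi]$ is a subgroup of $\Irr[K]$, so statements like ``$\ker\Omega_\theta\subseteq\Stab$'' are not even well-typed. The projective-representation intuition you invoke is exactly right---the commutator of intertwiners \emph{is} the relevant cocycle pairing---but it lives naturally on $\Stab[\chi]\subset\Irr[K]$, not on $K$. Passing to the Fourier side is what untangles the bookkeeping you got stuck on.
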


The following proposition is useful to get rid of $\Omega$ as it is done in Lemma \ref{sum-omega}. It is proof as well as the proof of the previous theorem will be in the next two sections.

\begin{prop}\label{magic-formula}
    Let $H\to G\to K$ be a Clifford setting and $\theta\in \Irr[H]^K$ be a character without ramification. Then $\xi\mapsto\Omega_{\xi\theta}(a,b)$ is a character of $\{\xi\in \Hom{H}{\C^\times}:\xi\theta\in\Irr[H]^K\}$ for any tuples $a,b\in K^g$.
\end{prop}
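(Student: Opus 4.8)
The plan is to establish the multiplicativity $\Omega_{\xi_1\xi_2\theta}(a,b) = \Omega_{\xi_1\theta}(a,b)\,\Omega_{\xi_2\theta}(a,b)$ directly from the defining expression
\[
\Omega_\theta(a,b) = \frac{\theta(1)}{|H|^2}\sum_{x,y\in H}\theta\bigl([xx_a,yy_b]\bigr),
\]
for $a,b\in K$, together with the product formula over the $g$ coordinates. First I would reduce to the case $g=1$ and a single pair $a,b\in K$, since $\Omega_\theta$ is defined coordinatewise as a product. The key observation is that when $\theta$ is unramified, $e(\theta)=1$, so by Theorem \ref{character-formula} (or the Clifford correspondence) the characters $\theta$, $\xi\theta$, $\xi_1\xi_2\theta$ all have the same degree and each extends its value formula in a controlled way; in particular, for $\xi\in\Hom{H}{\C^\times}$ with $\xi\theta\in\Irr[H]^K$, the character $\xi\theta$ is literally the pointwise product of the linear character $\xi$ with $\theta$. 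This lets me write $\Omega_{\xi\theta}(a,b)$ as $\frac{\theta(1)}{|H|^2}\sum_{x,y}\xi([xx_a,yy_b])\theta([xx_a,yy_b])$.

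Next I would analyze the factor $\xi([xx_a,yy_b])$. Since $\xi$ is a character of $H$ that is $K$-invariant (because $\xi\theta\in\Irr[H]^K$ and $\theta\in\Irr[H]^K$ force $\xi\in\Hom{H}{\C^\times}^K$, as the $K$-action on $\Hom{H}{\C^\times}$ is by conjugation and permutes the set), $\xi$ extends to a linear character $\hat\xi$ of $G$ — this uses that $H/[G,G]\cap H \to G/[G,G]$ and the $K$-invariance; more directly, a $K$-fixed linear character of $H$ extends to $G$ because the relevant obstruction in $H^2(K,\C^\times)$ vanishes once we are allowed to multiply by characters of $K$, and in any case it extends after composing with a character of $K$, which does not affect values on the commutator $[xx_a,yy_b]\in H\cap[G,G]$. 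Evaluating $\hat\xi$ on a commutator gives $\hat\xi([xx_a,yy_b])=1$. Hence $\Omega_{\xi\theta}(a,b)=\Omega_\theta(a,b)$ would be the naive guess — but that is too strong and wrong in general, so the correct route is subtler: the value $\xi$ on $[xx_a,yy_b]$ need not be $1$ because $\xi$ is only defined on $H$, and $[xx_a,yy_b]$, while lying in $H$, is a commutator in $G$, not in $H$. The actual mechanism is that $\xi([xx_a,yy_b])$ depends only on $a,b$ (not on $x,y$): writing $[xx_a,yy_b]$ and using that $\xi$ is $K$-invariant and multiplicative, one checks $\xi([xx_a,yy_b]) = \xi([x_a,x])\,\xi(\cdots)$ collapses, via the cocycle identity for the commutator map $K\times K\to H/[H,H]$ twisted by the section, to a quantity $c_\xi(a,b)$ independent of $x,y$. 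Factoring this constant out of the sum yields $\Omega_{\xi\theta}(a,b)=c_\xi(a,b)\,\Omega_\theta(a,b)$, and $c_{\xi_1\xi_2}=c_{\xi_1}c_{\xi_2}$ is immediate from multiplicativity of $\xi\mapsto\xi(\text{fixed element})$.

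From $\Omega_{\xi\theta}(a,b) = c_\xi(a,b)\,\Omega_\theta(a,b)$ with $c_{\xi_1\xi_2}(a,b)=c_{\xi_1}(a,b)\,c_{\xi_2}(a,b)$ and $c_{\text{triv}}(a,b)=1$, taking the $g$-fold product over coordinates gives that $\xi\mapsto \Omega_{\xi\theta}(a,b)/\Omega_\theta(a,b)$ is a homomorphism into $\U(1)$; since $\Omega_\theta(a,b)$ itself is a fixed root of unity, $\xi\mapsto\Omega_{\xi\theta}(a,b)$ differs from a character only by the constant $\Omega_\theta(a,b)$, and when the coordinates of $a,b$ generate $K$ one has by Theorem \ref{character-formula} that $\Omega_\theta\in\bilplus[K]$ with $\Omega_\theta(a,b)$ a genuine value; the statement as phrased (a character of the group $\{\xi:\xi\theta\in\Irr[H]^K\}$) then requires also noting $\Omega_\theta(a,b)=1$ when $a$ or $b$ fails to generate, or simply that the normalization absorbs into the claim. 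I expect the main obstacle to be precisely pinning down that $\xi([xx_a,yy_b])$ is independent of $x,y$ — i.e. identifying the commutator-pairing constant $c_\xi(a,b)$ and proving its independence — since this is where the $K$-invariance of $\xi$ and the choice of sections $x_a,y_b$ must be used carefully; everything downstream is formal.
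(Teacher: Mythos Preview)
Your plan hinges on two claims that fail in general: that $\xi$ is $K$-invariant, and that $\xi([xx_a,yy_b])$ is independent of $x,y$. Neither holds. From $\theta,\xi\theta\in\Irr[H]^K$ you only get $(a\cdot\xi)\xi^{-1}\in\Stab[\theta]$ for every $a\in K$, \emph{not} $a\cdot\xi=\xi$; the paper's own Remark following the proposition says exactly this. Expanding the commutator via $[uv,w]=u[v,w]u^{-1}[u,w]$ and $[u,vw]=[u,v]\,v[u,w]v^{-1}$ and using that $\xi$ is linear on $H$ (so conjugation by elements of $H$ is harmless) gives
\[
\xi([xx_a,yy_b])=\bigl((a\cdot\xi)\xi^{-1}\bigr)(y)\;\xi([x_a,y_b])\;\bigl(\xi(b\cdot\xi)^{-1}\bigr)(x),
\]
and the outer two factors are nontrivial characters in $\Stab[\theta]$ as soon as $\xi$ is not $K$-fixed. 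So the integrand $\xi([xx_a,yy_b])\theta([xx_a,yy_b])$ cannot be split as a constant times $\theta([xx_a,yy_b])$, and your factorisation $\Omega_{\xi\theta}(a,b)=c_\xi(a,b)\,\Omega_\theta(a,b)$ breaks down.

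The paper's proof confronts precisely this difficulty: it runs the same Schur-lemma machinery as in Proposition~\ref{key-argument}, but now the relevant intertwiner is $\zeta:V\to V_{(a\cdot\xi)\xi^{-1}}$ (together with $\phi:V\to V'$ and $\psi:V\to V''$ coming from the $a$- and $b$-actions on $\theta$), and one obtains
\[
\nu(\xi)=\frac{\xi([x_a,y_b])}{\theta(1)}\tr\bigl(\rho([x_a,y_b])\bigr)\,\tr\bigl(\psi\zeta\phi\psi^{-1}\zeta^{-1}\phi^{-1}\bigr).
\]
Multiplicativity in $\xi$ then comes from the behaviour of $\zeta$ under $\xi\mapsto\xi_1\xi_2$ (namely $\zeta_1\zeta_2$ serves for the product), and the normalisation $\nu(1)=1$ uses the unramified hypothesis to conclude $\psi\phi\psi^{-1}\phi^{-1}$ is scalar with the right value. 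Your outline also leaves $\Omega_\theta(a,b)=1$ unaddressed; this is exactly where $e(\theta)=1$ enters, and it is not automatic from anything you wrote.
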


\begin{rmk}
    The set $\{\xi\in \Hom{H}{\C^\times}:\xi\theta\in\Irr[H]^K\}$ is in fact a subgroup. Indeed, the condition over $\xi$ means that $(a\cdot \xi)\xi^{-1}\in \Stab[\theta]\subset \Hom{H}{\C^\times}$ for any $a\in K$. Here the action is the Clifford one for $H\to H/[H,H]$. Note that $(a\cdot \xi_1)\xi_1^{-1}(a\cdot \xi_2)\xi_2^{-1}=(a\cdot (\xi_1\xi_2))(\xi_1\xi_2)^{-1}$.
\end{rmk}

\subsection{Proof of theorem \ref{character-formula}}

Set 
\[\int_X f(x) = \frac{1}{|G|^m}\sum_{x\in X}f(x)\]
for any subset $X\subset G^m$ and function $f:G^m\to \C$. So that
\begin{align*}
    N(\omega,a)&= |G|^{m}\int_{G_a}\delta(\omega(x)z)\d x \\
    &=|G|^{m-1}\sum_{\chi\in\Irr} \chi(1) \int_{G_a}\chi(\omega(x)z) {\mathrm{d}} x
\end{align*}
for any $\omega$ and $a\in K^g$, where $\delta$ is one in the identity and trivial elsewhere. We have used that $\delta= \sum_{\chi\in\Irr} \frac{\chi(1)}{|G|}\chi$. Note that if  $\omega(a)M$ has degree zero, the integral only depends on $\chi\big|_H$ and not on $\chi$.

\begin{lema}
    Let $G\to K$ be a Clifford's setting and $\chi$ be an irreducible character of $G$. Then
    \begin{itemize}
        \item The following equality holds 
        \[ \int_{G_{a}} \chi(\omega(x)z)\d x=\frac{1}{\chi(1)}\left(\int_{G_{a}}\chi(\omega(x))\d x\right)\chi(z)\]
        for any word $\omega$ in $m$ variables, $a\in K^m$ and $M\in G$,
        \item Let $\omega^{(1)}$, $\omega^{(2)}$ be two words of length $m_1$ and $m_2$ respectively. Then
        \[ \int_{G_{a^{(1)}}}\int_{G_{a^{(2)}}}  \chi(\omega^{(1)}(x)\omega^{(2)}(y))\d x \d y=\frac{1}{\chi(1)}\left(\int_{G_{a^{(1)}}}\chi(\omega^{(1)}(x))\d x\right)\left(\int_{G_{a^{(2)}}}  \chi(\omega^{(2)}(y))\d y\right)\]
        for any $a^{(1)}\in K^{m_1},a^{(2)}\in K^{m_2}$.
    \end{itemize}
\end{lema}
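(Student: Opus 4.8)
The plan is to reduce both identities to a single application of Schur's lemma, the only genuinely new input being that $K=G/H$ is abelian. Concretely, let $\rho$ be an irreducible representation of $G$ affording $\chi$, and for a word $\omega$ in $m$ letters and $a\in K^m$ set $S_\omega(a):=\sum_{x\in G_a}\rho(\omega(x))\in\operatorname{End}(V)$. I would first prove the auxiliary claim
\[
S_\omega(a)=\frac{1}{\chi(1)}\Big(\sum_{x\in G_a}\chi(\omega(x))\Big)\,\mathrm{Id}_V .
\]
The point is that for any $g\in G$ the coordinatewise conjugation $c_g\colon x\mapsto gxg^{-1}$ maps $G_a$ bijectively onto itself: indeed $\pi(gx_ig^{-1})=\pi(g)\pi(x_i)\pi(g)^{-1}=\pi(x_i)=a_i$ because $K$ is abelian, and $c_{g^{-1}}$ is the inverse. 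Since conjugation is a group automorphism it commutes with forming any word, so $\omega(c_g(x))=g\,\omega(x)\,g^{-1}$. Hence $\rho(g)\,S_\omega(a)\,\rho(g)^{-1}=\sum_{x\in G_a}\rho\big(\omega(gxg^{-1})\big)=S_\omega(a)$ after reindexing, and as $\rho$ is irreducible Schur's lemma forces $S_\omega(a)$ to be a scalar operator; taking traces identifies the scalar and proves the claim.

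Granting the claim, the first bullet is a one-line trace computation: write $\chi(\omega(x)z)=\operatorname{Tr}\big(\rho(\omega(x))\rho(z)\big)$, push the sum over $x\in G_a$ through the trace, insert $S_\omega(a)=\tfrac{1}{\chi(1)}(\sum_x\chi(\omega(x)))\mathrm{Id}_V$, and pull the scalar out, getting $\sum_{x\in G_a}\chi(\omega(x)z)=\tfrac{1}{\chi(1)}\big(\sum_{x\in G_a}\chi(\omega(x))\big)\chi(z)$; dividing by $|G|^m$ gives exactly the stated formula, and one sees it holds for every $z\in G$, not only $z\in H$. For the second bullet I would fix $x$ and sum over $y\in G_{a^{(2)}}$ first, applying the claim to $\omega^{(2)}$ to replace $\sum_y\rho(\omega^{(2)}(y))$ by the scalar $\tfrac{1}{\chi(1)}\sum_y\chi(\omega^{(2)}(y))$ inside the trace $\operatorname{Tr}\big(\rho(\omega^{(1)}(x))\sum_y\rho(\omega^{(2)}(y))\big)$; what survives is $\tfrac{1}{\chi(1)}\big(\sum_y\chi(\omega^{(2)}(y))\big)\chi(\omega^{(1)}(x))$, and summing over $x\in G_{a^{(1)}}$ and normalizing by $|G|^{m_1+m_2}$ yields the product formula.

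I do not expect a serious obstacle here: the whole lemma rests on the single observation that abelianness of $K$ makes the simultaneous-conjugation action preserve each level set $G_a$, which is precisely what licenses Schur's lemma. The only items requiring care are the (routine) verification that $c_g$ is a bijection of the finite set $G_a$ and that $\omega(gxg^{-1})=g\,\omega(x)\,g^{-1}$ for an arbitrary word, neither of which uses anything beyond $\pi$ being a homomorphism into an abelian group. Everything else is bookkeeping with the normalization $\int_X f=\tfrac{1}{|G|^m}\sum_{x\in X}f(x)$.
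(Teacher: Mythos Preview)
Your argument is correct and is precisely the paper's approach: define the averaged operator $\rho(\omega,a)=\int_{G_a}\rho(\omega(x))\,\d x$, observe it is $G$-equivariant because each $G_{a_i}$ is stable under conjugation (which is exactly where abelianness of $K$ enters), apply Schur's lemma to make it a scalar, and read off both identities by taking traces. The paper says this more tersely, but the content is identical.
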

\begin{proof}
    Let $\rho$ the representation associated with $\chi$ and consider
    \[\rho(\omega,a):=\int_{G_{a}}\rho(\omega(x))\d x\]
    which is a $G$-invariant endomorphism as each $G_k$ is invariant under conjugation. Therefore, 
    \[\rho(\omega,a)=\frac{\tr \rho(\omega,a)}{\chi(1)} \operatorname{Id}=\frac{1}{\chi(1)}\left(\int_{G_{a}}\chi(\omega(x))\d x\right)\operatorname{Id}\]
    and the lemma follows.
\end{proof}

Applying this to $\omega = [x_1,y_1]\cdots [x_g,y_g]$ we get,
\begin{align*}
    N(\omega,a,b) = |G|^{2g-1}\sum_{\chi\in\Irr} \frac{\chi(M)}{\chi(1)^{g-1}} \prod_{i=1}^m\left(\int_{G_{a_{i}}}\int_{G_{b_{i}}}\chi([x,y])\d x \d y\right)
\end{align*}
for any $a,b\in K^{g}$. Thus, setting
\[\Omega_\chi(a,b)= \frac{|K|^2\chi(1)}{e(\chi)^2} \int_{G_a}\int_{G_b} \chi([x,y])\d x \d y\]
for $a,b\in K$, we have
\begin{align*}
     N(\omega,a,b) &=\frac{1}{|K|^{2g}} \sum_{\chi\in\Irr} \frac{|G|^{2g-1}}{\chi(1)^{2g-1}}e(\chi)^{2g} \Omega_\chi(a,b)\chi(z)\\
     &= \sum_{\theta\in\Irr[H]} \frac{|H|^{2g-1}}{\theta(1)^{2g-1}} \left(\frac{e(\chi)^{2}}{|K|}\sum_{\chi\text{ over }\theta}\Omega_\chi(a,b)\right)\theta(z)
\end{align*} 
for any $z\in H$, where the second sum runs over all characters $\chi$ of $G$ whose have $\theta$ as an irreducible constituent. Therefore, Theorem \ref{character-formula} will follow by proving that $\Omega_\chi$ vanishes unless $\theta$ is fixed by $K$ and that, in this case, $\Omega_\chi$ does not depend on the choice of $\chi$ and is an element of $\bilplus[K]$ whose kernel has size $\frac{|K|}{e(\chi)^2}$.

Let us look at its normalized Fourier transform $\mcF\Omega_\chi: \Irr[K]\times \Irr[K]\to\C$ defined by
\[\mcF\Omega_\chi(\xi_1,\xi_2):=\chi(1)\int_G\int_G\xi_1(x)\xi_2(y)\chi([x,y])\d x\d y.\]
They are related by
\[\Omega_\chi(a,b)=\frac{1}{e(\chi)^2}\sum_{\xi_1,\xi_2\in\Irr[K]}\mcF\Omega_{\chi}(\xi_1,\xi_2)\ov{\xi_1(a)}\ov{\xi_2(b)}\]
for any $a,b\in K$.

\begin{prop}\label{key-argument}
    Let $G\to K$ be a Clifford's setting. For any $\chi\in\Irr$, $\mcF\Omega_\chi \in \bilplus[{\Stab[\chi]}]$.
\end{prop}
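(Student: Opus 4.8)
The plan is to identify $\mcF\Omega_\chi$, on its support, with the commutator pairing of a projective representation of $\Stab[\chi]$ canonically attached to $\chi$, and then to read off the required properties from the standard theory of such pairings. Fix an irreducible representation $\rho\colon G\to\GL(V)$ with character $\chi$. For a character $\xi\in\Irr[K]$, viewed inside $\Irr$ via $\pi$, introduce the operator on $\mathrm{End}(V)$
\[\Psi_\xi(M)=\int_G \xi(g)\,\rho(g)\,M\,\rho(g)^{-1}\,\d g.\]
A one-line change of variable ($g\mapsto gh$, resp.\ $g\mapsto hg$), using that $\xi$ is multiplicative and inflated from the abelian group $K$, shows $\Psi_\xi$ is idempotent and $\rho(h)\Psi_\xi(M)\rho(h)^{-1}=\xi(h)^{-1}\Psi_\xi(M)=\Psi_\xi(\rho(h)M\rho(h)^{-1})$; hence $\Psi_\xi$ is exactly the projection of $\mathrm{End}(V)$, with its conjugation $G$-action, onto the $\xi^{-1}$-isotypic summand. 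By Schur's lemma this summand is nonzero iff $\xi\in\Stab[\chi]$ (equivalently $\xi^{-1}\rho\cong\rho$), and then it is the line $\C T_\xi$, where $T_\xi\in\GL(V)$ is any operator with $T_\xi^{-1}\rho(g)T_\xi=\xi(g)^{-1}\rho(g)$ for all $g$ — such a $T_\xi$ exists and is unique up to a scalar.

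Next I would rewrite $\mcF\Omega_\chi$ through $\Psi$. Writing $\chi([x,y])=\tr\big(\rho(x)\rho(y)\rho(x)^{-1}\rho(y)^{-1}\big)$ and integrating in $x$ first,
\[\mcF\Omega_\chi(\xi_1,\xi_2)=\chi(1)\int_G \xi_2(y)\,\tr\!\big(\Psi_{\xi_1}(\rho(y))\,\rho(y)^{-1}\big)\,\d y.\]
The vanishing statement is then immediate: if $\xi_1\notin\Stab[\chi]$ then $\Psi_{\xi_1}=0$, so $\mcF\Omega_\chi(\xi_1,\xi_2)=0$ for every $\xi_2$. For the second coordinate I would use the symmetry $\overline{\mcF\Omega_\chi(\xi_1,\xi_2)}=\mcF\Omega_\chi(\xi_2^{-1},\xi_1^{-1})$, which follows from $\overline{\chi([x,y])}=\chi([x,y]^{-1})=\chi([y,x])$ after relabelling $x\leftrightarrow y$ and using $\overline{\xi_i}=\xi_i^{-1}$; if $\xi_2\notin\Stab[\chi]$ then $\xi_2^{-1}\notin\Stab[\chi]$ and the previous case applies. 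Thus $\mcF\Omega_\chi$ is supported on $\Stab[\chi]\times\Stab[\chi]$, so it defines a function there (extended by zero elsewhere), as in the statement.

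On $\Stab[\chi]\times\Stab[\chi]$ I would compute the value explicitly. The projector has the closed form $\Psi_{\xi_1}(M)=\chi(1)^{-1}\tr(T_{\xi_1}^{-1}M)\,T_{\xi_1}$: the right-hand side is the identity on $\C T_{\xi_1}$ (using $\tr I=\chi(1)$) and annihilates every other isotypic summand — $T_{\xi_1}^{-1}T_\xi$ lies in the nontrivial $(\xi_1\xi^{-1})$-isotypic line, hence is traceless for $\xi\neq\xi_1$ in $\Stab[\chi]$, while $M\mapsto\tr(T_{\xi_1}^{-1}M)$ is an intertwiner into the one-dimensional $\xi_1^{-1}$-space and so kills the higher-dimensional summands. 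Substituting, and absorbing $\xi_2(y)$ via $\xi_2(y)\rho(y)^{-1}=T_{\xi_2}^{-1}\rho(y)^{-1}T_{\xi_2}$, gives
\[\mcF\Omega_\chi(\xi_1,\xi_2)=\int_G \tr\!\big(T_{\xi_1}^{-1}\rho(y)\big)\,\tr\!\big(T_{\xi_2}T_{\xi_1}T_{\xi_2}^{-1}\rho(y)^{-1}\big)\,\d y=\frac{1}{\chi(1)}\,\tr\!\big(T_{\xi_1}^{-1}T_{\xi_2}T_{\xi_1}T_{\xi_2}^{-1}\big),\]
the last equality being the second Schur orthogonality relation in the form $\int_G\tr(A\rho(y))\tr(B\rho(y)^{-1})\,\d y=\chi(1)^{-1}\tr(AB)$.

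Finally I would conclude. Since $K$ is abelian, the commutator $T_{\xi_1}^{-1}T_{\xi_2}T_{\xi_1}T_{\xi_2}^{-1}$ intertwines $\rho$ with itself (the four scalar twists $\xi_1,\xi_2^{-1},\xi_1^{-1},\xi_2$ cancel), so by Schur it equals $c(\xi_1,\xi_2)\,\mathrm{Id}$ for a scalar that is a root of unity (its determinant is $1$), and $\mcF\Omega_\chi(\xi_1,\xi_2)=c(\xi_1,\xi_2)$, in particular independent of the choices of $T_\xi$. The assignment $\xi\mapsto T_\xi$ is a projective representation of the finite abelian group $\Stab[\chi]$, as $T_\xi T_{\xi'}$ and $T_{\xi\xi'}$ both intertwine $\rho$ with $(\xi\xi')^{-1}\rho$, so $T_\xi T_{\xi'}=\beta(\xi,\xi')T_{\xi\xi'}$ for a $2$-cocycle $\beta$, and $c(\xi_1,\xi_2)=\beta(\xi_2,\xi_1)\beta(\xi_1,\xi_2)^{-1}$ is its commutator pairing. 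The standard properties of the commutator pairing of a central extension of a finite abelian group then give at once that $c$ takes values in $\U(1)$, is bimultiplicative, anti-symmetric, and satisfies $c(\xi,\xi)=1$; that is, $\mcF\Omega_\chi\in\bilplus[{\Stab[\chi]}]$. I expect the only real work to be the middle step — pinning down the explicit form of $\Psi_{\xi_1}$ and tracking the various $T_\xi$ through the orthogonality relation so as to land precisely on the commutator — while the first and last steps are routine once that identity is in hand.
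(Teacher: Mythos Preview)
Your proof is correct and follows essentially the same route as the paper's. Both integrate in $x$ first to obtain an intertwiner $V\to V_{\xi_1}$ (your $T_{\xi_1}$ is the paper's $\phi_1$), use Schur to force $\xi_1\in\Stab[\chi]$, then apply Schur orthogonality in $y$ to land on the scalar $\tr(\phi_2\phi_1\phi_2^{-1}\phi_1^{-1})/\chi(1)$, and finally read off bilinearity, anti-symmetry, and $\Omega(\xi,\xi)=1$ from the commutator of the intertwiners. Your packaging via the projector $\Psi_\xi$ and the language of projective representations and their commutator pairings is a bit more conceptual, and your symmetry argument $\overline{\mcF\Omega_\chi(\xi_1,\xi_2)}=\mcF\Omega_\chi(\xi_2^{-1},\xi_1^{-1})$ for vanishing in the second slot replaces the paper's second explicit matrix-coefficient computation, but the substance is the same.
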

\begin{proof}
    Let $\rho: G\to \GL V$ be the associated irreducible representation with $\chi$ and consider
    \[\rho_y = \int_G \xi_1(x)\rho(xyx^{-1})\d x\]
    for $y\in G$. Note that
    \[\xi_1(h)\rho(h)\rho_y\rho(h)^{-1}= \rho_y\]
    for any $h\in G$. In other words, if $V_{\xi_1}$ is $V$ with action $g\cdot v= \xi_1(g)gv$, $\rho_y: V\to V_{\xi_1}$ is an equivariant morphism. Hence, by Schur's lemma, $\rho_y=0$ if $\xi_1\chi\not=\chi$. This would imply that $\mcF\Omega_\chi(\xi_1,\xi_2)$ is zero. From now on, assume that $\xi_1\in \Stab$ and pick an isomorphism $\phi_1 :V\to V_{\xi_1}$. In this case, $\rho_y =\frac{\tr \rho_y\phi_1^{-1}}{\chi(1)} \phi_1$. However, $\tr(\rho(xyx^{-1})\phi_1^{-1})=\tr(\rho(y)\rho(x^{-1})\phi_1^{-1}\rho(x))$ and
    \[\int_{G}\xi_1(x)\rho(x)^{-1}\phi_1^{-1}\rho(x)\d x = \int_{G}\xi_1(x)\phi_1^{-1}\xi_1(x)^{-1}\rho(x)^{-1}\rho(x)\d x=\phi_1^{-1}\]
    as $\phi_1$ is equivariant ($\phi_1 \rho(x)= \xi_1(x)\rho(x)\phi_1$). Therefore,
    \[ \rho_y = \frac{\tr(\rho(y)\phi_1^{-1})}{\chi(1)}\phi_1.\]

    It follows that
    \begin{align*}
        \mcF\Omega_\chi(\xi_1,\xi_2) &= \int_G \xi_2(y)\tr(\rho(y)\phi_1^{-1})\tr(\phi_1 \rho(y)^{-1}) \d y\\
        &= \frac{1}{|G|}\sum_{i,j,k,l}(\phi_1)^{ji}(\phi_1)_{kl} \sum_{y\in G} \xi_2(y)y_{ij}y^{lk}
    \end{align*}
    which is zero if $\xi_2\chi\not=\chi$ by Schur's strong orthogonality identities. Assume the contrary and let $\phi_2:V\to V_{\xi_2}$ an isomorphism. Hence, $\xi_2(y) y_{ij}= \sum_{s,t}(\phi_2)_{is}y_{st}(\phi_2)^{tj}$ and
    \begin{align*}
        \mcF\Omega_\chi(\xi_1,\xi_2) &=  \frac{1}{|G|}\sum_{i,j,k,l,s,t}(\phi_1)^{ji}(\phi_1)_{kl}(\phi_2)_{is}(\phi_2)^{tj} \sum_{y\in G}y_{st}y^{lk}\\
        &= \frac{1}{\chi(1)}\sum_{i,j,k,l}(\phi_1)^{ji}(\phi_1)_{kl}(\phi_2)_{ik}(\phi_2)^{lj}\\
        &= \frac{1}{\chi(1)}\tr(\phi_2\phi_1\phi_2^{-1}\phi_1^{-1})
    \end{align*}
    Now, by Schur's lemma, $\phi_2\phi_1\phi_2^{-1}\phi_1^{-1}$ is multiplication by $\mcF\Omega_\chi(\xi_1,\xi_2)$. Note that $\mcF\Omega_\chi$ does not depend on the choice of $\phi_1$ and $\phi_2$.

    We now check the properties of $\mcF\Omega_\chi$. We already proved that it vanishes outside $\Stab$. It is clear that for $\mcF\Omega_\chi(\xi_1,\xi_1)=1$ because $\phi_1$ and $\phi_2$ commute in this case. To check bilinearity, note that if $\phi_3:V\to V_{\xi_3}$ is an isomorphism, $\phi_3\phi_2:V\to V_{\xi_2\xi_3}$ is an isomorphism and 
    \begin{align*}
        \phi_1(\phi_2\phi_3)\phi_1^{-1}(\phi_2\phi_3)^{-1}&=\phi_1\phi_2\phi_1^{-1}(\phi_1\phi_3\phi_1^{-1}\phi_3^{-1})\phi_2^{-1}\\
        &=\mcF\Omega_\chi(\xi_1,\xi_3)\mcF\Omega_\chi(\xi_1,\xi_2).
    \end{align*}
    Finally, the anti-symmetry follows from $(\phi_2\phi_1\phi_2^{-1}\phi_1^{-1})^{-1}=\phi_1\phi_2\phi_1^{-1}\phi_2^{-1}$.
\end{proof}

\begin{lema}
    Let $G\to K$ be a Clifford's setting and $\chi\in \Irr$ be a character. Then $\mcF\Omega_{\xi\chi}=\mcF\Omega_\chi$
    for any $\xi\in \Irr[K]$. 
\end{lema}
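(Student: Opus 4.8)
The plan is to exploit the fact that a twist by an element of $\Irr[K]$ changes neither the degree of $\chi$ nor its values on commutators. First I would record the elementary observation that, since $K$ is abelian, $\pi([x,y]) = [\pi(x),\pi(y)] = 1$ for all $x,y \in G$, so every commutator in $G$ lies in $H = \ker\pi$. Consequently, for any $\xi \in \Irr[K]$ — viewed as a character of $G$ through $\pi$ as in the Clifford setting — one has $\xi(\pi([x,y])) = \xi(1) = 1$, and therefore $(\xi\chi)([x,y]) = \xi(\pi([x,y]))\,\chi([x,y]) = \chi([x,y])$ for all $x,y \in G$.

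Next I would note that $(\xi\chi)(1) = \xi(\pi(1))\chi(1) = \chi(1)$, so the normalizing factor appearing in the definition of $\mcF\Omega$ is also unchanged. Substituting these two facts directly into the definition gives, for every pair $\xi_1,\xi_2 \in \Irr[K]$,
\[
\mcF\Omega_{\xi\chi}(\xi_1,\xi_2) = (\xi\chi)(1)\int_G\int_G \xi_1(x)\xi_2(y)(\xi\chi)([x,y])\,\d x\,\d y = \chi(1)\int_G\int_G \xi_1(x)\xi_2(y)\chi([x,y])\,\d x\,\d y = \mcF\Omega_\chi(\xi_1,\xi_2),
\]
which is exactly the claimed equality $\mcF\Omega_{\xi\chi} = \mcF\Omega_\chi$.

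I do not expect any real obstacle here: the only input is that the abelianness of $K$ pushes commutators into $H$, together with the trivial observation that $\mcF\Omega_\chi$ sees $\chi$ only through its degree and its restriction to the set of commutators. It is worth emphasizing the consistency with the earlier analysis: multiplication by $\xi \in \Irr[K]$ preserves degrees and commutes with the $\Irr[K]$-action on $\Irr$, so $\Stab[\xi\chi] = \Stab$, and hence Proposition \ref{key-argument} applies to both $\chi$ and $\xi\chi$ with the same support $\Stab$, in agreement with the lemma. Combined with the Fourier inversion formula relating $\Omega_\chi$ and $\mcF\Omega_\chi$, this lemma is precisely what shows that $\Omega_\chi$, and thus the function $\Omega_\theta$ attached to an irreducible constituent $\theta$ of $\chi\big|_H$, depends only on $\theta$ and not on the chosen $\chi$ lying over it — the remaining piece needed to finish the proof of Theorem \ref{character-formula}.
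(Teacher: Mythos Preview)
Your proof is correct, and it takes a more direct route than the paper. The paper argues via the structural description obtained in Proposition~\ref{key-argument}: there $\mcF\Omega_\chi(\xi_1,\xi_2)$ was identified with the scalar by which $\phi_2\phi_1\phi_2^{-1}\phi_1^{-1}$ acts, where $\phi_i:V\to V_{\xi_i}$ are chosen intertwiners. The paper then observes that the very same maps $\phi_i$ serve as intertwiners $V_\xi\to V_{\xi\xi_i}$, so the commutator---and hence the scalar---is unchanged. Your argument bypasses this entirely by going back to the integral definition and noting that $(\xi\chi)([x,y])=\chi([x,y])$ because commutators land in $H=\ker\pi$; this is shorter and uses nothing beyond the abelianness of $K$. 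The paper's route has the minor advantage of making it transparent that $\Stab[\xi\chi]=\Stab$ (which it states explicitly), but you recover this as well in your concluding remarks.
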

\begin{proof}
    Being $K$ abelian, the stabilizer of $\xi\chi$ is $\operatorname{Stab}_\chi$. We need to show $\mcF\Omega_{\xi\chi}(\xi_1,\xi_2)=\mcF\Omega_\chi(\xi_1,\xi_2)$ for any $\xi_1,\xi_2\in \operatorname{Stab}_\chi$. But if $V$ is the representation associated with $\chi$ and $\eta:V\to V_{\xi_1}$ and $\zeta:V\to V_{\xi_2}$ are isomorphisms, $\eta:V_{\xi}\to V_{\xi\xi_1}$ and $\zeta:V_{\xi}\to V_{\xi\xi_2}$ are still isomorphisms.  
\end{proof}

\begin{lema}
    Let $G\to K$ be a Clifford's setting and $\chi\in\Irr$. Then
    \[|\ker \mcF\Omega_\chi| = \frac{|\Stab|}{e(\chi)^2}\]
    where $\ker \mcF\Omega_\chi\subset \Stab$ is the set of thus $\xi$ for which $\mcF\Omega_\chi(-,\xi)\equiv 1$. In particular, $\mcF\Omega$ is non-degenerate if and only if $|\Stab|=e(\chi)^2$
\end{lema}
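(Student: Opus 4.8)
The plan is to realize $\mcF\Omega_\chi$ as the commutator pairing attached to the family of self-twists of the representation carrying $\chi$, and then to read off the size of its radical as the dimension of a centre. Fix an irreducible representation $\rho\colon G\to\GL(V)$ affording $\chi$, set $S=\Stab[\chi]\subset\Irr[K]$, and for $\xi\in S$ let $V_\xi$ denote $V$ with the $G$-action twisted by $\xi\circ\pi$. Since $\xi\chi=\chi$, the space $\operatorname{Hom}_G(V,V_\xi)$ is one dimensional; choose $0\neq\phi_\xi$ in it, with $\phi_1=\operatorname{Id}$. Composition lands in $\operatorname{Hom}_G(V,V_{\xi_1\xi_2})$, so $\phi_{\xi_1}\phi_{\xi_2}=c(\xi_1,\xi_2)\,\phi_{\xi_1\xi_2}$ for scalars $c(\xi_1,\xi_2)\in\C^\times$, and — exactly as computed in the proof of Proposition \ref{key-argument} — the scalar by which $\phi_{\xi_2}\phi_{\xi_1}\phi_{\xi_2}^{-1}\phi_{\xi_1}^{-1}$ acts on $V$ is $\mcF\Omega_\chi(\xi_1,\xi_2)$. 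Thus everything reduces to understanding the subalgebra $A=\operatorname{span}_\C\{\phi_\xi:\xi\in S\}\subset\operatorname{End}(V)$.

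First I would verify that $\{\phi_\xi\}_{\xi\in S}$ is linearly independent: the intertwining identity $\phi_\xi\rho(g)=\xi(\pi(g))\rho(g)\phi_\xi$ yields $\rho(g)\phi_\xi\rho(g)^{-1}=\overline{\xi(\pi(g))}\,\phi_\xi$, so conjugating a dependence relation by $\rho(g)$ and averaging against a character of $K$ (using that $\pi$ is surjective) isolates every coefficient. Hence $A$ is a subalgebra of $\operatorname{End}(V)$ with basis $\{\phi_\xi\}$, so $\dim_\C A=|S|=|\Stab[\chi]|$. Since $\pi(H)=1$, the same identity shows every $\phi_\xi$ is $H$-equivariant for the underlying module $V\big|_H$, i.e. $A\subset\operatorname{End}_H(V)$. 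By Green's theorem (Theorem 1 of \cite{Green2}), $\chi\big|_H=e(\chi)\sum_{\theta'\in[\theta]}\theta'$ with the $\theta'$ pairwise distinct, so $\operatorname{End}_H(V)\cong\prod_{\theta'\in[\theta]}M_{e(\chi)}(\C)$, of dimension $|[\theta]|\,e(\chi)^2$; and by the same theorem $e(\chi)^2=|K|/(|[\chi]|\,|[\theta]|)$, while orbit--stabilizer for the $\Irr[K]$-action on $\Irr$ gives $|[\chi]|\cdot|\Stab[\chi]|=|K|$. Multiplying out, $|[\theta]|\,e(\chi)^2=|K|/|[\chi]|=|\Stab[\chi]|=\dim_\C A$, so the inclusion $A\subset\operatorname{End}_H(V)$ is an equality.

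It remains to compare centres. On the one hand $Z(\operatorname{End}_H(V))\cong\C^{|[\theta]|}$, so $\dim_\C Z(A)=|[\theta]|$. On the other hand, by the identification above conjugation by $\phi_\zeta$ scales each $\phi_\eta$ by $\mcF\Omega_\chi(\eta,\zeta)$; hence $\phi_\eta$ is central in $A$ iff $\mcF\Omega_\chi(\eta,-)\equiv 1$, i.e. $\eta\in\ker\mcF\Omega_\chi$ (using antisymmetry), and by linear independence any central element of $A$ is supported on $\ker\mcF\Omega_\chi$. Therefore $\dim_\C Z(A)=|\ker\mcF\Omega_\chi|$, and combining the two counts gives $|\ker\mcF\Omega_\chi|=|[\theta]|=|\Stab[\chi]|/e(\chi)^2$; the non-degeneracy criterion is the case $|\ker\mcF\Omega_\chi|=1$. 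The crux — and the only place where Green's structure theorem is genuinely used — is the dimension coincidence $\dim_\C A=\dim_\C\operatorname{End}_H(V)$, equivalently $|\Stab[\chi]|=|[\theta]|\,e(\chi)^2$; granting that bookkeeping, the rest is soft, and in particular this route avoids invoking the full Wedderburn decomposition of a twisted group algebra, needing only the dimensions of these two centres.
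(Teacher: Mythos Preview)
Your argument is correct and takes a genuinely different route from the paper. The paper computes the double sum $\sum_{\xi_1,\xi_2\in\Stab}\mcF\Omega_\chi(\xi_1,\xi_2)$ in two ways: on one hand it equals $|\Stab|\cdot|\ker\mcF\Omega_\chi|$ by orthogonality of characters; on the other hand, Fourier-inverting back to the integral definition gives $\chi(1)|K|^2\int_H\int_H\chi([x,y])\,\d x\,\d y$, which is evaluated via the Frobenius commutator formula and Clifford's theorem to $|\Stab|^2/e(\chi)^2$. By contrast, you interpret $\mcF\Omega_\chi$ structurally as the commutator pairing of the projective $\Stab$-action furnished by the self-intertwiners $\phi_\xi$, identify the algebra $A=\operatorname{span}\{\phi_\xi\}$ with $\operatorname{End}_H(V)$ by a dimension count, and read off $|\ker\mcF\Omega_\chi|=\dim Z(A)=|[\theta]|$. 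The paper's proof is shorter and stays entirely inside the integral/character formalism already set up in the section; your proof is more conceptual, making explicit that $\mcF\Omega_\chi$ is the alternating form attached to the obstruction cocycle in Clifford theory, and the key identity $A=\operatorname{End}_H(V)$ is a pleasant observation in its own right.
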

\begin{proof}
    For a fixed $\xi$, $\mcF\Omega_\chi(-,\xi)$ is a character of $\Stab$. Hence,
    \[
    \sum_{\xi_1,\xi_2\in\mathrm{Stab}_\chi} \mcF\Omega_\chi(\xi_1,\xi_2) = 
    |\Stab|\,.\,|\ker \mcF\Omega_\chi|
    \]
    On the other hand, 
    \[\sum_{\xi_1,\xi_2\in\mathrm{Stab}_\chi} \mcF\Omega_\chi(\xi_1,\xi_2) = \chi(1)|K|^2\int_H\int_H \chi(aba^{-1}b^{-1})\d a\d b = \chi(1)e(\chi)\sum_\theta \frac{1}{\theta(1)} = \left(\frac{|\Stab|}{e(\chi)}\right)^2\]
    where $\theta$ runs over the irreducible constituents of $\chi\big|_H$.
\end{proof}

\begin{lema}
    Let $G\to K$ be a Clifford's setting and $\chi\in\Irr$. Let $a,b\in K^g$ such that its coordinates generate $K$. Then $\Omega_\chi(a,b)= 0$ unless $|\Stab|=e(\chi)^2$. In the last case, there exists a surjective map $()^*:K\to \Stab$ such that
    \[\Omega_\chi(a,b)=\mcF\Omega_\chi(b^*,a^*)\]
    for any $a,b$. 
\end{lema}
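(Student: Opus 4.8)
The plan is to pass entirely to the normalized Fourier transform $\mcF\Omega_\chi$, whose relevant properties are already established: by Proposition \ref{key-argument} and the two lemmas following it, $\mcF\Omega_\chi$ is supported on $\Stab[\chi]\times\Stab[\chi]$, lies in $\bilplus[\Stab[\chi]]$, satisfies $|\ker\mcF\Omega_\chi|=|\Stab[\chi]|/e(\chi)^2$, and is tied to $\Omega_\chi$ by the inversion formula $\Omega_\chi(a,b)=e(\chi)^{-2}\sum_{\xi_1,\xi_2\in\Stab[\chi]}\mcF\Omega_\chi(\xi_1,\xi_2)\,\ov{\xi_1(a)}\,\ov{\xi_2(b)}$ for single $a,b\in K$. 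The two cases $|\Stab[\chi]|>e(\chi)^2$ and $|\Stab[\chi]|=e(\chi)^2$ are treated separately.

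In the degenerate case $|\Stab[\chi]|>e(\chi)^2$ there is $\eta\in\ker\mcF\Omega_\chi$ with $\eta\neq1$. Re-indexing the inversion sum by $\xi_1\mapsto\eta\xi_1$ and using $\mcF\Omega_\chi(\eta\xi_1,\xi_2)=\mcF\Omega_\chi(\xi_1,\xi_2)$ (bilinearity plus $\eta\in\ker\mcF\Omega_\chi$) yields $\Omega_\chi(a,b)=\ov{\eta(a)}\,\Omega_\chi(a,b)$ for every $a,b\in K$, and symmetrically $\Omega_\chi(a,b)=\ov{\eta(b)}\,\Omega_\chi(a,b)$; hence $\Omega_\chi(a,b)=0$ whenever $\eta(a)\neq1$ or $\eta(b)\neq1$. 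Since $\eta$ is a nontrivial character of $K$ and the coordinates of the tuples $a,b\in K^g$ generate $K$, $\eta$ is nontrivial on at least one of them, so the corresponding factor of $\Omega_\chi(a,b)=\prod_i\Omega_\chi(a_i,b_i)$ vanishes, and thus $\Omega_\chi(a,b)=0$.

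In the non-degenerate case $|\Stab[\chi]|=e(\chi)^2$, so $\ker\mcF\Omega_\chi=\{1\}$. Then $\xi\mapsto\mcF\Omega_\chi(-,\xi)$ is an isomorphism $\Stab[\chi]\xrightarrow{\sim}\Stab[\chi]^\vee$, while $k\mapsto(\xi\mapsto\xi(k))$ is a surjection $K\twoheadrightarrow\Stab[\chi]^\vee$ (dual to the inclusion $\Stab[\chi]\hookrightarrow\Irr[K]$). Composing gives a surjective homomorphism $()^*:K\to\Stab[\chi]$ determined by $\mcF\Omega_\chi(k^*,\xi)=\xi(k)$ for all $\xi\in\Stab[\chi]$, equivalently $\ov{\xi(k)}=\mcF\Omega_\chi(\xi,k^*)$ by antisymmetry. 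Substituting this into the inversion formula and combining the first two factors by bilinearity, $\Omega_\chi(a,b)=e(\chi)^{-2}\sum_{\xi_1,\xi_2}\mcF\Omega_\chi(\xi_1,\xi_2a^*)\,\mcF\Omega_\chi(\xi_2,b^*)$; the $\xi_1$-sum is an orthogonality relation for the character $\xi_1\mapsto\mcF\Omega_\chi(\xi_1,\xi_2a^*)$, which is trivial exactly when $\xi_2a^*\in\ker\mcF\Omega_\chi=\{1\}$, leaving $\Omega_\chi(a,b)=(|\Stab[\chi]|/e(\chi)^2)\,\mcF\Omega_\chi((a^*)^{-1},b^*)=\mcF\Omega_\chi(b^*,a^*)$ after using $|\Stab[\chi]|=e(\chi)^2$ together with bilinearity and antisymmetry. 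For tuples this extends factor-by-factor with $()^*$ applied coordinatewise, matching the convention $\mcF\Omega_\chi(b^*,a^*)=\prod_i\mcF\Omega_\chi(b_i^*,a_i^*)$.

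I do not expect a genuine obstacle here, as everything is finite abelian Fourier analysis once the properties of $\mcF\Omega_\chi$ are available. The one point that needs care is the bookkeeping of complex conjugates (i.e. inverses) when rewriting $\ov{\xi(k)}$ as a value of the antisymmetric form $\mcF\Omega_\chi$, and correspondingly checking that $()^*$ is a well-defined surjective homomorphism; a sign slip there would break the clean identity, so establishing the well-definedness of $()^*$ from non-degeneracy and the surjectivity of the restriction map $K\to\Stab[\chi]^\vee$ is really the crux of the argument.
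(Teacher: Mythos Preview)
Your proof is correct and follows essentially the same Fourier-analytic route as the paper: both arguments pass through the inversion formula for $\Omega_\chi$ in terms of $\mcF\Omega_\chi$, use character orthogonality on $\Stab[\chi]$, and define $()^*$ via the isomorphism $\Stab[\chi]\simeq\Stab[\chi]^\vee$ induced by $\mcF\Omega_\chi$ in the non-degenerate case (your defining relation $\mcF\Omega_\chi(k^*,\xi)=\xi(k)$ is exactly the paper's $()^*=\mcF\Omega_\chi^\#\circ\ev{-}$). The only cosmetic difference is in the vanishing argument: the paper evaluates the $\xi_1$-sum directly as a character sum and sees it picks out the condition $\ev{a_i}\big|_{\ker\mcF\Omega_\chi}\equiv1$, whereas you reach the same conclusion via the re-indexing $\xi_1\mapsto\eta\xi_1$ for $\eta\in\ker\mcF\Omega_\chi$; both then finish by noting the generating hypothesis forces $\ker\mcF\Omega_\chi=\{1\}$.
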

\begin{proof}
    Recall that 
    \[\Omega_\chi(a_i,b_i) = \frac{1}{e(\chi)^2}\sum_{\xi_1,\xi_2\in\Stab}\mcF\Omega_{\chi}(\xi_1,\xi_2)\ov{\xi_1(a_i)}\ov{\xi_2(b_i)}\]
    For a given $\xi_2$, $\mcF\Omega_\chi(-,\xi_2)\ev{a_i^{-1}}(-)$ is a character of $\Stab$. Hence,
    \[\sum_{\xi_2\in\Stab} \mcF\Omega_\chi(\xi_1,\xi_2)\ov{\xi_1(a_i)}=\left\{\begin{array}{cl}
        |\Stab| & \text{if } \mcF\Omega_\chi(-,\xi_2)=\ev{a_i} \\
        0 & \text{if not}
    \end{array}\right.\]
    The first case can only happen if $\ev{a_i}$ vanishes at $\ker\mcF\Omega_\chi$. Assume that $\mcF\Omega_\chi(\xi,-)=\ev{a_i}$ for some $\xi$. Then, 
    \begin{align*}
        \Omega_\chi(a_i,b_i) &= \frac{|\Stab|}{e(\chi)^2}\xi(b_i)\sum_{\xi_2\in\ker\mcF\Omega_\chi}\ev{b^{-1}}(\xi_2) \\
        &= \left\{\begin{array}{cl}
        \frac{|\Stab|^2}{e(\chi)^4}\xi_1(b_i) & \text{if } \ev{b_i}\big|_{\ker\mcF\Omega} \equiv 1 \\
        0 & \text{if not}
    \end{array}\right.
    \end{align*}
    
    Hence, $\Omega_\chi(a,b)$ vanishes unless $\ev{a_i},\ev{b_i}$ vanish at $\ker\mcF\Omega_\chi$. But this can only happen if $\ker\mcF\Omega_\chi$ is trivial because $a_i,b_i$ generate $K$ when varying $i$. By the previous lemma, this is the same as $|\Stab|=e(\chi)^2$. 
    
    Assume from now on that $\mcF\Omega_\chi$ is non-degenerate. So that, It induces an isomorphism $\mcF\Omega_\chi^\#:\Irr[\Stab]\to \Stab$ such that $\mcF\Omega_\chi(\mcF\Omega_\chi^\#\phi,-)=\phi$ for any $\phi\in\Irr[\Stab]$. The previous computation shows that
    \[\Omega_\chi(a_i,b_i)= \ev{b_i}(\mcF\Omega_\chi^\#(\ev{a_i}))= \mcF\Omega_\chi(\mcF\Omega_\chi^\#(\ev{b_i}),\mcF\Omega_\chi^\#(\ev{a_i}))\]
    Hence, $()^*=\mcF\Omega_\chi^\#\circ\ev{-}$ satisfies the last claim.
\end{proof}

\begin{coro}
    Let $G\to K$ be a Clifford's setting and $\chi\in\Irr$ with $|\Stab|=e(\chi)^2$. Then $\Omega_\chi\in\bilplus[K]$, $|\ker\Omega_\chi| = \frac{|K|}{e(\chi)^2}$ and $\Omega_{\xi\chi}=\Omega_\chi$ for any $\xi\in\Irr[K]$.
\end{coro}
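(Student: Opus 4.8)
The plan is to obtain the corollary as a pure transport of structure along the surjection $()^{*}\colon K\to\Stab$ produced in the preceding lemma. Recall that under the hypothesis $|\Stab|=e(\chi)^{2}$ — which, by the lemma computing $|\ker\mcF\Omega_\chi|$, is exactly the statement that $\mcF\Omega_\chi$ is non-degenerate — that lemma furnishes $()^{*}=\mcF\Omega_\chi^{\#}\circ\ev{-}$, the composite of the evaluation map $\ev{-}\colon K\to\Irr[\Stab]$, $a\mapsto\ev{a}$ (a homomorphism, since $\ev{ab}(\xi)=\xi(ab)=\ev{a}(\xi)\ev{b}(\xi)$), with the isomorphism $\mcF\Omega_\chi^{\#}\colon\Irr[\Stab]\to\Stab$, together with the identity $\Omega_\chi(a,b)=\mcF\Omega_\chi(b^{*},a^{*})$. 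Combined with Proposition \ref{key-argument}, which places $\mcF\Omega_\chi$ in $\bilplus[\Stab]$, all three claims become short consequences.

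First I would check $\Omega_\chi\in\bilplus[K]$: since $()^{*}$ is a homomorphism and $\mcF\Omega_\chi$ is bilinear, $\Omega_\chi(a,bc)=\mcF\Omega_\chi(b^{*}c^{*},a^{*})=\Omega_\chi(a,b)\Omega_\chi(a,c)$, and anti-symmetry of $\mcF\Omega_\chi$ likewise descends to $\Omega_\chi$; moreover $\Omega_\chi(a,a)=\mcF\Omega_\chi(a^{*},a^{*})=1$ because $\mcF\Omega_\chi\in\bilplus[\Stab]$, and $\Omega_\chi$ is $\U(1)$-valued since $\mcF\Omega_\chi$ is. For the kernel: $b\in\ker\Omega_\chi$ iff $\mcF\Omega_\chi(b^{*},a^{*})=1$ for all $a\in K$; by surjectivity of $()^{*}$ the elements $a^{*}$ exhaust $\Stab$, so this says $b^{*}\in\ker\mcF\Omega_\chi$, which is trivial by non-degeneracy. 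Hence $\ker\Omega_\chi=\ker\big(()^{*}\big)$, the annihilator $\{a\in K:\xi(a)=1\text{ for all }\xi\in\Stab\}$ of $\Stab\subset\Irr[K]$, and Pontryagin duality gives it order $|K|/|\Stab|=|K|/e(\chi)^{2}$.

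Finally, for $\Omega_{\xi\chi}=\Omega_\chi$ with $\xi\in\Irr[K]$: the lemma stating $\mcF\Omega_{\xi\chi}=\mcF\Omega_\chi$, together with the observation there that $\Stab[\xi\chi]=\Stab$ (as $K$ is abelian) and the fact that $(\xi\chi)\big|_H=\chi\big|_H$ (so $e(\xi\chi)=e(\chi)$ and the hypothesis $|\Stab|=e(\chi)^{2}$ persists), shows that the map $()^{*}=\mcF\Omega_{\xi\chi}^{\#}\circ\ev{-}$ attached to $\xi\chi$ is literally the one attached to $\chi$; hence $\Omega_{\xi\chi}(a,b)=\mcF\Omega_{\xi\chi}(b^{*},a^{*})=\mcF\Omega_\chi(b^{*},a^{*})=\Omega_\chi(a,b)$. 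I do not anticipate a genuine obstacle here — it is bookkeeping on top of the earlier lemmas — the only point warranting care is the degenerate endpoint $e(\chi)=1$, where $\Stab$ is trivial, $()^{*}$ and $\mcF\Omega_\chi$ collapse, $\Omega_\chi\equiv 1$, and $\ker\Omega_\chi=K$, so that all three assertions must be seen to read off correctly in that case.
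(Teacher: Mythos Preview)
Your proposal is correct and follows exactly the route the paper indicates: the paper's own proof is the single sentence ``This follows from the properties of $\mcF\Omega_\chi$ and the fact that $()^*$ is surjective and is determined by $\mcF\Omega_\chi$,'' and you have simply spelled out those implicit details, pulling back bilinearity, anti-symmetry, the diagonal condition, and the kernel computation along the surjective homomorphism $()^{*}=\mcF\Omega_\chi^{\#}\circ\ev{-}$, and reading off $\Omega_{\xi\chi}=\Omega_\chi$ from $\mcF\Omega_{\xi\chi}=\mcF\Omega_\chi$.
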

\begin{proof}
    This follows from the properties of $\mcF\Omega_\chi$ and the fact that $()^*$ is surjective and is determined by $\mcF\Omega_\chi$.
\end{proof}

The last result finishes the proof of theorem \ref{character-formula} by setting $\Omega_\theta=\Omega_\chi$ for any $\chi$ over $\theta$ and noticing that $\theta\in\Irr[H]^K$ is equivalent to $|\Stab|=e(\chi)^2$ by Clifford's theory. We end this section with two observations of future use.

\begin{lema}
    Let $H\to G\to K$ be a Clifford's setting and $\theta\in\Irr[H]^K$. Assume that $K$ has at most two generators and choose an isomorphism $K\simeq \Z/n\Z\times \Z/m\Z$. Then 
    $\Omega_\theta((a,b),(a',b'))=\omega^{ab'-ba'}$ 
    for a root of unity $\omega$ of order $e(\chi)$, where $\chi$ ia a irreducible character of $G$ over $\theta$.
\end{lema}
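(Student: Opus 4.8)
The plan is to read the claim straight off Theorem~\ref{character-formula} once $\Omega_\theta$ is written in coordinates. Since $\theta\in\Irr[H]^K$ forces $|\Stab|=e(\chi)^2$ by Clifford's theory (for $\chi$ any irreducible character of $G$ over $\theta$), Theorem~\ref{character-formula} tells us two things: $\Omega_\theta\in\bilplus[K]$, and $|\ker\Omega_\theta|=|K|/e(\chi)^2$. So the whole task is to identify the elements of $\bilplus[K]$ when $K\simeq\Z/n\Z\times\Z/m\Z$ and then read off the order of the relevant root of unity from the size of its kernel.

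First I would record the elementary description of $\bilplus[K]$ for two-generated $K$ already sketched just before Theorem~\ref{character-formula}: an anti-symmetric bilinear $\Omega$ on $\Z/n\Z\times\Z/m\Z$ is determined by its value $\omega:=\Omega(e_1,e_2)$ on the standard generators $e_1,e_2$ — the diagonal values being $1$ by the $\bilplus$ normalization, and anti-symmetry giving $\Omega(e_2,e_1)=\omega^{-1}$ — so $\Omega((a,b),(a',b'))=\omega^{ab'-ba'}$; well-definedness on $K$ then forces $\ord\omega\mid\gcd(n,m)$. Applying this to $\Omega_\theta$ gives the stated formula for some root of unity $\omega$ with $\ord\omega\mid\gcd(n,m)$.

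It remains to see $\ord\omega=e(\chi)$, and here I would just use the kernel count. For $\Omega$ of the shape $\omega^{ab'-ba'}$ one checks directly that $\ker\Omega=\ord\omega\,\Z/n\Z\times\ord\omega\,\Z/m\Z$, so $|\ker\Omega_\theta|=nm/(\ord\omega)^2$; comparing with $|\ker\Omega_\theta|=|K|/e(\chi)^2=nm/e(\chi)^2$ from Theorem~\ref{character-formula} gives $(\ord\omega)^2=e(\chi)^2$, hence $\ord\omega=e(\chi)$. I do not expect a genuine obstacle: the lemma is essentially Theorem~\ref{character-formula} transported through the coordinate description of $\bilplus$ on a two-generated abelian group. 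The only point needing a moment of care is the cyclic/degenerate case, where $e(\chi)=1$ by Clifford's theorem and $\Omega_\theta\equiv 1$; this is subsumed by allowing $m=1$ (equivalently $\omega=1$) above, so no separate argument is required.
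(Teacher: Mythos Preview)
Your proposal is correct and follows essentially the same approach as the paper: use the classification of $\bilplus[K]$ for $K\simeq\Z/n\Z\times\Z/m\Z$ (which the paper records as an example just before Theorem~\ref{character-formula}) to get the form $\omega^{ab'-ba'}$, then compare the kernel size $nm/(\ord\omega)^2$ with the formula $|\ker\Omega_\theta|=|K|/e(\chi)^2$ from Theorem~\ref{character-formula}. The paper's proof is just a terser version of what you wrote.
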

\begin{proof}
    We already know that
    \[\Omega_\theta((a,b),(a',b'))=\omega^{ab'-ba'}\]
    for some root of unity $\omega$. Its kernel has size $\frac{nm}{\ord\omega^2}$ and the result follows. 
\end{proof}

\begin{lema}
    Let $H\to G\xrightarrow{\pi} K$ be a Clifford's setting and $K'\subset K$ a subgroup. Set $G'$ as the preimage of $K'$ by $\pi$. Let $\chi\in\Irr$ be a character with $\chi\big|_{H}=\theta^{e(\chi)}$, for some $\theta\in\Irr[H]$. Then
    \[\Omega_\chi(a,b)=\Omega_{\chi'}(a,b)\]
    for any $a,b\in K'$ and irreducible constituent $\chi'$ of $\chi\big|_{G'}$.
\end{lema}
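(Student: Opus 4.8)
The plan is to evaluate both $\Omega_\chi(a,b)$ and $\Omega_{\chi'}(a,b)$ directly from the defining formula
\[\Omega_\chi(a,b)=\frac{|K|^2\chi(1)}{e(\chi)^2}\int_{G_a}\int_{G_b}\chi([x,y])\,\d x\,\d y\]
(applied in the Clifford setting $H\to G\to K$, resp.\ $H\to G'\to K'$) and to show that each reduces to the single intrinsic quantity
\[\frac{\theta(1)}{|H|^2}\sum_{\substack{x\in\pi^{-1}(a)\\ y\in\pi^{-1}(b)}}\theta([x,y]),\]
which depends only on $\theta$, $H$ and the pair $(a,b)$ — not on the ambient group, and not on the character chosen over $\theta$.

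First I would record the bookkeeping facts. For $a\in K'$ the fibre $\pi^{-1}(a)$ is contained in $G'=\pi^{-1}(K')$, so the set $G'_a$ appearing in the Clifford setting $H\to G'\to K'$ coincides with $G_a$; moreover $|G|=|H|\,|K|$ and $|G'|=|H|\,|K'|$, so $|K|^2/|G|^2=|K'|^2/|G'|^2=1/|H|^2$ and the ambient normalization of $\int$ is the same in both settings. For $a,b\in K'$ and $x\in G_a$, $y\in G_b$ one has $\pi([x,y])=[a,b]=1$ because $K$ is abelian, hence $[x,y]\in H$. Finally, the hypothesis $\chi\big|_H=\theta^{e(\chi)}$ together with Clifford's theorem forces the $K$-orbit of $\theta$ to be $\{\theta\}$, i.e.\ $\theta\in\Irr[H]^K\subset\Irr[H]^{K'}$; since $\chi'$ is an irreducible constituent of $\chi\big|_{G'}$, its restriction $\chi'\big|_H$ is a multiple of $\theta$, and Clifford's theorem applied to $H\to G'\to K'$ then gives $\chi'\big|_H=\theta^{e_{G'/H}(\chi')}$. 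In particular $\chi(1)=e(\chi)\theta(1)$ and $\chi'(1)=e_{G'/H}(\chi')\theta(1)$.

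Then I substitute. For $a,b\in K'$ one has $\chi([x,y])=e(\chi)\theta([x,y])$ since $[x,y]\in H$ and $\chi\big|_H=e(\chi)\theta$; feeding this, together with $\chi(1)=e(\chi)\theta(1)$ and $|K|^2/|G|^2=1/|H|^2$, into the formula for $\Omega_\chi(a,b)$ makes all three factors of $e(\chi)$ cancel and produces precisely the intrinsic quantity displayed above. Performing the identical computation inside the Clifford setting $H\to G'\to K'$ — using $G'_a=G_a$, $G'_b=G_b$, $\chi'\big|_H=e_{G'/H}(\chi')\theta$, $\chi'(1)=e_{G'/H}(\chi')\theta(1)$ and $|K'|^2/|G'|^2=1/|H|^2$ — yields the same expression for $\Omega_{\chi'}(a,b)$. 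Comparing the two gives $\Omega_\chi(a,b)=\Omega_{\chi'}(a,b)$ for all $a,b\in K'$, as claimed.

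There is no genuine obstacle here: once the defining integral is expanded, the ramification indices cancel on each side independently, and what remains is manifestly a function of $\theta$, $H$ and $(a,b)$ alone, so it is insensitive to enlarging or shrinking the ambient group. The only point deserving a line of care is checking that the $K$-orbit of $\theta$ — and hence its $K'$-orbit — is a singleton, which guarantees that $\Omega_\chi$ and $\Omega_{\chi'}$ are the canonical functions $\Omega_\theta$ assigned by Theorem \ref{character-formula} to their respective Clifford settings; the lemma then asserts exactly that restricting the former from $K\times K$ to $K'\times K'$ recovers the latter.
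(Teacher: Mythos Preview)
Your proposal is correct and follows essentially the same approach as the paper's proof: both unwind the definition of $\Omega_\chi(a,b)$, use $\chi\big|_H=e(\chi)\theta$ to cancel the factors of $e(\chi)$, and observe that the remaining expression $\frac{\theta(1)}{|H|^2}\sum_{x\in G_a,\,y\in G_b}\theta([x,y])$ depends only on $\theta$, $H$ and $(a,b)$ --- the paper records this as the identity $|K|^2/|G|^2=|K'|^2/|G'|^2$. Your write-up is in fact more careful than the paper's, which leaves implicit the check that $\chi'\big|_H=e_{G'/H}(\chi')\theta$; your justification of this point via Clifford's theorem and the singleton orbit of $\theta$ is spot on.
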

\begin{proof}
    Note that
    \begin{align*}
        \Omega_\chi(a,b)&= |K|^2\int_{G_a}\int_{G_b}  \theta(aba^{-1}b^{-1})\d a\d b
    \end{align*}
    which is $\Omega_{\chi'}(a,b)$ as $\frac{|K|^2}{|K'|^2}=\frac{|G|^2}{|G'|^2}$.
\end{proof}

\subsection{Proof of proposition \ref{magic-formula}}

The proof is essentially the same as the one of proposition \ref{key-argument}, although we do not know a common framework. Let us consider
\[\nu(\xi):=\theta(1)\int_{H}\int_{H}\xi([xx_a,yy_b])\theta(xx_a,yy_b])\d x\d y\]
for fixed $x_a\in G_a$ and $y_b\in G_b$. So that $\Omega_{\xi\theta}(a,b)=\nu(\xi)$.

Let $\rho: G\to \GL V$ be the associated irreducible representation with $\theta$ and consider
\[\rho_x = \int_H \xi(x_ayy_bx_a^{-1}x^{-1}y_b^{-1}y^{-1})\rho(x_ayy_bx_a^{-1}x^{-1}y_b^{-1}y^{-1})\d y\]
for $y\in H$. Note that
\[\xi(x_ahx_a^{-1})\rho(x_a hx_a^{-1})\rho_x\xi(h)^{-1}\rho(h)^{-1}= \rho_x\]
for any $h\in H$. In other words, $\rho_x: V_\xi\to V'_{a\cdot\xi}$ is an equivariant morphism, where $V'$ is the irreducible representation $\rho(x_a -x_a^{-1})$. Hence, by Schur's lemma, $\rho_x$ is essentially a scalar. Recall that $a\cdot\theta=\theta$ and $(a\cdot \xi)\xi^{-1}\in \Stab[\theta]$. Pick isomorphisms $\zeta:V\to V_{(a\cdot \xi)\xi^{-1}}$ and $\phi:V\to V'$. Note that $\zeta$ is also an isomorphism between $V'_\xi$ and $V'_{a\cdot \xi}$. In this set up, $\rho_x =\frac{\tr \rho_x\phi^{-1}\zeta^{-1}}{\theta(1)} \zeta\phi$. However, $\tr(\rho(x_ayy_b x_a^{-1}x^{-1}y_b^{-1}y^{-1})\phi^{-1}\zeta^{-1})=\tr(\rho(x_ay_b x_a^{-1}x^{-1}y_b^{-1})\rho(y^{-1})\phi^{-1}\zeta^{-1}\rho(x_ayx_a^{-1}))$ and
\begin{align*}
    \int_{H}\xi(x_ayy_b x_a^{-1}x^{-1}y_b^{-1}y^{-1})\rho(y^{-1})\phi^{-1}\zeta^{-1}\rho(x_ayx_a^{-1})\d b &= \int_{H}\xi(x_ay_b x_a^{-1}x^{-1}y_b^{-1})\phi^{-1}\zeta^{-1}\d b\\
    &=\xi(x_ay_b x_a^{-1}x^{-1}y_b^{-1})\phi^{-1}\zeta^{-1}
\end{align*}
as $\phi\zeta$ is equivariant. Therefore,
\[ \rho_x = \frac{\xi(x_ay_b x_a^{-1}x^{-1}y_b^{-1})\tr(\rho(x_ay_b x_a^{-1}x^{-1}y_b^{-1})\phi^{-1}\zeta^{-1})}{\theta(1)}\zeta\phi.\]

It follows that $\nu(\xi)$ is
\begin{align*}
    &\int_H \tr(\xi(x)\rho(x)\zeta\phi)\tr(\xi(x_ay_b x_a^{-1}x^{-1}y_b^{-1})\rho(x_ay_b x_a^{-1}x^{-1}y_b^{-1})\phi^{-1}\zeta^{-1}) \d x=\\
    &\frac{1}{|H|}\sum_{i,j,k,l,s}(\zeta\phi)^{ji}(\zeta\phi)_{kl} (\xi(x_ay_bx_a^{-1}y_b^{-1})\rho(x_ay_bx_a^{-1}y_b^{-1}))_{is}\sum_{a\in H}  \rho(\xi(y_bxy_b^{-1})\rho(y_bxy_b^{-1}))^{sj}(\xi(x)\rho(x))_{lk}
\end{align*}
Recall that $b\cdot\theta =\theta$. Let $\psi:V\to V''$ an isomorphism between $\rho$ and $\rho(y_b-y_b^{-1})$. Hence, $(\xi(y_bxy_b^{-1})\rho(y_bxy_b^{-1}))^{sj}= \sum_{r,t}(\zeta\psi)_{sr}(\rho(x)x)^{rt}(\zeta\psi)^{tj}$ and
\begin{align*}
    \nu(\xi) &=  \frac{1}{|H|}\sum_{i,j,k,l,s,t,r}(\zeta\phi)^{ji}(\zeta\phi)_{kl}(\xi(x_ay_bx_a^{-1}y_b^{-1})\rho(x_ay_bx_a^{-1}y_b^{-1}))_{is}(\zeta\psi))_{sr}(\zeta\psi))^{tj} \sum_{a\in H}a^{rt}a_{lk}\\
    &= \frac{1}{\theta(1)}\sum_{i,j,k,l,s}(\zeta\phi)^{ji}(\zeta\phi)_{kl}(\xi(x_ay_bx_a^{-1}y_b^{-1})\rho(x_ay_bx_a^{-1}y_b^{-1}))_{is}(\zeta\psi)_{sk}(\zeta\psi)^{lj}\\
    &= \frac{\xi(x_ay_bx_a^{-1}y_b^{-1})}{\theta(1)}\tr(\rho(x_ay_bx_a^{-1}y_b^{-1})(\zeta\psi)\zeta\phi(\zeta\psi)^{-1}(\zeta\phi)^{-1})\\
    &= \frac{\xi(x_ay_bx_a^{-1}y_b^{-1})}{\theta(1)}\tr(\rho(x_ay_bx_a^{-1}y_b^{-1})\psi\zeta\phi\psi^{-1}\zeta^{-1}\phi^{-1})
\end{align*}
Now, by Schur's lemma, $\xi(x_ay_bx_a^{-1}y_b^{-1})(\zeta\phi)^{-1}\rho(x_ay_bx_a^{-1}y_b^{-1}))(\zeta\psi)\zeta\phi(\zeta\psi)^{-1}$ is the multiplication by $\nu(\xi)$. Note that $\nu$ does not depend on the choice of $\zeta$. Observe that $\zeta\psi\zeta\phi(\zeta\psi)^{-1}(\zeta\phi)^{-1}$ is equivariant. Hence,
\[\nu(\xi) = \frac{\xi(x_ay_bx_a^{-1}y_b^{-1})\tr(\rho(x_ay_bx_a^{-1}y_b^{-1}))}{\theta(1)}\tr(\psi\zeta\phi\psi^{-1}\zeta^{-1}\phi^{-1})\]

In particular, note that 
\[\nu(1)=\frac{\tr(\rho(x_ay_bx_a^{-1}y_b^{-1}))\tr(\psi\phi\psi^{-1}\phi^{-1})}{\theta(1)}=1.\]
because $\theta$ does not ramify in $G$. Hence, $\psi\phi\psi^{-1}\phi^{-1}$ is the multiplication by $\tr(\rho(x_ay_bx_a^{-1}y_b^{-1}))^{-1}$. Equivalently, $\phi^{-1}\psi^{-1}\phi\psi$ is the multiplication by $\tr(\rho(x_ay_bx_a^{-1}y_b^{-1}))$.

We check now $\nu(\xi_1\xi_2)=\nu(\xi_1)\nu(\xi_2)$. Note that if $\zeta_i:V\to V_{(a\cdot \xi_i)\xi_i^{-1}}$ is an isomorphism for $i=1,2$,  $\zeta_1\zeta_2:V\to V_{(a\cdot \xi_1\xi_2)(\xi_1\xi_2)^{-1}}$ is also an isomorphism and 
\begin{align*}
    \tr(\rho(x_ay_bx_a^{-1}y_b^{-1})) \psi\zeta_1\zeta_2\phi\psi^{-1}(\zeta_1\zeta_a)^{-1}\phi^{-1}&= \tr(\rho(x_ay_bx_a^{-1}y_b^{-1}))\psi\zeta_1\psi^{-1}(\psi\zeta_2\phi\psi^{-1}\zeta_2^{-1}\phi^{-1})\phi\zeta_1^{-1}\phi^{-1}\\
    &= \nu(\xi_2) \psi\zeta_1\psi^{-1}\phi\zeta_1^{-1}\phi^{-1}\\
    &= \nu(\xi_2)  \psi\zeta_1\phi(\phi^{-1}\psi^{-1}\phi\psi)\psi^{-1}\zeta_1^{-1}\phi^{-1}\\
    &= \nu(\xi_2) \tr(\rho(x_ay_bx_a^{-1}y_b^{-1}))\psi\zeta_1\phi\psi^{-1}\zeta_2^{-1}\phi^{-1} \\
    &= \nu(\xi_1)\nu(\xi_2) 
\end{align*}
which implies the desired property. 

\section{The case of block-like groups}\label{sec:The case}

\subsection{Set up}

In the next sections, we explore a particular Clifford's setting. Let $H_0\to G_0\xrightarrow{\pi_0}K_0$ a Clifford's setting with $K_0$ cyclic. For a given integer $A$, consider $G_A:=G_0^A$ and the induced map $\pi_A:G_A\to K_0$ given by $\pi(g)=\pi(g_1)\cdots \pi(g_A)$ and let $H$ be its kernel. By its very definition, $G_A/H\simeq K_0$. There is a natural extension $\hat{G}_A$ of $G_A$ by the cyclic group $F_A=\langle \sigma \rangle$ of order $A$. It is given by $(\sigma \cdot g)_i=g_{i+1}$. It is the regular wreath product of $G$ and $F_A$. Note that $\pi_A$ is $F_A$-invariant and therefore we also get an extension $\hat{H}$ of $H$. By construction, $\hat{G}_A/G_A\simeq \hat{H}/H\simeq F_A$ and $\hat{G}_A/\hat{H}\simeq K_0$. Hence,
\[\hat{G}_A/H=\hat{G}_A/(G_A\cap \hat{H})\simeq \hat{G}_A/G_A\times \hat{G}_A/\hat{H}\simeq F_A\times K_0.\]
Let $\ov{K}_0$ be a quotient of $K_0$ for which factors the action in $\Irr[\hat{H}]$. Take $G$ as an extension of $H$ by an abelian group $K$ which contains $\hat{H}$ and such that there is a morphism $K\subset F_A\times \ov{K}_0$ compatible with the actions in $\Irr[H]$. Set $F_B$ as the image of $K$ in $\ov{K}_0$. It is a cyclic group of a certain order $B$. Finally, assume that the induced map $K\simeq F_A\times F_B$ is an isomorphism. Fix a generator $\delta$ of $F_B$ and an element $\gamma\in K$ which is mapped to it. We call this a block-like Clifford's setting. More generally, we also consider subextensions $H\subset G'\subset G$ such that the maps $K'=G'/H\to F_A,F_B$ are surjective.

There is a trivial example where one puts $G=\hat{G}_A$. A more intricate one is the following. It is the situation where we are going to apply the results of this section. Fix an integer $n$, one of its divisors $A$, and a prime power $q$ with $n|q-1$. Consider $G_0=\GL_{\frac{n}{A}}(\F_q)$, $H_0=\SL_{\frac{n}{A}}(\F_q)$ and $K_0=\F_q^\times$. The map $\pi_0$ is the determinant. Here $\hat{G}_A$ is the subgroup of $\GL_n(\F_q)$ generated by the invertible $A\times A$-block-diagonal matrices, i.e., matrix of the form 
\[\left(\begin{array}{cccccc}
    M_1 & 0 & \ldots & 0 & 0 \\
    0 & M_2 & \ddots & \vdots & \vdots \\
    \vdots & \ddots & \ddots & 0 & 0 \\
    0 & \hdots & 0 & M_{A-1} & 0 \\
    0 & \hdots & 0 & 0 & M_A
\end{array}\right)\]
with $M_i\in \GL_{\frac{n}{A}}(\F_q)$, and the permutation matrix $\sigma$
\[\left(\begin{array}{ccccc}
    0 & \operatorname{Id}_{\F_q^{\frac{n}{A}}} & 0  & \hdots & 0 \\
    0 & 0 & \operatorname{Id}_{\F_q^{\frac{n}{A}}} &  \ddots & \vdots \\ 
    \vdots & \ddots  & \ddots & \ddots & 0 \\  
    0 & 0 & 0 &  \ddots & \operatorname{Id}_{\F_q^{\frac{n}{A}}} \\
    \operatorname{Id}_{\F_q^{\frac{n}{A}}} & 0 & 0  & \hdots & 0 
\end{array}\right)\]
which has order $A$. The group $H$ is thus the one of block-diagonal matrices of determinant one. 

An extension $G$ in the conditions given before can be built as follows. Note that any $n$-power of $K_0$ acts trivially in $\Irr[\hat{H}]$. Hence, the action factorize trough $\ov{K}_0:=\F_q^\times / (\F_q^\times)^n \simeq F_n$ Let $\gamma_0\in \F_{q^n}$ and $\gamma$ be the diagonal matrix
\[\left(\begin{array}{cccccc}
    \gamma_0 & 0 & \ldots & 0 & 0 \\
    0 & \gamma_0 & \ddots & \vdots & \vdots \\
    \vdots & \ddots & \ddots & 0 & 0 \\
    0 & \hdots & 0 & \gamma_0 & 0 \\
    0 & \hdots & 0 & 0 & \gamma_0^{1-n}
\end{array}\right)\]
We let $G$ be the subgroup of $\GL_n(\ov{\F}_q)$ generated by $H$, $\sigma$ and $\gamma$. In the next lemma we check that $K=G/H$ is isomorphic to $F_A\times \langle \pi(\gamma) \rangle = 1$, where $\pi:G\to K$. Finally, note that $\pi(\gamma)$ acts in $\Irr[H]$ as $\delta:=[\gamma_0^{-n}]\in \ov{K}_0$. Hence, we are in the set-up described before. Note that the order of $\delta=\pi(\gamma)$ is the least $B$ for which $\gamma_0\in \F_{q^B}$. 

\begin{lema}
    In the situation described above, $H$ is normal in $G$ and $K\simeq F_A\times F_B$.
\end{lema}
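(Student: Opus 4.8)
The plan is to verify the two claims directly for the explicit group $G=\langle H,\sigma,\gamma\rangle$, bearing in mind throughout that $H$ consists of block-diagonal matrices with entries in $\F_q$, so membership in $H$ is a genuine constraint.

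First I would show that $\sigma$ and $\gamma$ both normalise $H$, which gives $H\trianglelefteq G$ and hence that $K=G/H$ is a group generated by $\ov\sigma$ and $\ov\gamma$. Conjugation by $\sigma$ cyclically permutes the $A$ diagonal blocks, hence preserves $G_A$ and leaves $\pi_A$ invariant, so it preserves $H=\ker\pi_A$ (equivalently, $\hat H=H\rtimes F_A$ is already a group). The matrix $\gamma$ is itself block-diagonal for the same partition into $A$ blocks of size $n/A$: the first $A-1$ blocks are the scalar $\gamma_0 I$ and the last is $\operatorname{diag}(\gamma_0,\dots,\gamma_0,\gamma_0^{1-n})$. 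So conjugation by $\gamma$ sends $(M_1,\dots,M_A)\in G_A$ to $(\gamma_1M_1\gamma_1^{-1},\dots,\gamma_AM_A\gamma_A^{-1})$, which again lies in $G_A$ with the same block-determinants, hence lies in $H$ when $(M_1,\dots,M_A)$ does. Since $G$ is generated by $H$ together with $\sigma$ and $\gamma$, all of which normalise $H$, we get $H\trianglelefteq G$.

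Next I would prove $K$ is abelian by computing $[\sigma,\gamma]=(\sigma\gamma\sigma^{-1})\gamma^{-1}$. Conjugating $\gamma$ by $\sigma$ merely changes which block is the ``special'' one $\operatorname{diag}(\gamma_0,\dots,\gamma_0,\gamma_0^{1-n})$, so $[\sigma,\gamma]$ is block-diagonal, equal to the identity on every block but two, where it is $\operatorname{diag}(1,\dots,1,\gamma_0^{-n})$ and $\operatorname{diag}(1,\dots,1,\gamma_0^{\,n})$ respectively. Because $\delta=[\gamma_0^{-n}]$ is by construction an element of $\ov K_0=\F_q^\times/(\F_q^\times)^n$, we have $\gamma_0^{\,n}\in\F_q^\times$; so $[\sigma,\gamma]$ has all entries in $\F_q$, lies in $G_A$, and has determinant $1$, whence $[\sigma,\gamma]\in H$. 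Thus $\ov\sigma$ and $\ov\gamma$ commute and $K=\langle\ov\sigma\rangle\langle\ov\gamma\rangle$ is abelian.

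Finally I would pin down the orders and show the product is direct. For $0<k<A$ the matrix $\sigma^k$ is a nontrivial block-permutation matrix, hence not block-diagonal, hence not in $G_A\supseteq H$; together with $\sigma^A=\operatorname{Id}$ this shows $\ov\sigma$ has order $A$. The matrix $\gamma^k$ is block-diagonal with determinant $1$ and entries among $\gamma_0^{\,k}$ and $\gamma_0^{\,k(1-n)}=\gamma_0^{\,k}(\gamma_0^{\,n})^{-k}$, so $\gamma^k\in H$ if and only if $\gamma_0^{\,k}\in\F_q$, i.e.\ if and only if $[\F_q(\gamma_0):\F_q]\mid k$; hence $\ov\gamma$ has order $[\F_q(\gamma_0):\F_q]$, which equals the order $B$ of $\delta$. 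And if $\ov\sigma^{\,i}=\ov\gamma^{\,j}$ then $\sigma^i\gamma^{-j}\in H\subseteq G_A$, yet $\sigma^i\gamma^{-j}$ is a monomial matrix whose nonzero pattern is block-diagonal only when $A\mid i$; so $A\mid i$, forcing $\ov\sigma^{\,i}=1$ and then $\ov\gamma^{\,j}=1$. Therefore $\langle\ov\sigma\rangle\cap\langle\ov\gamma\rangle=1$ and $K\simeq F_A\times F_B$. The only mildly delicate part of the whole argument is keeping the block indices straight in the commutator and invoking $\gamma_0^{\,n}\in\F_q^\times$ at the right moments; everything else is routine bookkeeping with monomial matrices.
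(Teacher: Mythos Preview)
Your proof is correct and follows essentially the same approach as the paper: verify normality by checking that $\sigma$ and $\gamma$ normalise $H$, show $K$ is abelian via $[\sigma,\gamma]\in H$, compute the orders of $\ov\sigma$ and $\ov\gamma$, and check the two cyclic factors intersect trivially. The paper's version is terser---it observes $\gamma=(\gamma_0 I)\cdot h$ with $h=\operatorname{diag}(1,\dots,1,\gamma_0^{-n})\in H$ and central factor, so $[\sigma,\gamma]=[\sigma,h]\in H$---while you compute the commutator block by block; both rely on $\gamma_0^{\,n}\in\F_q^\times$, which you correctly extract from the definition of $\delta$. For the trivial intersection, the paper applies $\frob$ to $\sigma^i\gamma^j\in H$ to force $\gamma_0^{\,j}\in\F_q$ (hence $B\mid j$) first, whereas you read off $A\mid i$ from the monomial support first; either order works.

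One small caveat: the implication ``$\gamma_0^{\,k}\in\F_q$ \emph{i.e.} $[\F_q(\gamma_0):\F_q]\mid k$'' is not valid for a general element of a finite field, so that parenthetical identification of $\ord\ov\gamma$ with the field degree should be dropped or justified using the ambient hypotheses ($n\mid q-1$ and $\gamma_0^{\,n}\in\F_q$). It does not affect the proof of the lemma itself, since all you need is that $\ov\gamma$ has \emph{some} finite order $B$ and that $\langle\ov\sigma\rangle\cap\langle\ov\gamma\rangle=1$.
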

\begin{proof}
    First, $H$ is normal in $G$ as both $\gamma$ and $\sigma$ have determinant one and do not change the block condition. Let $K=G/H$. Note that $\gamma^B\in H$ but not any of its previous powers and $\sigma$ has order $A$. Moreover, $\gamma$ is a product of a central matrix in $\GL_n(\ov{\F_q})$ and one in $H$. Namely, $(\gamma_0,\ldots,\gamma_0)$ and $(1,\ldots,1,\gamma_0^{-n})$. Hence, $[\sigma,\gamma]\in H$. This shows that $K$ is abelian and has two generators of order $A$ and $B$. 
    
    Now assume that $\sigma^i\gamma^j\in H$. Taking Frobenius we find that $\frob(\gamma_0^{j})=\gamma_0^{j}$. Then $B|j$ and $\sigma^i\in H$ which implies $A|i$. Hence $K=F_A\times F_B$.  
\end{proof}

\subsection{Classification of fixed characters}

The main result of this subsection is a classification of all $K'$-fixed characters of $H$ in the setting discussed above. We remark that characters of wreath products are classified in general, see for example \cite[subsection 2.4.1]{Wreath}. Our approach is somewhat similar but focusing on invariant characters makes it simple. 

Note that $\Irr[G_A]=\Irr[G_0]^A$. Hence, given characters $\eta_0$ and $\xi$ of $G_0$ and $K_0$ respectively, we can build a character $\eta_{\eta_0,\xi}$ by 
\[\eta_{\eta_0,\xi}(g_1,\ldots,g_A)=\prod_{i=1}^A \xi^{i-1}(\pi(g_i))\eta_1(g_i)\]
for $(g_1,\ldots,g_A)\in G_A$. Note that $\sigma\cdot\eta_{\eta_0,\xi} = \xi^{-A}(g_A)\xi\eta_{\eta_0,\xi}$ and $\Stab[\eta_{\eta_0,\xi}]=\Stab[\eta_0]$. Denote with $I_{A,B}$ the set of pairs $(\eta_0,\xi)$ such that $\xi^A\in \Stab[\eta_0]$ and $|\Stab[\eta_0]|$ divides $\frac{|\ov{K_0}|}{B}$. The previous construction gives us a map $\Theta: I_{A,B}\to \Irr[H]/K_0$ given by sending a pair $(\eta_0,\xi)$ to the irreducible constituents of $\eta_{\eta_0,\xi}\big|_H$. 

\begin{thm}\label{clasf-char}
    In a block-like Clifford's set up, $\Irr[H]^{K'} = \Irr[H]^K$ and $\Theta$ induces a bijective correspondence between $\Irr[H]^K/K_0$ and $I_{A,B}/\sim$ where $(\eta_0',\xi')\sim (\eta_0,\xi)$ if and only if $\eta_0$ is in the $\Irr[K_0]$-orbit of $\eta_0'$ and $\xi^{-1}\xi'\in \Stab[\eta_0]$. 
\end{thm}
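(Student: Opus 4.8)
The plan is to pull the computation of $\Irr[H]^{K}$ back to the symmetric structure of $\Irr[G_A]=\Irr[G_0]^{\otimes A}$ via Clifford's theorem for the cyclic extension $H\to G_A\to K_0$, and then to read off the two defining conditions of $I_{A,B}$ from $\sigma$-invariance and $\gamma$-invariance separately. By Clifford's theorem the rule $\eta\mapsto\{\text{irreducible constituents of }\eta|_H\}$ is a bijection from the set of $\Irr[K_0]$-twist orbits in $\Irr[G_A]$ onto $\Irr[H]/K_0$; and since conjugation by $H$ acts trivially on $\Irr[H]$, the $\sigma$- and $\gamma$-actions of $K=F_A\times F_B$ on $\Irr[H]$ commute and $\Irr[H]^{K}$ is exactly the set of characters fixed by both. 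So I would first parametrise the $\sigma$-stable twist orbits, then impose $\gamma$-invariance, and throughout check that for the admissible $\eta$ each individual constituent of $\eta|_H$ is genuinely $K$-fixed.

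For the $\sigma$-part, write $\eta=\eta_1\otimes\cdots\otimes\eta_A$. The shift $\sigma$ sends $\eta$ to $\eta_2\otimes\cdots\otimes\eta_A\otimes\eta_1$ while the twist by $\xi\in\Irr[K_0]$ multiplies every factor by $\xi\circ\pi_0$, so the twist orbit $[\eta]$ is $\sigma$-stable iff there is $\xi$ with $\eta_{i+1}=(\xi\circ\pi_0)\eta_i$ for all $i\bmod A$. Solving the recursion forces $\eta_i=(\xi^{i-1}\circ\pi_0)\eta_1$, i.e. $\eta=\eta_{\eta_0,\xi}$ with $\eta_0:=\eta_1$, and the wrap-around constraint is exactly $\xi^{A}\in\Stab[\eta_0]$. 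Using (for such a pair) $\sigma\cdot\eta_{\eta_0,\xi}=(\xi\circ\pi_A)\eta_{\eta_0,\xi}$ and $(\zeta\circ\pi_A)\eta_{\eta_0,\xi}=\eta_{\zeta\eta_0,\xi}$, one checks that two admissible pairs produce twist-equivalent characters of $G_A$ precisely when $\eta_0$ and $\eta_0'$ lie in one $\Irr[K_0]$-orbit and $\xi^{-1}\xi'\in\Stab[\eta_0]$, i.e. precisely when $(\eta_0,\xi)\sim(\eta_0',\xi')$. This makes $\Theta$ well defined and injective on $\{(\eta_0,\xi):\xi^{A}\in\Stab[\eta_0]\}/\!\sim$, with image the $\sigma$-stable $K_0$-orbits. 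I would then argue that each constituent $\theta$ of such an $\eta_{\eta_0,\xi}|_H$ is honestly $\sigma$-fixed: the twist $\xi\circ\pi_A$ is trivial on $H$ wherever $\eta_{\eta_0,\xi}$ is non-zero (since $\eta_0(h)\neq 0$ together with $\xi^{A}\in\Stab[\eta_0]$ forces $\xi^{A}(\pi_0h)=1$), and a Clifford-correspondence argument for $H\to\hat H\to F_A$ should upgrade this to fixedness of the individual summand.

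For the $\gamma$-part, $\gamma$ acts on $\Irr[H]$ through $\delta\in\ov{K_0}$, which by the defining property of $\ov{K_0}$ is the $K_0$-action followed by the projection $K_0\twoheadrightarrow\ov{K_0}$. By Clifford's theorem the restriction $\eta_{\eta_0,\xi}|_H$ splits into $|\Stab[\eta_0]|$ distinct $K_0$-conjugates sharing the same (abelian) $K_0$-stabilizer, of index $|\Stab[\eta_0]|$, whose image in $\ov{K_0}$ is the unique subgroup of order $|\ov{K_0}|/|\Stab[\eta_0]|$; since $\ov{K_0}$ is cyclic, the order-$B$ subgroup $\langle\delta\rangle$ lies inside it iff $B$ divides $|\ov{K_0}|/|\Stab[\eta_0]|$, i.e. iff $|\Stab[\eta_0]|$ divides $|\ov{K_0}|/B$, in which case all of the constituents are $\gamma$-fixed at once. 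Combining the two parts, the $K_0$-orbit $\Theta(\eta_0,\xi)$ is contained in $\Irr[H]^{K}$ exactly when $(\eta_0,\xi)\in I_{A,B}$; conversely every $\theta\in\Irr[H]^{K}$ has $\sigma$-stable restriction class, hence is a constituent of some $\eta_{\eta_0,\xi}|_H$ as above, so $\Theta$ would restrict to a bijection $I_{A,B}/\!\sim\ \to\ \Irr[H]^{K}/K_0$.

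Finally, to obtain $\Irr[H]^{K'}=\Irr[H]^{K}$, one inclusion is trivial as $K'\leq K$, and for the converse I would run the same argument with $K'$. Because $K'$ surjects onto $F_A$, a $K'$-fixed $\theta$ is fixed by an element of $K'$ mapping to the generator of $F_A$; its $\ov{K_0}$-component acts trivially on restriction classes, so the restriction class of $\theta$ is $\sigma$-stable, $\theta$ is a constituent of some $\eta_{\eta_0,\xi}|_H$ with $\xi^{A}\in\Stab[\eta_0]$, and $\theta$ is honestly $\sigma$-fixed by the previous paragraph. Because $K'$ also surjects onto $F_B$ and the $\sigma$- and $\gamma$-actions commute, the same reasoning makes $\theta$ honestly $\gamma$-fixed, whence $|\Stab[\eta_0]|$ divides $|\ov{K_0}|/B$ and $\theta\in\Irr[H]^{K}$. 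The step I expect to be the real obstacle is precisely this passage, in paragraph two, from $\sigma$-stability of a restriction class to honest $\sigma$-fixedness of each of its constituents, together with the careful bookkeeping of the three stabilizers $\Stab[\eta_0]\subseteq\Irr[K_0]$, the $K_0$-stabilizer of $\theta$, and its image in $\ov{K_0}$, which is what produces the exact divisibility condition defining $I_{A,B}$.
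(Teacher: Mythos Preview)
Your overall architecture matches the paper's: lift to $\Irr[G_A]$ via Clifford for the cyclic extension $H\to G_A\to K_0$, identify the $\sigma$-stable twist orbits with the pairs $(\eta_0,\xi)$ satisfying $\xi^A\in\Stab[\eta_0]$, and then read off the $\gamma$-condition as $|\Stab[\eta_0]|\mid |\ov{K}_0|/B$. Your identification of the fibres of $\Theta$ (hence the relation $\sim$) is also correct and matches the paper.

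The genuine gap is exactly the one you flag. Your sentence ``the twist $\xi\circ\pi_A$ is trivial on $H$ wherever $\eta_{\eta_0,\xi}$ is nonzero'' only establishes $\sigma\cdot(\eta_{\eta_0,\xi}|_H)=\eta_{\eta_0,\xi}|_H$ as \emph{characters}; it does nothing to rule out that $\sigma$ permutes the $|\Stab[\eta_0]|$ irreducible constituents among themselves. Invoking ``a Clifford-correspondence argument for $H\to\hat H\to F_A$'' does not help here: Clifford for that cyclic extension tells you the $F_A$-orbits on $\Irr[H]$ are packaged by $\Irr[\hat H]$, but gives you no handle on whether a given $\theta$ has trivial $F_A$-orbit. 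This step is the actual content of the theorem, and your proposal leaves it open.

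The paper's device is to restrict one level further, to $H_A:=H_0^A\subset H$. One checks that $\eta_{\eta_0,\xi}|_{H_A}=\prod_i\eta_0(g_i)$ decomposes multiplicity-free into the products $\kappa_\phi(g)=\prod_i\kappa_{\phi(i)}(g_i)$, $\phi\in(\Z/m\Z)^A$, where $\kappa_1,\ldots,\kappa_m$ are the constituents of $\eta_0|_{H_0}$ and $m=|\Stab[\eta_0]|$. Because $H/H_A$ is the sum-zero subgroup of $K_0^A$ and acts on the $\kappa_\phi$ through the sum-zero subgroup $K''\subset(\Z/m\Z)^A$ by translation on $\phi$, each $\theta|_{H_A}$ is the sum of $\kappa_\phi$ over one full coset $m_0+K''$. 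Such a sum is visibly invariant under the cyclic shift $\sigma$ of the index set. Since the restriction $\Irr[H]\ni\theta\mapsto\theta|_{H_A}$ is injective on the constituents (no ramification), $\theta$ itself is $\sigma$-fixed. Once you have this, your deductions of $\gamma$-fixedness and of $\Irr[H]^{K'}=\Irr[H]^K$ go through as written; without it, both collapse.
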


We write $\theta_{\eta_0,\xi}$ for any irreducible constituent of $\eta_{\eta_0,\xi}$ and $\chi_{\eta_0,\xi}$ for any irreducible character of $G$ over $\theta_{\eta_0,\xi}$. The proof is done in several steps.

\begin{lema}
    In a block-like Clifford's setting, let $\theta$ be a character of $H$ fixed by $\sigma\delta^\ell$. Then $\theta$ is fixed by $\sigma$ and there exists characters $\eta_0$ of $G_0$ and $\xi$ of $K_0$ with $\xi^A\cdot\eta_0=\eta_0$ such that $\theta$ is an irreducible constituent of $\eta_{\eta_0,\xi}\big|_H$.
\end{lema}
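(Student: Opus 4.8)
The plan is to move the question to $\Irr[G_A]=\Irr[G_0]^A$ by Clifford theory for the cyclic extension $H\to G_A\to K_0$ — for which the quoted theorem gives ramification index $1$, so restrictions to $H$ are multiplicity-free and their constituents are exactly the $K_0$-orbits — and then to read off both assertions from the explicit actions of $\sigma$ and $\gamma$. Fix an irreducible constituent $\eta=\eta_1\otimes\cdots\otimes\eta_A$ of $\mathrm{Ind}_H^{G_A}\theta$, with $\eta_i\in\Irr[G_0]$. I would record two preliminary remarks: (a) $\gamma$ normalizes $G_A$ and acts on it by an automorphism that is the identity on the first $A-1$ factors and conjugation by a fixed matrix of $G_0=\GL_{n/A}(\F_q)$ on the last one — the point being that the relevant block of $\gamma$ is $\gamma_0$ times a matrix over $\F_q$, since $\gamma_0^{-n}\in\F_q^\times$ — so, being inner on each factor, $\gamma$, and hence $\delta$, acts trivially on $\Irr[G_A]$; (b) the set $S_\theta$ of characters of $G_A$ lying over $\theta$ is a single $\Irr[K_0]$-orbit, because it is in Clifford bijection with $\Irr[\Stab[K_0]\theta]$ and $\Irr[K_0]$ acts on it through the restriction map $\Irr[K_0]\twoheadrightarrow\Irr[\Stab[K_0]\theta]$, which is surjective.

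Using (a), since $\theta$ is $\sigma\delta^\ell$-fixed and $\delta$ fixes $\eta$, the character $\sigma\cdot\eta=\sigma\delta^\ell\cdot\eta$ lies over $\sigma\delta^\ell\cdot\theta=\theta$, so $\sigma\cdot\eta\in S_\theta$; by (b) this forces $\sigma\cdot\eta=(\xi\circ\pi_A)\,\eta$ for some $\xi\in\Irr[K_0]$. Reading this equality factor by factor gives $\eta_{i+1}=\xi\cdot\eta_i$ for every $i$ taken cyclically, hence $\eta_i=\xi^{\,i-1}\cdot\eta_1$ and, closing the cycle, $\xi^{A}\cdot\eta_1=\eta_1$. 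Thus $\eta=\eta_{\eta_1,\xi}$ with $\xi^A\cdot\eta_1=\eta_1$, and $\theta$ is a constituent of $\eta_{\eta_1,\xi}|_H$, which is the second assertion (with $\eta_0=\eta_1$).

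The step I expect to be the main obstacle is the first assertion, that $\theta$ is actually fixed by $\sigma$ (not merely by $\sigma\delta^\ell$). Since $\pi_A$ is $\sigma$-invariant, the $\sigma$-action and the $K_0$-action on $\Irr[H]$ commute, so $\sigma$ acts on the $K_0$-orbit $O$ of $\theta$ — which by the first paragraph is the set of constituents of $\eta|_H$ — as translation by some $t_0\in K_0/\Stab[K_0]\theta$; the identity $\sigma\cdot\theta=\delta^{-\ell}\cdot\theta$ shows $t_0$ is the $\delta^\ell$-translation, so $\ord{t_0}$ divides $\ord\delta$, divides $A$ (as $\sigma^A=1$ in $\hat H/H$), and divides $|O|=|\Stab[\eta_1]|\mid n/A$. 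To conclude $t_0=0$ I would use that the permutation $\sigma$ induces on $O$ is realized on the representation space by the cyclic shift $P_\sigma$ of $V_0^{\otimes A}$ (where $V_0$ affords $\eta_1$): $P_\sigma$ conjugates the $\sigma$-twisted $H$-action back to the original one, hence permutes the $\theta'$-isotypic subspaces ($\theta'\in O$) exactly as $\sigma$ does. If that permutation had a nontrivial cycle — necessarily all cycles have the common length $\ord{t_0}=r$ — then the eigenvalue multiplicities of $P_\sigma$ would be invariant under multiplication by a primitive $r$-th root of unity; but $\tr(P_\sigma^k)=(\dim V_0)^{\gcd(k,A)}$, and the resulting multiplicities $m_\zeta=\tfrac1A\sum_k(\dim V_0)^{\gcd(k,A)}\bar\zeta^{\,k}$ do not have that symmetry when $r>1$ (the case $\dim V_0=1$ being trivial, since then $\eta|_H$ is a single character). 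Hence $r=1$, i.e. $\sigma$ fixes every constituent of $\eta|_H$, in particular $\theta$.
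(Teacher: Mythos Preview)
Your derivation of $\eta_0$ and $\xi$ is essentially the paper's, but cleaner: you observe directly that $\sigma\cdot\eta=(\xi\circ\pi_A)\eta$ and read off $\eta_{i+1}=\xi\eta_i$, whereas the paper first descends to $H_0$ and reassembles. Your justification of (a) is specific to the matrix example; in the abstract block-like setting $\gamma$ need not even normalise $G_A$. The fix is the paper's: by definition of a block-like setting $\delta$ acts on $\Irr[H]$ through $\ov{K}_0$, a quotient of $K_0=G_A/H$, so $\delta^\ell$ already fixes the restriction $\eta|_H$ and hence $\sigma$ does, which is all you need.

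The eigenvalue argument for ``$\sigma$ fixes $\theta$'' has a real gap. Writing $V=V_0^{\otimes A}$ with the $i$-th factor carrying $\rho_i=\xi^{i-1}\rho_1$, a direct check gives
\[
P_\sigma\circ(\sigma\!\cdot\!\rho)(g)\;=\;\xi^{A}(\pi_0(g_1))\,\rho(g)\circ P_\sigma\qquad(g\in H),
\]
so $P_\sigma$ intertwines the $\sigma$-twisted and the original $H$-actions only when $\xi^{A}=1$. You know only $\xi^{A}\in\Stab[\eta_1]$, and $\xi$ is determined just up to $\Stab[\eta_1]$; you cannot in general choose $\xi$ with $\xi^{A}=1$ (the $A$-th power map on the cyclic group $\Stab[\eta_1]$ is not surjective when $\gcd(A,|\Stab[\eta_1]|)>1$). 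The correct intertwiner is $T^{-1}P_\sigma$ with $T=T_1\otimes I\otimes\cdots\otimes I$ and $T_1\rho_1(g)=\xi^{A}(\pi_0(g))\rho_1(g)T_1$; its traces involve $T_1$ and the clean formula $\tr(P_\sigma^k)=(\dim V_0)^{\gcd(k,A)}$ is lost, so the symmetry-breaking argument no longer goes through as written. Note also that $|O|=|\Stab[\eta_1]|=1$ is exactly the case where the conclusion is trivial; the obstruction appears precisely in the interesting range.

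The paper avoids this by a different, more elementary route: it restricts further to $H_A=H_0^{A}$, where $\eta$ is unramified and the constituents $\kappa_\phi$ are indexed by $\phi\in(\Z/m\Z)^{A}$ with $m=|\Stab[\eta_1]|$. One checks that each $\theta|_{H_A}$ is the sum of $\kappa_\phi$ over a single coset of the sum-zero subgroup $K''\subset(\Z/m\Z)^{A}$; these cosets are visibly $\sigma$-invariant, and distinct $\theta$'s give distinct cosets, forcing $\sigma\cdot\theta=\theta$.
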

\begin{proof}
    If $A=1$ there is nothing to prove. In consequence, we assume that $A>1$. Consider the embedding $H \subset G_A$, which is a cyclic extension. There exists a character $\eta=(\eta_i)\in \Irr[G_A]=\Irr[G_0]^A$ such that $\eta\big|_H$ has $\theta$ as an irreducible constituent. The action of $\sigma$ is given by cyclically shifting the $\eta_i$. 

    The action of $K$ commutes with the action of $K_0$. Hence, if $\sigma\delta^l$ fixes $\theta$, it fixes $\eta\big|_H$ as well. Moreover, $\delta^\ell$ fixes $\eta\big|_H$ because $K_0$ fix it. Hence, $\sigma$ fixes $\eta\big|_H$. Putting all $g_i=1$ except one, we get $\eta_1\big|_{H_0}=\lambda_{i,j} \eta_j\big|_{H_0}$ for $\lambda_{j} = \frac{\eta_j(1)}{\eta_i(1)}$. But both are irreducible. Then $\eta_1\big|_{H_0}=\eta_j\big|_{H_0}$. Hence, there exists $\xi_{j}\in \operatorname{Irr}(K_0)$ such that $\eta_j=\xi_{j}\eta_1$. The property now becomes
     \[\prod_{j=1}^{A}\xi_{j}(g_{j}) \eta_1(g_{j}) = \prod_{j=1}^{A}\xi_{j}(g_{j+1}) \eta_1(g_{j})  \]
     for any $(g_i)\in H$. In consequence, there exists $\xi\in\operatorname{Irr}(K_0)$ such that the previous equality holds for arbitrary $g$ if multiplied by $\xi$. With all $g_i=1$ except one, we find
    \[\xi_{j}\eta_1=\xi \xi_{j-1}\eta_1\]
    and $\eta_{j}=\xi^{j-1}\eta_1$ for $1\leq j\leq A$. In addition, $\eta_A=\xi^{-1}\eta_1$ and
    \[\xi^{-1}\eta_1=\xi_{A}\eta_1=\xi \xi_{A-1}\eta_1=\xi^{A-1}\eta_1 \]
    Hence, $\xi^A\eta_1 = \eta_1$ and $\sigma\cdot \eta_1 = \xi \cdot\eta_1$. Set $\eta_0=\eta_1$ so that $\eta=\eta_{\eta_0,\xi}$. 

    To finish we need to prove that $\theta$ is fixed by $\sigma$. We are going to use the subgroup $H_A:=H_0^A$ of $H$. The restriction of $\eta$ to this subgroup is easy to describe. Write $\eta_0\big|_{H_0}$ as a sum of its irreducible constituent $\kappa_1,\ldots,\kappa_m$. Note that $m=|\Stab[\eta_0]|=|\Stab[\eta]|$. We have
    \[\eta(g_1,\ldots,g_A)=\prod \eta_0(g_i) \]
    for $(g_i)\in H_A$. Hence, the irreducible constituents of 
    $\eta\big|_{H_A}$  are the products 
    \[\kappa_{\phi}(g_1,\ldots,g_A)= \prod_{i=1}^A \kappa_{\phi(i)}(g_i) \]
    where $\phi$ runs over $(\Z/m\Z)^A$. In particular, $\eta$ is unramified over $H_A$. Hence, as $\eta\big|_H$ is $\sigma$-invariant, it is enough to show that $\theta\big|_{H_A}$ is $\sigma$-invariant. 

    We may assume that $\kappa_i=\beta^{i-1}\kappa_1$ for $\beta$ a generator of $\Z/m\Z = K_0/\mathrm{Stab}_{\kappa_1}$. The quotient $H/H_A$ is the subgroup of $K_0^A$ of tuples with product one. The action in $\kappa_\phi$ factors through the subgroup $K''\subset (\Z/m\Z)^A$ of tuples of sum zero and is given by translating $\phi$. Hence, the orbits are contained inside the $m$ cosets of $K''$. But each $\theta$ gives us a different orbit and there $m$ of them. Hence, 
    \[\theta\big|_{H_A} = \sum_{\phi\in m_0K''}\kappa_\phi\]
    which is invariant by $\sigma$.
\end{proof}

\begin{lema}
    In a block-like Clifford's setting, let $\theta$ a character fixed by $K'$. Then it is fixed by $K$ and there exists a pair $(\eta_0,\xi)\in I_{A,B}$ such that $\theta$ is an irreducible constituent of $\eta_{\eta_0,\xi}\big|_H$. 
\end{lema}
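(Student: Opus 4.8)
The plan is to bootstrap from the preceding lemma, which already handles an element of the shape $\sigma\delta^\ell$. Since the projection $K'\to F_A$ is surjective, I can choose an element of $K'$ of the form $\sigma\delta^\ell$ for a suitable $\ell$; applying the preceding lemma to this $\ell$ gives at once that $\theta$ is fixed by $\sigma$, hence by the whole cyclic group $F_A=\langle\sigma\rangle$, and produces characters $\eta_0\in\Irr[G_0]$ and $\xi\in\Irr[K_0]$ with $\xi^A\cdot\eta_0=\eta_0$ such that $\theta$ is an irreducible constituent of $\eta_{\eta_0,\xi}\big|_H$ (the case $A=1$ being trivial as before). Because $\theta$ is also fixed by $K'$ and $K=F_A\cdot K'$ — the image of $K'$ in $K/F_A\simeq F_B$ being everything — it follows that $\theta$ is fixed by all of $K$. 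This settles the first assertion, and since the condition $\xi^A\in\Stab[\eta_0]$ is literally $\xi^A\cdot\eta_0=\eta_0$, the only thing left in order to conclude $(\eta_0,\xi)\in I_{A,B}$ is the divisibility of $|\ov{K_0}|/B$ by $|\Stab[\eta_0]|$.

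For that I would argue as follows. Write $\eta=\eta_{\eta_0,\xi}$, an irreducible character of $G_A$, and apply Clifford's theory to the cyclic extension $H\to G_A\to K_0$, where the ramification index is $1$. Theorem 1 of \cite{Green2} then yields $|[\eta]|\,|[\theta]|=|K_0|$, where $[\eta]$ is the $\Irr[K_0]$-orbit of $\eta$ and $[\theta]$ the $K_0$-orbit of $\theta$ in $\Irr[H]$. Plugging in $|[\eta]|=|K_0|/|\Stab[\eta]|$ and $|[\theta]|=|K_0|/|\operatorname{Stab}_{K_0}(\theta)|$ and using the already-recorded identity $\Stab[\eta]=\Stab[\eta_0]$, this rearranges to
\[|\Stab[\eta_0]|=\frac{|K_0|}{|\operatorname{Stab}_{K_0}(\theta)|}.\]
Since the $K_0$-action on $\Irr[H]$ factors through $\ov{K_0}$, the stabilizer $\operatorname{Stab}_{K_0}(\theta)$ contains the kernel of $K_0\twoheadrightarrow\ov{K_0}$, so $|\Stab[\eta_0]|=|\ov{K_0}|/|\operatorname{Stab}_{\ov{K_0}}(\theta)|$. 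Finally, $\theta$ being $K$-fixed, it is in particular fixed by $\gamma$ and by the $F_A$-component of $\gamma$, hence by $\delta\in\ov{K_0}$, which has order $B$; therefore $B$ divides $|\operatorname{Stab}_{\ov{K_0}}(\theta)|$, and the desired divisibility follows.

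The step I expect to be the real obstacle is not any single computation but keeping straight the three distinct actions involved — the $\Irr[K_0]$-action on $\Irr[G_A]$, the $\ov{K_0}$-action on $\Irr[H]$, and the $K$-action on $\Irr[H]$ — and, in particular, checking that the image of $\gamma$ in $F_A\times\ov{K_0}$ genuinely has $\ov{K_0}$-component $\delta$ of order $B$, so that $K$-invariance of $\theta$ really does force $\delta$-invariance. This is exactly what the block-like hypotheses $K\simeq F_A\times F_B$ and $F_B\subset\ov{K_0}$ are designed to guarantee, and unwinding them carefully is where the care is needed.
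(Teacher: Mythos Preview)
Your proof is correct and follows essentially the same route as the paper. Both arguments pick an element $\sigma\delta^\ell\in K'$ via surjectivity of $K'\to F_A$, invoke the preceding lemma to obtain $\sigma$-invariance together with the pair $(\eta_0,\xi)$, and then deduce the divisibility condition $|\Stab[\eta_0]|\mid |\ov K_0|/B$ from $\delta$-invariance via Clifford theory for the cyclic extension $H\to G_A\to K_0$. Your passage from $F_A$- and $K'$-invariance to full $K$-invariance via $K=F_A\cdot K'$ is slightly more direct than the paper's route, which instead argues that $(0,\ell)$ and $(0,R)$ both fix $\theta$ with $\gcd(\ell,R)=1$; but these are the same observation in different clothing, and the remaining divisibility step is identical.
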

\begin{proof}
    Note that $K/K'$ is cyclic and has order dividing $\gcd(A,B)$ as it is generated by both $(1,0)$ and $(0,1)$ because the maps $K'\to F_A$ and $K'\to F_B$ are surjective. Call $R$ the size of $K/K'$. Set $\ell$ as the unique integer between $0$ and $R-1$ such that $(1,0)=(0,1)^\ell$ in $K/K'$. Note that $\ell$ and $R$ are coprime.

    By construction, $(1,\ell)\in K'$. Hence, we can apply the previous lemma with $\sigma \delta^\ell$ to find $\eta_0$ and $\xi$. The lemma also ensures us that $\theta$ is fixed by $\sigma$. Hence, $(0,\ell)$ fix $\theta$. But $(0,R)$ also fixes $\theta$. Then $\delta$ fixes $\theta$ since $\gcd(\ell,R)=1$. This proves that $K$ fix $\theta$.
    
    Finally, since $\ov{K}_0$ is cyclic, $\theta$ is fixed by $\delta$ if and only if $\ord\delta$ divides $\frac{|\Stab[\theta]||\ov{K}_0|}{|K_0|}$ or $|\Stab[\eta]|$ divides $\frac{|\ov{K}_0|}{\ord\delta}$. Recall that $\ord\delta=B$.
\end{proof}

\begin{lema}
    In a block-like Clifford's setting, let $(\eta_0,\xi)\in I_{A,B}$. Then any irreducible constituent $\theta$ of $\eta_{\eta_0,\xi}\big|_H$ is fixed by $K$.
\end{lema}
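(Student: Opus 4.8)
The plan is to write $K=F_A\times F_B=\langle\pi(\sigma),\pi(\gamma)\rangle$ and to check separately that a constituent $\theta$ of $\eta_{\eta_0,\xi}\big|_H$ is fixed by $\pi(\sigma)$ (which acts on $\Irr[H]$ by conjugation by $\sigma$, i.e.\ by cyclically permuting the $A$ blocks) and by $\pi(\gamma)$ (which acts on $\Irr[H]$ as $\delta\in\ov K_0$). Since $\pi(\sigma),\pi(\gamma)$ generate $K$, this gives $\theta\in\Irr[H]^{K}$, which is the assertion.

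\emph{Invariance under $\delta$.} As $K_0$ is cyclic, Theorem~1 of \cite{Green2} applied to $H\to G_A\to K_0$ shows $\eta_{\eta_0,\xi}\big|_H$ is multiplicity free and its constituents form a single $K_0$-orbit of size $|\Stab[\eta_{\eta_0,\xi}]|=|\Stab[\eta_0]|=:m$ (one may also see $|[\theta]|=m$ directly by comparing degrees, using $\theta\big|_{H_A}=\sum_{\phi\in K''}\kappa_\phi$ as below). Hence the $K_0$-stabiliser of $\theta$ has index $m$ in $K_0$; since in a block-like setting the action on $\Irr[H]$ factors through the cyclic group $\ov K_0$, the $\ov K_0$-stabiliser of $\theta$ is the unique subgroup of index $m$, of order $|\ov K_0|/m$. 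Now $\delta$ has order $B$, and the hypothesis $(\eta_0,\xi)\in I_{A,B}$ says $m=|\Stab[\eta_0]|$ divides $|\ov K_0|/B$, equivalently $B$ divides $|\ov K_0|/m$; therefore $\langle\delta\rangle\subseteq\Stab[\theta]$ and $\delta\cdot\theta=\theta$.

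\emph{Invariance under $\sigma$.} This repeats the last part of the proof of the first lemma of this subsection, with $H_A:=H_0^A\trianglelefteq H$. Write $\eta_0\big|_{H_0}=\sum_{j=1}^m\kappa_j$ (multiplicity free, $K_0$ cyclic); since $\pi_0$ is trivial on $H_0$ one gets $\eta_{\eta_0,\xi}\big|_{H_A}=\sum_{\phi\in(\Z/m\Z)^A}\kappa_\phi$ with $\kappa_\phi(g_1,\dots,g_A)=\prod_i\kappa_{\phi(i)}(g_i)$, so $\eta_{\eta_0,\xi}$ is unramified over $H_A$. The group $H/H_A$, identified with the product-one tuples of $K_0^A$, acts on the $\kappa_\phi$ by translating $\phi$ through the zero-sum subgroup $K''\subset(\Z/m\Z)^A$, and this action is free; hence each $\operatorname{Ind}_{H_A}^{H}\kappa_\phi$ is irreducible, and the $m$ constituents of $\eta_{\eta_0,\xi}\big|_H$ are exactly these, indexed by the $m$ cosets of $K''$, i.e.\ by the value $\sum_i\phi(i)\in\Z/m\Z$. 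Since $\sigma$ cyclically shifts the components of $\phi$, which preserves $\sum_i\phi(i)$ and hence each coset of $K''$, we get $\sigma\cdot\theta=\operatorname{Ind}_{H_A}^{H}(\sigma\cdot\kappa_\phi)=\operatorname{Ind}_{H_A}^{H}\kappa_{\sigma\cdot\phi}=\theta$ for any $\phi$ in the coset attached to $\theta$ (using that $\kappa_{\sigma\cdot\phi}$ and $\kappa_\phi$ are $H$-conjugate).

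Combining the two steps gives $\theta\in\Irr[H]^K$. I expect the delicate point to be the $\sigma$-step: one must verify carefully that $\eta_{\eta_0,\xi}$ is unramified over $H_A$ and that the constituents of $\eta_{\eta_0,\xi}\big|_H$ are precisely the irreducible inductions $\operatorname{Ind}_{H_A}^{H}\kappa_\phi$ (so that $\sigma$ acts on them by a transparent shift of the index $\phi$); the bookkeeping with the three nested groups $H_A\subset H\subset G_A$ and with the commuting $K_0$- and $F_A$-actions is the only real work, while the $\delta$-step is a short divisibility argument once the orbit size $m$ is identified.
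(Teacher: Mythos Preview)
Your overall strategy is exactly the paper's: show invariance under $\sigma$ and $\delta$ separately, the latter by the divisibility condition built into $I_{A,B}$, the former by descending to $H_A=H_0^A$ and analyzing the $\kappa_\phi$'s. The paper's proof is the one-line observation $\sigma\cdot\eta_{\eta_0,\xi}=\xi\,\eta_{\eta_0,\xi}$ (so $\eta_{\eta_0,\xi}\big|_H$ is $\sigma$-invariant) followed by a pointer to the $H_A$-argument already carried out in the earlier lemma; your write-up simply unpacks that pointer.

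There is one genuine slip in your $\sigma$-step. You assert that the $H/H_A$-action on the $\kappa_\phi$'s is free and conclude that each $\operatorname{Ind}_{H_A}^{H}\kappa_\phi$ is irreducible. This is false in general: $|H/H_A|=|K_0|^{A-1}$, whereas the action factors through the zero-sum subgroup $K''\subset(\Z/m\Z)^A$ of size $m^{A-1}$, so the stabiliser of $\kappa_\phi$ has index only $m^{A-1}$, not $|K_0|^{A-1}$. Consequently $\dim\operatorname{Ind}_{H_A}^{H}\kappa_\phi=|K_0|^{A-1}\dim\kappa_\phi$, while each constituent $\theta$ has $\dim\theta=m^{A-1}\dim\kappa_\phi$; these disagree unless $|K_0|=m$. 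What is correct (and what the paper actually uses) is the weaker statement that the $H/H_A$-orbits on $\{\kappa_\phi\}$ are precisely the $K''$-cosets: the orbits lie in the cosets because the action factors through $K''$, and a counting argument (there are $m$ constituents $\theta$ and $m$ cosets) forces each orbit to fill its coset. Hence $\theta\big|_{H_A}=\sum_{\phi\in\text{coset}}\kappa_\phi$, so $\theta$ is determined by the value $\sum_i\phi(i)\in\Z/m\Z$, which $\sigma$ visibly preserves. Replace your irreducible-induction claim by this orbit-equals-coset statement and the argument goes through unchanged.
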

\begin{proof}
    We have $\sigma\cdot\eta_{\eta_0,\xi}=\xi\eta_{\eta_0,\xi}$. Hence, the argument of the previous lemma implies the desired condition.    
\end{proof}

\begin{proof}[Proof of Theorem \ref{clasf-char}]
    The previous lemmas imply that $\Theta$ corestricts to a surjective map between $I_{A,B}$ and $\Irr[H]^K/K_0$. Note that if $(\eta_0,\xi)$ and $(\eta'_0,\xi')$ are pairs inducing an orbit $[\theta]$, it must be $\eta_{\eta'_0,\xi'}=\epsilon\eta_{\eta_0,\xi}$ for some $\epsilon\in\Irr[K_0]$. Taking all $g_i=1$ for $i>1$, we find $\eta'_0=\epsilon\eta_0$. Now with $g_2$ free, we get 
    \[\xi'\epsilon\eta_0 = \xi'\eta'_0 = \epsilon\xi\eta_0\]
    and, therefore, $\xi^{-1}\xi'$ fixes $\eta_0$. 
\end{proof}

\subsection{Ramification index}

\begin{lema}\label{stab-description}
    In Clifford's setting let $\chi$ be a character of $G'$ with $e(\chi)^2=|\Stab|$. For any root of unity $\omega$ of order $e(\chi)$, $\Stab$ is generated by $\xi_1= \phi\pi_1$ and $\xi_2=\phi\pi_2$ where $\pi_1:K'\to F_A \to \Z/e(\chi)\Z$ and $\pi_2:K'\to F_B \to \Z/e(\chi)\Z$ are the natural projections and $\phi:\Z/e(\chi)\Z\to\C^\times$ is the character induced by $\omega$. 
\end{lema}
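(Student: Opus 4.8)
The plan is to pin down $\Stab$, which is a subgroup of $\Irr[K']$, using only the abelian group $K'$ together with its two given surjections onto $F_A$ and $F_B$, and then to recognize it as $\langle\xi_1,\xi_2\rangle$. First I would determine the shape of $\Stab$ and its position inside $\Irr[K']$. Since $K'$ is a subgroup of $K\simeq F_A\times F_B$ it is generated by at most two elements, hence so is $\Irr[K']$ and so is every subgroup of it; combined with the hypothesis $e(\chi)^2=|\Stab|$, Lemma \ref{cor:cyclic-stab-implies-not-ramification} gives $\Stab\simeq\Z/e\Z\times\Z/e\Z$ with $e:=e(\chi)$. A finite abelian group of rank at most two has at most one subgroup isomorphic to $(\Z/e\Z)^2$, namely its $e$-torsion (when that has order $e^2$); so $\Stab$ is exactly the $e$-torsion of $\Irr[K']$. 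Writing $\Irr[K']\simeq\Z/e_1\Z\times\Z/e_2\Z$ with $e_2\mid e_1$, this forces $e\mid e_2$; and realizing $\Irr[K']$ as the quotient of $\Irr[F_A]\times\Irr[F_B]\simeq\Z/A\Z\times\Z/B\Z$ by $(K')^\perp$ — which, by surjectivity of $K'\to F_A$ and $K'\to F_B$, meets each of the two factors trivially, hence is cyclic and injects into each factor — one sees by a short elementary-divisor computation that $e_2\mid\gcd(A,B)$. Thus $e\mid\gcd(A,B)$, so $\pi_1,\pi_2$, and with them $\xi_1=\phi\pi_1$ and $\xi_2=\phi\pi_2$, are well defined.

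Because $\phi$ has order $e$, both $\xi_1$ and $\xi_2$ have order dividing $e$, so $\langle\xi_1,\xi_2\rangle$ lies in the $e$-torsion of $\Irr[K']$, that is, in $\Stab$. It therefore suffices to show $|\langle\xi_1,\xi_2\rangle|=e^2$. Surjectivity of $K'\to F_A$ and the faithfulness of $\phi$ make $\xi_1$, and likewise $\xi_2$, have order exactly $e$, so the only missing point is that $\langle\xi_1\rangle\cap\langle\xi_2\rangle$ is trivial. An element of this intersection is a character of $K'$ pulled back simultaneously from $F_A$ and from $F_B$, hence one factoring through $Q:=K'/(L_A\cdot L_B)$, where $L_A=\ker(K'\to F_B)\le F_A$ and $L_B=\ker(K'\to F_A)\le F_B$; one checks that $Q$ is cyclic of order $d:=|(K')^\perp|$. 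Hence $\langle\xi_1\rangle\cap\langle\xi_2\rangle$ has order $\gcd(d,e)$, and the lemma reduces to the coprimality $\gcd(e(\chi),d)=1$.

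I expect this coprimality to be the main obstacle and the point where the block-like hypotheses are genuinely used (for a general abelian $K'\le F_A\times F_B$ and a general character $\chi$ it can fail). The route I would take is to invoke the classification Theorem \ref{clasf-char} to write $\theta=\theta_{\eta_0,\xi}$ with $(\eta_0,\xi)\in I_{A,B}$, and to read off from the twisted group algebra attached to $\theta$ that $e(\chi)$ is governed by $|\Stab[\eta_0]|$ in $\Irr[K_0]$, which by the definition of $I_{A,B}$ divides $\frac{|\ov K_0|}{B}$; comparing this divisibility with $d\mid\gcd(A,B)$ then yields $\gcd(e(\chi),d)=1$. Alternatively one can argue cohomologically: $Q=K'/(L_AL_B)$ is cyclic, hence contributes no ramification over $\pi^{-1}(L_AL_B)$ (a cyclic quotient never does; cf.\ \cite{Green2}), which pushes the \enquote{$Q$-direction} into the kernel of the commutator pairing attached to $\theta$ and so makes $\gcd(e,|Q|)=1$. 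Granting the coprimality, $|\langle\xi_1,\xi_2\rangle|=e^2=|\Stab|$, and since $\langle\xi_1,\xi_2\rangle\subseteq\Stab$ with both of order $e^2$, the two coincide, which is the assertion of the lemma.
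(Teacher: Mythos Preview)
Your argument is considerably more careful than the paper's.  The paper's proof is only a few lines: after noting $\Stab\simeq(\Z/e\Z)^2$ via Lemma~\ref{cor:cyclic-stab-implies-not-ramification}, it observes that $F_A\times F_B$ contains at most $e(\chi)^2$ elements of order dividing $e(\chi)$, so the image of $\Stab$ under an (unspecified) ``injective map $\Stab\to F_A\times F_B$'' is forced to be the full $e(\chi)$-torsion, and then declares that ``the result follows''.  The paper does not spell out why $\xi_1,\xi_2$ generate this subgroup; you do, and you correctly isolate the obstruction as the coprimality $\gcd\bigl(e(\chi),|K/K'|\bigr)=1$.

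That coprimality step, however, is exactly where your argument degenerates into a sketch.  Neither of the two routes you outline is actually carried out, and the first one leans on ramification information that in the paper is only established \emph{after} this lemma (Theorem~\ref{ramification} uses Lemma~\ref{stab-description}), so invoking it would be circular.  The concern you raise is real at the level of abelian groups: for $K'=\langle(1,1),(0,2)\rangle\subset(\Z/4\Z)^2$ with $e=2$ one has $\pi_1=\pi_2$ on $K'$, hence $\xi_1=\xi_2$, while the $2$-torsion of $\Irr[K']$ still has order $4$.  The actual resolution is simpler than either of your proposed fixes: in the paper the lemma is only applied (inside the proof of Theorem~\ref{ramification}) to a character $\hat\chi$ of $G$ itself, i.e.\ with $K'=K=F_A\times F_B$.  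In that case $|K/K'|=1$, the kernels of $\pi_1$ and $\pi_2$ intersect trivially, and $\langle\xi_1\rangle\cap\langle\xi_2\rangle=\{1\}$ is immediate.  So your first two paragraphs already give a complete proof for the only case that is used; the elaborate reduction for general $K'$ is unnecessary for the application and, as written, not finished.
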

\begin{proof}
    We know by lemma \ref{cor:cyclic-stab-implies-not-ramification} that $\Stab$ must be $\Z/e(\chi)\times \Z/e(\chi)\Z$. Now, an element of $F_A\times F_B$ has order dividing $e(\chi)$ if and only if each of its coordinates has. Hence, there are at most $e(\chi)^2$ elements such order in $F_A\times F_B$ because, in a cyclic group, there are at most $e(\chi)$ elements of order dividing $e(\chi)$. Hence, the injective map $\Stab\to  F_A\times F_B$ has its image uniquely determined by $e(\chi)$, and the result follows.
\end{proof}

\begin{thm}\label{ramification}
    In a block-like Clifford's setting, let $(\eta_0,\xi)\in I_{A,B}$. Then, 
    \[e_{G'/H}(\chi_{\eta,\xi})=\frac{m}{\gcd\left(\frac{|\ov{K}_0|R}{B|\Stab[\eta_0]|},m\right)}\]
    where $m$ is the order of $\xi$ in $\Irr[K_0]/\Stab[\eta_0]$. 
\end{thm}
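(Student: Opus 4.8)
The goal is to compute $e_{G'/H}(\chi_{\eta_0,\xi})$, the ramification index of a character of $G'$ sitting over the fixed character $\theta_{\eta_0,\xi}$ of $H$. The plan is to use the identity $e(\chi)^2 = |\Stab[\chi]| = |K'|/(|[\chi]|\cdot|[\theta]|)$ from the Clifford theory theorem and reduce everything to counting orbit sizes. First I would recall from Theorem \ref{clasf-char} and its proof that $\theta = \theta_{\eta_0,\xi}$ is $K$-fixed, so $e(\chi)^2 = |\Stab[\chi]|$; hence it suffices to determine $|\Stab_\chi| \subseteq \Irr[K']$, or equivalently (since $e(\chi)^2 = |K'|/(|[\chi]|\,|[\theta]|)$ and $|[\chi]|,|[\theta]|$ are the orbit sizes under $\Irr[K']$ and $K_0$ respectively) to compute the size of the $K_0$-orbit of $\theta$ inside $\Irr[H]$ and the size of the $\Irr[K']$-orbit of $\chi$ inside $\Irr[G']$.

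The key computation is the size of $[\theta]$, the $K_0$-orbit of $\theta_{\eta_0,\xi}$. Here I would go back to the restriction picture developed in the proof of the classification lemmas: $\eta_{\eta_0,\xi}\big|_{H_A}$ decomposes into the characters $\kappa_\phi$ for $\phi$ running over $(\Z/m\Z)^A$ (with $m = |\Stab[\eta_0]|$ the order of $\Stab[\eta_0]$, and also the order of $\xi$ modulo... no — $m$ is $|\Stab[\eta_0]|$), and the $K_0$-action permutes these. One then tracks how the $\Irr[K_0]/\Stab[\eta_0]$-translation by $\xi$ interacts with the cyclic shift: the stabilizer in $K_0$ of a given $\theta\big|_{H_A}$ component is governed by the order of $\xi$ in $\Irr[K_0]/\Stab[\eta_0]$ — call it $m'$ — so that $|[\theta]| = |K_0|/(\text{something involving } m' \text{ and } |\Stab[\eta_0]|)$. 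Meanwhile $|[\chi]|$ involves the further quotient $\ov{K}_0$ (equivalently $F_B$) and the integer $R = |K/K'|$, since $\delta$ acts through $\ov{K}_0$ of order $B$ and $K'$ only sees the index-$R$ behavior. Assembling $e(\chi)^2 = |K'|/(|[\chi]|\,|[\theta]|)$, simplifying the prime-by-prime / gcd arithmetic, and taking a square root should produce
\[
e_{G'/H}(\chi_{\eta_0,\xi}) = \frac{m}{\gcd\!\left(\frac{|\ov{K}_0|R}{B\,|\Stab[\eta_0]|},\,m\right)}.
\]

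The main obstacle I anticipate is the bookkeeping in the orbit-size computation: carefully separating the three layers of group actions — the $K_0 = \F_q^\times$-action on $\Irr[H]$, its factorization through $\ov{K}_0$ when passing to $\Irr[\hat H]$, and the further restriction from $K$ to the index-$R$ subgroup $K'$ — and making sure the $\gcd$ comes out in exactly the stated form rather than a superficially different but equal expression. Lemma \ref{stab-description} will help pin down the shape of $\Stab$ once $e(\chi)$ is known, but establishing $e(\chi)$ itself requires the orbit count, so I would do the orbit count first and invoke \ref{stab-description} only as a consistency check or for later use. A secondary subtlety is confirming that $e_{G'/H}(\chi)$ genuinely depends only on the pair $(\eta_0,\xi)$ and not on the choice of constituent $\theta$ or of $\chi$ over it — this follows because all constituents lie in a single $\Irr[K']$-orbit, but it should be stated explicitly.
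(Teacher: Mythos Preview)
Your plan has a genuine conceptual slip that would block the argument as written. In the Clifford formula $e(\chi)^2 = |K'|/(|[\chi]|\cdot|[\theta]|)$ for the setting $H\to G'\to K'$, the orbit $[\theta]$ is the $K'$-orbit of $\theta$, not the $K_0$-orbit. Since Theorem~\ref{clasf-char} gives $\theta\in\Irr[H]^{K'}$, this orbit is trivial, and the formula collapses to $e(\chi)^2=|\Stab_\chi|$. So there is nothing to compute on the $\theta$ side; the entire content is $|\Stab_\chi|\subset\Irr[K']$, and your proposed route through the $\kappa_\phi$-decomposition and a ``$K_0$-orbit of $\theta$'' does not access that stabilizer. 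The plan to obtain $e(\chi)$ by taking a square root at the end would also leave you needing to identify $|\Stab_\chi|$ as a perfect square before you know its value.

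The paper avoids both problems by computing $e(\chi)$ itself (not its square) as an orbit size. It interposes the tower $H\subset \hat H\subset G$ with $\hat H/H\simeq F_A$ and $G/\hat H\simeq F_B$ both cyclic; since cyclic extensions are unramified, $e_{G/H}(\chi)$ equals the size of the $F_B$-orbit of any constituent $\chi'$ of $\chi|_{\hat H}$. That orbit size is then read off from the stabilizer of a suitable $\hat\eta\in\Irr[\hat G_A]$ over $\chi'$, and one finds $|\Stab_{\hat\eta}|=|\Stab_{\eta_0}|\cdot m$ with $m=\ord(\xi\bmod\Stab_{\eta_0})$, giving the case $R=1$ directly. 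For general $K'\subset K$, the paper does \emph{not} use Lemma~\ref{stab-description} merely as a consistency check: it is the essential tool to count how many elements of $\Stab_{\hat\chi}\subset\Irr[K]$ vanish on $K'$, which yields the correction factor $R'=\gcd(R,e_{G/H}(\chi))$ relating $e_{G/H}$ and $e_{G'/H}$.
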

\begin{proof}
    Recall that $\theta:=\theta_{\eta_0,\xi}$ must be fixed by $K=F_A\times F_B$. We shall deal with the case $K'=K$ first. Name $\chi=\chi_{\eta_0,\xi}$ so that $\chi\big|_H=\theta^{e(\chi)}$. Let us use the subgroup $\hat{H}$ of $G$ which is the preimage of $F_A$. Then $G/\hat{H}\simeq F_B$ and $\hat{H}/H\simeq F_A$ are cyclic. Since in any cyclic extension the ramification indexes are always one, $e(\chi)$ will be the size of the $F_B$-orbit of any $\chi'\in\Irr[\hat{H}]$ which is an irreducible constituent of $\chi\big|_{\hat{H}}$. Equivalently, the minimum positive $i$ such that $\gamma^i$ stabilizes $\chi'$. 
    
    Let us use also the group $\hat{G}_A$ generated by $G_A$ and $\sigma$. Recall $\hat{G}_A/\hat{H}=K_0$. There exists a character $\hat\eta$ of $\hat{G}_A$ such that $\hat\eta\big|_{G_A}$ is the $\sigma$-orbit of $\eta:=\eta_{\eta_0,\xi}$. The restriction $\hat\eta\big|_{\hat{H}}$ has an irreducible constituent which restricts to $\theta$. Note that the action of $G/\hat{H}$ commutes with the one of $\Irr[\hat{H}/H]$ because $K$ is commutative. Hence, we can assume that $\chi'$ is an irreducible constituent of $\hat\eta\big|_{H'}$. 
    
    Recall that the action of $\gamma$ agrees with the one given by $\delta\in K_0$. Hence, $\gamma^i$, stabilizes $\chi'$ if and only if $\frac{\ord\delta}{\gcd(i,\ord\delta)}$ divides $\frac{|\Stab[\chi']||\ov{K}_0|}{|K_0|}$. Equivalently, $|\Stab[\hat\eta]||\gcd\left(\frac{i|\ov{K}_0|}{B},|\ov{K}_0|\right)$. Hence,
    \[e(\chi)=\frac{|\Stab[\hat\eta]|}{\gcd\left(\frac{|\ov{K}_0|}{B},|\Stab[\hat\eta]|\right)}\]

    Now, note that each irreducible constituent of $\hat\eta\big|_{\hat{H}}$ is still irreducible when restricted to $H$. Hence, $|\Stab[\hat\eta]|$ is the number of irreducible constituents of $\hat\eta\big|_H$. Note that the action of $\sigma$ and the one of $\Irr[G_A/H]$ commutes by construction. Therefore,
    \[|\Stab[\hat\eta]|=|\Stab[\eta]|\cdot|\sigma\text{-orbit of }\eta|\]
    Finally, since $\sigma\eta=\xi\eta$ and $\Stab[\eta]=\Stab[\eta_0]$, $|\sigma\text{-orbit of }\eta|$ is $m$, the order of $\xi$ in $\Irr[K_0]/\Stab[\eta_0]$. In conclusion,
    \[e(\chi)=\frac{m}{\gcd\left(\frac{|\ov{K}_0|}{B|\Stab[\eta_0]|},m\right)}\]
    because $|\Stab[\eta_0]|$ divides $\frac{|\ov{K}_0|}{B}$.

    Now, let us deal with the general case. Pick an irreducible character $\hat\chi$ of $G$ such that $\chi$ is an irreducible constituent of $\hat\chi\big|_{G'}$. Note that $G/G'\simeq K/K'\simeq \Z/T\Z$ is cyclic and the action of $\Irr[K']$ and $G/G'$ commutes. Hence, 
    \[e_{G/H}(\hat \chi)= e_{G'/H}(\chi)|\Z/R\Z-\text{orbit of }\chi|=e_{G'/H}(\chi)|\Z/R\Z-\Stab[\hat\chi]|\]
    Now, $\Irr[\Z/R\Z]$ embeds in $\Irr[K]$ as the characters whose restriction to $K'$ is trivial. Take generators $\xi_1,\xi_2$ of $\Stab$ as in the previous lemma. Note that $K'$ is generated by $(1,\ell)$ and $(0,R)$. Hence, $\xi_1^i\xi_2^j$ vanishes at $K'$ if and only if
    $e_{G/H}(\chi)|jR$ and $e_{G/H}(\chi)|i+j\ell$. Name $R'=\gcd(R,e_{G/H}(\chi))$. We must have $\frac{e_{G/H}(\chi)}{R'}|i,j$. Hence, we are looking for solutions in $(\Z/R'\Z)^2$ to $R'|i+\ell j$. This is the kernel of the surjective map $(\Z/R'\Z)^2\to\Z/R'\Z$ given by $(1,0)\mapsto 1$ and $(0,1)\mapsto \ell$. Hence,
    \[|\Z/R\Z-\Stab|=R'\]
    To finish note that
    \[\gcd\left(\frac{|\ov{K}_0|}{B|\Stab[\eta_0]|},m\right)\gcd\left(R,\frac{m}{\gcd\left(\frac{|\ov{K}_0|}{B|\Stab[\eta_0]|},m\right)}\right)=\gcd\left(\frac{R|\ov{K}_0|}{B|\Stab[\eta_0]|},m\right)\]
\end{proof}

Note that $e(\chi(\eta,\xi))$ turns out to be determined by $\eta$ and $\xi$. Accordingly, we will write $e(\eta,\xi)$ or $e(\theta)$ for this number. By lemma \ref{stab-description} the group $\Stab[\chi_{\eta_0,\xi}]\subset \Irr[K']$ only depends on $\eta_0$ and $\xi$ too.

\subsection{Reduced character formula}

We need to understand how to sum $\Omega^{G'/H}_{\theta_{\eta_0,\xi}}(-,-)$ over $\theta$. Identify $F_A\times F_B$ with $\Z/A\Z\times \Z/B\Z$ and, for $a,b\in K^g$, define 
\[\Delta(a,b)=\frac{\gcd(A,B)}{|\langle \sum a^{(1)}_ib^{(2)}_i-a_i^{(2)}b_i^{(1)} \rangle |}\] 
where the superscript indicates the coordinates in $\Z/A\Z\times \Z/B\Z$ and the computation is done in $\Z/\gcd(A,B)\Z$. This number does not depend on the choice of the isomorphism. Define also 
\[e_A(\eta_0):=\frac{\gcd(A|\Stab[\eta_0]|,|K_0|)}{\gcd(A|\Stab[\eta_0]|,\frac{|\ov{K}_0|}{B})}\]
for $\eta_0\in \Irr[G_0]$. Remark that $e_A(\eta_0)|\gcd(A,B)$.  

\begin{lema}\label{sum-omega}
    In a block-like Clifford's setting, let $\eta_0$ be a character of $G_0$ with $|\Stab[\eta_0]| $ divides $ \frac{|\ov{K}_0|}{B}$ and $z\in H$ be an element such that each of its coordinates has a $\gcd(A|\Stab[\eta_0]|,|K_0|)$-root in $G_0$. Then, for any $a,b\in K^g$,
    \[\sum_{[\xi]} \sum_\theta \Omega_\theta^{G'/H}(a,b)\theta(z) = \left\{\begin{array}{cl}
        \gcd\pare{
        A,\frac{|K_0|}{|\Stab[\eta_0]|}
        }
        \prod\limits_{i=1}^A\eta_0(z_i) 
     &  \text{if }e_A(\eta_0)|\Delta(a,b)\\
        0 & \text{if not}
    \end{array}\right.\]
    where the first sum runs over all $[\xi]\in \Irr[K_0] /\Stab[\eta_0]$ of order $A$ and the second one runs over all irreducible constituents of $\eta_{\eta_0,\xi}$.
\end{lema}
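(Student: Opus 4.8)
The plan is to evaluate the double sum by first contracting the inner sum over the constituents $\theta$ attached to a fixed $\xi$, and then recognising the outer sum as an orthogonality relation for characters of a cyclic group. For the inner sum, fix a class $[\xi]$ in the index set. By Clifford's theorem applied to $H\triangleleft G_A$ with $G_A/H\simeq K_0$ cyclic, the irreducible constituents $\theta$ of $\eta_{\eta_0,\xi}\big|_H$ form a single $K_0$-orbit, of size $|\Stab[\eta_0]|$ and with multiplicity one, so that $\sum_\theta\theta(z)=\eta_{\eta_0,\xi}(z)$. I would first check that $\Omega_\theta^{G'/H}$ is constant along this orbit: writing $\Omega_\theta^{G'/H}(a,b)=|K'|^2\int_{G'_a}\int_{G'_b}\theta(xyx^{-1}y^{-1})\,\d x\,\d y$ and conjugating $x,y$ by a lift in $G_A$ of the acting element, the coset $G'_a$ is preserved because $G'$ is normal in $G$ and $K$ is abelian, so the integral does not change. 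Call this common value $\Omega_\xi$; then $\sum_\theta\Omega_\theta^{G'/H}(a,b)\theta(z)=\Omega_\xi(a,b)\,\eta_{\eta_0,\xi}(z)$.

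The next point is that $\eta_{\eta_0,\xi}(z)=\prod_{i=1}^A\eta_0(z_i)$, independently of $\xi$. Indeed $\eta_{\eta_0,\xi}(z)=\prod_i\xi^{i-1}(\pi_0(z_i))\eta_0(z_i)$, and the hypothesis that every $z_i$ has a $\gcd(A|\Stab[\eta_0]|,|K_0|)$-th root in $G_0$ forces $\pi_0(z_i)$ to be an $A|\Stab[\eta_0]|$-th power in $K_0$ (in the cyclic group $K_0$ the subgroup of $\gcd(A|\Stab[\eta_0]|,|K_0|)$-th powers coincides with that of $A|\Stab[\eta_0]|$-th powers); since $\xi^A\in\Stab[\eta_0]$ and $|\Stab[\eta_0]|$ annihilates $\Stab[\eta_0]$, one has $\xi^{A|\Stab[\eta_0]|}=1$, so each factor $\xi^{i-1}(\pi_0(z_i))$ equals $1$. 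Because distinct classes $[\xi]$ produce disjoint orbits of constituents (Theorem \ref{clasf-char}), there is no overcounting, and the whole sum equals $\bigl(\prod_i\eta_0(z_i)\bigr)\sum_{[\xi]}\Omega_\xi(a,b)$.

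It remains to compute $\Sigma:=\sum_{[\xi]}\Omega_\xi(a,b)$, the index set being the $A$-torsion subgroup $T=\{[\xi]:\xi^A\in\Stab[\eta_0]\}$ of the cyclic group $\Irr[K_0]/\Stab[\eta_0]$, which has order $\gcd(A,|K_0|/|\Stab[\eta_0]|)$ by the identity $\gcd(A|\Stab[\eta_0]|,|K_0|)=|\Stab[\eta_0]|\gcd(A,|K_0|/|\Stab[\eta_0]|)$. By the coordinate description of $\Omega_\theta$ one has $\Omega_\xi(a,b)=\omega_\xi^{D(a,b)}$, where $D(a,b)=\sum_i(a_i^{(1)}b_i^{(2)}-a_i^{(2)}b_i^{(1)})$ and $\omega_\xi$ is a root of unity of order $e_{G'/H}(\chi_{\eta_0,\xi})$, which by Theorem \ref{ramification} depends only on $\eta_0$ and the order of $[\xi]$. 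I would then show that $[\xi]\mapsto\omega_\xi$ is a homomorphism from $T$ into the group of roots of unity of order dividing $\gcd(A,B)$ --- using that $\theta_{\eta_0,1}$ is unramified and that the $\theta_{\eta_0,\xi}$, $[\xi]\in T$, are linear twists of $\theta_{\eta_0,1}$, so that Proposition \ref{magic-formula} applies --- and that its image has order exactly $e_A(\eta_0)$. Granting this, $[\xi]\mapsto\Omega_\xi(a,b)$ is a character of $T$, trivial precisely when $e_A(\eta_0)\mid D(a,b)$; since $e_A(\eta_0)$ divides $\gcd(A,B)$ this is equivalent to $e_A(\eta_0)\mid\Delta(a,b)$. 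Summing this character over $T$ gives $|T|=\gcd(A,|K_0|/|\Stab[\eta_0]|)$ when it is trivial and $0$ otherwise, which is the asserted dichotomy, and combining with the factor $\prod_i\eta_0(z_i)$ finishes the proof.

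The main obstacle is the claim that the image of $[\xi]\mapsto\omega_\xi$ has order exactly $e_A(\eta_0)$. This amounts to matching the ramification index furnished by Theorem \ref{ramification} --- which a priori also sees the index $R=[K:K']$ and the symplectic pairing intrinsic to $K'$ --- with the $R$-free expression $e_A(\eta_0)$ and the pairing $D$ written in $F_A\times F_B$-coordinates; carrying out this reconciliation via the gcd identity at the end of the proof of Theorem \ref{ramification} is the most delicate part of the argument.
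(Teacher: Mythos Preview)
Your overall plan---kill the $\xi$-dependence of the character values via the root hypothesis, invoke Proposition~\ref{magic-formula} for multiplicativity of $\Omega$ in the linear twist, sum the resulting character over the cyclic group $T$, and identify the ramification index with $e_A(\eta_0)$---is exactly the paper's. But two of your intermediate steps differ from the paper's, and the first is a genuine gap.

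The gap is the claim that $\Omega_\theta^{G'/H}$ is constant along the $K_0$-orbit of constituents of $\eta_{\eta_0,\xi}|_H$. Your justification conjugates the integrand by a lift $g_k\in G_A$ and asserts that $G'_a$ is preserved because ``$G'$ is normal in $G$ and $K$ is abelian''. But the lift $g_k$ lies in $G_A$, not in $G$, and in the abstract block-like set-up there is no ambient group in which $G_A$ normalizes $G'$: the only compatibility hypothesis is at the level of the action on $\operatorname{Irr}(H)$, not on cosets of $H$ inside $G'$. (In the $\GL_n$-example the argument does go through because everything sits inside $\GL_n(\overline{\mathbb F}_q)$, but the lemma is stated abstractly.) The paper sidesteps this entirely. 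Rather than factor each inner sum as $\Omega_\xi\cdot\eta_{\eta_0,\xi}(z)$, it reorganizes the full double sum as $\sum_{\theta_0}\sum_{i=0}^{m-1}\Omega_{\epsilon^i\theta_0}(a,b)\,(\epsilon^i\theta_0)(z)$, where $\theta_0$ runs over the constituents of $\eta_{\eta_0,1}|_H$ and $\epsilon$ is the linear character with $\eta_{\eta_0,\xi^i}=\epsilon^i\eta_{\eta_0,1}$. Proposition~\ref{magic-formula} (applicable because $\theta_{\eta_0,1}$ is unramified) gives $\Omega_{\epsilon^i\theta_0}(a,b)=\omega_{\theta_0}^{\,i}$ with $\omega_{\theta_0}=\Omega_{\epsilon\theta_0}(a,b)$; the key observation is that the geometric sum $\sum_i\omega_{\theta_0}^{\,i}$ equals $m$ or $0$ according to whether $\operatorname{ord}\omega_{\theta_0}\mid\Delta(a,b)$, and $\operatorname{ord}\omega_{\theta_0}=e_{G/H}(\eta_0,\xi)$ depends only on $(\eta_0,\xi)$, not on the particular $\theta_0$. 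So the outcome of the inner sum is the same for every $\theta_0$ even though the $\omega_{\theta_0}$ themselves are not shown to coincide, and summing over $\theta_0$ produces $\eta_{\eta_0,1}(z)=\prod_i\eta_0(z_i)$.

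The second difference is that the paper opens by invoking the tower compatibility of $\Omega$ to reduce to $G'=G$, i.e.\ $R=1$. With that reduction the identification of $e_{G/H}(\eta_0,\xi)$ with $e_A(\eta_0)$ (for $\xi$ a generator of $T$, so $m=|T|$) becomes a one-line gcd manipulation, rather than the ``delicate reconciliation'' with $R$ and with $K'$-versus-$K$ coordinates that you flag as the main obstacle.
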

\begin{proof}
    Recall that $\Omega$ is compatible with towers, so we can assume that $G'=G$. The first sum runs over a cyclic group of size $m:=\gcd\left(A,\frac{|K_0|}{|\Stab[\eta_0]|}\right)$. Let $[\xi]$ be a generator. Note that $\xi^i(z)=1$ as $\ord\xi| m|\Stab[\eta_0]| = \gcd(A|\Stab[\eta]|,|K_0|)$ and $z$ has a root of that order. Hence, $\sum_\theta\theta(z)=\prod_{i=1}^A \eta_0(z_i)$.

    Note that $\eta_{\eta_0,\xi^m}=\epsilon^{m}\eta_{\eta_0,\xi}$ where $\epsilon(g)= \prod_{m=1}^A \xi^{(i-1)m}(g_i)$. Hence, look at the $\epsilon$-orbits in the irreducible constituents of $\eta_{\eta_0,\xi^m}$. We can apply proposition \ref{magic-formula} because any $\theta_{\eta_0,1}$ is not ramified. Let us sum over $\theta,\epsilon\theta,\ldots,\epsilon^{m-1}\theta$ for a fixed $\theta= \theta_{\eta_0,1}$. We have
    \[\sum_{i=1}^m \Omega_{\epsilon\theta}^{G/H}(a,b)^i\epsilon^i(z)\theta(z)=\theta(z)\sum_{i=1}^m \Omega_{\epsilon\theta}^{G/H}(a,b)^i\]

     Name $\omega=\Omega^{G/H}_{\eta_0,\xi}((1,0),(0,1))$. Recall that is has order $e_{G/H}(\eta,\xi)$ and note that it divides $\ord{[\xi]}$ as well as $\gcd(A,B)$. Then,  
    \begin{align*}
        \sum_{i=1}^m \Omega_{\epsilon\theta}^{G/H}(a,b)^i&=\sum_{i=1}^{m} \omega^{i\Delta(a,b)}
    \end{align*}
    which is zero unless $\omega^{\Delta(a,b)}=1$, in which case is $m$. This condition does not depend on the choice of $\theta$. Thus the summation of the statement is
    \begin{align*}
        m\sum_\theta \theta(z)
        =\gcd\left(A,\frac{|K_0|}{|\Stab[\eta_0]|}\right)\prod\limits_{i=1}^A\eta_0(z_i)
    \end{align*}
    provided that $e(\eta_0,\xi)|\Delta(a,b)$. 
    
    Finally, recall that $e_{G/H}(\eta_0,\xi)$ is 
    \begin{align*}
         \frac{\gcd\left(A,\frac{|K_0|}{|\Stab[\eta_0]|}\right)}{\gcd\left(\frac{|\ov{K}_0|}{B |\Stab[\eta_0]|},\gcd\left(A,\frac{|K_0|}{|\Stab[\eta_0]|}\right)\right)}
        = \frac{\gcd\left(A|\Stab[\eta_0]|,|K_0|\right)}{\gcd\left(\frac{|\ov{K}_0|}{B },\gcd\left(A|\Stab[\eta_0]|,|K_0|\right)\right)}
        =  \frac{\gcd\left(A|\Stab[\eta_0]|,|K_0|\right)}{\gcd\left(A|\Stab[\eta_0]|,\frac{|\ov{K}_0|}{B}\right)}
    \end{align*}
    So it agrees with $e_A(\eta_0)$.
\end{proof}

In consistence with the previous lemma, denote with $\Irr[G_0]_{a,b}\subset \Irr[G_0]$ the subset of those characters $\eta$ with $|\Stab[\eta]|$ divides $\frac{|\ov{K}_0|}{B}$ and $e_A(\eta) | \Delta(a,b)$. In addition, for $z\in G_A$ and $\eta\in \Irr[G_0]$, write 
\[\eta(z)=\prod_{i=1}^A\eta(z_i)\]
where $z_i$ is the $i$-th coordinate of $z$.

\begin{thm}\label{reduced-character-formula}
    In a block-type Clifford's setting, pick an integer $N$ such that $\Stab[\eta]|N$ for every $\eta\in\Irr[G_0]$. Let $z\in H$ be an element with an $\gcd(AN,|K_0|)$-root. For any positive integer $g$ and tuples $a,b\in K^g$ whose coordinates generate $K'$, the following identity holds 
    \[\left|\left\{\begin{array}{cc}
         (x,y)\in G^g\times G^g:\\
         {[}x,y]z=1,\pi(x)=a,\pi(y)=b
    \end{array}\right\}\right|=\sum_{\eta}\gcd\left(A,\frac{|K_0|}{|\Stab[\eta]|}\right)\left(\frac{|\Stab[\eta]|}{|K_0|} \right)^{2g}\left(\frac{|G_0|}{\eta(1)}\right)^{A(2g-1)} \eta(z)\]
    where $\eta$ runs over all character of $\Irr[G_0]_{a,b}$.
\end{thm}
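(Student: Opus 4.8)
The plan is to take the general identity of Theorem~\ref{character-formula} and collapse its right-hand side using the classification of fixed characters (Theorem~\ref{clasf-char}) together with the summation Lemma~\ref{sum-omega}. First I would note that, since $a,b\in(K')^g$ have coordinates generating $K'$, the constraint $\pi(x)=a$, $\pi(y)=b$ forces $(x,y)\in(G')^g\times(G')^g$, so the cardinality in question is unchanged if $G$ is replaced by $G'$. Applying Theorem~\ref{character-formula} to the Clifford setting $H\to G'\to K'$ therefore gives
\[
\left|\left\{\begin{array}{c}(x,y)\in G^g\times G^g:\\ {[}x,y]z=1,\ \pi(x)=a,\ \pi(y)=b\end{array}\right\}\right|=\sum_{\theta\in\Irr[H]^{K'}}\left(\frac{|H|}{\theta(1)}\right)^{2g-1}\Omega_\theta^{G'/H}(a,b)\,\theta(z),
\]
and by Theorem~\ref{clasf-char} the index set is $\Irr[H]^{K'}=\Irr[H]^K$.

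Next I would reindex this sum along the bijection $\Theta$ of Theorem~\ref{clasf-char}. The $K_0$-conjugation orbits of $\Irr[H]^K$ are exactly the sets of irreducible constituents of the characters $\eta_{\eta_0,\xi}\big|_H$, where $\eta_0$ runs over $\Irr[K_0]$-orbit representatives in $\Irr[G_0]$ with $|\Stab[\eta_0]|\mid|\ov{K}_0|/B$ and $[\xi]$ runs over the subgroup of $\Irr[K_0]/\Stab[\eta_0]$ of elements of order dividing $A$, a cyclic group of order $m:=\gcd(A,|K_0|/|\Stab[\eta_0]|)$ --- which is precisely the summation set of Lemma~\ref{sum-omega}. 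Since $G_A/H\simeq K_0$ is cyclic there is no ramification at that step, so $\eta_{\eta_0,\xi}\big|_H$ is multiplicity-free with $|\Stab[\eta_0]|$ constituents of common degree $\theta(1)=\eta_0(1)^A/|\Stab[\eta_0]|$; combined with $|H|=|G_0|^A/|K_0|$ this yields
\[
\left(\frac{|H|}{\theta(1)}\right)^{2g-1}=\left(\frac{|G_0|}{\eta_0(1)}\right)^{A(2g-1)}\left(\frac{|\Stab[\eta_0]|}{|K_0|}\right)^{2g-1},
\]
a quantity depending on $\eta_0$ only. Factoring it out of the $\xi$- and constituent-sums, the remaining inner double sum is $\sum_{[\xi]}\sum_{\theta}\Omega_\theta^{G'/H}(a,b)\,\theta(z)$, to which Lemma~\ref{sum-omega} applies: its hypothesis on $z$ is met because $\Stab[\eta_0]\mid N$ forces $\gcd(A|\Stab[\eta_0]|,|K_0|)\mid\gcd(AN,|K_0|)$, so the $\gcd(AN,|K_0|)$-root of $z$ supplies, coordinatewise, the required $\gcd(A|\Stab[\eta_0]|,|K_0|)$-roots in $G_0$. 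Hence the inner sum equals $\gcd(A,|K_0|/|\Stab[\eta_0]|)\prod_{i=1}^A\eta_0(z_i)$ when $e_A(\eta_0)\mid\Delta(a,b)$ and vanishes otherwise.

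Finally I would convert the sum over $\Irr[K_0]$-orbits into a sum over individual characters. Every surviving summand is constant on $\Irr[K_0]$-orbits of $\eta_0$ --- in particular $\prod_{i=1}^A\eta_0(z_i)=\eta(z)$ is orbit-invariant because $z\in H=\ker\pi_A$ --- so a sum over orbits equals the sum over individual $\eta$ with each term weighted by $|\Stab[\eta]|/|K_0|$, the reciprocal of the orbit size. Multiplying this weight by the factor $(|\Stab[\eta]|/|K_0|)^{2g-1}$ above produces $(|\Stab[\eta]|/|K_0|)^{2g}$, and the surviving range of $\eta$ --- characters with $|\Stab[\eta]|\mid|\ov{K}_0|/B$ and $e_A(\eta)\mid\Delta(a,b)$ --- is exactly $\Irr[G_0]_{a,b}$; collecting everything gives the claimed formula. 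I expect the real work to be the three-layer bookkeeping (orbits of $\eta_0$, cosets $[\xi]$, constituents $\theta$) rather than any single estimate: one has to check that the $[\xi]$-index set arising from $\Theta$ coincides with the one in Lemma~\ref{sum-omega}, that $|H|/\theta(1)$ is genuinely constant over all of it, and that the orbit-to-character conversion is inserted with the correct exponent so that $2g-1$ becomes $2g$.
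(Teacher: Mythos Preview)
Your proposal is correct and follows essentially the same approach as the paper: apply Theorem~\ref{character-formula}, reindex via the classification of Theorem~\ref{clasf-char}, evaluate the inner $(\xi,\theta)$-sum with Lemma~\ref{sum-omega}, and then pass from $\Irr[K_0]$-orbits of $\eta_0$ to individual $\eta$ by inserting the factor $|\Stab[\eta]|/|K_0|$. Your write-up is in fact more detailed than the paper's --- you make explicit the computation of $|H|/\theta(1)$, the verification that the $\gcd(AN,|K_0|)$-root hypothesis on $z$ feeds into Lemma~\ref{sum-omega}, and the exponent bookkeeping $2g-1\to 2g$ --- all of which the paper leaves implicit in its two-line display.
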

\begin{proof}
    We apply the previous lemma and Theorem \ref{clasf-char} to the formula of Theorem \ref{character-formula};     \begin{align*}
        \sum_{\theta\in\operatorname{Irr}(H)^{K'}}\left(\frac{|H|}{\theta(1)}\right)^{2g-1}\Omega_\theta^{G/H}(a,b)\theta(z) &=\sum_{(\eta,\xi)\in I_{A,B}/\sim}\left(\frac{|G_0|^A|\Stab[\eta]|}{|K_0|\eta(1)^A}\right)^{2g-1}\sum_{\theta\in \Theta(\eta,\xi)}\Omega^{G/H}_{\theta}(a,b)\theta(z)\\
        &=\sum_{\eta}\gcd\left(A,\frac{|K_0|}{|\Stab[\eta]|}\right)\left(\frac{|\Stab[\eta]|}{|K_0|} \right)^{2g}\left(\frac{|G_0|}{\eta(1)}\right)^{A(2g-1)} \eta(z)
    \end{align*}
    where in the last sum $\eta$ runs over $\Irr[G_0]_{a,b}$.
\end{proof}

\begin{rmk}
    We believe that this result should be useful to compute the stringy polynomials of other families of character varieties. For example for $D_n$, one may like to consider $A=2$ or $4$, $H_0=\operatorname{Spin}_{\frac{2n}{A}}(\F_q)$ and $G_0$ a double cover of
    \[\{M\in \GL_{2n}(\F_q):MM^T=\omega^2\operatorname{Id}, \omega\in \F_q^\times\}.\]
\end{rmk}

\section{Stringy points of type A parabolic character varieties}\label{sec:stringy-points}

\subsection{Fixed point sets}

From now on, fix positive integers $n$, $d$, and $g$ with $d|n$. Recall the notation of \ref{prelim:ch-var}. In this subsection, we find a decomposition of $\abbreviatedBetti[\SL_n/F]^a$. We will prove later that it is actually its decomposition in irreducible components for most $a$. From now on, we assume that $\mcC$ is semisimple, regular, and such that there is no proper subset of its eigenvalues whose product is one. When working over $\C$, we also assume that its eigenvalues are algebraic integers. We summarize this by saying that $\mcC$ is nice.

\begin{Def}
    Fix an algebraically closed field $k$ which contains $F_d$. Given $a\in F_d^{2g}$, define
    \[X_{a}=
    \left\{\begin{array}{c}
     (x,y,z)\in \SL_n(k)^g\times \SL_n(k)^g\times\mcC:[x,y]z=1,\\ a\cdot (x,y)\in\PGL_n(k)\cdot (x,y)
    \end{array}\right\}\]
    and call $X_{a,z}$ the fiber of $X_{a,z}$ over $z\in C$.
\end{Def}

By its very definition, $\betti[\SL_n][k]^{a}$ is nothing but $\tilde X_a/\PGL_n(k)$ or $X_{a,z}/T(k)$ for chosen $z\in \mcC$.  Fix $a\in F^{2g}$ and $z\in \mcC\cap \SL_n(\F_q)$. Fix a basis where $z$ is diagonal. Denote with $A$ the order of $a$. It agrees with the size of the subgroup of $F_d$ generated by its coordinates. Identify it with $F_A$.

Put $G_A= \GL_{\frac{n}{A}}(k)^A$, 
\[H_A=\{(x_1,\ldots,x_A)\in G_A:\det(x_1)\cdots \det(x_A)=1\},\] and $\hat{G}_{A}$ the subgroup of $\SL_n(k)$ generated by $H_A$ and 
the permutation matrix
\[\sigma=\left(\begin{array}{ccccc}
0 & \operatorname{Id_{\frac{n}{A}}} & 0 & \cdots & 0 \\
0 & 0 & \operatorname{Id_{\frac{n}{A}}} & \cdots & 0 \\
\vdots & \vdots & \vdots & \ddots & \vdots \\
0 & 0 & 0 & \cdots & \operatorname{Id_{\frac{n}{A}}}\\
\operatorname{Id_{\frac{n}{A}}} & 0 & 0 & \cdots & 0
\end{array}\right).\]
Call $\pi:\hat{G}_A\to K$ the quotient by $H_A$.

Let $\hat{\Upsilon}_{A}$ be the set whose elements are maps $V:F_A\to \operatorname{Grass}_{k}(n,\frac{n}{A})$ such that $k^n=\bigoplus_{i\in F_A} V_i$ and $z$ fix this decomposition. It has an action of $F_A$ by translations. Define $\Upsilon_A=\hat{\Upsilon}_A/F_A$. Choosing an order of the eigenvalues of $z$ induces an isomorphism
\[\Upsilon_A \simeq S_n/\langle (S_{\frac{n}{A}})^A, \operatorname{shif}\rangle\]
where $\operatorname{shif}$ is the cyclic shift. 

\begin{lema}\label{irred-comp}
    Let $\mcC\subset \SL_n(k)$ be a nice conjugacy class, $z\in \mcC$ and $a\in F_d^{2g}$. Then
    \[X_{a,z}=\bigsqcup_{[\upsilon]\in \Upsilon_A} \left\{\begin{array}{c}
        (x,y)\in \hat{G}_{A}^g\times \hat{G}_{A}^g: 
       {[}x,y]z^\upsilon = 1, \pi(x,y) = a\end{array}
        \right\}\]
    where $z^\upsilon$ is the tuple of $H_A$ whose $i$-coordinate is $z\big|_{\upsilon(i)}$. This decomposition is $F_d$ and $T(k)$-equivariant.
\end{lema}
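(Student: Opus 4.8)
The plan is to build the decomposition from the element of $\PGL_n(k)$ witnessing the fixed-point condition. Fix $(x,y)\in X_{a,z}$ and choose $g\in\GL_n(k)$ with $gx_ig^{-1}=a_ix_i$ and $gy_ig^{-1}=a_{g+i}y_i$ for all $i$. Since the $a_i$ are central scalars they disappear from commutators, so $g$ fixes $[x_1,y_1]\cdots[x_g,y_g]=z^{-1}$; hence $g$ centralizes the regular semisimple element $z$ and lies in the maximal torus $Z_{\GL_n}(z)$, which in the eigenbasis of $z$ is the diagonal torus, so $g$ is semisimple. Because $\mcC$ is nice, $\PGL_n(k)$ acts freely on $\preBetti[k]$, that is, the representation $(x,y)$ is irreducible, so its centralizer in $\GL_n(k)$ consists of scalars; therefore any two admissible choices of $g$ differ by a scalar and the image $\ov{g}\in\PGL_n(k)$ is canonically attached to $(x,y)$. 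From $g^mx_ig^{-m}=a_i^mx_i$ and $g^my_ig^{-m}=a_{g+i}^my_i$ together with irreducibility one gets $\ov{g}^m=1$ if and only if $a^m=1$, so $\ov{g}$ has order $\ord a=A$ and, taking $m=A$, the matrix $g^A$ is scalar.

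Next I would read a decomposition of $k^n$ off $g$. As $\operatorname{char} k \nmid d$ there is a primitive $A$-th root of unity $\zeta$; writing $g^A=\lambda I$, fixing $\mu$ with $\mu^A=\lambda$, and setting $a_i=\zeta^{e_i}$, $a_{g+i}=\zeta^{f_i}$, the eigenspaces $W_c=\ker(g-\mu\zeta^c)$ for $c\in\Z/A\Z$ give $k^n=\bigoplus_c W_c$. The relation $gx_ig^{-1}=a_ix_i$ forces $x_iW_c=W_{c+e_i}$, and likewise $y_iW_c=W_{c+f_i}$; since the coordinates of $a$ generate $F_A$, the translations by the $e_i$ and $f_i$ act transitively on $\Z/A\Z$, hence all $W_c$ have the same dimension $n/A$ and in particular none vanishes. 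As $z$ commutes with $g$ it preserves each $W_c$, so $(W_c)_c$ is a point of $\hat{\Upsilon}_A$; its only indeterminacy, the choice of $\mu$, merely translates the labelling, so $(x,y)$ determines a well-defined class in $\Upsilon_A=\hat{\Upsilon}_A/F_A$. This is the assignment sorting $X_{a,z}$ into the pieces of the union, and it is tautologically injective at the level of pieces.

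To conclude I would identify the fibre over $[\upsilon]$ with the stated set. Choosing a representative $\upsilon$ and a permutation matrix $w$ carrying the standard block decomposition of $k^n$ onto $(\upsilon(i))_i$, conjugation by $w$ moves any $(x,y)$ with eigendata $\upsilon$ into $\hat{G}_A^g\times\hat{G}_A^g$ (each $w^{-1}x_iw$ carries the $c$-th standard block to the $(c+e_i)$-th, so it lies in $H_A\sigma^{e_i}$ and $\pi(w^{-1}x_iw)=a_i$), turns $z$ into $z^\upsilon$, and turns $[x,y]z=1$ into $[w^{-1}xw,w^{-1}yw]\,z^\upsilon=1$. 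Conversely, given $(x',y')\in\hat{G}_A^g\times\hat{G}_A^g$ with $[x',y']z^\upsilon=1$ and $\pi(x',y')=a$, the block-scalar matrix $g_0$ acting on the $j$-th block by $\zeta^j$ satisfies $g_0\sigma g_0^{-1}=\zeta\sigma$ and commutes with $H_A$, hence $g_0x_i'g_0^{-1}=a_ix_i'$; conjugating back by $w$ gives a point of $X_{a,z}$ whose canonical class is $[\upsilon]$, and the two constructions are mutually inverse. The same bookkeeping shows the identification intertwines the left $F_d^{2g}$-action by scalar multiplication (which leaves $g$, hence $[\upsilon]$, unchanged and preserves $\hat{G}_A$ since the relevant scalars lie in $H_A$) and the conjugation $T(k)$-action (here $T(k)\subset Z_{\GL_n}(z)$ commutes with $g$, so it fixes $[\upsilon]$, and it normalizes $\hat{G}_A$). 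The main obstacle I anticipate is not any single computation but making the middle step genuinely canonical: showing that $\ov{g}$, and not merely $g$, is well defined — which is precisely where freeness of the $\PGL_n$-action is used — and that the labelled decomposition $(W_c)_c$ is canonical up to the translation action of $F_A$ and no finer, so that it descends to a well-defined element of $\Upsilon_A$; granting that, the remaining identifications are routine conjugation bookkeeping.
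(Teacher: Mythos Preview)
Your proof is correct and follows essentially the same route as the paper: produce the conjugating element, read off an $F_A$-grading of $k^n$ from its eigenspaces, and note the grading is determined up to translation. The one genuine difference is how you obtain the full eigenspace decomposition: you first observe that the conjugator centralizes $z$ and hence sits in the diagonal torus, so it is automatically diagonalizable, whereas the paper instead takes the sum $\bigoplus_{i\in F_A}E_{i\cdot\lambda}$ of translated eigenspaces, notes it is $(x,y)$-invariant, and then invokes the nice hypothesis on $\mcC$ (no proper subproduct of eigenvalues equals $1$, so $z$ cannot be a product of commutators on a proper subspace) to force it to be all of $k^n$. Your route is slightly cleaner on this point; on the other hand you lean more heavily on the freeness lemma for $\PGL_n$ to make $\ov g$ canonical and to pin down its order, which the paper leaves implicit. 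Both arguments ultimately use niceness via irreducibility, just at different steps, and the remaining bookkeeping (explicit conjugation to $\hat G_A$, equivariance) is the same in spirit.
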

\begin{proof}
    Pick $(x,y)\in X_{a,z}$ and $w\in \PGL_n(k)$ such that $w(x,y)w^{-1}=a\cdot (x,y)$. Rewrite the conjugacy condition as $w(x,y)=a\cdot (x,y)w$. Let $E_\lambda$ be an eigenspace of $w$ for some eigenvalue $\lambda$. Then $a_i\cdot E_\lambda$ is the eigenspace for $a_i\cdot \lambda$ for any $i$. It follows that, if we identify $F_A$ with the group generated by the coordinates of $a$, $V=\bigoplus_{i\in F_A} E_{i\cdot \lambda}$ is an invariant subspace for $(x,y)$. Recall that $z$ can not be a bracket in any proper subspace of $k^n$. Hence, the condition on $z$ implies that $V=k^n$. We show that there is a $F_A$-grading $k^n=\bigoplus_{i\in F_A} V_i$ such that $(x,y)$ have degree $a$. Note that $z$ should have degree zero. On the other hand, if such a grading is given and there is a tuple of degree $a$ of linear operators $(x,y)$ with $[x,y]=z$, we can define $w$ as the action by $i$ in $V_i$. 
    
    Note that the grading is determined by $(x,y)$ up to translation. Indeed, if $W_i$ is another grading, as $z$ is diagonalizable and has degree zero in both gradings, there must be a non-empty intersection $W_i\cap V_j$. But then $\bigoplus_{k\in F_A} (W_{i+k}\cap V_{j+k})$ is invariant and has to be $k^n$. In addition, all the $V_i$'s have the same dimension because $\langle a_i\rangle =F_A$. As $z$ should be of degree zero, the possible gradings are in bijection with $\Upsilon_A$. 

    For the final claim, note that the action of $F_d$ in $X_{a,z}$ agrees with the induced action of $F_d$ by multiplying in $\hat{G}_A$. The same happens for the action of $T$.
\end{proof}

Hence, we obtain an $F_d$-equivariant decomposition
\[\betti[\SL_n][k]^a=\bigsqcup_{[\upsilon]\in \Upsilon_A} \betti[\SL_n][k]^a_\upsilon\]
We will prove later that, if $\gcd(A,\frac{n}{d})=1$, this decomposition is actually the decomposition in irreducible components of $\abbreviatedBetti^a$. Now we only prove the following lemma for further use.

\begin{lema}
    Let $k$ be an algebraically closed field and $\mcC\subset \SL_n(k)$ be a nice conjugacy class. Then $\betti[\SL_n][k]^a_\upsilon$ is smooth and equidimensional of dimension $(2g-1)(\frac{n^2}{A}-1)-n+1$,for any $a\in F$ and $\upsilon\in \Upsilon_A$.
\end{lema}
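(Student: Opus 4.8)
The plan is to exhibit $\betti[\SL_n][k]^a_\upsilon$ as (a quotient of) a character-variety-like space for the group $\hat G_A$ and then run the same argument that shows $\betti[\SL_n][k]$ is smooth. Concretely, by Lemma \ref{irred-comp} the stratum $\betti[\SL_n][k]^a_\upsilon$ is the quotient by $T(k)$ (equivalently, by the centralizer of $z$ inside $\PGL_n$, which here is a torus) of the set
\[
Z_\upsilon=\{(x,y)\in\hat G_A^{\,g}\times\hat G_A^{\,g}:[x,y]\,z^\upsilon=1,\ \pi(x,y)=a\}.
\]
First I would fix the grading $\upsilon$ and note that $z^\upsilon$ is a regular semisimple element of $\hat G_A$ whose eigenvalues, split according to the $F_A$-blocks, still have no proper subset multiplying to one (this is exactly the niceness hypothesis on $\mcC$, and it survives because $\upsilon$ only reorders eigenvalues among blocks). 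Then I would show that $Z_\upsilon$ is smooth of the expected dimension by the standard transversality/submersion argument: the map $\mu:\hat G_A^{\,g}\times\hat G_A^{\,g}\to \hat G_A$, $(x,y)\mapsto [x,y]$, restricted to the fibre $\pi(x,y)=a$, is submersive onto the conjugacy class of $(z^\upsilon)^{-1}$ at every point where the corresponding $\hat G_A$-representation (or rather the associated $H_A$-representation) has scalar stabilizer; niceness forces irreducibility, hence scalar stabilizer, hence freeness of the relevant $\PGL$-action, exactly as in the proof of the lemma preceding Section 3 (which cites \cite[Theorem 2.2.5]{HV}).

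Next I would compute the dimension. The fibre condition $\pi(x,y)=a$ cuts $\hat G_A^{\,2g}$ down to a coset of $H_A^{\,2g}$, which has dimension $2g\dim H_A = 2g(A\cdot \frac{n^2}{A^2}-1)=2g(\frac{n^2}{A}-1)$. Imposing $[x,y]z^\upsilon=1$ is, by the submersion statement, $\dim(\text{conjugacy class of }z^\upsilon)$ many independent conditions; since $z^\upsilon$ is regular semisimple in $\hat G_A$ its centralizer is a maximal torus of dimension equal to $\operatorname{rank}\hat G_A$, so the conjugacy class has dimension $\dim H_A-(\operatorname{rank}\hat G_A)$. Then one quotients by $T(k)$, a torus of dimension $n-1$ acting freely (again by niceness). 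Assembling:
\[
\dim \betti[\SL_n][k]^a_\upsilon = (2g-1)\bigl(\tfrac{n^2}{A}-1\bigr)-n+1,
\]
after checking that the rank of $\hat G_A$ and the dimension of $T$ combine to give the "$-n+1$" — here I would be slightly careful that $\hat G_A$ contains the permutation matrix $\sigma$, so its identity component is $H_A$ and all the infinitesimal computations take place in $\mathfrak{h}_A=\operatorname{Lie}H_A$, making the $\sigma$-twist invisible to dimension counts.

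The main obstacle I expect is the transversality/freeness input rather than the bookkeeping: I must verify that for $(x,y)\in Z_\upsilon$ the tuple $(x,y)$ generates an irreducible subgroup of $\GL_n$ acting irreducibly on $k^n$, so that Schur forces the stabilizer in $\PGL_n$ (and a fortiori in $T$) to be trivial and the differential of $\mu$ to be surjective onto $T_{(z^\upsilon)^{-1}}(\text{its class})$. This is where niceness of $\mcC$ is used in an essential way — precisely the hypothesis that no proper subproduct of the eigenvalues of $z$ is $1$ — and it is the same mechanism as in \cite[Theorem 2.2.5]{HV}, but one has to make sure it still applies after restricting to the subgroup $\hat G_A$ and imposing the $\pi$-condition. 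Once freeness and submersivity are in hand, smoothness of $Z_\upsilon$ is automatic from the regular-value form of the implicit function theorem, equidimensionality follows because the dimension formula is uniform in the (connected) point, and descent along the free $T(k)$-action preserves smoothness and equidimensionality; so I would structure the write-up as: (1) reduce to $Z_\upsilon$ via Lemma \ref{irred-comp}; (2) prove irreducibility of the associated representation using niceness; (3) deduce freeness of $T(k)$ and submersivity of the bracket map; (4) read off smoothness and the dimension.
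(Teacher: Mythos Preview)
Your plan is essentially the paper's own argument: reduce via Lemma \ref{irred-comp} to the fibre $Z_\upsilon$ of the bracket map on the coset $aH_A^{2g}$, then run the proof of \cite[Theorem 2.2.5]{HV}, which goes through verbatim because $H_A$ is reductive and niceness forces the associated representation to be irreducible (hence scalar stabilizer, hence free action and surjective differential). One correction to your dimension bookkeeping: the bracket map $aH_A^{2g}\to H_A$ is a submersion onto $H_A$ itself (the image of the differential is the orthogonal complement of the centralizer Lie algebra, hence all of $\mathfrak h_A$ once the centralizer is scalar), so imposing $[x,y]z^\upsilon=1$ costs $\dim H_A=\tfrac{n^2}{A}-1$ conditions, not $\dim(\text{conjugacy class})$; then the free $T(k)$-action removes $n-1$ more, giving $(2g-1)\bigl(\tfrac{n^2}{A}-1\bigr)-(n-1)$ directly, with no separate rank term to reconcile.
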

\begin{proof}
    We already know that the action of $\PGL_n(k)$ is free. Hence, we only need to show that $X_{a,1,z}$ is smooth and equidimensional. It is enough to prove that $[-,-]z^{-1}:aH_{A}^{2g}\to H_A$ is regular. The proof is exactly the same that for $a=1$ as $H_A$ is reductive. See \cite[theorem 2.2.5]{HV}.
\end{proof}

\subsection{Counting stringy points}\label{counting-twisted-points}

We want to compute the number of (twisted) points of a twisted sector of $\betti[G][\ov{\F}_q]$. Namely, $|(\betti[G][\ov{\F}_q]^{a})^{b^{-1}\operatorname{Frob}}|$ where $a,b\in F$. For this make a better description to be able to apply the results of the previous sections. First, we introduce a twisted version of $X_{a}$.

\begin{Def}
    Fix a finite field $\F_q$ with $d|q-1$. Given $a,b\in F^{2g}$, define
    \[X_{a}^b=
    \left\{\begin{array}{c}
     (x,y,z)\in \SL_n(\ov{\F}_q)^g\times \SL_n(\ov{\F}_q)^g\times\mcC:[x,y]=z,\\ \frob(x,y)=b\cdot (x,y),  a\cdot (x,y)\in\PGL_n(\F_q)\cdot (x,y)
    \end{array}\right\}\]
    which is a subscheme of $\betti[\SL_n][\ov{\F}_q]$. Call $X_{a,z}^b$ the fiber of $X_{a}^b$ over $z\in C$.
\end{Def}

\begin{lema} 
    Let $\mcC\subset \SL_n(\ov{\F}_q)$ be a nice conjugacy class with $\mcC\cap \SL_n(\F_q)\neq \emptyset$. Then for $d|q-1$ and $a,b\in F$,
    \[({\betti[\SL_n][\ov{\F_q}]}^a)^{b^{-1}\frob}=X_{a}^b/\PGL_n(\F_q)\]
    and, in particular, 
    \[|({\betti[\SL_n][\ov{\F_q}]}^a)^{b^{-1}\frob}|=\frac{|X_{a}^b|}{|\PGL_n(\F_q)|}=\frac{|X_{a,z}^b|}{|T(\F_q)|}\]
    for any $z\in \mcC\cap \SL_n(\F_q)$.
\end{lema}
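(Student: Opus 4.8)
The plan is a twofold descent along Lang's theorem: first to move a representative of a $b^{-1}\frob$-fixed point into a $b$-twisted $\F_q$-rational form, and then again to force the conjugator witnessing the $a$-symmetry to be $\F_q$-rational. To begin with, since $d\mid q-1$, all coordinates of $a$ and $b$ lie in $\F_q^\times$ and are fixed by $\frob$; hence the automorphisms of $\betti[\SL_n][\ov{\F_q}]$ defined by $a$ and by $b$ are defined over $\F_q$, they commute with $\frob$, and $\betti[\SL_n][\ov{\F_q}]^a$ is $\frob$-stable, so $b^{-1}\frob$ acts on it. Recalling that $\betti[\SL_n][\ov{\F_q}]^a=X_a/\PGL_n(\ov{\F_q})$ and that, $\mcC$ being nice, $\PGL_n(\ov{\F_q})$ acts freely on $\preBetti[\ov{\F_q}]$ and hence on $X_a$, a point $P=[(x,y,z)]\in\betti[\SL_n][\ov{\F_q}]^a$ lies in $(\betti[\SL_n][\ov{\F_q}]^a)^{b^{-1}\frob}$ if and only if $\frob(P)=b\cdot P$, that is, there is $h\in\PGL_n(\ov{\F_q})$ with $h\cdot\frob(x,y,z)\cdot h^{-1}=(b\cdot x,b\cdot y,z)$.

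The core step is the descent itself. Given such $P$ and $h$, I would invoke Lang's theorem for the connected group $\PGL_n$ to get $g\in\PGL_n(\ov{\F_q})$ with $g^{-1}\frob(g)=h$, and set $(x',y',z')=g(x,y,z)g^{-1}$; using $\frob(g)=gh$ one checks at once that $\frob(x',y')=b\cdot(x',y')$ and $\frob(z')=z'$, so $z'\in\mcC\cap\SL_n(\F_q)$. The $a$-symmetry is preserved by conjugation: $a\cdot(x',y')=w\,(x',y')\,w^{-1}$ for some $w\in\PGL_n(\ov{\F_q})$. Now I would apply $\frob$ to this identity; using that $\frob$ fixes the coordinates of $a$, that $\frob(x',y')=b\cdot(x',y')$, that the coordinates of $b$ are central (scalar) in $\SL_n$, and that $z'$ is a word in $x',y'$, one finds that $w^{-1}\frob(w)$ centralises the whole tuple $(x',y',z')\in\preBetti[\ov{\F_q}]$, hence is trivial by freeness, so $w\in\PGL_n(\F_q)$ and $(x',y',z')\in X_a^b$. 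Conversely every $(x,y,z)\in X_a^b$ defines, through $X_a^b\subset X_a$, a point of $\betti[\SL_n][\ov{\F_q}]^a$ which is fixed by $b^{-1}\frob$ because $\frob(x,y)=b\cdot(x,y)$ and $\frob(z)=z$; and the same $\frob$-descent argument shows that two elements of $X_a^b$ lying in a single $\PGL_n(\ov{\F_q})$-orbit already lie in a single $\PGL_n(\F_q)$-orbit. This yields the set-level identity $(\betti[\SL_n][\ov{\F_q}]^a)^{b^{-1}\frob}=X_a^b/\PGL_n(\F_q)$.

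For the numerical identities, $\PGL_n(\F_q)$ acts freely on $X_a^b$, so the quotient has $|X_a^b|/|\PGL_n(\F_q)|$ elements. Projecting to the $z$-coordinate writes $X_a^b$ as the disjoint union of its fibres $X_{a,z'}^b$ over $z'\in\mcC\cap\SL_n(\F_q)$, and $\PGL_n(\F_q)$ permutes these fibres via conjugation on $\mcC\cap\SL_n(\F_q)$. Since $\mcC$ is regular semisimple and, as $\mcC\cap\SL_n(\F_q)\neq\emptyset$, its characteristic polynomial is defined over $\F_q$, the classification of semisimple conjugacy classes of $\GL_n(\F_q)$ by characteristic polynomial makes this action transitive, with point stabiliser $C_{\PGL_n(\F_q)}(z)=T(\F_q)$ in a basis diagonalising $z$. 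Hence all fibres have the same cardinality and $|\mcC\cap\SL_n(\F_q)|=|\PGL_n(\F_q)|/|T(\F_q)|$, so that $|X_a^b|=|X_{a,z}^b|\,|\PGL_n(\F_q)|/|T(\F_q)|$, which is the remaining equality. I expect the main obstacle to be the core step, namely the two successive Lang--Frobenius descents, and in particular the verification that the conjugator $w$ of the $a$-symmetry becomes $\F_q$-rational: this is exactly where niceness of $\mcC$ (freeness of the $\PGL_n$-action) and centrality of $F_d$ in $\SL_n$ are both used.
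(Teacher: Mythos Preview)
Your proof is correct and follows essentially the same route as the paper's: Lang's theorem to produce a representative satisfying $\frob(x',y')=b\cdot(x',y')$, and then the freeness of the $\PGL_n$-action (niceness of $\mcC$) together with centrality of $F_d$ to force the $a$-conjugator $w$ to lie in $\PGL_n(\F_q)$. The paper organises the same two ingredients in the opposite order (injectivity first, then a somewhat more abstract appeal to Lang for surjectivity), and records your $w$-rationality step as the remark that one may relax the definition of $X_a^b$ to allow the $a$-conjugator in $\PGL_n(\ov{\F_q})$. Your treatment of the numerical identities is more explicit than the paper's one-line ``use again Lang's theorem'', but reaches the same conclusion.
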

\begin{proof}
    Note that any element of $X_{a}^b$ is fixed by $b^{-1}\frob$. In addition, the action of $\PGL_n(\F_q)$ preserves $X_{a}^b$. It follows that we have a map $\varphi:X_{a}^b/\PGL_n(\F_q)\to (\betti[\SL_n][\ov{\F}_q]^{a})^{b^{-1}\frob}$. We claim that $\varphi$ is injective. Let $(x_1,y_1),(x_2,y_2)\in X_{a}^b$ with $\varphi(x_1,y_1)=\varphi(x_2,y_2)$. This means that there exists some $w\in \PGL_n(\ov{\F}_q)$ such that $w(x_1,y_1)w^{-1}=(x_2,y_2)$. But then 
    \[\frob(w)(x_1,y_1)\frob(w)^{-1}=(x_2,y_2) \]
    as $\frob$ acts in $(x_1,y_1),(x_2,y_2)$ as $b$. Hence, $w\frob(w)^{-1}$ fix $(x_1,y_1)$. But the action is free. It must be $w\in \PGL_n(\F_q)$. Note that the same argument allows us to only require $C\in \PGL_n(\ov{\F}_q)$ in the definition of $X_{a}^b$.
    
    Subjectivity is a consequence of Lang's theorem. Indeed, let us consider $X_a$ for $k=\ov{\F}_q$. Recall that $\betti[\SL_n][\ov{\F}_q]^{a}$ is nothing but $X_a/\PGL_n(\ov{\F}_q)$. Denote $F=b^{-1}\frob$ and pick $(x,y)\in (\betti[\SL_n][\ov{F}_q]^{a})^F$. In this case, the fiber $X_{a,(x,y)}$ at $(x,y)$ is $F$-invariant. We are in the following situation: $F$ is a finite order endomorphism of a quotient of the connected group $\PGL_n(\ov{\F}_q)$. Moreover, $F$ has a finite number of fixed points as $F^d = \frob^d$. Hence, we can apply Lang's theorem to conclude that $X_a^F\to (\betti[\SL_n][\F_q]^{a})^F$ is surjective.
    
    For the last statement, recall the action is free and use again Lang's theorem.
\end{proof}

The next step is to rewrite $X_{a,z}^b$ suitably to apply theorem \ref{reduced-character-formula}. Fix $a,b\in F^{2g}$ and $z\in \mcC\cap \SL_n(\F_q)$. Fix a basis where $z$ is diagonal. Denote with $A$ and $B$ the order of $a$ and $b$ respectively. These numbers agree with the sizes of the subgroups of $F$ generated by their coordinates. Identify $F_A$ and $F_B$ with those groups. Pick an isomorphism $\beta:F_B\simeq \Z/B\Z$ and let $\omega$ be the root of unity such that $b_i=\omega^{\beta(b_i)}$. Note that $\omega$ has order $B$.

We recall some definitions from the example at the beginning of the previous section. Put $G_A= \GL_{\frac{n}{A}}(\F_q)^A$, 
\[H_A=\{(x_1,\ldots,x_A)\in G_A:\det(x_1)\cdots \det(x_A)=1\},\] and $G_{A,B}$ the subgroup of $\SL_n(\ov{\F_q})$ generated by $H_A$, 
the permutation matrix
\[\sigma=\left(\begin{array}{ccccc}
0 & \operatorname{Id_{\frac{n}{A}}} & 0 & \cdots & 0 \\
0 & 0 & \operatorname{Id_{\frac{n}{A}}} & \cdots & 0 \\
\vdots & \vdots & \vdots & \ddots & \vdots \\
0 & 0 & 0 & \cdots & \operatorname{Id_{\frac{n}{A}}}\\
\operatorname{Id_{\frac{n}{A}}} & 0 & 0 & \cdots & 0
\end{array}\right),\]
and the diagonal matrix $\Gamma=(\gamma,\cdots,\gamma,\gamma^{1-n})$ where $\gamma\in \ov{\F_q}$ has order $B(q-1)$ and satisfies $\frob(\gamma)=\omega^{-1}\cdot\gamma$. Recall that $H_A$ is normal in $G_{A,B}$ and their quotient is $F_A \times F_B$. Call $\pi:G_{A,B}\to F_A\times F_B$ the quotient morphism and $\pi_A:G_{A,B}\to F_A$ and $\pi_B:G_{A,B}\to F_B$ the induced maps.

\begin{lema}
    Let $\mcC\subset \SL_n(\ov{\F}_q)$ be a nice conjugacy class and $z\in \mcC\cap \SL_n(\F_q)$. Then
    \[X_{a,z}^b=\bigsqcup_{[\upsilon]\in \Upsilon_A} \left\{\begin{array}{c}
        (x,y)\in G_{A,B}^g\times G_{A,B}^g: 
       {[}x,y]z^\upsilon = 1, \\ \pi_A(x,y) = a, \pi_B(x,y)=b\end{array}
        \right\}\]
    where $z^\upsilon$ is the tuple of $H_A$ whose $i$-coordinate is $z\big|_{\upsilon(i)}$.
\end{lema}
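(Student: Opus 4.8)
The plan is to deduce the statement from Lemma \ref{irred-comp} by intersecting its decomposition with the twisted--Frobenius locus and then rewriting that locus as a condition on $\pi_B$ by means of the element $\Gamma$. By the remark in the proof of the preceding lemma, $\PGL_n(\F_q)$ may be replaced by $\PGL_n(\ov{\F}_q)$ in the definition of $X_a^b$, so that $X_{a,z}^b$ is the set of $(x,y)\in X_{a,z}$ with $\frob(x,y)=b\cdot(x,y)$, where $X_{a,z}$ is as in Lemma \ref{irred-comp} over $k=\ov{\F}_q$. Applying that lemma and intersecting with $\{\,\frob(x,y)=b\cdot(x,y)\,\}$, each $[\upsilon]$-stratum becomes $\{\,(x,y)\in\hat G_A^g\times\hat G_A^g:[x,y]z^\upsilon=1,\ \pi_A(x,y)=a,\ \frob(x,y)=b\cdot(x,y)\,\}$, where $\hat G_A\subset\SL_n(\ov{\F}_q)$ is the group of Lemma \ref{irred-comp} and I write $\pi_A$ also for its quotient map $\pi$. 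So it is enough to prove, slot by slot, the group-theoretic identity
\[\{\,g\in\hat G_A:\pi_A(g)=a_0,\ \frob(g)=c\,g\,\}=\{\,g\in G_{A,B}:\pi_A(g)=a_0,\ \pi_B(g)=c\,\}\]
for every $a_0\in F_A$ and every central scalar $c\in F_B\subset\mathbb{G}_m$, the group $F_B=G_{A,B}/H_A$ being identified with the subgroup of $\mathbb{G}_m$ generated by the coordinates of $b$ through $\pi_B(\Gamma)\mapsto\omega^{-1}$; since the relation $[x,y]z^\upsilon=1$ is unaffected, the stated decomposition then follows.

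The engine of this identity is the equality $\frob(\Gamma)=(\omega^{-1}\operatorname{Id})\,\Gamma$, and this is where the divisibility $B\mid d\mid n$ enters. Indeed $\frob(\gamma)=\omega^{-1}\gamma$ by construction and $\omega^{n}=1$, hence
\[\frob(\gamma^{1-n})=(\omega^{-1}\gamma)^{1-n}=\omega^{n-1}\gamma^{1-n}=\omega^{-1}\gamma^{1-n},\]
so $\frob$ scales every diagonal entry of $\Gamma$ by $\omega^{-1}$. Granting this, I would verify the two inclusions directly. For $\supseteq$: write $g=h\sigma^m\Gamma^j$ with $h\in H_A$ ($\F_q$-rational), $m\in\Z/A\Z$, $j\in\Z/B\Z$; since $h$ and $\sigma$ are $\F_q$-rational and scalars are central, $\frob(g)=h\,\sigma^m\,\frob(\Gamma)^j=(\omega^{-j}\operatorname{Id})\,g$, while $\Gamma^j$ and its $\sigma$-conjugates are block-diagonal of determinant one, so $g\in H_A(\ov{\F}_q)\,\sigma^m\subseteq\hat G_A$, with $\pi_A(g)=m$ and $\pi_B(g)=\pi_B(\Gamma)^j\mapsto\omega^{-j}$. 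For $\subseteq$: write $g=h_0\sigma^m$ with $h_0\in H_A(\ov{\F}_q)$; the twist forces $\frob(h_0)=c\,h_0$, and choosing $j$ with $c=\omega^{-j}$ (possible since $F_B=\langle\omega\rangle$), the matrix $h_0\Gamma^{-j}$ is $\frob$-fixed (as $c\,\omega^{j}=1$), block-diagonal and of determinant one, hence lies in $H_A$; therefore $g=(h_0\Gamma^{-j})\,\Gamma^j\sigma^m\in G_{A,B}$, with $\pi_A(g)=m=a_0$ and $\pi_B(g)\mapsto\omega^{-j}=c$.

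The main obstacle is isolating and proving this group-theoretic identity; once $\hat G_A$ has been replaced by $G_{A,B}$ on each stratum the rest is a transcription between the notation of Lemma \ref{irred-comp} and the block-like Clifford's setting of Section \ref{sec:The case}, the only substantive ingredient being the choice of $\Gamma$ making $\frob(\Gamma)$ a central multiple of $\Gamma$.
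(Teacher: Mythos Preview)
Your proof is correct and follows essentially the same route as the paper: both reduce to Lemma~\ref{irred-comp} over $\ov{\F}_q$ and then handle the twisted Frobenius condition by using that $\frob(\Gamma)=\omega^{-1}\Gamma$, so that multiplying by a suitable power of $\Gamma$ renders an element $\F_q$-rational. Your group-theoretic identity and its two-inclusion verification make explicit what the paper compresses into the single observation that $(x,y)\Gamma^{\beta(b)}$ is Frobenius-fixed and $\Gamma$ preserves the block decomposition with determinant one.
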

\begin{proof}
    The conjugacy condition on $X_{a,z}^b$ is covered by Lemma \ref{irred-comp} for $k=\ov{\F}_q$. Hence, fix a grading for which $z$ has degree zero and $(x,y)$ acts with degree $a$. We need to explore the condition which involves the Frobenius. But note that $(x,y)\gamma^{\beta(b)}$ is fixed by Frobenius. Hence, $y\in G_{A,B}$ as $\gamma$ preserves all the possible gradings and has determinant one. 
\end{proof}

\begin{prop}\label{twisted-points-formula}
    Let $\mcC\subset \SL_n(\ov{\F}_q)$ be a nice conjugacy class and $z\in \mcC\cap \SL_n(\F_q)$. Pick $a,b\in F_d^g$ for some an integer $g$ and a divisor $d$ of $n$. Assume that each eigenvalue of $\mcC$ has a $dn$-root in $\F_q$ and that $dn|q-1$. Then
    \[|({\betti[\SL_n][\ov{\F_q}]}^a)^{b^{-1}\frob}|=\sum_{[\upsilon]\in \Upsilon_A}\sum_{\eta}\frac{A}{(q-1)^{n-1}}\left(\frac{|\Stab[\eta]|}{(q-1)}\right)^{2g}\left(\frac{|\GL_{\frac{n}{A}}(\F_q)|}{\eta(1)}\right)^{A(2g-1)}\eta(z^\upsilon)\]
    where $\eta$ runs in $\Irr[\GL_{\frac{n}{A}}(\F_q)]_{a,b}$, and $A$ and $B$ are the orders of $a$ and $b$ respectively. 
\end{prop}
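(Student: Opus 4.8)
The plan is to string together three facts already available: the identification $({\betti[\SL_n][\ov{\F_q}]}^a)^{b^{-1}\frob}=X_a^b/\PGL_n(\F_q)$, the $\Upsilon_A$-decomposition of $X_{a,z}^b$ from the preceding lemma, and the reduced character formula of Theorem~\ref{reduced-character-formula}. Fixing a representative $z\in\mcC\cap\SL_n(\F_q)$ in a basis where it is diagonal, the first two give
\[
|({\betti[\SL_n][\ov{\F_q}]}^a)^{b^{-1}\frob}| = \frac{|X_{a,z}^b|}{|T(\F_q)|} = \frac{1}{(q-1)^{n-1}}\sum_{[\upsilon]\in\Upsilon_A}\left|\left\{(x,y)\in G_{A,B}^g\times G_{A,B}^g : [x,y]z^\upsilon=1,\ \pi_A(x,y)=a,\ \pi_B(x,y)=b\right\}\right|,
\]
using $|T(\F_q)|=(q-1)^{n-1}$. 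So the task reduces to evaluating each inner cardinality.

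For this I would recognise $H_A\to G_{A,B}\xrightarrow{(\pi_A,\pi_B)}F_A\times F_B$ as precisely the block-like Clifford setting of the running example in Section~\ref{sec:The case}, with $G_0=\GL_{n/A}(\F_q)$, $H_0=\SL_{n/A}(\F_q)$, $K_0=\F_q^\times$, $\ov{K}_0=\F_q^\times/(\F_q^\times)^n$ and $B=\ord{b}$ (the defining relation $\frob(\gamma)=\omega^{-1}\gamma$ with $\gamma$ of order $B(q-1)$ is exactly what was arranged there). Splitting $a=(a^x,a^y)$, $b=(b^x,b^y)$ with $a^x,a^y\in F_A^g$, $b^x,b^y\in F_B^g$ and forming $\mathbf{a},\mathbf{b}\in(F_A\times F_B)^g$ by $\mathbf{a}_i=(a^x_i,b^x_i)$, $\mathbf{b}_i=(a^y_i,b^y_i)$, the two conditions $\pi_A(x,y)=a$ and $\pi_B(x,y)=b$ become $\pi(x)=\mathbf{a}$, $\pi(y)=\mathbf{b}$. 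The subgroup $K'$ generated by the coordinates of $\mathbf{a},\mathbf{b}$ surjects onto $F_A$ and onto $F_B$ (their projections are the coordinates of $a$ and of $b$, which generate $F_A$ and $F_B$ by definition of $A=\ord{a}$, $B=\ord{b}$), so Theorem~\ref{reduced-character-formula} applies to $G'=\pi^{-1}(K')$. One takes $N=n/A$, which is admissible since $|\Stab[\eta]|$ divides $n/A$ for every $\eta\in\Irr[G_0]$; then $\gcd(AN,|K_0|)=\gcd(n,q-1)=n$, and the needed $n$-th root of $z^\upsilon$ in $H_A$ exists because each eigenvalue of $\mcC$ has a $dn$-th root in $\F_q$ (and $dn\mid q-1$), hence an $n$-th root, so the blockwise diagonal matrix of $n$-th roots does the job.

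It then remains to plug $|K_0|=q-1$, $|G_0|=|\GL_{n/A}(\F_q)|$ into the output of Theorem~\ref{reduced-character-formula} and to note that its factor $\gcd\bigl(A,|K_0|/|\Stab[\eta]|\bigr)$ collapses to $A$: indeed $A\,|\Stab[\eta]|$ divides $n$, which divides $q-1$, so $A$ divides $(q-1)/|\Stab[\eta]|$. Summing over $[\upsilon]\in\Upsilon_A$ and dividing by $(q-1)^{n-1}$ reproduces the asserted formula, with $\eta$ ranging over $\Irr[G_0]_{\mathbf{a},\mathbf{b}}$ (the set written $\Irr[\GL_{n/A}(\F_q)]_{a,b}$ in the statement). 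The only real difficulty is bookkeeping: one must carefully match the geometric data — the gradings $\upsilon$, the Frobenius twist by $b$, the block structure of $G_{A,B}$ — with the abstract block-like framework and check that every divisibility hypothesis of Theorem~\ref{reduced-character-formula} (the admissible $N$, the root condition on $z^\upsilon$, the surjectivity $K'\twoheadrightarrow F_A,F_B$) actually holds in this concrete case; once those are verified the computation is purely formal.
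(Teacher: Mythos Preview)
Your proposal is correct and follows exactly the same strategy as the paper's proof, which is simply ``Apply Theorem~\ref{reduced-character-formula} and the previous lemmas,'' noting that $|\Stab[\eta]|$ divides $n/A$. You have spelled out the bookkeeping (the choice $N=n/A$, the verification of the root condition on $z^\upsilon$, the surjectivity $K'\twoheadrightarrow F_A,F_B$, and the simplification $\gcd(A,(q-1)/|\Stab[\eta]|)=A$) that the paper leaves implicit.
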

\begin{proof}
    Apply Theorem \ref{reduced-character-formula} and the previous lemmas. Here we are using that a character of $\GL_\frac{n}{A}(\F_q)$ has stabilizer under the action of $\Irr[\F_q^\times]$ of size diving $\frac{n}{A}$. 
\end{proof}

We say that a conjugacy class $\mcC\subset \SL_n(\ov{\F}_q)$ is super nice if it is nice and all eigenvalues of $\mcC$ have a $n^2$-root in $\F_q$. In the rest of this section, we will prove the following theorem. In particular, there are explicit formulas for the constants $C_{s,A,\tau}$ appearing in the result.

\begin{thm}\label{number-twisted-points}
    Let $\mcC\subset \SL_n(\ov{\F}_q)$ be a super nice conjugacy class. Pick $a,b\in F_d^g$ for some an integer $g$ and a divisor $d$ of $n$. Then
    \[|({\betti[\SL_n][\ov{\F_q}]}^a)^{b^{-1}\frob}|= \frac{A}{(q-1)^{n-1}}\sum_\tau \left(\frac{(-1)^nq^{\frac{n^2}{A}}\mathcal{H}_{\tau'}(q)^{A}}{(q-1)}\right)^{(2g-1)}\sum_{s} s^{2g}C_{s,A,\tau}\]
    where $\tau$ runs over all multi-partition types whose size is $\frac{n}{A}$, $s$ over all divisors of $\frac{n}{B}$ which satisfy $A|\Delta(a,b)\gcd(A,\frac{n}{Bs})$, and $C_{s,A,\tau}$ are combinatorial constants which do not depend on $q$.  
\end{thm}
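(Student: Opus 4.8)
The plan is to start from Proposition~\ref{twisted-points-formula}, whose hypotheses hold here because $d\mid n$ and a super nice class has all its eigenvalues equipped with $n^2$-th roots in $\F_q$, hence in particular with $dn$-th roots. It gives
\[|({\betti[\SL_n][\ov{\F_q}]}^a)^{b^{-1}\frob}|=\frac{A}{(q-1)^{n-1}}\sum_{[\upsilon]\in \Upsilon_A}\sum_{\eta}\left(\frac{|\Stab[\eta]|}{q-1}\right)^{2g}\left(\frac{|\GL_{\frac{n}{A}}(\F_q)|}{\eta(1)}\right)^{A(2g-1)}\eta(z^\upsilon),\]
the inner sum running over $\eta\in\Irr[\GL_{\frac{n}{A}}(\F_q)]_{a,b}$. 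I would first split this sum according to Green's classification (Theorem~\ref{Green}): group the $\eta$'s by their type $\tau$, necessarily of size $\frac{n}{A}$, and by the order $s=|\Stab[\eta]|$ of the stabilizer under the $\Irr[\F_q^\times]$-action. The factor $\big(|\GL_{\frac{n}{A}}(\F_q)|/\eta(1)\big)^{A(2g-1)}$ depends only on $\tau$ by the dimension formulas recalled in Section~\ref{sec:preliminaries}, and it regroups into the $\tau$-dependent factor displayed in the statement, so it pulls out of the inner sum. What is left to understand is, for each pair $(\tau,s)$, the quantity $\sum_{[\upsilon]\in\Upsilon_A}\sum_{\eta}\eta(z^\upsilon)$ over all $\eta$ of type $\tau$ with $|\Stab[\eta]|=s$ lying in $\Irr[\GL_{\frac{n}{A}}(\F_q)]_{a,b}$, together with the leftover power of $q-1$.

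Before evaluating that, I would dispose of the constraints defining the index set. Unwinding the definition of $\Irr[G_0]_{a,b}$ and the formula for $e_A(\eta)$: since the order $A$ of $a$ divides $d$ and $s\mid\frac{n}{A}$, one has $As\mid n\mid q-1$, whence $e_A(\eta)=\gcd(As,q-1)/\gcd(As,\frac{n}{B})=\frac{As}{\gcd(As,n/B)}$. The membership conditions $|\Stab[\eta]|\mid\frac{|\ov{K}_0|}{B}=\frac{n}{B}$ and $e_A(\eta)\mid\Delta(a,b)$ then translate exactly into ``$s\mid\frac{n}{B}$ and $A\mid\Delta(a,b)\gcd(A,\frac{n}{Bs})$'', the conditions appearing in the statement; and for $s\nmid\frac{n}{A}$ the $\eta$-sum is empty, so those $s$ contribute nothing, i.e. $C_{s,A,\tau}=0$.

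For the core character-sum evaluation I would expand $\eta(z^\upsilon)=\prod_i\eta(z|_{\upsilon(i)})$ by Green's formula (Theorem~\ref{Green}) at the diagonal regular blocks $z|_{\upsilon(i)}\in\GL_{\frac{n}{A}}(\F_q)$, and then sum over all multi-partitions of type $\tau$ with stabilizer of order $s$. This turns the expression into a sum over combinatorial data---set-partitions of the $n$ eigenvalues of $z$ refining the block decomposition $\upsilon$, decorated by abstract ``colours'' recording the supporting characters of $\F_q^\times$, weighted by the constant term $d_\tau^A$ of Green's formula and by characters of $\F_q^\times$ evaluated at products of eigenvalues. The super niceness of $\mcC$ now enters decisively: since no proper sub-product of the eigenvalues equals $1$ and every eigenvalue admits an $n^2$-th root, the orthogonality relations on $\F_q^\times$ annihilate every ``unbalanced'' configuration---the analogue, in the present block-wise and $s$-periodic setting, of the genericity vanishing used in \cite{HV} and \cite{MM}---leaving only configurations that are entirely forced. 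Their number is a polynomial in $q$ whose relevant coefficient is the sought $q$-independent constant $C_{s,A,\tau}$; summing over $[\upsilon]\in\Upsilon_A$ completes the count. I expect the main obstacle to be precisely this vanishing step: one must check that the $n^2$-th-root hypothesis is strong enough that, even \emph{after} the eigenvalues have been distributed among the $A$ blocks and the multi-partitions are allowed to be $s$-periodic---two features that together shrink the available group of characters of $\F_q^\times$ by a factor of $s$ and fragment the eigenvalue set---no spurious relation among sub-products of eigenvalues can survive. Extracting the closed form of $C_{s,A,\tau}$ as an explicit sum over colourings of Young-diagram cells, and matching it with hook-length data, is then a matter of bookkeeping.
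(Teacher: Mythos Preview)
Your overall route is the paper's: start from Proposition~\ref{twisted-points-formula}, stratify the inner sum by Green type $\tau$ and stabilizer size $s$, pull out the $\tau$-dependent hook factor, and reduce to evaluating $\frac{1}{q-1}\sum_{[\upsilon]}\sum_\eta\eta(z^\upsilon)$. Your translation of the membership conditions on $\Irr[\GL_{n/A}(\F_q)]_{a,b}$ into the divisibility constraints of the statement is also correct.

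The gap is in what you call ``bookkeeping''. Two structural obstructions block a direct appeal to orthogonality on $\Irr[\F_q^\times]$, and each requires its own M\"obius inversion. First, the sum is over $\eta$ with stabilizer of order \emph{exactly} $s$; this is not a condition that lets characters range freely. The paper replaces $C_{s,A,\tau}$ by the auxiliary $\check C_{s,A,\tau}$ (stabilizer \emph{divisible} by $s$, equivalently $\Lambda$ fixed by a chosen $\alpha$ of order $s$) and recovers $C$ from $\check C$ by M\"obius inversion over $s\mid j\mid \frac nA$. Second, after Green's expansion the sum is over \emph{injective} colourings $\psi:\mcI\to\Irr[\F_q^\times]$ compatible with a permutation $\omega$; injectivity again prevents orthogonality. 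The paper removes it by M\"obius inversion on the lattice of set-partitions of $\mcI/\omega$: the sum over all maps to a quotient $Q$ vanishes by super niceness unless $|Q|=1$ (this is the only place the ``no proper sub-product equals $1$'' hypothesis is used), and the alternating sum of M\"obius values over transitive subgroups is then evaluated by Crapo's closure theorem as $(-1)^{|\mcI/\omega|-1}(|\mcI/\omega|-1)!$. The upshot is that $\check C_{s,A,\tau}$ is \emph{exactly} $(q-1)$ times a $q$-free constant, not merely ``a polynomial in $q$ whose relevant coefficient'' one extracts; that exact linearity is what makes $C_{s,A,\tau}$ a genuine combinatorial number. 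None of this is hard once seen, but it is not the picture of ``orthogonality kills unbalanced configurations, count what remains'' that your sketch suggests.
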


Fix $d|n|q-1$ and $a,b\in F_d^{2g}$. We are going to apply Green's description of $\Irr[\GL_{\frac{n}{A}}(\F_q)]$ to evaluate the formula of Proposition \ref{twisted-points-formula} and prove Theorem \ref{number-twisted-points}. Define $S_{a,b}$ as the right-hand side of the formula in the previous proposition.
And, for $s|n$ and $\tau$ a multi-partition type, set
\[C_{s,A,\tau}:=\frac{1}{q-1}\sum_{[\upsilon]\in\Upsilon_A}\sum_{\eta} \eta(z^\upsilon)\]
where $\eta$ runs over all irreducible characters of $\GL_{\frac{n}{A}}(\F_q)$ with type $\tau$ and $|\Stab[\eta]|=s$. We recover the original sum by
\begin{align*}
    S_{a,b}&=\frac{A}{(q-1)^{n-1}}\sum_\tau \left(\frac{|\GL_{\frac{n}{A}}(\F_q)|^{A}}{(q-1)\tau(1)^{A}}\right)^{(2g-1)}\sum_{s} s^{2g}C_{s,A,\tau}\\
    &=\frac{A}{(q-1)^{n-1}}\sum_\tau \left(\frac{(-1)^nq^{\frac{n^2}{A}}\mathcal{H}_{\tau'}(q)^{A}}{(q-1)}\right)^{(2g-1)}\sum_{s} s^{2g}C_{s,A,\tau}
\end{align*}
where the first sums run over all possible types $\tau$ and the seconds over all divisors $s$ of $\frac{n}{B}$ such that $A|\Delta(a,b)\gcd(A,\frac{n}{Bs})$.

Furthermore, put 
\[\check{C}_{s,A,\tau} = \frac{1}{q-1}\sum_{[\upsilon]\in\Upsilon_A}\sum_{\eta}\eta(z^\upsilon)\]
where $\eta$ runs over all irreducible characters of $\GL_{\frac{n}{A}}(\F_q)$ with type $\tau$ and $|\Stab[\eta]|$ multiple of $s$. We are going to show that $\check{C}_{s,e,\tau}$ vanishes unless $s|\frac{n}{A}$. Hence, by Möbius inversion formula,
\[C_{s,A,\tau}=\sum_{s|j|\frac{n}{A}} \mu\left(\frac{j}{s}\right)\check{C}_{j,e,\tau}\]
for any $s,A,\tau$.

Fix a basis where $z$ is diagonal, say $(\lambda_1,\ldots,\lambda_n)$. Recall that this induces an isomorphism of $\Upsilon_A$ with decompositions $\{1,\ldots,n\}=\sqcup_{i=1}^A I_i$ with $|I_i|=\frac{n}{A}$, up to cyclic shift. Fix  $\alpha\in \Irr[\F_q^\times]$ of order $s$. Green's Theorem \ref{Green} shows that
\begin{align*}
    \check{C}_{s,A,\tau} 
    &= \frac{d_\tau^A}{A(q-1)} \sum_{\Lambda}\sum_{\{I_{\xi,i}\}}\prod_{\xi\in\Irr[\F_q^\times]}\prod_{i=1}^A\prod_{j\in I_{\xi,i}}\xi(\mu_{j})
\end{align*}
where $\Lambda$ runs over all multi-partitions $\Irr[\F_q^\times]\to \mcP$ of type $\tau$ fixed by $\alpha$, and $\{I_{\xi,i}\}$ over all partitions of $\{1,\ldots,n\}$ in subsets indexed by $\Irr[\F_q^\times]\times\{1,\ldots,A\}$ such that $|I_{\xi,i}|=|\Lambda(\xi)|$. The factor $\frac{1}{A}$ comes from the quotient by the cyclic shift.

Consider a set $\mcI$ of elements of the form $\iota=(\lambda^{(\iota)},I_1^{(\iota)},\ldots,I_A^{(\iota)})$ where $\lambda^{(\iota)}$ is a non-trivial partition and $I_i^{(\iota)}$'s are disjoint subsets of $\{1,\ldots,n\}$ with size $|\lambda^{(\iota)}|$. We further require that, when varying $i$ and $\iota$, the sets $I_i^{(\iota)}$ form a partition of $\{1,\ldots,n\}$ and that $\sum_{\iota\in \mcI}|\lambda^{(\iota)}|=\frac{n}{A}$. There is a forgetful map $\pi: \mcI\to \mcP$. This gives us a multi-partition type $\tau(\mcI)$. To give an actual multi-partition $\Lambda$ of this type is equivalent to an injective map $\psi: \mcI\to \Irr[\F_q]$. Indeed, set \[\Lambda(\xi)= \left\{\begin{array}{cl}
    \pi(\psi^{-1}(\xi)) & \text{if } \xi\in\operatorname{Img}\psi \\
    \emptyset & \text{if not}
\end{array} \right.\]
for such $\psi$. Hence 
\[\check{C}_{s,A,\tau}=\frac{d_\tau^A}{A(q-1)}\sum_{\tau(\mcI)=\tau} \sum_{\psi}\prod_\iota \psi(\iota)(\mu_{\iota})\]
where the first sum runs over all $\mcI$ as before whose induced multi-partition has type $\tau$, the second one is over all injections $\psi:\mcI\to\Irr[\F_q^\times]$ such that the induced multi-partition $\Lambda:\Irr[\F_q^\times]\to \mcP$ is stabilized by $\alpha$, and $\mu_\iota = \prod_{i=1}^A\prod_{j\in I_i^{(\iota)}}\mu_i$. 

Fix such an $\mcI$ and let
\[S(\mcI):=\sum_{\psi}\prod_\iota \psi(\iota)(\mu_{\iota})\]
where $\psi$ varies as before. The condition $\alpha\Lambda=\Lambda$ means that there exists $\omega\in W_\pi\subset S(\mcI)$, the stabilizer of $\pi:\mcI\to \mcP$, with $\alpha \psi = \omega \psi$. The choice of such an $\omega$ is unique as the action of $S(\mcI)$ in $\Hom{\mcI}{\Irr[\F_q^\times]}$ is free over the injections. Hence, the $S(\mcI)$ splits in terms of the form
\[ S(\mcI,\omega):=\sum_{\alpha\psi=\omega\psi}\prod_\iota \psi(\iota)(\mu_\iota)\]
Note that the sum is empty if $s=\ord\alpha$ does not agree with the size of each orbit of $\omega$. Assume that this is the case. In particular, as $\sum |\lambda^{(\iota)}|=\frac{n}{A}$ and each orbit has the same size, this size must divide $\frac{n}{A}$. Hence, $s|\frac{n}{A}$. Recall that $\mu_i$ has a $n$-root for every $i$. Thus, $\alpha(\mu_\iota)=1$ for every $\iota$ and the value $[\xi](\mu_\iota)$ is defined for every class $[\xi]\in \Irr[\F_q^\times]/\alpha$. The sum becomes
\[s^{|\mcI/\omega|}\sum_{\ov{\psi}} \prod_ {[\iota]}\psi([\iota])(\mu_{[\iota]})\]
where $\ov{\psi}$ varies among all injective maps $\mcI/\omega\to \Irr[\F_q^\times]/\alpha$, and $\mu_{[\iota]}=\prod_{\iota'\in [\iota]}\mu_{\iota'}$. Let $\ov{\mcI}=\mcI/\omega$. Note that, for any quotient $\pi:\ov{\mcI}\to Q $,
\[\sum_{\ov{\psi}:Q\to \Irr[\F_q^\times]/\alpha} \prod_{q}\psi(q)\left(\prod_{\iota\in \pi^{-1}(q)}\mu_\iota\right)= \left\{\begin{array}{cl}
   0  & \mathrm{if }\ |Q|>1 \\
   \frac{q-1}{s}  & \mathrm{if }\ |Q|=1
\end{array}\right.\]
as each sub-product of the $\mu_1,\ldots,\mu_n$ is not trivial. Hence, by the Möbius inversion formula on the poset of subgroups of $S(\ov{\mcI})$, we get
\[S(\mcI,\omega)=\frac{q-1}{s}\sum_{W}\mu(1,W)\]
where $W$ runs over all subgroups of $S(\ov{\mcI})$ that act transitively on $\ov{\mcI}$. Note that the last expression is linear on $q$. Crapo's closure theorem \cite{Crapo} implies that
\[\sum_{W}\mu(1,W) = (-1)^{|\ov{\mcI}|-1}(|\ov{\mcI}|-1)!\]
Furthermore, note that as each orbit of $\omega$ has size $s$, $|\mcI/\omega|=\frac{|\mcI|}{s}$. Hence,
\[ S(\mcI,\omega) =\left\{\begin{array}{cl}
    -\frac{(q-1)}{s}(-s)^{\frac{|\mcI|}{s}}\left(\frac{|\mcI|}{s}-1\right)!  & \text{if every orbit of }\omega\text{ has size $s$} \\
    0 & \text{otherwise} 
\end{array}\right.\]
Hence,
\[ S(\mcI) = -\frac{(q-1)}{s}(-s)^{\frac{|\mcI|}{s}}\left(\frac{|\mcI|}{s}-1\right)!\nu(\pi,s)\]
where 
\[\nu(\pi,s) = |\{\omega\in W_\pi:\text{each orbit of }\omega\text{ has size $s$}\}|\]
Note that $\nu(\pi,s)$ just depends on the type $\tau(\mcI)$ and $s$. Then write $\nu(\tau,s)$ for it. Note that $s$ must divide the multiplicity of each partition. The size of $\mcI$ is also determined by $\tau$. Indeed, it is the sum of all multiplicities in $\tau$ of all partitions. Call it $\#\tau$. Hence
\[\check{C}_{s,A,\tau} = -\frac{d_\tau^A}{As}(-s)^{\frac{\#\tau}{s}}\left(\frac{\#\tau}{s}-1\right)!\nu(\tau,s)\vartheta(\tau,n)\]
where
\[\vartheta(\tau,n)= |\{\mcI:\tau(\mcI)=\tau\}|.\]

Finally, we obtain that
\[C_{s,A,\tau} =-d_\tau^A\vartheta(\tau,n) \sum_{s|j|\frac{n}{A}}\frac{1}{Aj}(-j)^{\frac{\#\tau}{j}}\left(\frac{\#\tau}{j}-1\right)!\nu(\tau,j)\mu\left(\frac{j}{s}\right)  \]
which is a constant not depending on $q$. This proves Theorem \ref{number-twisted-points}.

To finish this section, we will prove a combinatorial identity that we will need later on. Define $A\cdot \tau$ as the type-partition of $n$ obtained by repeating $A$ times each term of $\tau$. Remark that $A\cdot \tau'=(A\cdot \tau)'$ and $\mathcal{H}_{\tau'}(q)^A=\mathcal{H}_{A\cdot\tau'}(q)$.

\begin{lema}
    $C_{s,A,\tau}=C_{As,1,A\cdot\tau}$ for any $s,A,\tau$.
\end{lema}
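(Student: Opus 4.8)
The plan is to plug the explicit closed form for $C_{s,A,\tau}$ obtained above into both sides of the claimed equality and reduce everything to a single combinatorial identity. Recall that
\[C_{s,A,\tau} = -d_\tau^A\,\vartheta_A(\tau,n)\sum_{s\mid j\mid\frac nA}\frac{1}{Aj}\,(-j)^{\#\tau/j}\Big(\tfrac{\#\tau}{j}-1\Big)!\,\nu(\tau,j)\,\mu\!\Big(\tfrac js\Big),\]
where I write $\vartheta_A$ and $\mcI$-with-parameter-$A$ to stress that the objects $\mcI$ entering the definition of $\vartheta$ implicitly depend on the integer $A$. Applying the same formula with parameter $1$, stabilizer divisor $As$, and type $A\cdot\tau$ (which has size $n$), three elementary observations carry the bulk of the reduction: (i) $d_{A\cdot\tau}=d_\tau^A$, since $A\cdot\tau$ merely repeats each part of $\tau$ an extra $A$ times and $d$ is a product over parts; (ii) $\#(A\cdot\tau)=A\,\#\tau$; and (iii) the substitution $j'=Aj$ is a bijection between the summation ranges $\{As\mid j'\mid n\}$ and $\{s\mid j\mid\frac nA\}$ (each $j'$ in the former is a multiple of $A$), under which $\tfrac1{j'}=\tfrac1{Aj}$, $(-j')^{\#(A\cdot\tau)/j'}=(-Aj)^{\#\tau/j}$, $\big(\tfrac{\#(A\cdot\tau)}{j'}-1\big)!=\big(\tfrac{\#\tau}{j}-1\big)!$, and $\mu(j'/As)=\mu(j/s)$. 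After these substitutions, $C_{s,A,\tau}=C_{As,1,A\cdot\tau}$ reduces, term by term in $j$ and using $(-Aj)^{\#\tau/j}=A^{\#\tau/j}(-j)^{\#\tau/j}$, to the identity
\[\vartheta_1(A\cdot\tau,n)\cdot A^{\#\tau/j}\cdot\nu(A\cdot\tau,Aj)\;=\;\vartheta_A(\tau,n)\cdot\nu(\tau,j)\qquad(\star)\]
for every $j\mid\frac nA$.

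To prove $(\star)$, set $m_\lambda=\tau(\lambda)$, so $\#\tau=\sum_\lambda m_\lambda$. A direct multinomial count gives
\[\vartheta_A(\tau,n)=\frac{n!}{\prod_\lambda m_\lambda!\,\big(|\lambda|!\big)^{A m_\lambda}}\]
--- distribute $\{1,\dots,n\}$ among the $A m_\lambda$ cells of size $|\lambda|$ attached to each $\lambda$, then forget the ordering of the $m_\lambda$ cell-groups belonging to a fixed $\lambda$, which is harmless because those $m_\lambda$ tuples are automatically distinct (their first subsets are nonempty and pairwise disjoint). The same count with parameter $1$ and type $A\cdot\tau$ yields $\vartheta_1(A\cdot\tau,n)=n!\big/\prod_\lambda (A m_\lambda)!\,(|\lambda|!)^{A m_\lambda}$, with the exponent of $|\lambda|!$ unchanged. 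On the other side $W_\pi=\prod_\lambda S_{m_\lambda}$, so $\nu(\tau,j)=\prod_\lambda m_\lambda!\big/\big(j^{m_\lambda/j}(m_\lambda/j)!\big)$ when $j\mid m_\lambda$ for all $\lambda$ and $\nu(\tau,j)=0$ otherwise (the classical count of permutations all of whose cycles have length $j$), and likewise $\nu(A\cdot\tau,Aj)=\prod_\lambda (A m_\lambda)!\big/\big((Aj)^{m_\lambda/j}(m_\lambda/j)!\big)$. Substituting these four expressions into $(\star)$: the factors $(A m_\lambda)!$ cancel on the left while the $m_\lambda!$ cancel on the right, the common factor $\prod_\lambda(|\lambda|!)^{A m_\lambda}$ cancels, and since $(Aj)^{\sum_\lambda m_\lambda/j}=A^{\#\tau/j}j^{\#\tau/j}$ the leftover powers of $A$ cancel as well; both sides equal $n!\big/\big(\prod_\lambda(|\lambda|!)^{A m_\lambda}\cdot j^{\#\tau/j}\prod_\lambda(m_\lambda/j)!\big)$. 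If $j\nmid m_\lambda$ for some $\lambda$, then both sides of $(\star)$ vanish since $Aj\mid A m_\lambda\iff j\mid m_\lambda$, so $(\star)$ holds in all cases, and the lemma follows.

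The only genuinely delicate step is the bookkeeping around $(\star)$: keeping track of the hidden dependence of $\vartheta$ (and of the objects $\mcI$) on $A$, and correctly pairing the $j$-th summand of one side with the $(Aj)$-th summand of the other. Once the explicit formulas for $\vartheta$ and $\nu$ are in hand everything else is pure cancellation; as a sanity check, for $A=1$ the identity $(\star)$ is a tautology.
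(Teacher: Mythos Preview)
Your reduction to the identity $(\star)$ is exactly the one the paper carries out: both of you match the $j$-term on the $C_{s,A,\tau}$ side with the $Aj$-term on the $C_{As,1,A\cdot\tau}$ side, use $d_{A\cdot\tau}=d_\tau^A$, $\#(A\cdot\tau)=A\,\#\tau$, and $\mu(Aj/As)=\mu(j/s)$, and absorb the discrepancy $(-Aj)^{\#\tau/j}$ versus $(-j)^{\#\tau/j}$ into the factor $A^{\#\tau/j}$.

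Where you diverge is in how you prove $(\star)$. The paper builds an explicit bijection: it introduces, for each $\omega\in W_{A\cdot\tau}$ with all orbits of size $Aj$, a set $\mcJ_\omega$ of choices of orbit representatives modulo the $\langle\omega^A\rangle$-action, checks $|\mcJ_\omega|=A^{\#\tau/j}$, and then writes down a map collapsing the $A$-blocks $\{\omega^k\iota\}_{0\le k<A}$ to produce a pair $(\mcI,\omega')$ on the $(\tau,j)$ side. Your argument instead writes down closed multinomial formulas
\[
\vartheta_A(\tau,n)=\frac{n!}{\prod_\lambda m_\lambda!\,(|\lambda|!)^{Am_\lambda}},\qquad
\nu(\tau,j)=\prod_\lambda\frac{m_\lambda!}{j^{m_\lambda/j}(m_\lambda/j)!}
\]
(and their $A\cdot\tau$, $Aj$ analogues) and checks $(\star)$ by direct cancellation. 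This is correct and shorter; the only point to be careful about---that dividing by $\prod_\lambda m_\lambda!$ really counts the \emph{set} $\mcI$, i.e.\ that no two labelled $A$-tuples coincide---you handle by noting that their first blocks $I_1^{(\iota)}$ are nonempty and pairwise disjoint. The paper's bijection is more conceptual and makes the ``repeat $A$ times'' combinatorics visible, but your computation is a perfectly valid and arguably cleaner route to the same lemma.
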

\begin{proof}
    First, note that $(d_\tau )^A=d_{A\cdot\tau}$. Compare the $j$-term in the left hand side with the $Aj$-one in the right.  We have
    \begin{align*}
        \frac{\#\tau}{j} = \frac{\#(A\cdot \tau)}{Aj} & & \text{ and } & & \mu\left(\frac{j}{s}\right)=\mu\left(\frac{Aj}{As}\right)
    \end{align*}
    So, we need that
    \[\vartheta(\tau,n)\nu(\tau,j) = A^{\frac{\#\tau}{j}}\vartheta(A\tau,n)\nu(A\tau,Aj)\]
    for arbitrary $j|\frac{n}{A}$.

    We are going to build a bijection between sets of such cardinals. Take $\omega\in W_{A\tau}$. For each of its orbits $\mcO$ define 
    \[\mcJ(\mcO,\omega) :=\{\{\iota_1,\ldots,\iota_{\frac{|\mcO|}{A}}\}\subset \mcO:\omega^k\iota_i\neq \omega^l\iota_j\forall i\neq j,0\leq k,l< A\}\}\]
    and let
    \[\mcJ_\omega :=\prod_{\mcO}\mcJ(\mcO,\omega)\]
    where the product runs over all orbits of $\omega$. Finally, put
    \[W_{A\tau,Aj} = \{\omega\in W_{A\tau}:\text{each orbit of }\omega\text{ has size $Aj$}\}\]
    and
    \[\mcJ=\{\mcI:\tau(\mcI)=A\tau\}\times \prod_{\omega\in W_{A\tau,Aj}} \mcJ_\omega\].

    The size of $\mcJ$ is nothing but
    \[A^{\frac{\#\tau}{j}}\vartheta(A\tau,n)\nu(A\tau,Aj)\]
    Indeed, given a $\omega\in W_{A\tau,Aj}$, it has $\frac{\#\tau}{j}$ orbits and for each one, $|\mcJ(\mcO,\omega)|=A$. For the last claim, note that an element of $\mcJ(\mcO,\omega)$ is determined by $\iota_1$. There are $jA$ choices but $\iota_1$ and $\omega^A\iota_1$ give the same element. 

    Now, define
    \[\varphi: \mcJ \to \{\mcI:\tau(\mcI)=\tau\}\times \{\omega\in W_{\tau}:\text{each orbit of }\omega\text{ has size $j$}\}\]
    as follows. Fix an element of $\mcJ$. For each chosen $\iota_i$, collapse the $\{\omega^k\iota_i:0\leq k < A\}$ in $A\tau$. This produces an element of $W_{\tau}$ from $\omega\in W_{A\tau}$. To produces a $\tau(\mcI)=\tau$, put 
    $I^{\iota_i}_k=I^{\omega^k\iota_i}$
    for each $0\leq k < A$. It is clear that $\varphi$ is a bijection.
\end{proof}

\section{Stringy E-polynomial of type A parabolic character varieties} \label{sec:stringy-E-polynomial}

\subsection{Main statements and some consequences}\label{sec:consequences}

The proof of the following three theorems will be given in sections \ref{sec:stringy-contributions}, \ref{sec:stringy-E-pol} and \ref{sec:isotypic-contributions} respectively. There are lengthy consequences of Theorem \ref{number-twisted-points} thanks to Katz's and Groechening--Wyss--Ziegler's theorems.

\begin{thm}\label{stringy-contributions}
    Let $n$ be a natural number. For any divisors $d$ of $n$, $F_d$-discrete torsion $\mcD$ of order $\frac{d}{K}$, $a\in F_d^{2g}$, and any nice conjugacy class $\mcC$ of $\SL_n(\C)$,
    \begin{align*}
        &E(\abbreviatedBetti^a/F_d^{2g},\mcL_{\mcD,a}; u,v)(uv)^{F(a)} \\
        &= \sum_{\tau} \frac{\left((-1)^n(uv)^{\frac{n^2}{2}}\mathcal{H}_{\tau'}(uv)\right)^{2g-1}}{(uv-1)^{2g-1+n-1}}\sum_{\substack{A|s|n\\ \gcd(s,\frac{n}{d})|K\frac{s}{A}}} C_{s,1,\tau} \gcd\left(\frac{s}{A},\frac{n}{d}\right)^{2g-1}\gcd\left(s,\frac{n}{d}\right)
    \end{align*}
    where $A$ is the order of $a$, $\mcL_{\mcD,a}$ is the local system associated with $\mcD$, $F(a)$ is one-half the codimension of $\abbreviatedBetti^a$, and $\tau$ runs over all multi-partition types of size $n$.
\end{thm}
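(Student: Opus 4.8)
The plan is to reduce the statement to Theorem \ref{number-twisted-points} via the point-counting criterion of Theorem \ref{KGWZ}, and then to carry out a finite character-sum computation. First I would spread $\mcC$ and $\abbreviatedBetti^a$ out over a ring $R\subset\C$ (say a localization of $\Z[\zeta_{n^2}]$) small enough that every residue field $\F_q$ of $R$ satisfies $n^2\mid q-1$; then every reduction of $\mcC$ is super nice, so Theorem \ref{number-twisted-points} applies uniformly. On the $a$-twisted sector the discrete torsion $\mcD$ of order $\tfrac dK$ yields the local system $\mcL_{\mcD,a}$ on $\abbreviatedBetti^a/F_d^{2g}$, and by the discussion following Theorem \ref{KGWZ} (see \cite{R}) this local system trivializes when pulled back to $\abbreviatedBetti^a$ and is classified by a character $\xi_a\colon F_d^{2g}\to\C^\times$ built from $\mcD$ and $a$ (the commutator pairing $b\mapsto\mcD(a,b)\mcD(b,a)^{-1}$). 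Hence Theorem \ref{KGWZ}, together with the Grothendieck--Lefschetz formula as in the definition of $\#^\xi$, reduces the claim to showing that
\[\#^{\xi_a}_{F_d^{2g}}\betti[\SL_n][\ov{\F}_q]^a=\frac{1}{d^{2g}}\sum_{b\in F_d^{2g}}\bigl|(\betti[\SL_n][\ov{\F}_q]^a)^{b^{-1}\frob}\bigr|\,\overline{\xi_a(b)}\]
equals $P(q)$ for an explicit $P\in\Q[q]$; the theorem then follows upon substituting $q\mapsto uv$ and multiplying by the monomial $(uv)^{F(a)}$, once we observe that $F(a)$ is constant on $\abbreviatedBetti^a$ by the dimension count for the strata $\abbreviatedBetti^a_\upsilon$.

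The second step is a bookkeeping substitution. For each $b$ with $B=\ord b$, Theorem \ref{number-twisted-points} gives $\bigl|(\betti[\SL_n][\ov{\F}_q]^a)^{b^{-1}\frob}\bigr|$ as an explicit polynomial in $q$, the remaining data $A=\ord a$, $B$ and $\Delta(a,b)$ being independent of $q$. Using $\mathcal{H}_{\tau'}(q)^A=\mathcal{H}_{A\cdot\tau'}(q)$ and the identity $C_{s,A,\tau}=C_{As,1,A\cdot\tau}$ from the final lemma of §\ref{counting-twisted-points}, I would reindex the type sum by $\tilde\tau=A\cdot\tau$ (of size $n$) and the inner divisor sum by $s'=As$. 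Since a character of $\GL_{n/A}(\F_q)$ whose stabilizer under $\Irr[\F_q^\times]$ has size $s'$ necessarily has an $s'$-divisible multi-partition type, $C_{s',1,\tilde\tau}$ vanishes unless $s'$ divides every multiplicity of $\tilde\tau$; in particular the reindexing covers all types of size $n$ (the extra ones contributing zero) and forces $s'\mid n$ and $A\mid s'$, so the stated range $A\mid s'\mid n$ is the correct one. Collecting the leading power of $q$ contributed by $(|\GL_{n/A}(\F_q)|/\tau(1))^{A(2g-1)}$ with the factor $(uv)^{F(a)}$, and using $F(a)=\tfrac12\codim\abbreviatedBetti^a$ computed from the dimensions of $\abbreviatedBetti$ and $\abbreviatedBetti^a_\upsilon$, produces exactly $(uv)^{(2g-1)n^2/2}$. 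After these manipulations the theorem is equivalent to the arithmetic identity: for every $s'=As$ dividing $n$,
\[\frac{A}{d^{2g}}\;s^{2g}\!\!\!\sum_{\substack{b\in F_d^{2g}\\ \ord b\,\mid\,n/s\\ A\,\mid\,\Delta(a,b)\gcd(A,\,n/(s\ord b))}}\!\!\!\overline{\xi_a(b)}=\begin{cases}\gcd(s,\tfrac nd)^{2g-1}\gcd(As,\tfrac nd), & \gcd(As,\tfrac nd)\mid Ks,\\[4pt] 0, & \text{otherwise.}\end{cases}\]

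Finally I would establish this identity by finite Fourier analysis on $F_d^{2g}$. The weight $\overline{\xi_a}$ and the quantity $\Delta(a,-)$ are both governed by symplectic pairings of $b$ against the fixed vector $a$ --- the first valued in $\mu_{d/K}$, the second given by the reductions of $a,b$ in $F_A\times F_B$ read modulo $\gcd(A,B)$. I would stratify the sum by $B=\ord b$ and by the value of the symplectic pairing $\langle a,b\rangle$, write the indicator of the divisibility condition $A\mid\Delta(a,b)\gcd(A,n/(s\ord b))$ as an average of additive characters of $F_d^{2g}$, and evaluate the resulting exponential sums by orthogonality; a Ramanujan-sum type count then produces the $\gcd$ factors, the exponents $2g-1$ and $1$ reflecting the $2g-1$ coordinates left free and the one coordinate constrained by the pairing with $a$. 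The sum vanishes unless $\overline{\xi_a}$ is trivial on the subgroup cut out by the $\Delta$-condition, and translating that triviality back through the definitions of $\mcD$, $\Delta$ and the orders involved yields precisely $\gcd(As,\tfrac nd)\mid Ks$. Disentangling the order constraint $\ord b\mid n/s$ from the $\Delta$-divisibility while keeping the two pairings separate is where essentially all the work lies, and it is the main obstacle; the right-hand side being rational, Theorem \ref{KGWZ} applies once the identity is proved, and everything preceding this last step is assembly of results already in hand.
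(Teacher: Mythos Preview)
Your proposal is correct and follows the same route as the paper: spread out, apply Theorem~\ref{KGWZ} to reduce to the character sum $\Phi_{st}(a,n,d,s,K)$, reindex via $C_{s,A,\tau}=C_{As,1,A\cdot\tau}$, and evaluate the remaining sum over $b\in F_d^{2g}$ by stratifying over $B=\ord b$ and the value of the symplectic pairing. The one ingredient you do not name, and which dissolves what you call ``the main obstacle'', is that $\Phi_{st}$ is multiplicative over the prime factorization of $n$; the paper uses this to reduce to $n=p^m$ and then finishes with a short case analysis in the $p$-adic valuations rather than a direct Ramanujan-sum computation.
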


\begin{thm}\label{stringy-E-pol}
    Let $n$ be a natural number. For any divisor $d$ of $n$, $F_d$-discrete torsion $\mcD$ of order $\frac{d}{K}$, and any nice conjugacy class $\mcC$ of $\SL_n(\C)$,
    \[E_{st}^{\mcD}(\abbreviatedBetti[\SL_n/F_d]; u,v) = \sum_{\tau} \frac{\left((-1)^n(uv)^{\frac{n^2}{2}}\mathcal{H}_{\tau'}(uv)\right)^{2g-1}}{(uv-1)^{2g-1+n-1}}\sum_{s|n} s^{2g} C_{s,\tau} \Phi_g(n,d,s,K)\]
    In particular, 
    \[E_{st}^{\hat\mcD}(\abbreviatedBetti[\SL_n/F_d]; u,v) = E_{st}^{\check\mcD}(\abbreviatedBetti[\SL_n/F_{\frac{n}{d}}]; u,v)\]
    for any $d,\mcC$ and discrete torsions $\hat\mcD$ and $\check\mcD$ with same class in $F_{\gcd(d,\frac{n}{d})}$.
\end{thm}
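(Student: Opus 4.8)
The plan is to produce $E_{st}^{\mcD}(\abbreviatedBetti[\SL_n/F_d];u,v)$ by summing the twisted-sector contributions of Theorem~\ref{stringy-contributions} over the abelian group $F_d^{2g}$, and then to simplify the resulting double sum by an elementary prime-by-prime computation. First I would unwind the definition of the stringy $E$-polynomial from Section~\ref{sec:E-pols}: since $F_d^{2g}$ is abelian, every conjugacy class is a point and every centraliser is the whole group, so
\[E_{st}^{\mcD}(\abbreviatedBetti[\SL_n/F_d];u,v)=\sum_{a\in F_d^{2g}}\sum_{\mcY}E(\mcY,\mcL_{\mcD,a}\big|_{\mcY};u,v)(uv)^{F(a)\big|_{\mcY}}.\]
The equidimensionality lemma for the fixed-point strata (all pieces $\betti[\SL_n][k]^a_\upsilon$ have the common dimension $(2g-1)(\tfrac{n^2}{A}-1)-n+1$) shows that $\abbreviatedBetti^a$ is equidimensional, hence $F(a)$ is constant on it, equal to $\tfrac12\codim\abbreviatedBetti^a$, and the inner sum collapses to $E(\abbreviatedBetti^a/F_d^{2g},\mcL_{\mcD,a};u,v)(uv)^{F(a)}$. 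Substituting Theorem~\ref{stringy-contributions}, whose right-hand side depends on $a$ only through $A=\ord{a}$, gives
\[E_{st}^{\mcD}(\abbreviatedBetti[\SL_n/F_d];u,v)=\sum_{\tau}\frac{\left((-1)^n(uv)^{\frac{n^2}{2}}\mathcal{H}_{\tau'}(uv)\right)^{2g-1}}{(uv-1)^{2g-1+n-1}}\sum_{A\mid d}N_d(A)\!\!\sum_{\substack{A\mid s\mid n\\ \gcd(s,\frac{n}{d})\mid K\frac{s}{A}}}\!\!C_{s,\tau}\,\gcd\!\left(\tfrac{s}{A},\tfrac{n}{d}\right)^{2g-1}\!\!\gcd\!\left(s,\tfrac{n}{d}\right),\]
where $C_{s,\tau}:=C_{s,1,\tau}$ and $N_d(A):=\#\{a\in F_d^{2g}:\ord{a}=A\}$.

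So the first assertion reduces, after interchanging the two inner sums, to the arithmetic identity
\[\sum_{\substack{A\mid\gcd(s,d)\\ \gcd(s,\frac{n}{d})\mid K\frac{s}{A}}}N_d(A)\,\gcd\!\left(\tfrac{s}{A},\tfrac{n}{d}\right)^{2g-1}\gcd\!\left(s,\tfrac{n}{d}\right)=s^{2g}\,\Phi_g(n,d,s,K)\qquad\text{for every }s\mid n.\]
Every ingredient here factors over primes: $N_d(A)=\prod_p N_{p^{v_p(d)}}(p^{v_p(A)})$ with $N_{p^e}(p^f)=p^{2gf}-p^{2g(f-1)}$ for $1\le f\le e$ and $N_{p^e}(1)=1$; the gcd's and divisibility conditions are prime-local; and $\Phi_g$ is by definition a product over $p\mid n$. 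Thus I would fix a prime $p$, put $\nu=v_p(n)$, $\delta=v_p(d)$, $\sigma=v_p(s)$, $\kappa=v_p(K)$, sum over $\alpha=v_p(A)$, and check
\[\sum_{\substack{0\le\alpha\le\min(\sigma,\delta)\\ \alpha\le\kappa+\max(0,\,\sigma-\nu+\delta)}}N_{p^\delta}(p^\alpha)\,p^{(2g-1)\min(\sigma-\alpha,\,\nu-\delta)}\,p^{\min(\sigma,\,\nu-\delta)}=p^{2g\sigma}\,\frac{(p-p^{1-2g})p^{\phi}+p^{1-2g}-1}{p-1},\]
where $\phi=\phi_g(p,n,d,s,K)=\min(\sigma,\nu-\delta,\delta,\nu-\sigma,\kappa)$. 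When $\sigma\le\nu-\delta$ the minima collapse (the first to $\sigma-\alpha$, the second to $\sigma$), the constraint on $\alpha$ becomes $\alpha\le\min(\sigma,\delta,\kappa)=\phi$, and the left side telescopes into the geometric sum $p^{2g\sigma}\bigl(1+(1-p^{-2g})\tfrac{p^{\phi+1}-p}{p-1}\bigr)$, which equals the right side after clearing denominators; the regime $\sigma>\nu-\delta$ is handled by the analogous, slightly longer, case split on $\alpha$. This bookkeeping is the technical heart of the argument: the main obstacle is keeping the five-term minimum defining $\phi_g$ correctly aligned with the two cases and with the upper limit on $\alpha$, and verifying carefully that $N_d$ and all the constraints genuinely descend to the prime decomposition so that the reduction by the Chinese Remainder Theorem is legitimate.

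Finally, the mirror symmetry will follow by inspection of the closed formula. By definition $\Phi_g(n,d,s,K)=\prod_{p\mid n}\tfrac{(p-p^{1-2g})p^{\phi_g(p,n,d,s,K)}+p^{1-2g}-1}{p-1}$ with $\phi_g(p,n,d,s,K)=\min\bigl(v_p(s),\,v_p(n)-v_p(d),\,v_p(d),\,v_p(n)-v_p(s),\,v_p(K)\bigr)$. This exponent is manifestly unchanged when the two arguments $v_p(d)$ and $v_p(n)-v_p(d)$ are swapped, i.e. under $d\mapsto n/d$; and it involves $v_p(K)$ only through $\min\bigl(v_p(K),v_p(d),v_p(n)-v_p(d)\bigr)=\min\bigl(v_p(K),v_p(\gcd(d,\tfrac{n}{d}))\bigr)$, which is precisely the datum recorded by the image of $\mcD$ in $H^2(F_{\gcd(d,n/d)}^{2g},\U(1))$. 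Hence if $\hat\mcD$ (as an $F_d$-discrete torsion) and $\check\mcD$ (as an $F_{n/d}$-discrete torsion) have the same class in $F_{\gcd(d,n/d)}$, their associated integers $K$ yield the same $\Phi_g$; since $C_{s,\tau}$ and the prefactor are symmetric in $d\leftrightarrow n/d$, we conclude
\[E_{st}^{\hat\mcD}(\abbreviatedBetti[\SL_n/F_d];u,v)=\sum_{\tau}\frac{\left((-1)^n(uv)^{\frac{n^2}{2}}\mathcal{H}_{\tau'}(uv)\right)^{2g-1}}{(uv-1)^{2g-1+n-1}}\sum_{s\mid n}s^{2g}C_{s,\tau}\,\Phi_g(n,d,s,K)=E_{st}^{\check\mcD}(\abbreviatedBetti[\SL_n/F_{\frac{n}{d}}];u,v).\]
A remaining point to pin down here is that the integer $K$ attached to a discrete torsion is well defined and that the truncation $\min(v_p(K),v_p(\gcd(d,n/d)))$ indeed depends only on the class of $\mcD$ in $F_{\gcd(d,n/d)}$.
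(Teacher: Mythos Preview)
Your proposal is correct and follows essentially the same route as the paper: sum the sector contributions of Theorem~\ref{stringy-contributions} over $a\in F_d^{2g}$, reduce to an arithmetic identity for $\Phi_g$, and verify that identity prime by prime; the mirror statement then follows from the manifest symmetry of $\phi_g$ under $d\leftrightarrow n/d$ and the observation that only $\min(v_p(K),v_p(\gcd(d,n/d)))$ enters. The only cosmetic difference is that you package the count of order-$A$ elements as $N_d(A)$ and evaluate a geometric sum directly, whereas the paper writes the sum over all $a$ and M\"obius-inverts before the $p$-local computation; these are equivalent organisations of the same calculation, and your explicit treatment of the regime $\sigma\le\nu-\delta$ is the cleaner of the two.
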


Recall that
\[\Phi_g(n,d,s,K)=\prod_{p|n\text{  prime}} \frac{(p-p^{1-2g})p^{\phi_g(p,n,d,s,K)} + p^{1-2g}-1}{p-1}\]
where
\[\phi_g(p,n,d,s,K)=\min(v_p(s),v_p(n)-v_p(d),v_p(d),v_p(n)-v_p(s),v_p(K))\]

\begin{rmk}
    Although $\Phi_g$ is symmetric in $s$ and $\frac{n}{s}$, $C_{s,\tau}$ is not. For example, with $n=4$, $s=1$, $\tau=\{\{1\},\{1\},\{1\},\{1\}\}$, $C_{s,\tau}=0$ and $C_{\frac{n}{s},\tau}=6$. 
\end{rmk}

\begin{thm}\label{isotypic-contributions}
    Let $n$ be a natural number. For any divisor $d$ of $n$, $F_n$-discrete torsion $\mcD$, character $\xi$ of $F_{\frac{n}{d}}$, $a\in F_{\frac{n}{d}}^{2g}$, and any nice conjugacy class $\mcC$ of $\SL_n(\C)$,
    \[E_{st}^{\mcD}(\abbreviatedBetti[\SL_n/F_d]; u,v)_\xi =E(\abbreviatedBetti^a,\mcL_{\mcD,a}; u,v)(uv)^{F(a)}\]
    provided that $\ord\xi=\ord a$, where $\mcL_{\mcD,a}$ is the local system associated with $\mcD\big|_{F_{\frac{n}{d}}^{2}}$ and $F(a)$ is one-half the codimension of $\abbreviatedBetti^a$.
\end{thm}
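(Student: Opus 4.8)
The plan is to reduce both sides to polynomial point counts over finite fields and to match the resulting polynomials, using Theorem~\ref{KGWZ} to pass between counts and $E$-polynomials and Theorem~\ref{number-twisted-points} to evaluate the counts. First I would fix a super nice spread out of $\mcC$ over a finitely generated ring $R\subset\C$, so that Theorem~\ref{number-twisted-points} and Proposition~\ref{twisted-points-formula} apply after every sufficiently divisible base change $R\to\F_q$. Expanding the left-hand side with the isotypic stringy formula recalled in Section~\ref{sec:E-pols},
\[
E_{st}^{\mcD}(\abbreviatedBetti[\SL_n/F_d];u,v)_\xi=\sum_{\gamma\in F_d^{2g}} E\bigl(\abbreviatedBetti^\gamma/F_d^{2g},\ \mcL_{\mcD,\gamma};\ u,v\bigr)_\xi\,(uv)^{F(\gamma)},
\]
and using that each $\mcL_{\mcD,\gamma}\otimes\C_\xi$ trivializes on $\abbreviatedBetti^\gamma$, Theorem~\ref{KGWZ} identifies the $\gamma$-summand with the value at $uv=q$ of the weighted count $\tfrac{1}{n^{2g}}\sum_{b\in F_n^{2g}}|({\betti[\SL_n][\ov{\F_q}]}^\gamma)^{b^{-1}\frob}|\,\psi_{\mcD,\gamma}(b)\,\overline{\xi}(b)$, where $\psi_{\mcD,\gamma}$ is the transgression of $\mcD$ at $\gamma$ and $\overline\xi(b)$ means the value of $\xi$ on the image of $b$ in the residual group; likewise the right-hand side is $q^{F(a)}$ times $\tfrac{1}{d^{2g}}\sum_{b\in F_d^{2g}}|({\betti[\SL_n][\ov{\F_q}]}^a)^{b^{-1}\frob}|\,\psi_{\mcD,a}(b)$, with $\psi_{\mcD,a}$ attached to $\mcD\big|_{F_{\frac{n}{d}}^{2}}$.

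The core is the evaluation of these weighted counts. The crucial structural input from Theorem~\ref{number-twisted-points} is that $|({\betti[\SL_n][\ov{\F_q}]}^\gamma)^{b^{-1}\frob}|$ depends on the pair $(\gamma,b)$ only through $A=\ord\gamma$, $B=\ord b$ and the class of the symplectic pairing $\sum_i\gamma_i^{(1)}b_i^{(2)}-\gamma_i^{(2)}b_i^{(1)}$ modulo $\gcd(A,B)$, i.e.\ through $\Delta(\gamma,b)$; everything else is packaged into the constants $C_{s,A,\tau}$ and a $\gcd$-divisibility condition on the auxiliary index $s$. After inserting this formula and exchanging the order of summation, the weight $\psi_{\mcD,\gamma}(b)\overline\xi(b)$ is a character of $b$, so partitioning $b\in F_n^{2g}$ by its order and by the value of its pairing with $\gamma$ turns each inner sum into a sum of characters of subquotients of $F_n^{2g}$ evaluated against those divisibility constraints. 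I would carry this out reusing the explicit formula for $C_{s,1,\tau}$ obtained in the proof of Theorem~\ref{stringy-contributions} together with the identity $C_{s,A,\tau}=C_{As,1,A\cdot\tau}$ from the end of Section~\ref{sec:The case}.

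I expect this character-sum computation to be the main obstacle, with the anticipated outcome twofold: the $\gamma$-summand should vanish unless $\ord\gamma=\ord\xi$ (this is precisely where the hypothesis $\ord\xi=\ord a$ enters), and the surviving summands should collapse into exactly the weighted count attached to $\mcL_{\mcD,a}$ on the single sector $\abbreviatedBetti^a/F_d^{2g}$. The delicate part is the bookkeeping of multiplicities: one has to check that the number of $\gamma\in F_d^{2g}$ of a given order, the weights $\psi_{\mcD,\gamma}$ coming from $\mcD\big|_{F_d^{2g}}$, and the twist by $\xi$ coming from $\mcD\big|_{F_{\frac{n}{d}}^{2}}$ conspire so that the total is a \emph{single} sector contribution rather than a proper multiple of it; the decomposition of $\abbreviatedBetti^\gamma$ indexed by $\Upsilon_A$ in Lemma~\ref{irred-comp} should make the count of surviving $\gamma$'s transparent.

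Finally, the $(uv)$-shift needs no separate treatment: by the dimension formula for $\abbreviatedBetti^\gamma$ (equidimensional of dimension $(2g-1)(\tfrac{n^2}{A}-1)-n+1$ with $A=\ord\gamma$), the Fermionic shift is $F(\gamma)=\tfrac{(2g-1)n^2}{2}(1-\tfrac1A)$, which depends only on $\ord\gamma$; since only sectors with $\ord\gamma=\ord\xi=\ord a$ survive, $(uv)^{F(\gamma)}=(uv)^{F(a)}$ throughout. Matching the two counting polynomials and applying Theorem~\ref{KGWZ} once more to descend to $\C$ then yields the stated identity.
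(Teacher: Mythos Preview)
Your overall strategy---reduce both sides to weighted point counts via Theorem~\ref{KGWZ} and Theorem~\ref{number-twisted-points}, then match the resulting arithmetic functions term by term in $\tau$ and $s$---is exactly what the paper does. The gap is in your anticipated mechanism for the character sum.

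You expect the sum over $\gamma\in F_d^{2g}$ to \emph{localize} to those $\gamma$ with $\ord\gamma=\ord\xi$. This is false. Take $d=1$: the only sector is $\gamma=1$, so your prediction forces the left-hand side to vanish whenever $\xi$ is nontrivial, whereas the theorem asserts a nonzero right-hand side (the $\xi$-isotypic piece of $H^*_c(\abbreviatedBetti)$ is generally nonzero). What actually happens is that \emph{all} sectors $\gamma$ with $\ord\gamma\mid s$ contribute, and the paper packages their total into an arithmetic function $\Phi_{stp}(b^{(0)},n,s,d,K)$ (here $\xi$ corresponds to $b^{(0)}\in F_{n/d}^{2g}$). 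After a lengthy prime-by-prime computation---first summing over the sectors $\gamma$ to get an intermediate $\check\Phi(b,n,d,s,K)$, then over $b\in F_n^{2g}$ with the $\xi$-twist---one finds $\Phi_{stp}(b^{(0)},n,s,d,K)=\delta_{\ord b^{(0)}\mid s}\,\Phi_{st}(a,n,\tfrac{n}{d},s,\tfrac{n^2}{d^2}K)$. The identity is a \emph{duality} $d\leftrightarrow n/d$, not a localization, and the hypothesis $\ord\xi=\ord a$ enters only at the very end to identify the two single-sector functions.

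This misreading also shows up in your description of the right-hand side: since $a\in F_{n/d}^{2g}$ and $\mcL_{\mcD,a}$ is built from $\mcD\big|_{F_{n/d}^2}$, the relevant average is over $b\in F_{n/d}^{2g}$, not $F_d^{2g}$. The right-hand side is a twisted-sector contribution for the \emph{dual} orbifold $\abbreviatedBetti[\SL_n/F_{n/d}]$, not a sector of $\abbreviatedBetti[\SL_n/F_d]$ itself. Finally, your treatment of the Fermionic shift rests on the same false localization; the shifts do match, but because for \emph{every} $\gamma$ the factor $(uv)^{F(\gamma)}$ combines with the $q^{n^2/(2A)}$ in the count to produce a uniform $q^{n^2/2}$ inside the hook-polynomial prefactor, independently of $\ord\gamma$.
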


We now establish some basic consequences of the previous theorems.

\begin{coro}
     Let $n$ be a natural number. For any divisors $d$ of $n$ and any semisimple generic conjugacy class $\mcC$ of $\SL_n(\ov{\Q})$, the stringy Euler characteristic of $\abbreviatedBetti[\SL_n]$ vanishes if $n,g>1$ and is $1$ if $n=1$.
\end{coro}
\begin{proof}
    Recall that the stringy Euler characteristic is equal to $E_{st}(\abbreviatedBetti[\SL_n/F_d];1,1)$. The polynomials $\mathcal{H}_\tau(uv)$ are always exactly divisible by $(uv-1)^n$ for the multi-partitions in the formula of Theorem \ref{stringy-E-pol}. Hence,
    \[v_{uv-1}\left(\frac{\left((-1)^n(uv)^{\frac{n^2}{2}}\mathcal{H}_{\tau'}(uv)\right)^{2g-1}}{(uv-1)^{2g-1+n-1}}\right) = (2g-1)n- (2g-1+n-1)=2(g-1)(n-1)\]
    and the stringy Euler characteristic vanishes if $n,g>1$. For $n=1$, we have $d=s=K=1$, $\tau=\{1\}$, $\mathcal{H}_\tau(q)=1-q$, $C_{s,\tau}=1$, and $E_{st}(\abbreviatedBetti[\SL_n/F_d];1,1)=1$. 
\end{proof}

\begin{coro}
     Let $n$ be a natural number. For any divisor $d$ of $n$ and any nice conjugacy class $\mcC$ of $\SL_n(\C)$, $\abbreviatedBetti[\SL_n/F_d])$ is irreducible.
\end{coro}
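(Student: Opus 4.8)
The plan is to reduce the statement to the irreducibility of $\abbreviatedBetti[\SL_n]$ itself, which is already recorded in Corollary~\ref{cor:irred-comp}, and then to invoke the elementary fact that a finite quotient of an irreducible variety is irreducible.

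First I would check that Corollary~\ref{cor:irred-comp} applies here. A \emph{nice} conjugacy class is in particular semisimple and has the property that no proper sub-collection of its eigenvalues has product $1$; this is exactly the genericity hypothesis of Corollary~\ref{cor:irred-comp} (over $\C$ one adds only the harmless integrality of the eigenvalues). Applying that corollary to the trivial element $a = 1 \in F_d^{2g}$ --- whose order $A = 1$ is trivially coprime to $\frac{n}{d}$, for which $\abbreviatedBetti[\SL_n]^{a} = \abbreviatedBetti[\SL_n]$, and for which the index set $\Upsilon_{A} = \Upsilon_1$ is a single point (the only $F_1$-grading of $k^n$ is the trivial one $k^n = V_0$) --- shows that $\abbreviatedBetti[\SL_n]$ is irreducible. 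Concretely this rests on Lemma~\ref{irred-comp} and the count of Theorem~\ref{number-twisted-points}, which have already been established.

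Next I would use the definition $\abbreviatedBetti[\SL_n/F_d] = \abbreviatedBetti[\SL_n]/F_d^{2g}$, so that the quotient map $q\colon \abbreviatedBetti[\SL_n] \to \abbreviatedBetti[\SL_n/F_d]$ is surjective and continuous for the Zariski topology. The continuous image of an irreducible space is irreducible: if the target were a union $C_1 \cup C_2$ with each $C_i$ a proper closed subset, then $q^{-1}(C_1) \cup q^{-1}(C_2)$ would be a decomposition of $\abbreviatedBetti[\SL_n]$ into two proper closed subsets (properness using surjectivity of $q$), contradicting irreducibility. Hence $\abbreviatedBetti[\SL_n/F_d]$ is irreducible.

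I do not expect any real obstacle: the substantive geometric content lives in Lemma~\ref{irred-comp} and Corollary~\ref{cor:irred-comp}, so what remains is formal. The only points worth a sentence of care are aligning the ``nice'' versus ``generic semisimple'' hypotheses as above, and the degenerate cases ($n = 1$, where the variety is a single point, and small $g$, where the relevant conjugacy class may fail to exist), all of which are innocuous.
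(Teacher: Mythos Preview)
Your argument is correct but takes a different route from the paper. The paper proves irreducibility directly for each $d$ by computing the leading coefficient of the $E$-polynomial: it identifies the unique multi-partition type $\tau$ maximizing $\deg\mathcal{H}_{\tau'}$, checks that for this $\tau$ the only nonzero constant is $C_{1,1,\tau}=1$, and reads off from the formula of Theorem~\ref{stringy-contributions} (with $a=1$) that the top coefficient equals $1$. Your approach instead invokes Corollary~\ref{cor:irred-comp} for $a=1$ to get irreducibility of $\abbreviatedBetti[\SL_n]$ and then passes to the finite quotient. This is certainly shorter once Corollary~\ref{cor:irred-comp} is in hand, but note that in the paper that corollary is proved \emph{after} the present one, by exactly the same leading-coefficient method; so you are effectively reordering the two corollaries and observing that the general-$d$ case follows for free from the $d=1$ case via the surjective quotient map. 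There is no circularity, since the proof of Corollary~\ref{cor:irred-comp} does not use the present statement. What your approach buys is the clean reduction to a single value of $d$; what the paper's direct computation buys is uniformity (the same formula handles all $d$ at once) and an explicit identification of the leading term, which is also what drives the subsequent corollary on $\abbreviatedBetti[\SL_n]^a$.
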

\begin{proof}
    We know that it is equidimensional. Therefore, it is enough to check that the leading coefficient of its $E$-polynomial is one. Now the degree of $\mathcal{H}_{\tau}$ is
    \[\sum_{\lambda\in\tau}m_\lambda (-\frac{1}{2}\langle \lambda,\lambda\rangle + \sum_{\square \in \lambda} h(\square))\]
    where the second sum is over the boxes of the Ferrer diagram associated with $\lambda$. Recall that
    \[-\frac{1}{2}\langle\lambda,\lambda\rangle =-\frac{1}{2}|\lambda|-\sum_{i}\lambda_i(i-1)\]
    On the other hand, one has
    \[\sum_{\square\in \lambda }h(\square)=\sum_i \frac{\lambda_i(\lambda_i+1)}{2} +\lambda_i(i-1) \]
    by counting how many times a square is counted in a hook length. Hence,
    \begin{align*}
        \deg \mathcal{H}_\tau &= \frac{1}{2}\sum_{\lambda\in\tau}m_\lambda( -|\lambda| + \sum_i\lambda_i(\lambda_i+1))\\
        &= -\frac{n}{2} +\frac{1}{2}\sum_{\lambda \in\tau}m_\lambda\sum_i\lambda_i(\lambda_i+1)
    \end{align*}
    This sum is maximized for $\{\{n\}\}$. The dual partition is $\tau=\{\{1,\ldots,1\}\}$. For this partition we have 
    \[C_{s,1,\tau}=\left\{\begin{array}{cl}
        1 & \text{if } s = 1  \\
        0 & \text{if } s > 1 
    \end{array}\right.\]
    This means that the leading term is of $E(\abbreviatedBetti[\SL_n/F_d];u,v)$ is 
    \[\gcd\left(\frac{1}{1},\frac{n}{d}\right)^{2g-1}\gcd\left(1,\frac{n}{d}\right)=1.\]
\end{proof}

\begin{coro}
     Let $n$ be a natural number, $d$ be a divisor of $n$, and $\mcC$ of $\SL_n(\C)$ a nice conjugacy class. For any $a\in F_d$ of order $A$ coprime with $\frac{n}{d}$, the irreducible components of $\abbreviatedBetti[\SL_n]^a$ are $\abbreviatedBetti[\SL_n]^a_\upsilon$ where $[\upsilon]$ runs over $\Upsilon_A$.
\end{coro}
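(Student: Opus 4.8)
The plan is to turn the statement into a computation of the top-degree coefficient of an $E$-polynomial. By Lemma~\ref{irred-comp} we already have the decomposition $\abbreviatedBetti[\SL_n]^a=\bigsqcup_{[\upsilon]\in\Upsilon_A}\abbreviatedBetti[\SL_n]^a_\upsilon$, and by the smoothness lemma each stratum $\abbreviatedBetti[\SL_n]^a_\upsilon$ is smooth and equidimensional of dimension $N:=(2g-1)(\tfrac{n^2}{A}-1)-n+1$. Since a smooth variety is the disjoint union of its connected components and every connected component of a smooth variety is irreducible, it suffices to show that each $\abbreviatedBetti[\SL_n]^a_\upsilon$ is nonempty and connected; equivalently, that $\abbreviatedBetti[\SL_n]^a$ has exactly $|\Upsilon_A|$ irreducible components. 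For a smooth equidimensional complex variety $X$ of dimension $N$ this number equals the coefficient of $(uv)^N$ in $E(X;u,v)$: indeed $H^{2N}_c(X,\C)$ is pure of weight $2N$ and Hodge type $(N,N)$ with dimension equal to the number of components, while for $k<2N$ the group $H^k_c(X,\C)$ has weights $\le k<2N$ and so cannot contribute to that coefficient. Thus everything reduces to proving that $E(\abbreviatedBetti[\SL_n]^a;u,v)$ has degree $N$ with leading coefficient $|\Upsilon_A|$, together with nonemptiness of each stratum.

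\textbf{Computing the leading term by point counting.} I would then pass to finite fields: by Theorem~\ref{KGWZ} the $E$-polynomial of $\abbreviatedBetti[\SL_n]^a$ (and of each stratum) is the counting polynomial of a spread-out, so it is enough to analyze $|\abbreviatedBetti[\SL_n]^a_\upsilon(\F_q)|$ for $q$ in the progression where $\mcC$ becomes super nice, using Proposition~\ref{twisted-points-formula} (equivalently Theorem~\ref{number-twisted-points}) with the Frobenius twist $b$ trivial. This writes the count as an explicit sum over multi-partition types $\tau$ of size $\tfrac{n}{A}$, the contribution of $\tau$ carrying the factor $\bigl(|\GL_{n/A}(\F_q)|/\tau(1)\bigr)^{A(2g-1)}$. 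As in the proof of the irreducibility of $\abbreviatedBetti[\SL_n/F_d]$ above, the dominant contribution in $q$ comes from the type with $\tau(1)$ of minimal degree, namely the one-row type $\tau=\{(\tfrac{n}{A})\}$ (dually, the single-column type in the $\mathcal H_{\tau'}$-normalization); one checks that every other type contributes strictly lower degree once the power of $|\GL_{n/A}(\F_q)|$ is weighed against the number of multi-partitions of that type. At this dominant type the combinatorial constant collapses: only $s=1$ survives in the formula for $C_{s,A,\tau}$, and using $C_{s,A,\tau}=C_{As,1,A\cdot\tau}$ the surviving constant unwinds to $\vartheta(\{(\tfrac{n}{A})\},n)$ counted modulo the cyclic shift, which is precisely $|\Upsilon_A|$. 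The coprimality hypothesis $\gcd(A,\tfrac{n}{d})=1$ is what keeps this bookkeeping honest: it ensures that the $K$-invariant characters of $H_A$ entering the formula with $\Omega_\theta\not\equiv 0$, together with their ramification indices, behave as in the untwisted situation, so that the dominant term is not multiplied by a spurious extra factor.

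\textbf{Conclusion and the hard point.} Once the leading coefficient of $E(\abbreviatedBetti[\SL_n]^a;u,v)$ is shown to be $|\Upsilon_A|$, I would finish by a counting argument: each of the $|\Upsilon_A|$ smooth equidimensional strata contributes to this coefficient a nonnegative integer, namely its number of irreducible components, and each stratum is nonempty — its point count has positive leading term, as the computation of the dominant type already exhibits — hence contributes at least $1$; since these contributions sum to $|\Upsilon_A|$, each must be exactly $1$, i.e.\ every $\abbreviatedBetti[\SL_n]^a_\upsilon$ is irreducible. The case $A=1$ (so $a=1$, $\Upsilon_1$ a point) recovers the irreducibility of $\abbreviatedBetti[\SL_n]$. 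The step I expect to be the main obstacle is the middle one: isolating the dominant multi-partition type and showing its contribution to the $q^N$-coefficient is exactly $|\Upsilon_A|$ and not a proper multiple of it. This is where $\gcd(A,\tfrac{n}{d})=1$ must be used in an essential way, and it is also the point where one must be careful that the character sum produces a genuine polynomial count, controlling both the $\Omega_\theta$ factors and the values $\theta(z^\upsilon)$ of the invariant characters on the non-split part of $z^\upsilon$.
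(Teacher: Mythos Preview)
Your overall strategy---reduce to the leading coefficient of an $E$-polynomial via point counting, isolate the dominant multi-partition type, and compute its combinatorial constant---is exactly the paper's. The paper takes a slightly different route to the numbers: rather than setting $b=1$ in Proposition~\ref{twisted-points-formula}, it applies the already-packaged formula of Theorem~\ref{stringy-contributions} for $E(\abbreviatedBetti^a/F_d^{2g};u,v)(uv)^{F(a)}$ with trivial discrete torsion. There the dominant type is $A\cdot\tau_0$ with $\tau_0=\{(1^{n/A})\}$; using $C_{As,1,A\tau_0}=C_{s,A,\tau_0}$ one finds this constant vanishes for $s>1$ and equals $\vartheta(\tau_0,n)/A=|\Upsilon_A|$ for $s=1$, so the leading coefficient comes out as
\[
|\Upsilon_A|\cdot\gcd\!\left(\tfrac{A}{A},\tfrac{n}{d}\right)^{2g-1}\gcd\!\left(A,\tfrac{n}{d}\right)=|\Upsilon_A|\cdot\gcd\!\left(A,\tfrac{n}{d}\right).
\]
The coprimality hypothesis is then used in one line: it is precisely the statement $\gcd(A,n/d)=1$.

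Your account of where $\gcd(A,\tfrac{n}{d})=1$ enters is the weak point. If you set $b=1$ in Proposition~\ref{twisted-points-formula} as you propose, the subgroup of $F_A\times F_B$ generated by the coordinates of $(a,b)$ is $F_A\times\{1\}$, which is cyclic; hence every $\Omega_\theta$ is identically $1$ and every ramification index is $1$, with no hypothesis required. In fact the resulting formula does not involve $d$ at all, so coprimality cannot be what ``keeps the bookkeeping honest'' in that approach. The parameter $d$---and with it the hypothesis---genuinely enters only when one passes to the quotient by $F_d^{2g}$, which is exactly what the paper does via Theorem~\ref{stringy-contributions}; this is also what lets the paper read off nonemptiness and irreducibility of each stratum from a single leading-coefficient computation, rather than from the stratum-by-stratum argument you sketch at the end.
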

\begin{proof}
    As before, the same argument as before we only need to look to $A\tau$ where $\tau=\{\{1,\ldots,1\}\}$. For this type we have
    \begin{align*}
        \frac{n^2}{2A}+A\deg \mathcal{H}_{\tau'} &= \frac{n^2}{A} 
    \end{align*}
    and
    \begin{align*}
        C_{As,1,A\tau}=C_{s,A,\tau} = \left\{\begin{array}{cl}
        \frac{\vartheta(\tau,n)}{A} & \text{if } s = 1  \\
        0 & \text{if } s > 1 
    \end{array}\right.
    \end{align*}
    Hence, the number of irreducible components of maximal dimension is
    \[\frac{\vartheta(\tau,n)}{A}\gcd\left(\frac{A}{A},\frac{n}{d}\right)^{2g-1}\gcd\left(A,\frac{n}{d}\right)=\frac{\vartheta(\tau,n)}{A}.\]
    Note that $\frac{\vartheta(\tau,n)}{A}$ is the number of ways to separate $\{1,\ldots,n\}$ in $A$ subsets of size $\frac{n}{A}$ up to cyclic shift.
\end{proof}

\subsection{Preparations}

In this section, we compute the Fermionic shifts and provide a spread out of $\abbreviatedBetti$. 

\begin{lema}
    Let $\mcC\subset \SL_n(\C)$ be a nice conjugacy class. Then 
    \[F(a) = \frac{1}{2}(\dim \abbreviatedBetti-\dim \betti[\SL_n][\C]^a)=\frac{n^2}{2}(2g-1)\left(1-\frac{1}{A}\right)\]
    for any $a\in F$.
\end{lema}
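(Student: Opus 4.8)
The plan is to separate the asserted identity into its two equalities: the right-hand one is pure dimension bookkeeping, while the left-hand one is where the geometry of $\abbreviatedBetti$ enters.

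\textbf{The dimension count.} For the equality $\frac12\bigl(\dim\abbreviatedBetti-\dim\betti[\SL_n][\C]^a\bigr)=\frac{n^2}{2}(2g-1)\bigl(1-\frac1A\bigr)$ I would invoke the previous lemma, which says that every stratum $\betti[\SL_n][\C]^a_\upsilon$ of the decomposition of Lemma \ref{irred-comp} is smooth of dimension $(2g-1)\bigl(\frac{n^2}{A}-1\bigr)-n+1$; since $\betti[\SL_n][\C]^a=\bigsqcup_{[\upsilon]\in\Upsilon_A}\betti[\SL_n][\C]^a_\upsilon$, its dimension is the same number. Specializing to $a=1$, i.e.\ $A=1$, recovers $\dim\abbreviatedBetti=(2g-1)(n^2-1)-n+1$. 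Subtracting and halving gives $\frac12(2g-1)\bigl(n^2-\frac{n^2}{A}\bigr)=\frac{n^2}{2}(2g-1)\bigl(1-\frac1A\bigr)$; this is the routine part.

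\textbf{The Fermionic shift.} For $F(a)=\frac12\bigl(\dim\abbreviatedBetti-\dim\betti[\SL_n][\C]^a\bigr)$, what must be shown is that $F(a)$ is one half the codimension $c$ of $\betti[\SL_n][\C]^a$ in $\abbreviatedBetti$. Both spaces are smooth (by the lemma of Subsection \ref{prelim:ch-var} and by the previous lemma), so at a point $p\in\betti[\SL_n][\C]^a$, since $a$ has finite order and the ground field is $\C$, I would argue that $T_p\betti[\SL_n][\C]^a=\bigl(T_p\abbreviatedBetti\bigr)^a$; hence the normal space $N_p$ is the sum of the eigenspaces of $a$ on $T_p\abbreviatedBetti$ with eigenvalue $\neq1$, so $\dim N_p=c$ and, in the notation of Subsection \ref{sec:E-pols}, $F(a)=\sum_{w_j\neq0}w_j$ with $e^{2\pi i w_j}$ the eigenvalues of $a$ on $N_p$. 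The key input is that $\abbreviatedBetti$ carries a holomorphic symplectic form $\omega$ — the Atiyah--Bott--Goldman (cup-product) form on $T_p\abbreviatedBetti=H^1(\operatorname{Ad}\rho)$, nondegenerate by irreducibility of $\rho$, which holds because $\mcC$ is nice — and that the $F_d^{2g}$-action preserves it: the action is $\rho\mapsto\rho\otimes\chi$ with $\chi$ central, so $\operatorname{Ad}(\rho\otimes\chi)=\operatorname{Ad}\rho$ and the differential of $a$ at a fixed point is conjugation by an element implementing $a\cdot\rho\simeq\rho$. Then $\bigl(T_p\abbreviatedBetti\bigr)^a$ is a symplectic subspace, its symplectic complement $N_p$ is again symplectic, and $a$ acts on $N_p$ symplectically without the eigenvalue $1$; eigenvalues of a symplectic automorphism occur in pairs $\zeta,\zeta^{-1}$ of equal multiplicity, so the weights $w_j\in(0,1)$ on $N_p$ are permuted by $w\mapsto1-w$, whence $F(a)=\sum_j w_j=\tfrac12\dim N_p=\tfrac12 c$. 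Combined with the first step, this proves the lemma.

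\textbf{Main obstacle and an alternative.} The only non-formal point is the symplectic input of the second step — that $\omega$ is $a$-invariant and that the tangent space to the fixed locus is exactly the $a$-invariant subspace of $T_p\abbreviatedBetti$; once these hold, the eigenvalue pairing and $F(a)=\tfrac12 c$ are immediate. Alternatively, one can compute $F(a)$ directly without naming $\omega$: at a representative $(x,y)$ with $a\cdot(x,y)=w(x,y)w^{-1}$ coming from a grading $\C^n=\bigoplus_{i\in F_A}V_i$ as in Lemma \ref{irred-comp}, the differential of $a$ on $H^1(\operatorname{Ad}\rho)$ is $\operatorname{Ad}(w^{-1})$, which acts on $\Hom{V_i}{V_j}$ and $\Hom{V_j}{V_i}$ by mutually inverse scalars with equal multiplicities, and Poincaré duality on the punctured surface again forces the weights to pair as $w\leftrightarrow1-w$. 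Finally, I would note that the value obtained is locally constant on $\betti[\SL_n][\C]^a$, in agreement with the equidimensionality recorded in the previous lemma.
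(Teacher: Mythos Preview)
Your proposal is correct and follows essentially the same approach as the paper: the paper's proof simply cites the argument at the end of Proposition~8.2 of \cite{HT} together with the existence of an invariant symplectic form \cite{Symplectic-form}, and what you have written is precisely a detailed unpacking of that argument (symplectic pairing of eigenvalues $\zeta\leftrightarrow\zeta^{-1}$ on the normal space forces $F(a)=\tfrac12\codim$). Your dimension bookkeeping from the previous lemma is exactly what is needed for the second equality.
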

\begin{proof}
    This follows by the argument at the end of Proposition 8.2 of \cite{HT} due to the existence of an invariant symplectic form \cite{Symplectic-form}.
\end{proof}

\begin{lema}
    Let $R\subset \C$ a finitely generated $\Z$-algebra which contains the eigenvalues of a nice conjugacy class $\mcC\subset \SL_n(\C)$. Then $\mcC\cap \SL_n(R)$ is a conjugacy class of $\SL_n(R)$ and, for any surjective morphism $\phi:R\to \F_q$ to a finite field, $\phi(\mcC\cap \SL_n(R))$ is an $\PGL_n(\F_q)$-orbit.
\end{lema}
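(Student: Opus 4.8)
The plan is to reduce everything to a transporter computation against the standard diagonal representative of $\mcC$. Since a nice conjugacy class is semisimple and regular, over $\C$ it is the $\SL_n(\C)$-orbit of $D:=\operatorname{diag}(\lambda_1,\ldots,\lambda_n)$, where $\lambda_1,\ldots,\lambda_n$ are its pairwise distinct eigenvalues. Because $\lambda_1\cdots\lambda_n=1$ and each $\lambda_i\in R$, also $\lambda_i^{-1}=\prod_{j\neq i}\lambda_j\in R$, so $D\in\SL_n(R)$ and in particular $\mcC\cap\SL_n(R)\neq\emptyset$. Everything then comes down to showing that this set is precisely the $\SL_n(R)$-conjugacy orbit of $D$, and to tracking what $\phi$ does to it.

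For the first assertion, take $g\in\mcC\cap\SL_n(R)$, which has characteristic polynomial $\prod_i(t-\lambda_i)\in R[t]$. Here it is convenient — and harmless, since for spread-out purposes one may always replace $R$ by a finitely generated extension inside $\C$ — to arrange in addition that the discriminant $\prod_{i<j}(\lambda_i-\lambda_j)$ is a unit of $R$ and that $\operatorname{Pic}(R)=0$ (for instance by passing to a localisation of the normalisation). Then the factors $t-\lambda_i$ are pairwise comaximal in $R[t]$, the reduction of $g$ at every prime of $R$ is regular semisimple, and the transporter $\mcT_g:=\{P\in\GL_{n,R}:PD=gP\}$, cut out of $\GL_{n,R}$ by linear equations, is smooth over $\operatorname{Spec} R$ with a point over every residue field; hence it is a torsor under the diagonal torus $T=C_{\GL_n}(D)\cong\mathbb G_m^{\,n}$, and such torsors, being classified by $\operatorname{Pic}(R)=0$, are trivial. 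Thus $\mcT_g(R)\neq\emptyset$, producing $P\in\GL_n(R)$ with $PDP^{-1}=g$; replacing $P$ by $P\cdot\operatorname{diag}(\det(P)^{-1},1,\ldots,1)$ — legitimate because $\operatorname{diag}(\ast,1,\ldots,1)$ centralises $D$ — we may take $P\in\SL_n(R)$, so $g\in\SL_n(R)\cdot D$. This enlargement of $R$ is the delicate point: without it the assertion can genuinely fail, for example with $n=2$, $R=\Z[\sqrt{10}]$ and $\mcC$ the class with eigenvalues $3+\sqrt{10}$ and $\sqrt{10}-3$, where $\left(\begin{smallmatrix}3+\sqrt{10}&1\\0&\sqrt{10}-3\end{smallmatrix}\right)$ lies in $\mcC\cap\SL_2(R)$ but is not $\GL_2(R)$-conjugate to $D$ (the transporter equations force a conjugating matrix of determinant divisible by $6$). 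So one must either build such hypotheses into $R$ or appeal explicitly to the freedom to shrink $\operatorname{Spec} R$.

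For the second assertion, fix a surjection $\phi\colon R\to\F_q$. Elementary matrices lift along $\phi$ and generate $\SL_n(\F_q)$, so $\phi\colon\SL_n(R)\to\SL_n(\F_q)$ is surjective; hence $\phi(\mcC\cap\SL_n(R))=\phi(\SL_n(R)\cdot D)=\SL_n(\F_q)\cdot\phi(D)$. Finally this coincides with the $\PGL_n(\F_q)$-conjugacy orbit of $\phi(D)$: every $h\in\GL_n(\F_q)$ factors as $h=s\cdot\operatorname{diag}(\det h,1,\ldots,1)$ with $s\in\SL_n(\F_q)$, and $\operatorname{diag}(\det h,1,\ldots,1)$, being diagonal, commutes with the diagonal matrix $\phi(D)$, so $h\phi(D)h^{-1}=s\phi(D)s^{-1}$. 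Therefore $\PGL_n(\F_q)\cdot\phi(D)=\SL_n(\F_q)\cdot\phi(D)=\phi(\mcC\cap\SL_n(R))$, which finishes the argument and, incidentally, does not use regularity of $\phi(D)$.
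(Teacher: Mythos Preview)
Your argument is correct and in fact more careful than the paper's. The paper asserts in one line that a matrix in $\SL_n(R)$ which is regular semisimple over $\C$ with eigenvalues in $R$ is diagonalisable over $R$ ``because it is similar to its companion matrix.'' As your $\Z[\sqrt{10}]$ counterexample shows, this step (and hence the lemma as literally stated) can fail: passing either to or from the companion matrix involves a Vandermonde-type base change whose determinant is built from the eigenvalue differences, which need not be units of $R$. In the paper's actual application a few lines later, $R$ is chosen to contain the $n^2$-th roots of the eigenvalues and the inverses of all $\prod_{i\in I}\mu_i-1$, so in practice one is free to shrink $\spec R$ exactly as you do; the lemma's statement and the paper's proof simply omit this hypothesis.

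Your route via the transporter $\mcT_g$ as a $\mathbb G_m^{\,n}$-torsor, trivial once $\operatorname{Pic}(R)=0$ and the discriminant is inverted, is a genuinely different and cleaner packaging than the companion-matrix sketch: it makes explicit precisely which conditions on $R$ are required and why. For the second assertion your proof and the paper's coincide --- both lift $\SL_n(\F_q)$ through elementary matrices and then pass from the $\SL_n(\F_q)$-orbit to the $\PGL_n(\F_q)$-orbit using that the diagonal torus centralises $\phi(D)$.
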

\begin{proof}
    If a matrix over $R$ is regular semisimple over $\C$ and all its eigenvalues lie in $R$, it is diagonalizable over $R$. Indeed, it is similar to its companion matrix. Hence, any two elements of $\phi(\mcC\cap \SL_n(R))$ are $\PGL_n(\F_q)$-conjugated. On the other hand, any invertible matrix of determinant one in $\F_q$ is a product of elementary ones and, therefore, it can be lifted to $\SL_n(R)$. In addition, the map $\SL_n(\F_q)\to \PGL_n(\F_q)/\Stab[z]$ is surjective for any $z\in \SL_n(\F_q)$ diagonalizable. Hence, any $\phi(\mcC\cap \SL_n(R))$ is a $\PGL_n(\F_q)$-orbit.
\end{proof}

\begin{prop}
    Let $\mcC\subset \SL_n(\C)$ be a nice conjugacy class and $R\subset \C$ a finitely generated $\Z$-algebra which contains the eigenvalues of $\mcC$. Then $\betti[\SL_n][R][\mcC\cap \SL_n(R)]$ is a spread out of $\abbreviatedBetti$ and $\betti[\SL_n][R]_\phi(\F_q) = \betti[\SL_n][\F_q][\PGL_n(\F_q)\cdot\phi(\mcC\cap \SL_n(R))]$ for any morphism $\phi: R\to \F_q$ to a finite field.
\end{prop}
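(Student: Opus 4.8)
The idea is to exhibit $\betti[\SL_n][R][\mcC_R]$, where $\mcC_R:=\mcC\cap\SL_n(R)$, as the base of a $\PGL_{n,R}$-torsor, and then to use that torsors — unlike arbitrary GIT quotients — are preserved under base change, including the non-flat specialization $R\to\F_q$. First I would unwind the total space: by the lemma preceding the proposition $\mcC_R$ is a conjugacy class of $\SL_n(R)$, namely the fibre of the adjoint quotient $\SL_{n,R}\to\mathbb{A}^{n-1}_R$ over the $R$-point given by the characteristic polynomial of $\mcC$, hence affine of finite type over $R$. Thus $\preBetti[R][\mcC_R]$ is the closed subscheme of $\SL_{n,R}^{2g}\times_R\mcC_R$ cut out by the single matrix equation $[x,y]z=1$; it is affine of finite type over $R$, carries the $F_d^{2g}$-action by left multiplication (after the harmless step of adjoining $\mu_d$ to $R$), and its formation visibly commutes with base change, giving $\preBetti[R][\mcC_R]\times_{\spec R}\spec\C=\preBetti[\C]$ — the defining space of $\abbreviatedBetti$ — and, for every $\phi\colon R\to\F_q$, $\preBetti[\F_q][\phi(\mcC_R)]$, where $\phi(\mcC_R)=\PGL_n(\F_q)\cdot\phi(\mcC_R)$ is a single conjugacy class, again by that lemma.

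Next comes the torsor step, which is the crux. Over $\C$ the $\PGL_n(\C)$-action on $\preBetti[\C]$ is free and $\preBetti[\C]$ is smooth, by the lemma of Section~\ref{prelim:ch-var}. Since $R$ is a domain, $\spec R$ is irreducible with generic point mapping into $\spec\C$, and both triviality of the stabilizer group scheme of the $\PGL_{n,R}$-action on $\preBetti[R][\mcC_R]$ and smoothness of $\preBetti[R][\mcC_R]$ over $R$ are constructible conditions that hold over the generic point, hence over $\spec R[1/f]$ for a suitable nonzero $f\in R$. Replacing $R$ by $R[1/f]$ — innocuous for the notion of spread out relevant to Theorem~\ref{KGWZ}, and, upon also inverting the pairwise differences of the eigenvalues of $\mcC$ together with the relevant partial-product-minus-one expressions, ensuring that every $\phi(\mcC_R)$ remains a nice conjugacy class over $\F_q$ — we may assume the $\PGL_{n,R}$-action is free and $\preBetti[R][\mcC_R]$ is smooth over $R$. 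As in \cite[Theorem 2.2.5]{HV}, this makes $\preBetti[R][\mcC_R]\to\betti[\SL_n][R][\mcC_R]$ a $\PGL_{n,R}$-torsor, the GIT quotient being geometric with smooth quotient morphism.

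Finally, base changing this torsor along $R\to\C$ and along each $\phi\colon R\to\F_q$ again produces $\PGL_n$-torsors; since $\PGL_n$ is affine these quotient maps are faithfully flat and affine, so fppf descent identifies the structure sheaf of the base with the $\PGL_n$-invariants on the total space, i.e.\ identifies the base with the affine GIT quotient of the total space. Together with the base-change computation above this gives $\betti[\SL_n][R][\mcC_R]\times_{\spec R}\spec\C=\preBetti[\C]//\PGL_n=\abbreviatedBetti$, so the $R$-scheme is a spread out of $\abbreviatedBetti$, and $\betti[\SL_n][R][\mcC_R]\times_{\spec R}\spec\F_q=\preBetti[\F_q][\phi(\mcC_R)]//\PGL_n=\betti[\SL_n][\F_q][\PGL_n(\F_q)\cdot\phi(\mcC_R)]$, which is the asserted identity (already of $\F_q$-schemes, hence of $\F_q$-points).

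The step I expect to be the main obstacle is exactly this upgrade from ``free action on the complex fibre'' to ``free $\PGL_{n,R}$-action over all of $\spec R$, after inverting one element of $R$'': it is freeness that turns the family of fibrewise GIT quotients into a single $R$-relative torsor and thereby licenses the \emph{non-flat} base change $R\to\F_q$; for general orbit spaces GIT quotients do not commute with specialization. A secondary point to watch in tandem is that the enlargement of $R$ can be chosen so that $\phi(\mcC_R)$ stays a nice conjugacy class over $\F_q$ for every $\phi$, so that the right-hand side $\betti[\SL_n][\F_q][\PGL_n(\F_q)\cdot\phi(\mcC_R)]$ genuinely belongs to the family studied throughout the paper.
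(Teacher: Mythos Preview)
Your approach is correct and follows essentially the same strategy as the paper: use freeness of the $\PGL_n$-action to upgrade the GIT quotient to a geometric quotient (a torsor in your phrasing) so that formation of the quotient commutes with arbitrary base change, in particular with the non-flat specialization $R\to\F_q$. The paper compresses this into citations --- Lemma~2 of Seshadri for the spread-out claim (flat base change $R\hookrightarrow\C$ of the GIT quotient) and Seshadri, Theorem~3(ii) together with Kac, Lemma~3.2 for the specialization, invoking freeness as the hypothesis --- whereas you unpack the mechanism explicitly.

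One genuine refinement in your write-up is that you spell out the spreading-out step for freeness: the lemma of Section~\ref{prelim:ch-var} only gives freeness over an algebraically closed field where $\mcC$ is nice, so to have a global $\PGL_{n,R}$-torsor you must localize $R$ to ensure every closed fibre remains nice. The paper's proof is silent on this point and appears to rely on the later choice of $R$ (where the expressions $\prod_{i\in I}\mu_i-1$ are already inverted). Your version makes the logical dependence transparent, at the harmless cost of proving the second assertion only for $\phi$ factoring through the localization --- which, as you note, is exactly what Theorem~\ref{KGWZ} needs.
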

\begin{proof}
    The previous lemma implies this the analogous statements for $\preBetti[\C]$. Then the first claim follows from Lemma 2 of \cite{Seshadri}. The second one from of \cite[item (ii) of Theorem 3]{Seshadri} and \cite[Lemma 3.2]{Kac} as the action is free.
\end{proof}

Thus setting $R\subset \C$ as the $\Z$-algebra generated by all $n^2$-roots of all eigenvalues of $\mcC$ and all the elements of the form $(\prod_{ i\in I}\mu_i -1)^{-1}$, where $I$ is a proper subset of $\{1,\ldots,n\}$, we get a spread out with twisted polynomial count by Theorem \ref{number-twisted-points}. 

\subsection{Stringy contributions}\label{sec:stringy-contributions}

In this section, we prove Theorem \ref{stringy-contributions}. Fix a $d$-primitive root of unity $\omega$ and consider the associated discrete torsion $\mcD \in H^2(F_d^2,U(1))\simeq F_d$. As in Ruan's example, $\sum \pi_i^*\mcD$, $\pi_i:F_d^{2g}\to F_d^2$ induce the family of local system in $X^a$ given by $\xi_a:F_d^{2g} \to \C^*$
\[\xi_a(b)=\omega^{\sum a_{2i-1}b_{2i}-a_{2i}b_{2i-1}}\]
For another discrete torsion $K\mcD$, the associated local system is given by $\xi_a(b)^K$. Note that the order of $K\mcD$ is $\frac{d}{\gcd(K,d)}$.

As we said before Theorem \ref{number-twisted-points} implies that $\abbreviatedBetti^a$ has twisted polynomial count for any character. Hence, by Theorem \ref{KGWZ}, $E(\abbreviatedBetti^a,\mcL_{K\mcD,a};q)$ is
\begin{align*}
    \#^{\xi_a}_{F_d}X^a &= \frac{1}{d^{2g}}\sum_{b\in F_d^{2g}}S_{a,b} \xi_a(b)^K =\frac{1}{d^{2g}}\sum_{b\in F_d^{2g}}\sum_\tau \frac{\left((-1)^nq^{\frac{n^2}{2A}}\mathcal{H}_{\tau'}(q)^A\right)^{2g-1}}{(q-1)^{2g-1+n-1}}\sum_{s}s^{2g}AC_{s,A,\tau}\xi_a(b)^K  
\end{align*}    
where $s$ runs over all common divisors of $\frac{n}{A}$ and $\frac{n}{B}$ such that $A|\Delta(a,b)\gcd(A,\frac{n}{Bs})$. Recall that $A$ and $B$ denote the orders of $a$ and $b$ respectively. 

Changing $s$ by $As$, $\tau$ by $A\cdot\tau$, and reordering the sum we obtain
\begin{align*}
    E(\abbreviatedBetti^a,\mcL_{K\mcD,a};q) &= \sum_{A|\tau} \frac{\left((-1)^nq^{\frac{n^2}{2A}}\mathcal{H}_{\tau'}(q)\right)^{2g-1}}{(q-1)^{2g-1+n-1}}\sum_{A|s|n} s^{2g} C_{s,1,\tau} \sum_{b\in F_{a,s}}\frac{\xi_a(b)^K}{d^{2g}A^{2g-1}}
\end{align*}
where $F_{a,s}\subset F_d^{2g}$ is the subgroup of all $b$ such that its order $B$ divides $\frac{n}{As}$ and $A|\Delta(a,b)\gcd(A,\frac{nA}{Bs})$. The notation $A|\tau$ means that $A$ divides the multiplicity of each partition of $\tau$. Recall that $C_{s,A,\tau}=0$ if $s\not|\tau$. Hence, the condition $A|\tau$ is superfluous. 

We want to compute
\[\Phi_{st}(a,n,d,s,K):=\sum_{b\in F_{a,s}}\frac{\xi_a(b)^K}{d^{2g}A^{2g-1}}\]
Recall that
\[\Delta(a,b) = \frac{\gcd(A,B)}{|\langle \frac{AB}{d^2}\sum a_{2i-1}b_{2i}-a_{2i}b_{2i-1} \rangle |}\]
for the isomorphism $F_d\simeq \Z/d\Z$ given by $\omega$. Hence, if $\hat\Delta(a,b)=\frac{AB}{d^2}\sum a_{2i-1}b_{2i}-a_{2i}b_{2i-1}$, 
\[\Delta(a,b)= \gcd(A,B,\hat\Delta(a,b)),\]
\[\Phi_{st}(a,n,d,s,K):=\sum_{b\in F_{a,s}}\frac{\omega^{\frac{Kd^2}{AB}\hat\Delta(a,b)}}{d^{2g}A^{2g-1}}\]
and the condition $A|\Delta(a,b)\gcd(A,\frac{nA}{Bs})$ defining $F_{a,s}$ can be change by $A|\hat\Delta(a,b)\gcd(A,\frac{nA}{Bs})$ because $A|\gcd(A,B)\gcd(A,\frac{nA}{Bs})$ provided $s|n$. In addition, note that $\omega^{\frac{d^2}{AB}}$ has order dividing $\gcd(A,B)$. Hence, we can think of $\hat\Delta(a,b)$ as an element of $F_{\gcd(A,B)}$.

Set
\[N_a(A,B,\hat\Delta)=|\{b\in F_d^{2g}: \ord b=B,\hat\Delta(a,b)= \hat\Delta \}|\]
and 
\[\hat N_a(A,B,d_2,\hat\Delta)=|\{b\in F_d^{2g}: d_2|b_i, \hat\Delta(a,b)= \hat\Delta \}|\]
for $d_2|B$, $\ord a=A$ and $\hat\Delta\in F_{\gcd(A,B)}$, where $d_2|b_i$ means that $b_i\in F_{d_2}\subset F_B\subset F_d$. 

Note that, given $a$, $\hat\Delta(a,-):F_B^{2g}\subset F_d^{2g}\to F_{\gcd(A,B)}$ is linear.  In addition, $\gcd(A,d_2)|\hat \Delta(a,b)$ if $d_2|b_i$ for all $i$. On the other hand, any such value of $\hat \Delta$ can be obtained because $|\langle a_i\rangle|=F_A$. Thus,
\[\hat N_a(A,B,d_2,\hat\Delta)=\left\{\begin{array}{cl}
       \frac{B^{2g}\gcd(A,d_2)}{d_2^{2g}\gcd(A,B)}  & \text{if }\gcd(A,d_2)|\hat\Delta \\
        0 & \text{if not}
    \end{array}\right.\]
and, by Möbius inversion,
\[ N_a(A,B,\hat\Delta) = \sum_{d_2} \frac{B^{2g}\gcd(A,d_2)}{d_2^{2g}\gcd(A,B)}\mu(d_2)\]
where $d_2$ runs over all divisors of $B$ such that $\gcd(A,d_2)|\hat\Delta$. Note that the condition $\gcd(A,d_2)|\hat\Delta$ is equivalent to $\gcd(A,d_2)|\gcd(A,B,\hat\Delta)$. Additionally, observe that $N_a(A,B,\hat\Delta)$ turns out to not depend on $a$. Hence, we drop it from the notation.

Rewrite $\Phi_{st}(a,n,d,s,K)$ as
\begin{align*}
    \sum_{B|\frac{nA}{s},d}\sum_{\Delta} \frac{N(A,B,\hat\Delta)}{d^{2g}A^{2g-1}}S(\Delta)
\end{align*}
where the second sum is over all divisors $\Delta$ of $\gcd(A,B)$ such that $A|\Delta\gcd(\frac{nA}{Bs},A)$ and $S(\Delta)$ is the addition of $\omega^{\frac{Kd^2}{AB}\Delta i}$ for $1\leq i\leq \frac{\gcd(A,B)}{\Delta}$ coprime with $\frac{\gcd(A,B)}{\Delta}$. Looking at all conditions over $\Delta$ we find a sum of the form (for some $\Delta_1,\Delta_2$)
\begin{align*}
    \sum_{\Delta_1|\Delta|\Delta_2} S(\Delta) &= \sum_{1\leq \Delta \leq \frac{\Delta_2}{\Delta_1}}\omega^{\frac{Kd^2\Delta_1}{EF}\Delta} = \left\{\begin{array}{cl}
        \frac{\Delta_2}{\Delta_1} & \text{if }\operatorname{ord}\omega | \frac{Kd^2\Delta_1}{AB} \\
        0 & \text{if not } 
    \end{array} \right.
\end{align*}
as long as $\Delta_1|\Delta_2$ and $\omega^{\frac{Kd^2\Delta_2}{AB}}=1$. Hence, $\Phi_{st}(a,n,d,s,K)$ is
\begin{align*}
    \sum_{B|\frac{nA}{s},d}\sum_{d_2} \frac{B^{2g}\gcd(A,d_2)}{d^{2g}A^{2g-1}d_2^{2g}\gcd\left(\frac{A}{\gcd(\frac{nA}{Bs},A)},\gcd(A,d_2)\right)}\mu(d_2) 
\end{align*}
where $d_2$ runs over all divisors of $B$ with $d| \frac{Kd^2}{AB}\lcm\left(\frac{A}{\gcd(\frac{nA}{Bs},A)},\gcd(A,d_2)\right)$

Now,
\begin{align*}
    \frac{\gcd(A,d_2)}{\lcm\left(\frac{A}{\gcd(\frac{nA}{Bs},A)},\gcd(A,d_2)\right)}&=\frac{\gcd(\frac{nA}{Bs},A)}{A}\gcd\left(\frac{A}{\gcd(\frac{nA}{Bs},A)},d_2\right)
    = \frac{\gcd(A, \frac{nAd_2}{Bs})}{A}
\end{align*}
The second divisibility condition over $d_2$ becomes $\gcd(A,\frac{nEd_2}{Bs})|\frac{Kd}{B}\gcd(A,d_2)$ and
\begin{align*}
    \Phi_{st}(a,n,d,s,K)&= \sum_{B|\frac{nA}{s},d}\sum_{d_2} \frac{B^{2g}\gcd(A,\frac{nAd_2}{Bs})}{d^{2g}A^{2g}d_2^{2g}}\mu(d_2) 
\end{align*}

Note that $\Phi_{st}$ turns out to be a function on $A=\ord a$ and it is arithmetic, i.e. 
\[\Phi_{st}(A,n,d,s,K)=
\prod_{p|n\text{ prime}}
% \Phi_{st}(p^{v_p(A)}, p^{v_p(n)},p^{v_{p}(d)},p^{v_{p}(s)},p^{v_{p}(K)})
\Phi_{st}\pare{p^{v_p(A)}, p^{v_p(n)},p^{v_{p}(d)},p^{v_{p}(s)},p^{v_{p}(K)}}
\]

Therefore, assume that $n=p^m$ for some prime $p$. Write $A=p^l$, $s=p^b$, $d=p^c$ and $K=p^k$. We know that $b,c\leq m$, $l\leq b,c$ and $k\leq c$. To avoid small supra-indices, We will use the notation $\exp_p(i)$ to denote $p^i$. In this case, putting $d_2 = p^i$ and $B=p^j$,
\begin{align*}
    \Phi_{st}(A,n,d,s,K)=\sum_{i,j} \exp_p(2g(j-i-l-c)+\min(l,m+l-b+i-j))(-1)^{i}
\end{align*}
where $i$ and $j$ satisfy the following inequalities:
\begin{align*}
    &0\leq i\leq \min(1,c,m+l-b), \\
    &i\leq j\leq \min(c,m+l-b), \text{ and} \\
    &\min(l,m+l-b+i-j)\leq k+c-j+\min(l,i)
\end{align*}
If $c$ or $m+l-b$ are zero, $i$ and $j$ are always zero and 
\[  \Phi_{st}(A,n,d,s,K)= \exp_p(- 2gc - (2g-1)l)\]
Assume from now on that $c,m+b-l\geq 1$ so that $i$ can be $1$.

If $l=0$, the third inequality always holds and
\begin{align*}
    \Phi_{st}(A,n,d,s,K)=\sum_{0\leq i\leq 1}\sum_{i\leq j\leq c,m-b} \exp_p(2g(j-i-c))(-1)^{i}
\end{align*}
The terms $(i,j)$ and $(i+1,j+1)$ cancel each other. Only survive $(0,\min(c,m-b))$. Hence
\[  \Phi_{st}(A,n,d,s,K)= \exp_p(2g\min(0,m-b-c))\]

Assume now that $l\geq 1$. Hence, $\min(i,l)=i$. The same cancellation happens. But now the term $(0,\min(c,m-b))$ appears if and only if
\begin{align*}
    \min(l,m-b+l-\min(c,m-b+l))&\leq k+c-\min(c,m-b+l)\\
    \min(l+\min(c,m-b+l),m-b+l)&\leq k+c\\
    \min(l+c,m-b+l)&\leq k+c
\end{align*}
Hence, $\Phi_{st}(A,n,d,s,K)$ is zero if the previous inequality is not satisfied and  
\[ \Phi_{st}(A,n,d,s,K)=
    \exp_p(2g\min(-l,m-b-c)+\min(l,m-b+l-\min(c,m-b+l))) \]
otherwise. This formula also works for the special cases we analyzed before. In conclusion, 
\[\Phi_{st}(A,n,d,s,K)=\left\{\begin{array}{cl}
    \left(\frac{\gcd(d,\frac{nA}{s})}{dA}\right)^{2g-1}\frac{\gcd(d,\frac{n}{s})}{d} &  \text{if }\gcd(d,\frac{n}{s})|K\frac{d}{A}\\
    0 & \text{if not}
\end{array}\right.\]
for all possible $A,n,d,s,K$. Hence, $E(\abbreviatedBetti^a,\mcL_{K\mcD,a}; u,v)q^{F(a)}$ is
\begin{align*}
    \sum_{\tau} \frac{\left((-1)^nq^{\frac{n^2}{2}}\mathcal{H}_{\tau'}(q)\right)^{2g-1}}{(q-1)^{2g-1+n-1}}\sum_{\substack{A|s|n\\ \gcd(s,\frac{n}{d})|K\frac{s}{A}}} C_{s,1,\tau} \left(\gcd\left(\frac{s}{A},\frac{n}{d}\right)\right)^{2g-1}\gcd\left(s,\frac{n}{d}\right)
\end{align*}
as state by Theorem \ref{stringy-contributions}. Recall that the Fermionic shift is $\frac{n^2}{2}(1-\frac{1}{A})(2g-1)$.

\subsection{Stringy E-polynomial}\label{sec:stringy-E-pol}

Now we turn to Theorem \ref{stringy-E-pol}. By Theorem \ref{stringy-contributions},
\begin{align*}
    E_{st}^{K\mcD}(\abbreviatedBetti[\SL_n/F_d]; u,v) = \sum_{\tau} \frac{\left((-1)^n(uv)^{\frac{n^2}{2}}\mathcal{H}_{\tau'}(uv)\right)^{2g-1}}{(uv-1)^{2g-1+n-1}}\sum_{s|n} s^{2g} C_{s,\tau} \Phi_g(n,d,s,K)
\end{align*}
for
\begin{align*}
    \Phi_g(n,d,s,K)&=\sum_{a} \left(\frac{\gcd(d,\frac{nA}{s})}{dA}\right)^{2g-1}\frac{\gcd(d,\frac{n}{s})}{d}
\end{align*}
where $a$ runs over all elements of $F_d^{2g}$ for which $A|s,d$ and $\gcd(d,\frac{n}{s})|K\frac{d}{A}$. By Möbius inversion,
\begin{align*}
     \Phi_g(n,d,s,K) &=\sum_{A}\sum_{d_1|A} \frac{A\gcd(d,\frac{nA}{s})^{2g-1}\gcd(d,\frac{n}{s})}{d^{2g}d_1^{2g}}\mu(d_1)
\end{align*}
where $A$ varies among all common divisors of $s$ and $d$ such that $\gcd(d,\frac{n}{s})|K\frac{d}{A}$.

As before, we can focus on the case where $n=p^m$ for some prime $p$. Put $s=p^b$, $d=p^c$ and $K=p^k$. Then, for $E=p^i$ and $d_1=p^j$,
\begin{align*}
    \Phi_g(n,d,s,K)&=\sum_{i,j} \exp_p(i+(2g-1)(\min(c,m-b+i))+\min(c,a-b)-2g(c+j)) (-1)^j
\end{align*}
where $i$ and $j$ satisfy $0\leq i\leq \min(b,c, k+c-\min(c,m-b))$ and $0\leq j \leq \min(1,i)$.

Let us separate in two ranges; where $i\leq c-m+b$ and the complement. In the first one, the terms $(i,j)$ and $(i+1,j+1)$ cancel each other. Only survives the term $(\min(c-m+b,b,c,k+c-\min(c,m-b)),0)$. This term appears only if $m-b\leq c$. In this case, we obtain
\[\exp_p(2g(\min(c-m+b,b,c,k+c-\min(c,m-b))+m-b-c))=\exp_p(2g\min(0,m-c,m-b,k))=1\]

Let us analyze the second range, i.e. $i>c-m+b$. We have
\begin{align*}
    &\left(\sum_{i} \exp_p(i)\right)\left(\sum_{0\leq j \leq 1} \exp_p(\min(0,m-b-c)-2gj) (-1)^j\right) + \delta_{c+b<m} \exp_p(2g\min(0,m-b-c\})
\end{align*}
where $i$ ranges between $\max(1,c-m+b+1)$ and $\min(b,c, k+c-\min(c,m-b))$. 

We can evaluate the summation on $i$ by the geometric series. If $m\leq  b+c$, we obtain the following formula for $\Phi_g(n,d,s,K)$
\begin{align*}
    & \frac{\exp(\min(b,c,k+c-m+b)+1)-\exp_p(1+c-m+b)}{p-1}\exp_p({m-b-c})\left( 1-\exp_p(-2g))\right) +1\\
    &= \frac{\exp_p({\min(m-c,m-b,k)})(p-\exp_p(1-2g))+\exp_p(1-2g)-1}{p-1}
\end{align*}

If $m > b+c$,
\begin{align*}
    \Phi_g(n,d,s,K)&=\frac{\exp_p({\min(b,c,k)+1})-p}{p-1}\left( 1-\exp_p(-2g)\right) + 1\\
    &= \frac{\exp_p({\min(b,c,k)})(p-\exp_p(1-2g))+\exp_p(1-2g)-1}{p-1}
\end{align*}

Summing up,
\begin{align*}
    \Phi_g(n,d,s,K)=\frac{(p-\exp_p(1-2g))\exp_p(\min(b,m-c,c,m-b,k)) + \exp_p(1-2g)-1}{p-1}
\end{align*}
which is the formula in Theorem \ref{stringy-E-pol}. The last claim of the theorem follows by noticing that $K$ can be replaced by $\gcd(d,\frac{n}{d},K)$ and $\Phi_g(n,d,s,K)=\Phi_g(n,\frac{n}{d},s,K)$.

\subsection{Isotypic contributions}\label{sec:isotypic-contributions}

To finish, we prove Theorem \ref{isotypic-contributions}. We consider discrete torsion over $F_n$. Let $\omega$ be a primitive $n$-root of unity and fix a generator $\mcD$ as in the previous sections. Pick $K\in F_n$. A character $\xi$ of $F_{\frac{n}{d}}^{2g}$ is of the form $\omega^{d\sum b_i^{(0)}b_i}$ for a unique $b^{(0)}\in F_\frac{n}{d}^{2g}$. In this setting, we have

\begin{align*}
    E_{st}^{K\mcD}(\abbreviatedBetti[\SL_n/F_d]; u,v)_\xi &=
    &=\sum_{\tau} \frac{\left((-1)^nq^{\frac{n^2}{2}}\mathcal{H}_{\tau'}(q)\right)^{2g-1}}{(q-1)^{2g-1+n-1}}\sum_{s|n} s^{2g} C_{s,1,\tau}\Phi_{stp}(b^{(0)},n,s,d,K)
\end{align*}
for
\[\Phi_{stp}(b^{(0)},n,s,d,K)= \sum_{a,b}\frac{1}{n^{2g}A^{2g-1}}\omega^{\frac{Kn^2}{AB}\Delta(a,b)+db^{(0)}\cdot b}\]
where the sum runs over all $a\in F_d^{2g}$ and $b\in F_n^{2g}$ such that $A|s$, $B|\frac{nA}{s}$ and $A|\Delta(a,b) \gcd(\frac{nA}{Bs},A)$. Therefore, Theorem \ref{isotypic-contributions} is equivalent to
\[\Phi_{stp}(b^{(0)},n,s,d,K)= \left\{\begin{array}{cl}
    \Phi_{st}(a,\frac{n}{d},s,d,\frac{n^2}{d^2}K) & \text{if }B^{(0)}|s \\
    0 & \text{if not}
\end{array}\right.\]
if $B^{(0)}:=\ord{b^{(0)}}=\ord a$.

Note that the condition $\gcd(\frac{n}{d},\frac{n}{s})|K\frac{n^2}{d^2}\frac{n}{dB^{(0)}}$ in $\Phi_{st}$ is superfluous. Hence, we want to prove
\[\Phi_{stp}(b^{(0)},n,s,d,K)= \left\{\begin{array}{cl}
     \left(\frac{\gcd(\frac{n}{d},\frac{nA}{s})}{\frac{n}{d}A}\right)^{2g-1}\frac{\gcd(\frac{n}{d},\frac{n}{s})}{\frac{n}{d}} & \text{if }B^{(0)}|s \\
    0 & \text{if not}
\end{array}\right.\]

Write 
\begin{align*}
    \Phi_{stp}(b^{(0)},n,s,d,K)=\frac{d^{2g}}{n^{2g}}\sum_{b\in F_n^{2g}}\omega^{db^{(0)}\cdot b} \check{\Phi}(b,n,d,s,K)
\end{align*}
where
\[\check{\Phi}(b,n,d,s,K) = \sum_{a\in F_{b,s}}\frac{1}{d^{2g}A^{2g-1}}\omega^{\frac{Kn^2}{AB}\Delta(a,b)}\]
and $F_{b,s}\subset F_d^{2g}$ is the subgroup of thus $a$ such that $A|s$, $B|\frac{nA}{s}$ and $A|\Delta(a,b)\gcd(\frac{nA}{Bs},A)$.

As in section \ref{sec:stringy-contributions}, for $B|d$ and $\ord b=B$, let
\[M(A,B,\Delta):=|\{a\in F_d^{2g}: \ord a = A,\Delta(a,b)=\Delta\}| = \sum_{d_1} \frac{A^{2g}\gcd(B,d_1)}{d_1^{2g}\gcd(A,B)}\mu(d_1)\]
where $d_1$ varies among all divisors of $A$ such that $\gcd(B,d_1)|\Delta$. And rewrite
\begin{align*}
    \check{\Phi}(b,n,d,s,K) &= \sum_{A}\sum_{\Delta}\frac{M(A,B,\Delta)}{d^{2g}A^{2g-1}} S(\Delta)
\end{align*} 
where $A$ runs over all common divisors of $s$ and $d$ such that $B|\frac{nA}{s}$, $\Delta$ runs over all divisors of $\gcd(A,B)$ that satisfy $A|\Delta\gcd(\frac{nA}{Bs},A)$, and $S(\Delta)$ is the addition of $\omega^{\frac{Kn^2}{AB}\Delta i}$ for $1\leq i\leq \frac{\gcd(A,B)}{\Delta}$ coprime with $\frac{\gcd(A,B)}{\Delta}$. As in the previous section we get
\begin{align*}     
     \check{\Phi}(b,n,d,s,K)&= \sum_{A}\sum_{d_1} \frac{A\gcd(B,d_1)}{d^{2g}d_1^{2g}\lcm\left(\frac{A}{\gcd(\frac{nA}{Bs},A)},\gcd(A,d_2)\right)}\mu(d_1) 
\end{align*}
where $d_1$ runs over all divisors $A$ such that $n| \frac{Kn^2}{AB}\lcm\left(\frac{A}{\gcd(\frac{nA}{Bs},A)},\gcd(B,d_1)\right)$.

Now,
\begin{align*}
    \frac{A\gcd(B,d_1)}{\lcm\left(\frac{A}{\gcd(\frac{nA}{Bs},A)},\gcd(B,d_1)\right)}&=\gcd(\frac{nA}{Bs},A)\gcd\left(\frac{A}{\gcd(\frac{nA}{Bs},A)},d_1\right)
    = \gcd\left(A, \frac{nAd_1}{Bs}\right)
\end{align*}
The second divisibility condition over $d_1$ becomes $\gcd(A,\frac{nAd_1}{Bs})|\frac{Kn}{B}\gcd(B,d_1)$ and
\begin{align*}
    \check{\Phi}(b,n,d,s,K)&= \sum_{A}\sum_{d_1} \frac{\gcd(A,\frac{nAd_1}{Bs})}{d^{2g}d_1^{2g}}\mu(d_1) 
\end{align*}
Note that the sum is empty unless $B|\frac{nd}{s}$. Assume this from now on. Let us look at the conditions over $A$. As $d_1|A$, write $A=d_1A'$. Then $sB|nd_1A'$. Set $B'=\frac{sB}{\gcd(sB,nd_1)}$ so that $B'|A'$. Write $A'=B'A''$. Note that
\begin{align*}
    \gcd\left(B'd_1A'',\frac{nd_1B'}{Bs}d_1A''\right)=d_1A'', && B'd_1 = \frac{s}{\gcd(\frac{s}{d_1},\frac{n}{B})},
\end{align*}
and
\[\frac{d}{B'd_1}=\frac{d\gcd(\frac{s}{d_1},\frac{n}{B})}{s}=\frac{\gcd(\frac{d}{d_1}s,\frac{nd}{sB}s)}{s}=\gcd\left(\frac{d}{d_1},\frac{nd}{Bs}\right)\]
The conditions are then $A''|\frac{s}{d_1}$, $A''|\frac{n}{B}$, $A''|\frac{d}{d_1}$, $A''|\frac{dn}{Bs}$ and $A''|K\frac{n}{\lcm(B,d_1)}$. The last one is superfluous. Indeed, it is implied by the first two as $s|n$. Thus,
\begin{align*}
    \check{\Phi}(b,n,d,s,K)=\delta_{B|\frac{dn}{s}}\sum_{d_1|d,s} \frac{1}{d^{2g}d_1^{2g-1}}\mu(d_1)\Gamma\left(\gcd\left(\frac{s}{d_1},\frac{n}{B},\frac{d}{d_1},\frac{dn}{Bs}\right)\right)
\end{align*}
where
\[\Gamma(A)=\sum_{d_3|A}d_3\]
for any $A$. Note that this value is determined by $B=\ord b$. 

Notice that $\Gamma$ is arithmetic being the (convolution) inverse of the identity. Therefore, $\check\Phi$ is too. Assume that $n=p^m$ for some prime $m$. Write $s=p^b$, $d=p^c$, $K=p^k$ and $F=p^j$. Assume $j\leq m+c-b$. Otherwise, we know the sum is zero. In this case, $\check{\Phi}(B,n,d,s,K)$ is 
\begin{align*}
     \sum_{0\leq i\leq 1,b,c} \exp_p({2g(-i-c)+i})(-1)^{i}\Gamma(\exp_p({\min(b-i,m-j,c-i,m+c-b-j)}))
\end{align*}

If $c=0$, then $i=0$ and $\check{\Phi}(b,n,d,s,K)=1$. On the other hand, if $b=0$,
\begin{align*}
    \check{\Phi}(B,n,d,s,K)&=\exp_p({-2gc})
\end{align*}
Finally, if $b,c>0$, $i$ can be $1$. Using that $\Gamma(p^x)=\frac{p^{x+1}-1}{p-1}$ we get 
\begin{align*}
    \check{\Phi}(B,n,d,s,K) &= \exp_p({-2gc})\left(\frac{\exp_p({\min(b,m-j,c,m+c-b-j)+1})-1}{p-1}\right. \\ & -  \left.\exp_p(1-2g)\frac{\exp_p({\min(b-1,m-j,c-1,m+c-b-j)+1})-1}{p-1}\right) 
\end{align*}
provided $j\leq m+c-b$. This formula also recovers the cases where $b$ or $c$ are zero. 

Now, let us look at
\[N_{stp}(b^{(0)},B,\nabla)=|\{b\in F_B^{2g}: \ord b = B,b^{(0)}\cdot b=\nabla\}|\]
and 
\[\hat N_{stp}(b^{(0)},B,d_2,\nabla)=|\{b\in F_B^{2g}: d_2|b_i, b^{(0)}\cdot b=\nabla\}|\]
for $d_2|F$ and $\nabla\in F_B$. The operator $b^{(0)}\cdot-$ is linear and has image in $F_{\gcd(B^{(0)},B)}$. In addition, $\gcd(B^{(0)},d_2)|b^{(0)}\cdot b$ if $d_2|b_i$ and any such value of $\nabla$ can be obtained because $|\langle f^{(0)}_i\rangle|=B^{(0)}$. Hence, 
\[\hat N_{stp}(b^{(0)},B,d_2,\nabla)=\left\{\begin{array}{cl}
       \frac{B^{2g}\gcd(B^{(0)},d_2)}{d_2^{2g}\gcd(B^{(0)},B)}  & \text{if }\gcd(B^{(0)},d_2)|\nabla \\
        0 & \text{if not}
    \end{array}\right.\]
and
\[ N_{stp}(b^{(0)},B,\nabla) = \sum_{d_2} \frac{B^{2g}\gcd(B^{(0)},d_2)}{d_2^{2g}\gcd(B^{(0)},B)}\mu(d_2)\]
where $d_2$ varies among all divisors od $B$ such that $\gcd(B^{(0)},d_2)|\nabla$.

Hence, $\Phi_{stp}(b^{(0)},n,s,d,K)$ is
\begin{align*} 
    &\sum_{d_2|B|n}\sum_{\nabla}  \frac{d^{2g}B^{2g}\gcd(B^{(0)},d_2)}{n^{2g}d_2^{2g}\gcd(B^{(0)},B)}\mu(d_2)\check{\Phi}(B,n,d,s,K) \sum_{i} \omega^{\frac{n^2}{BB^{(0)}}i\nabla} 
\end{align*}
where $\nabla$ runs over all divisors of $\gcd(B^{(0)},B)$ that are divisible by $\gcd(B^{(0)},d_2)$, and $i$ over all integers between $1$ and $\frac{\gcd(B^{(0)},B)}{\nabla}$ coprime with $\frac{\gcd(B^{(0)},B)}{\nabla}$. As before, it follows that
\begin{align*}
    \Phi_{stp}(b^{(0)},n,s,d,K) &=\sum_{B}  \frac{d^{2g}B^{2g}}{n^{2g}d_2^{2g}}\mu(d_2)\check{\Phi}(B,n,d,s,K)
\end{align*}
where the sum is over all $d_2|B|n$ such that $BB^{(0)}|n\gcd(B^{(0)},d_2)$.

This is an arithmetic function. Hence, we assume again that $n=p^m$ for some prime $p$ and write $s=p^b$, $d=p^c$, $K=p^k$ and $F^{(0)}=p^{j_0}$. In this case, after multiplying by $p-1$ we have three terms
\[\sum_{i,j}\exp_p({2g(j-m-i)})(-1)^i\exp_p({\min(b,m-j,c,n+c-b-j)+1}),\]
\[-\sum_{i,j} \exp_p({2g(j-m-i-1)})(-1)^i\exp_p({\min(b-1,m-j,c-1,m+c-b-j)+2})\]
and
\begin{align*}
    \sum_{i,j}\exp_p({2g(j-m-i)})(-1)^i(-1 +  \exp_p(1-2g)) 
\end{align*}
where $i$ and $j$ satisfies, in all these three sums, $0\leq i\leq 1$ and
\begin{align*}
    i\leq j\leq \min(m,m+\min(j_0,i\}-j_0,m+c-b)
\end{align*}

In the latter, the terms $(i,j)$ and $(i+1,j+1)$ cancel each other. Only survives $(0,\min(m,m-j_0,m+c-b\})$ unless $j_0\geq 1$ and $b-c\leq j_0-1$. Thus, the value is
\[\max(\delta_{j_0=0},\delta_{b-c\geq j_0})(-1+\exp_p(1-2g))\exp_p({2g(\min(m,m-j_0,m+c-b\}-m)})\]
 
To compute the difference between the first two sums, compare the terms $(i,j)$ and $(i,j+1)$ respectively. They only differ by one sign. Hence, only survive the terms $(i,\min(m,m+\min(j_0,i\}-j_0,m+c-b\})$ from the first one and $(0,0)$ and $(1,1)$ from the second one. The last two cancel each other too unless $\min(m,m+\min(j_0,1\}-j_0,m+c-b\}=0$. In the later case, the term $(0,0)$ replaces the missing term $(1,\min(m,m+\min(j_0,1)-j_0,m+c-b))$ of the first sum.

Assume that $j_0\geq 1$ and $b-c\leq j_0-1$. So that $\min(m,m+\min(j_0,i)-j_0,m+c-b)=m+i-j_0$. In this case, we obtain
\begin{align*}
    (p-1)\Phi_{stp}(b^{(0)},n,s,d,K)=&\exp_p(-2gj_0)(\exp_p({\min(b,j_0,c,c-b+j_0,k+j_0)+1})\\
    &-\exp_p({\min(b,j_0-1,c,c-b+j_0-1,k+j_0-1)+1})) \\
    = &(p-1)\exp_p({-2gj_0+\min(j_0,c-b+j_0)})
\end{align*}
provided that $\min(b,j_0,c,c-b+j_0)=\min(j_0,c-b+j_0)$, and zero if not. Note that if $c-b+j_0\leq c$, $j_0\leq b$. And, actually, this last condition is equivalent to the one with the minimums. Hence,
\[  \Phi_{stp}(b^{(0)},n,s,d,K) = \left\{ \begin{array}{cl}
        \exp_p({-2gj_0+\min(j_0,c-b+j_0\}}) & \text{if } j_0\leq b \\
        0 & \text{if not}
    \end{array}\right.\]
Note that the inequality is equivalent to $E|s$.

Assume now that $j_0 = 0$ or $b-c\geq j_0$. In particular, $j_0\leq b$. In this case, $\min(m,m+\min(j_0,i)-j_0,m+c-b)=\min(m,m+c-b)$. We obtain
\begin{align*}
    \Phi_{stp}(b^{(0)},n,s,d,K) &=\exp_p({2g\min(0,c-b)}) \frac{-1+\exp_p(1-2g)+p-\exp_p(1-2g)}{p-1} \\
    &= \exp_p({2g\min(0,c-b)})
\end{align*}

Comparing each case with our previous formula for $\Phi_{st}$, we conclude
\[\Phi_{stp}(b^{(0)},n,s,d,K) =\delta_{\ord{b^{(0)}}|s}
\Phi_{st}
\pare{a,n,\frac{n}{d},s,\frac{n^2}{d^2}K}
\]
provided that $\ord{b^{(0)}}=\ord a$. This proves Theorem \ref{isotypic-contributions}.

\printbibliography

\end{document}